\theoremstyle{plain}
\newtheorem{thm}{Theorem}[section]
\newtheorem{lem}[thm]{Lemma}
\newtheorem{prop}[thm]{Proposition}
\newtheorem{cor}[thm]{Corollary}
\newtheorem{conj}[thm]{Conjecture}
\newtheorem{dfn-lem}[thm]{Definition-Lemma}
\newtheorem{ex-thm}[thm]{Example}
\newtheorem*{thm*}{Theorem}
\theoremstyle{definition}
\newtheorem{dfn}[thm]{Definition}
\newtheorem{rmk}[thm]{Remark}
\theoremstyle{remark}
\newcommand{\bZ}{\mathbb{Z}}
\newcommand{\Z}{\mathbb{Z}}
\newcommand{\Q}{\mathbb{Q}}
\newcommand{\R}{\mathbb{R}}
\newcommand{\C}{\mathbb{C}}
\newcommand{\PP}{\mathbb{P}}
\newcommand{\bC}{\mathbb{C}}
\newcommand{\bF}{\mathbb{F}}
\newcommand{\bH}{\mathbb{H}}
\newcommand{\bQ}{\mathbb{Q}}
\newcommand{\bbF}{\mathbb{F}}
\newcommand{\bbH}{\mathbb{H}}
\newcommand{\bbP}{\mathbb{P}}
\newcommand{\bbR}{\mathbb{R}}
\newcommand{\mcA}{\mathcal{A}}
\newcommand{\mcC}{\mathcal{C}}
\newcommand{\mcJ}{\mathcal{J}}
\newcommand{\mcM}{\mathcal{M}}
\newcommand{\mcO}{\mathcal{O}}
\newcommand{\cO}{\mathcal{O}}
\newcommand{\cE}{\mathcal{E}}
\newcommand{\cH}{\mathcal{H}}
\newcommand{\cM}{\mathcal{M}}
\newcommand{\cS}{\mathcal{S}}
\newcommand{\msF}{\mathscr{F}}
\newcommand{\msL}{\mathscr{L}}
\newcommand{\msR}{\mathscr{R}}
\newcommand{\msS}{\mathscr{S}}
\newcommand{\msT}{\mathscr{T}}
\newcommand{\msW}{\mathscr{W}}
\newcommand{\mfZ}{\mathfrak{Z}}
\newcommand{\mfa}{\mathfrak{a}}
\newcommand{\mfb}{\mathfrak{b}}
\newcommand{\mfz}{\mathfrak{z}}
\newcommand{\fz}{\mathfrak{z}}
\newcommand{\fZ}{\mathfrak{Z}}
\DeclareMathOperator{\Gal}{Gal}
\DeclareMathOperator{\Hom}{Hom}
\DeclareMathOperator{\Ext}{Ext}
\DeclareMathOperator{\Tor}{Tor}
\DeclareMathOperator{\Ann}{Ann}
\DeclareMathOperator{\image}{im}
\DeclareMathOperator{\coker}{coker}
\DeclareMathOperator{\sgn}{sgn}
\newcommand{\re}{\operatorname{Re}}
\newcommand{\im}{\operatorname{Im}}
\newcommand{\isomto}{\stackrel{\sim}{\longrightarrow}}
\newcommand{\bs}{\backslash}
\newcommand{\ra}{\rightarrow}
\newcommand{\setm}{\!-\!}
\newcommand{\ord}{\mathrm{ord}}
\numberwithin{equation}{section}
\newcommand{\Rpos}{\mathbb{R}_{>0}}
\newcommand{\brk}[1]{\langle#1\rangle}
\newcommand{\lr}[1]{\left(#1\right)}
\newcommand{\Eis}{\mathrm{Eis}}
\newcommand{\BS}{\mathrm{BS}}
\newcommand{\MS}{\mathcal{MS}}
\begin{document}
\title{Harder's denominator problem for $\mathrm{SL}_2(\Z)$ and its applications}

\subjclass[2020]{11F75, 11R42, 11F11, 11F67, 11R23}
\keywords{Eisenstein class, special values of partial zeta functions, real quadratic fields, Duke's conjecture}

\author{Hohto Bekki}
\address{Hohto Bekki \newline Max Planck Institute for Mathematics\\Vivatsgasse 7\\53111 Bonn\\Germany}
\email{bekki@mpim-bonn.mpg.de}

\author{Ryotaro Sakamoto}
\address{Ryotaro Sakamoto \newline Department of Mathematics\\University of Tsukuba\\1-1-1 Tennodai\\Tsukuba\\Ibaraki 305-8571\\Japan}
\email{rsakamoto@math.tsukuba.ac.jp}

\begin{abstract}
The aim of this paper is to give a full detail of the proof given by Harder of a theorem on the denominator of the Eisenstein class for $\mathrm{SL}_2(\Z)$ and to show that the theorem has some interesting applications including the proof of a recent conjecture by Duke on the integrality of the higher Rademacher symbols. 
We also present a sharp universal upper bound for the denominators of the values of partial zeta functions associated with narrow ideal classes of real quadratic fields in terms of the denominator of the values of the Riemann zeta function. 
\end{abstract}

\maketitle
\tableofcontents

\section{Introduction}

\subsection{The denominator of Eisenstein classes for $\mathrm{SL}_2(\Z)$}
Let $\Gamma:=\mathrm{SL}_2(\Z)$ be the modular group, 
and let $\bbH:=\{z \in \C \mid \im(z)>0\}$ be the upper half plane, on which $\Gamma$ acts by the linear fractional transformation. 
We denote by $Y:=\Gamma \bs \bbH$ the modular curve of level $\Gamma$, by $Y^{\BS}$ its Borel--Serre compactification, and by $\partial Y^{\BS}:=Y^{\BS}\setm Y$ the Borel--Serre boundary of $Y$. 
For any even integer $n \geq 2$, we define a left $\Gamma$-module $\mcM_n:=\mathrm{Sym}^n(\Z^2)$ to be the $n$-th symmetric power of $\Z^2$, and define $\mcM_n^{\vee}:=\Hom_{\Z}(\mcM_n, \Z)$ to be its dual $\Gamma$-module. 
Then the $\Gamma$-module $\mcM_n^{\vee}$ naturally defines a sheaf on $Y^{\BS}$ (which we also denote by $\mcM_n^\vee$), and we can consider the cohomology groups $H^{\bullet}(Y^{\BS}, \mcM_n^{\vee})$ and $H^{\bullet}(\partial Y^{\BS}, \mcM_n^{\vee})$. 
Since $\mcM_n^{\vee}$ has a left action of $M_2(\bZ)$, these cohomology groups   carry the structure of Hecke modules. 

The boundary $\partial Y^{\BS}$ is  identified with $\Gamma_\infty \bs \bbR$, where $\Gamma_\infty := \left\{\begin{pmatrix} 1&a\\0&1\end{pmatrix} \, \middle| \, a \in \bZ \right\}$. 
Hence it is easy to see that $\dim_{\bQ} H^{1}(\partial Y^{\BS}, \mcM_n^{\vee} \otimes \bQ) = 1$ and we have a natural generator  $\omega_n \in H^{1}(\partial Y^{\BS}, \mcM_n^{\vee})/(\text{torsion})$. 

Harder considered in his book \cite{CAG} a unique Hecke-equivariant section 
\[
H^{1}(\partial Y^{\BS}, \mcM_n^{\vee} \otimes \bQ) \longrightarrow H^{1}( Y^{\BS}, \mcM_n^{\vee} \otimes \bQ)
\]
of the canonical homomorphism $H^{1}(Y^{\BS}, \mcM_n^{\vee} \otimes \bQ) \longrightarrow H^{1}(\partial Y^{\BS}, \mcM_n^{\vee} \otimes \bQ)$ induced by the inclusion map $\partial Y^{\BS} \longhookrightarrow Y^{\BS}$, and he defined  the Eisenstein cohomology class 
\[
\Eis_n \in H^1(Y^{\BS}, \mcM_n^{\vee}\otimes \Q)
\]
to be the image of $\omega_n$ under this section. 
Then Harder studied the denominator $\Delta(\Eis_n)$ of the Eisenstein cohomology class $\Eis_n$, that is, the smallest positive integer $\Delta(\Eis_n)$ such that 
\[
\Delta(\Eis_n) \Eis_n \in H^1(Y^{\BS}, \mcM_n^{\vee})/(\text{torsion})
\subset H^1(Y^{\BS}, \mcM_n^{\vee}\otimes \Q). 
\]
See also the dissertation  \cite{Wang89} of Wang. 
As a result, the following theorem was obtained by Harder (see \cite[Staz 2, \S1]{HP92}). 

\begin{thm}[{Harder~{\cite[Theorem 5.1.2]{CAG}}}]\label{intro:main theorem 1}
For any even integer $n \geq 2$, we have
\[
\Delta(\Eis_n) = \textrm{the numerator of }\zeta(-1-n), 
\]
where $\zeta(s)$ denotes the Riemann zeta function\footnote{In the present paper, the numerator and the denominator of a rational number are always defined to be positive integers.}. 
\end{thm}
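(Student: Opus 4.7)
The plan is to identify $\Delta(\Eis_n)$ with the index of the image of the integral boundary restriction map and then compute this index by a two-sided argument: an explicit construction for the upper bound and a Hecke-theoretic congruence argument for the lower bound. Write $H := H^1(Y^{\BS}, \mcM_n^{\vee})/(\text{torsion})$ and $H_\partial := H^1(\partial Y^{\BS}, \mcM_n^{\vee})/(\text{torsion}) \cong \Z \omega_n$, and let $d$ be the positive generator of the image of the restriction $r \colon H \to H_\partial$. I first show $\Delta(\Eis_n) = d$: the divisibility $\Delta(\Eis_n) \mid d$ is immediate from applying $r$ to an integral representative of $\Delta(\Eis_n) \cdot \Eis_n$, while the converse uses the Hecke-equivariance of the Eisenstein section, the Eisenstein--cuspidal decomposition of $H \otimes \Q$ (Manin--Drinfeld), and an integral splitting argument showing that any integral preimage of $d \omega_n$ differs from $d \cdot \Eis_n$ by an integral interior class. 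This reduces the theorem to computing $d$.

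For the upper bound $d \mid \numerator(\zeta(-1-n))$, I would exhibit an explicit integral cocycle representing $\numerator(\zeta(-1-n)) \cdot \Eis_n$. The natural candidate comes from the weight-$(n+2)$ holomorphic Eisenstein series $E_{n+2}$, whose constant term is the rational number $\zeta(-1-n)/2 = -B_{n+2}/(2(n+2))$: its Eichler--Shimura period cocycle $\gamma \mapsto \int_{z_0}^{\gamma z_0} E_{n+2}(z)(zX+Y)^n \, dz$ can be replaced, via a combinatorial Dedekind--Rademacher-type construction, by a $\Gamma$-cocycle valued in $\mcM_n^{\vee} \otimes \Q$; after clearing the denominator coming from the $\zeta(-1-n)$ in the constant term, one obtains a cocycle valued in $\mcM_n^{\vee}$ whose restriction to the parabolic generator $\begin{pmatrix}1 & 1 \\ 0 & 1\end{pmatrix}$ equals $\numerator(\zeta(-1-n)) \cdot \omega_n$. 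This yields the upper bound.

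The harder direction $\numerator(\zeta(-1-n)) \mid d$ is the main obstacle and is handled prime by prime. Fix a prime $\ell$ such that $\ell^k$ exactly divides $\numerator(\zeta(-1-n))$ and suppose for contradiction that $\ell^k \nmid d$. Then a non-zero class $\bar{x} \in H/\ell H$ can be produced which is a simultaneous Hecke eigenvector with $T_p \equiv 1 + p^{n+1} \pmod{\ell}$ for all primes $p$ and which does not vanish in the interior cohomology. By the Eichler--Shimura isomorphism such a class corresponds to a cuspidal Hecke eigensystem on $\SL_2(\Z)$ of weight $n+2$ congruent modulo $\ell^k$ to that of $E_{n+2}$. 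Kummer/Herbrand--Ribet-type analysis of such Eisenstein congruences shows that their existence at depth $k$ is equivalent to $\ell^k \mid \numerator(\zeta(-1-n))$, giving the desired contradiction and hence the divisibility. The most delicate step---and the one that constitutes Harder's main technical contribution---is setting up the integral Hecke module on $H$ so that this congruence argument can be run at the correct integral level and with the correct exponent of $\ell$; without this integral refinement of Manin--Drinfeld, one recovers only the rational statement.
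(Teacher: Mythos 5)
Your opening reduction is where the argument breaks down, and the error is fatal rather than cosmetic. The quantity $d$ you introduce --- the index of the image of the integral restriction map $H \to H_\partial$ --- is \emph{not} equal to $\Delta(\Eis_n)$. The paper in fact proves (Corollary \ref{cor:exact-seq-ordinary-H^1-inner-boundary}, built on Lemma \ref{lem:H^2_ord(Y^BS, partial Y^BS, cM) = H^2_ord(Y^BS, cM) = 0}) that for $p \geq 5$ the restriction map is surjective on ordinary parts over $\bZ_p$; since $\omega_n$ is ordinary, it lifts to an integral class at every such $p$, so $d$ is a unit at all primes dividing $N_{n+2}$ (the primes $2,3$ being regular), while $\Delta(\Eis_n)=N_{n+2}$ can be large (e.g.\ $691$ for $n=10$). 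What $\Delta(\Eis_n)$ measures is the failure of the \emph{Hecke-equivariant} section to be integral, not the failure of $\omega_n$ to admit some integral lift; the paper encodes this as the annihilator of the extension class $[\cE_{\ell,p,n}]$ in Lemma \ref{lem:ann-exact seq-cE_n,p}. Your proof of the divisibility $\Delta(\Eis_n)\mid d$ --- that ``any integral preimage of $d\omega_n$ differs from $d\Eis_n$ by an integral interior class'' --- presupposes an integral Manin--Drinfeld splitting, and the failure of precisely that splitting modulo $\ell$ is the Eisenstein-congruence phenomenon the theorem quantifies; the step is circular.

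The lower bound has a second, independent problem: the logic is inverted. A nonzero interior mod-$\ell$ eigenclass with eigenvalues $1+p^{n+1}$ is produced from $\ell \mid \Delta(\Eis_n)$ (reduce $\Delta(\Eis_n)\Eis_n$ modulo $\ell$ and note its boundary restriction vanishes), i.e.\ from the \emph{largeness} of the denominator, not from the hypothesis $\ell^k\nmid d$; and even granting a depth-$k$ congruence of Hecke eigensystems, converting it into the statement $\ell^k\mid\Delta(\Eis_n)$ requires fine control of the local Hecke algebra (multiplicity one, Gorenstein-type input) that is essentially equivalent to the theorem --- ``Herbrand--Ribet-type analysis'' at exact depth $k$ is the Mazur--Wiles level of difficulty, not a citable routine. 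At best this route recovers an inequality of Haberland's type (the weaker statement recalled in the paper right after Theorem \ref{intro:main theorem 1}). The paper's actual proof avoids all of this: it constructs explicit $p$-integral cycles $\widetilde{T_p^{m!}(C_\nu(\tau))}$, evaluates $\lim_{m}\brk{\Eis_n,\widetilde{T_p^{m!}(C_\nu(\tau))}}$ in closed form via Kubota--Leopoldt $p$-adic $L$-functions (Theorem \ref{thm:Tp^m and Eis}), shows these cycles generate the ordinary homology modulo the boundary (Proposition \ref{prop:homology_generator_ordinary}), and extracts the exact exponent $\ord_p(N_{n+2})$ from congruences for $L_p$, the Carlitz--Skula bound on the index of irregularity when $p<n$, and a Hida-theoretic variation in the weight (Proposition \ref{prop:change-delta}) to reduce to that case. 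None of these ingredients has a counterpart in your outline.
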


\begin{rmk}
    Haberland had proved a slightly weaker version of Theorem \ref{intro:main theorem 1} in \cite[pp. 272--273]{Haberland}. More precisely, let $p$ be a prime number and assume that $p>n$. Then Haberland obtained 
    \[
\ord_p(\Delta(\Eis_n)) \leq \ord_p(\textrm{the numerator of }\zeta(-1-n)). 
    \]
    Moreover, if we further assume that $p$ divides $\zeta(-1-n)$ and that there exists $\nu \in \{1, \dots, n-1\}$ such that $p \nmid \zeta(-\nu)\zeta(\nu-n)$, then he obtained the equality
    \[
\ord_p(\Delta(\Eis_n)) = \ord_p(\textrm{the numerator of }\zeta(-1-n)). 
    \]
\end{rmk}

\begin{rmk}
For any prime numbers $p$ and $\ell$ with $\ell \nmid p(p-1)$, 
the denominators over $\bZ_\ell$ of Eisenstein classes for $\Gamma_1(p)$ with a character was computed by Kaiser in the diploma thesis \cite{Kaiser90} (see also the paper \cite{Mahnkopf00} of Mahnkopf for the study of Eisenstein classes for $\Gamma_1(p^e\ell)$). 
Eisenstein classes for $\mathrm{GL}_2$ over totally real  fields have been studied by Maennel in the dissertation \cite{Maennel93}. 
Eisenstein classes for $\mathrm{GL}_2$ over imaginary quadratic fields have been studied by Harder in \cite{Harder81, Harder82}, Weselmann in \cite{Weselmann88}, Berger in  \cite{Berger08, Berger09}, 
and Branchereau in \cite{Branchereau23}. 
\end{rmk}

The purpose of the present paper is to report some arithmetic applications of Theorem \ref{intro:main theorem 1}  especially to the special values of partial zeta functions of real quadratic fields. 
However, since the book \cite{CAG} (which is available on Harder's web-page) is still under development, some of the important arguments and references in the proof of Theorem \ref{intro:main theorem 1} are currently not given completely. 
Taking this situation into account, we decided to also give the detailed proof of Theorem \ref{intro:main theorem 1}, which is another main purpose of the present paper.

\subsection{Reformulation in terms of the holomorphic Eisenstein series}
In view of applications, we interpret the above definition of the Eisenstein class $\Eis_n$ and Theorem \ref{intro:main theorem 1} in terms of the holomorphic Eisenstein series and the Eichler--Shimura homomorphism. 

In the following, let $n \geq 2$ be an even integer and let $M_{n+2}(\Gamma)$ denotes the space of modular forms of  level $\Gamma=\mathrm{SL}_2(\Z)$ and weight $n+2$. 
Then we have the Hecke-equivariant homomorphism 
\[
r \colon M_{n+2}(\Gamma) 
\longrightarrow
H^1(Y, \mcM_n^{\vee}\otimes \C) = H^1(Y^\BS, \mcM_n^{\vee}\otimes \C)
\]
called the Eichler--Shimura homomorphism which is defined by certain path integrals on $\bbH$ (see \S \ref{sec:Eichler--Shimura map} for the precise definition of the Eichler--Shimura homomorphism). 
Let 
\[
E_{n+2}(z) := 1 + \frac{2}{\zeta(-1-n)}\sum_{k=1}^{\infty}\sigma_{n+1}(k)q^k \in M_{n+2}(\Gamma),  \quad  q := e^{2\pi i z}
\]
denote the holomorphic Eisenstein series of weight $n+2$. 
Then the following is the reformulation of Theorem \ref{intro:main theorem 1} (\cite[Theorem 5.1.2]{CAG}) which will be proved in the present paper. 

\begin{thm}[{Lemma \ref{lem:eisenstein-pairing-eigen-rational}, Proposition \ref{prop:eisenstein-rational}, and Theorem \ref{thm:main result}}]\label{intro:reformulated theorem}
~
\begin{enumerate}
\item 
We have $r(E_{n+2})=\Eis_n$, i.e., 
the class $r(E_{n+2})$ coincides with the Eisenstein class $\Eis_n$. 
\item 
The denominator of $r(E_{n+2})$ is equal to the numerator of $\zeta(-1-n)$. 
\end{enumerate}
\end{thm}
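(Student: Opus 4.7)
The plan splits into the two parts of the theorem; I treat them in order, with the bulk of the work concentrated in part (2).

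For part (1), I would invoke the characterization of $\Eis_n$ as the unique Hecke-equivariant preimage of $\omega_n$ under the restriction to $H^{1}(\partial Y^{\BS}, \mcM_n^\vee \otimes \bQ)$. Since the Eichler--Shimura map $r$ is Hecke-equivariant, $r(E_{n+2})$ inherits the Hecke eigenvalues $T_\ell \mapsto 1 + \ell^{n+1}$ of $E_{n+2}$, which by the Ramanujan bound cannot appear in the cuspidal cohomology. Hence $r(E_{n+2})$ lies in an Eisenstein isotypic component on which the Hecke-equivariant section is unique, so it suffices to verify that the boundary restriction of $r(E_{n+2})$ equals $\omega_n$. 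This is a direct calculation on a generator of $\Gamma_\infty$: evaluating the Eichler--Shimura cocycle there and using the Fourier expansion of $E_{n+2}$ extracts the constant term $1$, which is precisely the normalization defining $\omega_n$.

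For part (2), once (1) is known, the claim follows by applying Theorem \ref{intro:main theorem 1} to $r(E_{n+2})$; the proof of Theorem \ref{intro:main theorem 1} (i.e., Theorem \ref{thm:main result}) is the technical core of the paper. The upper bound $\Delta(r(E_{n+2})) \mid \numerator(\zeta(-1-n))$ can be obtained by writing $r(E_{n+2})$ as a group $1$-cocycle on $\Gamma$ via period integrals against $E_{n+2}$. Using the presentation of $\Gamma$ by the generators $S$ and $T$, together with the Haberland--Zagier formula for the period polynomial of $E_{n+2}$, the only non-integral contribution to the cocycle is, up to integral factors, proportional to $1/\zeta(-1-n)$; multiplication by $\numerator(\zeta(-1-n))$ then clears it.

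The main obstacle is the matching lower bound. The approach is Hecke-theoretic: for a prime $\ell \mid \numerator(\zeta(-1-n))$, one exploits an Eisenstein congruence to produce a cuspidal Hecke eigensystem mod $\ell$ coinciding with the Eisenstein one, and pairs $r(E_{n+2})$ via Poincar\'e duality with an integral cuspidal class to detect the full $\ell$-adic valuation of $\numerator(\zeta(-1-n))$ in the denominator. Making this detection both tight and uniform across primes, while correctly tracking integral structures on $H^{1}(Y^{\BS}, \mcM_n^\vee)$ and the cokernel of the boundary restriction, is the most delicate step and constitutes the core of the proof of Theorem \ref{thm:main result}.
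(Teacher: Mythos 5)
Your part (1) is essentially the paper's own argument: there $\Eis_n$ is \emph{defined} to be $r(E_{n+2})$, and Lemma \ref{lem:eisenstein-pairing-eigen-rational} records precisely the two facts you use (the constant-term normalization $\brk{r(E_{n+2}),\{\tau,\tau+1\}\otimes e_0}=1$ on the boundary and the Hecke eigenvalue $1+p^{n+1}$), with the Ramanujan bound invoked in Lemma \ref{lem:corollary-of-Ramanujam-conjecture-T_p-eigen-implies-Hecke-eigen} to identify the Eisenstein line. No issue there.

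For part (2) your route diverges from the paper's, and both halves have genuine gaps. For the upper bound: evaluating the Eisenstein cocycle on the relevant relative cycles (equivalently, reading off the period polynomial of $E_{n+2}$ at $S$) produces the quantities $\zeta(-\nu)\zeta(\nu-n)/\zeta(-1-n)-\zeta(-\nu)-\zeta(\nu-n)$ (Proposition \ref{prop:pairing-rational}), and their denominators are \emph{not} controlled by the numerator of $\zeta(-1-n)$ alone: by von Staudt--Clausen, every prime $p$ with $p-1\mid\nu+1$ or $p-1\mid n-\nu+1$ occurs in the denominators of $\zeta(-\nu)$ and $\zeta(\nu-n)$. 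Asserting that the only non-integral contribution is ``proportional to $1/\zeta(-1-n)$ up to integral factors'' hides exactly the hard point; this is why Haberland's version of the argument only yields the bound for $p>n$. The paper eliminates these spurious denominators by replacing the naive cycles with the $p$-integral ordinary cycles $\widetilde{T_p^{m!}(C_{\nu}(\tau))}$, whose pairings with $\Eis_n$ converge $p$-adically to expressions in Kubota--Leopoldt $p$-adic $L$-functions (Theorem \ref{thm:Tp^m and Eis}, Corollary \ref{cor:reformulation-of-THeorem-Tp^m-and-Eis-pairing}), and then cancels the problematic factors via the congruences of Corollaries \ref{cor:case1} and \ref{cor:case2}.

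For the lower bound the paper does not use Eisenstein congruences with cusp forms at all. Once one knows the twisted cycles generate the ordinary part of homology modulo the boundary (Proposition \ref{prop:homology_generator_ordinary}), the same explicit formula gives the exact value $\delta_p(\Eis_n)=\max_{\nu}\delta_p(n,\nu)$; the identification of this maximum with $\ord_p(N_{n+2})$ is proved for $p<n$ using the Carlitz--Skula bound on the index of irregularity (Lemma \ref{lem:case1-p-1-nmid-n+2-equality}) and is transported to general $n$ by a Hida-theoretic weight-variation argument (Proposition \ref{prop:change-delta}). Your proposed strategy --- producing a cuspidal eigensystem congruent to the Eisenstein one mod $\ell$ and pairing with an integral cuspidal class --- would, to detect the \emph{full} $\ell$-adic valuation, require knowing the exact index of the Eisenstein ideal in the level-one Hecke algebra and a precise comparison between that congruence module and the denominator of $\Eis_n$ (the content of Lemmas \ref{lem:denominator-ordinary-Ann-x_p} and \ref{lem:ann-exact seq-cE_n,p} in the paper, but used there in the opposite direction). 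As stated this is a plan whose crucial step is essentially equivalent in difficulty to the theorem itself, not a proof.
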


\begin{rmk}
From the $q$-expansion of the Eisenstein series $E_{n+2}$, we see that the denominator of $E_{n+2}$ with respect to the integral structure coming from the $q$-expansion (the de Rham integral structure) is clearly the numerator of $\zeta(-1-n)$. An interesting and non-trivial point in Theorem \ref{intro:reformulated theorem} is that on the Eisenstein parts $\bQ E_{n+2}$ and $\Q\Eis_n$, the Betti integral structure 
coincides with the de Rham integral structure under the Eichler--Shimura homomorphism. Cf. \cite[\S 1.1]{harder_can_we} and Remark \ref{rem:differences integral structure (1)-ordinary (2)-de rham--betti}. 
\end{rmk}

\subsection{Strategy of the proof of Theorem \ref{intro:reformulated theorem}}

We review the strategy of the proof of Theorem \ref{intro:reformulated theorem}, which is based on Harder's argument in \cite{CAG}. 
First, note that for any prime number $p$, we have
\[
\ord_p(\Delta(\Eis_n))
=
\min\{\delta \in \Z_{\geq 0} \mid p^{\delta}\Eis_n \in H^1(Y^{\BS}, \mcM_{n}^{\vee} \otimes \Z_{(p)}) \}. 
\]
Therefore, it suffices to prove that 
\[
 \ord_p(\Delta(\Eis_n)) = \ord_p(\text{the numerator of } \zeta(-1-n))
\]
for each prime number $p$. 
Then the proof consists roughly of the following four parts. 
\begin{enumerate}
\item[(I)] First, we construct in \S\ref{sec:construction of the lift} a certain family of $p$-adically integral homology classes  
\[
\widetilde{T_p^{m}(C_{\nu}(\tau))} \in H_1(Y^{\BS}, \mcM_{n}\otimes \Z_{(p)})
\]
for any $\tau \in \bbH$, sufficiently large integer $m$, and  $1 \leq \nu \leq n-1$, 
where $H_1(Y^{\BS}, \mcM_{n}\otimes \Z_{(p)})$ is the cosheaf homology group associated with the $\Gamma$-module $\mcM_{n}\otimes \Z_{(p)}$. 
Note that the homology class $\widetilde{T_p^{m}(C_{\nu}(\tau))}$ is independent of the choice of $\tau \in \bbH$. 
See  Lemma \ref{lem:T_p-cycle-integral}. 
\item[(II)] 
Next, in \S \ref{sec:Period}, we compute  the $p$-adic limit of the value of the pairing 
\[
\lim_{m \to \infty}
\brk{\Eis_n, \widetilde{T_p^{m!}(C_{\nu}(\tau))}}, 
\]
where $\brk{~,~} \colon H^1(Y^{\BS}, \mcM_{n}^\vee) \times H_1(Y^{\BS}, \mcM_{n}) \longrightarrow \bZ$ is the pairing induced by $\mcM_{n}^\vee \times \cM_{n} \longrightarrow \bZ; (f, x) \mapsto f(x)$. 
 More precisely, we will show in Theorem \ref{thm:Tp^m and Eis} and Corollary \ref{cor:reformulation-of-THeorem-Tp^m-and-Eis-pairing} that this $p$-adic limit can be described in terms of the Kubota--Leopoldt $p$-adic $L$-functions, namely, for any integer $\nu \in \{1, \ldots, n-1\}$ we obtain the following interesting formula 
   \begin{align*}
   \lim_{m \to \infty}\brk{\Eis_n, \widetilde{T_p^{m!}(C_{\nu}(\tau))}} 
    =  
   \frac{1-p^{n+1}}{(1-p^\nu)(1-p^{n-\nu})}D_p(n, \nu), 
   \end{align*}
where 
\[
  D_p(n, \nu) := \frac{L_{p}(-\nu, \omega^{1+\nu})L_{p}(\nu-n, \omega^{n-\nu+1})}{L_{p}(-1-n, \omega^{n+2})} - L_{p}(-\nu, \omega^{1+\nu}) - L_{p}(\nu-n, \omega^{n-\nu+1}) 
\]
and 
$L_{p}(s, \omega^{a})$ denotes the  Kubota--Leopoldt $p$-adic $L$-function associated with the $a$-th power of the Teichm\"uller character $\omega$.

\item[(III)] 
In \S \ref{sec:denominator-cohomology-class}, we prove that 
the homology classes $\widetilde{T_p^{m!}(C_{1}(\tau))}, \ldots, \widetilde{T_p^{m!}(C_{n-1}(\tau))}$ give a generator of the ordinary part of the quotient group $(H_1(Y^{\BS}, \mcM_{n})/H_1(\partial Y^{\BS}, \mcM_{n})) \otimes \Z_{p}$ (see Proposition \ref{prop:homology_generator_ordinary}). 
Hence, since $\brk{\Eis_{n}, H_1(\partial Y^{\BS}, \mcM_{n})} = \bZ$,  we obtain 
\begin{align}\label{intro-eq:delta_p(n) = - min ord_p(D_p(n,nu))}
     \ord_p(\Delta(\Eis_n)) = \min \{a \in \bZ_{\geq0} \mid p^aD(n,\nu) \in \bZ_p \textrm{ for any integer } 1 \leq \nu \leq n-1 \}. 
\end{align}
See Corollary \ref{cor:ordinary class denominator}, and also Proposition \ref{prop:delta-p-Eisenstein=max-delta-p-n-nu}.

\item[(IV)] In \S\ref{sec:proof of main thm}, we show that the right hand side of \eqref{intro-eq:delta_p(n) = - min ord_p(D_p(n,nu))} is equal to the $p$-adic valuation of the numerator of $\zeta(-1-n)$. 
We devote \S\ref{sec:Eisenstein class} and \S\ref{sec:p-adic L} to the preparation for proving this fact. 
\end{enumerate}

\subsection{Applications to Duke's conjecture and to the special values of the partial zeta functions of real quadratic fields}

\subsubsection{Duke's conjecture}

In the paper \cite{Duke23}, Duke defined a certain map 
\[
\Psi_k: \Gamma=\mathrm{SL}_2(\Z) \longrightarrow \Q
\]
for each integer $k \geq 2$ called the higher Rademacher symbol which is a generalization of the classical Rademacher symbol, and he conjectured the integrality of the higher Rademacher symbol (\cite[Conjecture, p. 4]{Duke23}). 
As a first application of Theorem \ref{intro:reformulated theorem}, we prove this conjecture.

\begin{thm}[{Corollary \ref{cor:duke's conjecture}}]\label{intro-thm:duke's conjecture}
    Duke's conjecture holds true, that is, for any integer $k \geq 2$ and matrix $\gamma \in \Gamma$, we have 
\[
\Psi_k(\gamma) \in \Z. 
\]
\end{thm}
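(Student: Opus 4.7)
The plan is to deduce Duke's conjecture from Theorem~\ref{intro:reformulated theorem} by expressing the higher Rademacher symbol $\Psi_k(\gamma)$ as an evaluation, on an integral element $v_\gamma \in \mcM_n$, of a group $1$-cocycle representing the Eisenstein class $\Eis_n$ with $n = 2k-2$, normalized precisely by the numerator of $\zeta(-1-n)$. First, I would translate Duke's analytic definition of $\Psi_k(\gamma)$ in~\cite{Duke23} into the cohomological framework of this paper. Duke's symbol is built from certain period integrals of the Eisenstein series $E_{2k}$ along arcs in $\bbH$ determined by the fixed-point data of $\gamma \in \Gamma$. By the very construction of the Eichler--Shimura homomorphism $r$, these periods assemble into the evaluation at $\gamma$ of a natural $1$-cocycle $c_{\Eis} \in Z^1(\Gamma, \mcM_n^\vee \otimes \bQ)$ representing $r(E_{n+2})$. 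The identity I would target is
\begin{equation*}
\Psi_k(\gamma) \;=\; N_n \cdot \langle c_{\Eis}(\gamma),\, v_\gamma \rangle,
\end{equation*}
where $N_n$ denotes the numerator of $\zeta(-1-n)$ and $v_\gamma \in \mcM_n$ is an explicit integral polynomial canonically attached to $\gamma$. The factor $N_n$ is exactly what arises when one clears the denominator $\zeta(-1-n)$ from the Fourier coefficients of $E_{2k}$, so its appearance is intrinsic to Duke's normalization.

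Once this identification is in place, the integrality is essentially a formal consequence of Theorem~\ref{intro:reformulated theorem}. By part~(1), $r(E_{n+2}) = \Eis_n$, so $c_{\Eis}$ does represent the Eisenstein class; by part~(2), the denominator of $\Eis_n$ is exactly $N_n$. Hence $N_n \cdot c_{\Eis}$ is cohomologous in $\mcM_n^\vee \otimes \bQ$ to some integral cocycle $\tilde{c} \in Z^1(\Gamma, \mcM_n^\vee)$, say $N_n \cdot c_{\Eis} = \tilde{c} + d\varphi$ for some $\varphi \in \mcM_n^\vee \otimes \bQ$. Evaluating on $v_\gamma$ gives
\begin{equation*}
\Psi_k(\gamma) \;=\; \langle \tilde{c}(\gamma),\, v_\gamma \rangle + \langle \varphi,\, (\gamma - 1) v_\gamma \rangle.
\end{equation*}
The first term lies in $\bZ$ since both $\tilde{c}(\gamma)$ and $v_\gamma$ are integral. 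The coboundary correction must be eliminated by choosing a good representative: Harder's Eisenstein section produces a canonical splitting of the boundary restriction, which, combined with the fact that the parabolic contributions at the cusp $\infty$ are already accounted for in Duke's definition (through his explicit correction terms coming from $\Gamma_\infty$), forces the coboundary ambiguity to vanish on the elements $v_\gamma$ arising from $\gamma$. Equivalently, one may work from the start with the canonical integral cocycle representative obtained by clearing the denominator in $c_{\Eis}$, so that $\varphi$ disappears.

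The main obstacle, and indeed the only nontrivial part, is the identification in the first step: matching Duke's explicit combinatorial formula for $\Psi_k(\gamma)$ (involving sums over cusps, binomial coefficients, and $\gamma$-periods of $E_{2k}$) with the cohomological evaluation $N_n \langle c_{\Eis}(\gamma), v_\gamma\rangle$, getting the normalization factor $N_n$ on the nose and identifying the integral element $v_\gamma \in \mcM_n$. This is essentially a translation exercise between Duke's classical language and the modular-symbol / group-cohomology language used in this paper, but one must track signs, binomial factors, and boundary correction terms carefully. After this dictionary is set up, the integrality of $\Psi_k$ follows immediately from Theorem~\ref{intro:reformulated theorem}(2).
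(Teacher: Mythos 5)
Your overall strategy --- pair the normalized class $N_{2k}\Eis_{2k-2}$ against an integral cycle attached to $\gamma$ and invoke the denominator theorem --- is the same as the paper's, but two essential points are left unresolved, and the second one as you state it would not go through. First, the identity $\Psi_k(\gamma)=N_{2k}\,\langle c_{\Eis}(\gamma), v_\gamma\rangle$ is the entire content of the reduction, and you only describe it as a ``translation exercise'' to be done. In the paper this step is not re-derived: it is Duke's own Lemma~6 (quoted as Proposition~\ref{prop:rademacher integral}), which gives
\[
\Psi_k(\gamma) \;=\; N_{2k}\int_{\tau}^{\gamma\tau} E_{2k}(z)\,Q_{\gamma}(z,1)^{k-1}\,dz,
\]
with $Q_\gamma\in\mcM_2$ the integral binary quadratic form attached to $\gamma$. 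Without citing or reproving this, your argument has no starting point.

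Second, and more seriously, your treatment of the coboundary ambiguity is not a proof. The term $\langle\varphi,(\gamma-1)v_\gamma\rangle$ vanishes for \emph{every} $\varphi$ if and only if $(\gamma-1)v_\gamma=0$; no appeal to ``Harder's canonical splitting'' or to ``parabolic contributions at the cusp'' can make the evaluation of a cocycle at a single group element on a non-invariant vector well defined. The point you are missing is that the correct choice is $v_\gamma=Q_\gamma(X_1,X_2)^{k-1}$, which \emph{is} $\gamma$-invariant (since $\gamma$ is determined up to sign and finite index by its fixed quadratic form). This invariance is exactly what makes $\{\tau,\gamma\tau\}\otimes Q_\gamma^{k-1}$ a closed $1$-cycle, i.e.\ a class in $H_1(Y,\mcM_{2k-2})$, so that the pairing with $\Eis_{2k-2}$ is independent of the cocycle representative and the coboundary term is identically zero (this is Corollary~\ref{cor:pairing with simple cycles} in the paper). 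Once you name $v_\gamma$ explicitly and observe its $\gamma$-invariance, your argument collapses to the paper's two-line proof; without it, the integrality claim does not follow from Theorem~\ref{intro:reformulated theorem}.
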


In fact, Duke proved in \cite[Lemma 6]{Duke23} that the higher Rademacher symbols can be written as the integral of the holomorphic Eisenstein series  along a certain homology cycles (see Proposition \ref{prop:rademacher integral}). 
Therefore, we can derive Theorem \ref{intro-thm:duke's conjecture} directly from Theorem \ref{intro:reformulated theorem}.

\begin{rmk}
Duke's conjecture is recently proved also by O'Sullivan in \cite{Sullivan23} using a more direct method. 
\end{rmk}

\subsubsection{The denominators of the partial zeta functions of real quadratic fields}

Next, we discuss the denominators of the partial zeta functions associated with narrow ideal classes of orders in real quadratic fields.

Let $F\subset \R$ be a real quadratic field, $\mcO \subset F$ be an order in $F$, and $\mcA \in Cl_{\mcO}^+$ be a narrow ideal class of $\mcO$. Then we have the associated partial zeta function
\[
\zeta_{\mcO}(\mcA,s)
=
\sum_{\mfa \subset \mcO, \mfa \in \mcA}
\frac{1}{N\mfa^s}, 
\]
which can be continued meromorphically to $\C$, and it is known that 
\[
\zeta_{\mcO}(\mcA, 1-k) \in \Q
\]
for any integer $k \geq 2$. We also define the positive integer $J_{2k}$ by 
\[
J_{2k} := \text{the denominator of } \zeta(1-2k). 
\]
Then in \S \ref{sec:denominator of partial zeta functions}, we obtain the following as another consequence of Theorem \ref{intro:reformulated theorem}. 

\begin{prop}
[Corollary \ref{cor:partial zeta denominator and J_n}]\label{intro:denominator of partial zeta}
Let $k \geq 2$ be an integer. 
Then the integer $J_{2k}$ gives a universal upper bound for the denominator of $\zeta_{\mcO}(\mcA, 1-k)$ with respect to  orders $\cO$ and narrow ideal classes $\mcA \in Cl_\cO^+$. 
In other words, we have 
\[
J_{2k}\zeta_{\mcO}(\mcA, 1-k) \in \Z 
\]
for all orders $\cO$ in all real quadratic fields and narrow ideal classes $\mcA \in Cl_\cO^+$. 
\end{prop}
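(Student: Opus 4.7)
The strategy is to realize $\zeta_{\mcO}(\mcA, 1-k)$ as a period pairing of the Eisenstein class $\Eis_n$ (with $n = 2k-2$) against an integral homology cycle attached to $\mcA$, and then invoke Theorem~\ref{intro:reformulated theorem}(2) on the denominator of $\Eis_n$.

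First, to each narrow ideal class $\mcA \in Cl^+_{\mcO}$ I would attach a class $C_{\mcA} \in H_1(Y^{\BS}, \mcM_n)$ as follows: pick a representative ideal $\mfa \in \mcA^{-1}$ together with an ordered $\bZ$-basis $(\omega_1, \omega_2)$, let $\gamma_{\mcA} \in \SL_2(\bZ)$ be the matrix of multiplication by a totally positive fundamental unit of $\mcO^{\times, +}$ in this basis, and set $C_{\mcA}$ to be the cycle given by any path in $\bbH$ from $\tau_0$ to $\gamma_{\mcA}\tau_0$ tensored with the polynomial $Q_{\mcA}(X,Y)^{k-1} \in \mcM_n$, where $Q_{\mcA}$ is the integral binary quadratic form associated with $\mfa$. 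Since $Q_{\mcA}^{k-1}$ is $\gamma_{\mcA}$-invariant, this defines a closed cycle, it is manifestly integral, and (by a standard check) it is independent of the choices of $\mfa$, basis, and basepoint.

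Next I would establish the period formula
\[
\zeta_{\mcO}(\mcA, 1-k) = c_k \cdot \zeta(1-2k) \cdot \brk{\Eis_n, C_{\mcA}}
\]
for some integer constant $c_k$ depending only on $k$. This is a reformulation of the classical Hecke--Siegel integral representation of partial zeta values of real quadratic fields in terms of periods of the holomorphic Eisenstein series of weight $2k$; the factor $\zeta(1-2k)$ appears because $E_{n+2}$ is normalized here to have constant term $1$, whereas the classical formula is most naturally stated using the Eisenstein series $G_{2k} = \tfrac{\zeta(1-2k)}{2} E_{n+2}$, whose non-constant Fourier coefficients are integral. Combining the period formula with Theorem~\ref{intro:reformulated theorem}(2), and writing $\zeta(1-2k) = N/J_{2k}$ with $N$ the numerator and $\gcd(N, J_{2k}) = 1$, we get that $N \cdot \Eis_n$ is integral, hence its pairing with the integral cycle $C_{\mcA}$ lies in $\bZ$, so $\brk{\Eis_n, C_{\mcA}} \in \tfrac{1}{N}\bZ$, and therefore
\[
J_{2k} \cdot \zeta_{\mcO}(\mcA, 1-k) = c_k \cdot N \cdot \brk{\Eis_n, C_{\mcA}} \in \bZ,
\]
which is the desired bound.

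The main obstacle will be the period formula: while the Hecke--Siegel-style integral representation of $\zeta_{\mcO}(\mcA, 1-k)$ is classical, care is needed to verify that the constant $c_k$ is a rational integer (rather than merely rational with denominator dividing $J_{2k}$) and that the construction of $C_{\mcA}$ and the period formula work uniformly over all orders $\mcO$, including non-maximal ones, where classically extra factors involving the conductor can appear and must be absorbed into either the cycle or the constant.
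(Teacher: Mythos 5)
Your plan is exactly the route the paper takes: the cycle $C_{\mcA}$ is the paper's $\mfz_{\mcO,k}(\mcA^{-1})$ (Definition \ref{def:definition of mfz}), the period formula is Proposition \ref{prop:Hecke integral formula} with $c_k=(-1)^k$ (proved via a Feynman-parametrization integral and the functional equations of the partial zeta functions, valid for arbitrary orders), and the final step invoking $\Delta(\Eis_{2k-2})=N_{2k}$ is the paper's proof of Corollary \ref{cor:partial zeta denominator and J_n} verbatim. The obstacle you flag — integrality of $c_k$ and uniformity over non-maximal orders — is resolved in the paper exactly as you anticipate.
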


In fact, one can construct a natural map 
$\mfz_{\cO, k} \colon Cl_\cO^+ \longrightarrow H_1(Y^\BS, \cM_{2k-2})$ for any integer $k \geq 2$ (see Definition \ref{def:definition of mfz}), and we show in Proposition \ref{prop:Hecke integral formula} that 
\[
\brk{\Eis_{2k-2}, \mfz_{\cO, k}(\mcA^{-1})} = (-1)^k\frac{\zeta_\cO(\mcA, 1-k)}{\zeta(1-2k)} = \pm \frac{J_{2k}\zeta_\cO(\mcA, 1-k)}{N_{2k}}. 
\]
Hence Proposition \ref{intro:denominator of partial zeta} follows  from Theorem \ref{intro:reformulated theorem}. 
See also Remark \ref{rem:duke's conjecture and integrality partial zetas are equivalent}  for the relation between Duke's conjecture and Proposition \ref{intro:denominator of partial zeta}.

Next, we discuss the sharpness of the universal upper bound obtained in Proposition \ref{intro:denominator of partial zeta}.

\begin{thm}[Corollary \ref{cor:sherpness of the universal bound of partial zeta functions}]\label{intro:sharpness}
The universal upper bound in Proposition \ref{intro:denominator of partial zeta} is sharp, that is, we have
\[
J_{2k}=\min\left\{ J \in \Z_{>0} \,\, \middle|\,\,   
\begin{aligned}
J\zeta_\mcO(\mcA, 1-k) \in \Z 
\text{ for all orders $\mcO$ in all real quadratic fields}\\ 
\text{and narrow ideal classes $\mcA \in Cl_{\mcO}^+$ } 
\end{aligned} \, 
\right\}. 
\]
\end{thm}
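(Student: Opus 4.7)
Proposition \ref{prop:Hecke integral formula} rewrites the relevant denominators as
\[
J_{2k}\zeta_\cO(\mcA, 1-k) = \pm N_{2k}\brk{\Eis_{2k-2}, \mfz_{\cO, k}(\mcA^{-1})},
\]
where $\zeta(1-2k) = \pm N_{2k}/J_{2k}$ with $\gcd(N_{2k}, J_{2k}) = 1$. The universal upper bound in Proposition \ref{intro:denominator of partial zeta} already yields the inclusion $\leq$ in the statement of Theorem \ref{intro:sharpness}, so it remains to exhibit, for each prime $p \mid J_{2k}$, a pair $(\cO, \mcA)$ with $\ord_p(J_{2k}\zeta_\cO(\mcA, 1-k)) = 0$. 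Since $p \nmid N_{2k}$, this is equivalent to exhibiting $(\cO, \mcA)$ for which the pairing $\brk{\Eis_{2k-2}, \mfz_{\cO, k}(\mcA^{-1})}$ is a $p$-adic unit.

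Fix such a prime $p$. By Theorem \ref{intro:reformulated theorem}(2), the denominator of $\Eis_{2k-2}$ equals $N_{2k}$, so $\Eis_{2k-2}$ is $p$-integral. I would first check that its reduction modulo $p$ in $H^1(Y^{\BS}, \mcM_{2k-2}^{\vee} \otimes \bF_p)/(\text{torsion})$ is nonzero: since $\Eis_{2k-2} = r(E_{2k})$ by Theorem \ref{intro:reformulated theorem}(1) and the constant term of $E_{2k}$ equals $1$, the Eichler--Shimura image cannot vanish modulo $p$. The nondegenerate duality between cohomology and homology with $\bZ_{(p)}$-coefficients then produces an integral homology class $z \in H_1(Y^{\BS}, \mcM_{2k-2} \otimes \bZ_{(p)})$ with $\brk{\Eis_{2k-2}, z} \in \bZ_{(p)}^{\times}$.

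The crux, and main obstacle, is to realize such a witness inside the $\bZ_{(p)}$-span of $\{\mfz_{\cO, k}(\mcA^{-1})\}$ modulo the boundary subgroup $H_1(\partial Y^{\BS}, \mcM_{2k-2})$, on which $\Eis_{2k-2}$ always pairs integrally because its restriction to the boundary is the integral generator $\omega_{2k-2}$. I would establish this spanning assertion using the explicit geometric description of $\mfz_{\cO, k}(\mcA)$ as a modular symbol attached to a closed geodesic of a fundamental unit of $\cO$ and a quadratic irrationality representing $\mcA$ (as used in the proof of Proposition \ref{prop:Hecke integral formula}), together with Hecke-equivariance of $\mcA \mapsto \mfz_{\cO, k}(\mcA)$ and the latitude to vary $\cO$ among orders in different real quadratic fields. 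Once spanning is in hand, a $\bZ_{(p)}$-linearity (``pigeonhole'') argument forces the unit pairing to be attained on an individual $\mfz_{\cO, k}(\mcA^{-1})$, completing the proof.
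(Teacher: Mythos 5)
Your reduction of the problem (via Proposition \ref{prop:Hecke integral formula} and Theorem \ref{thm:main result}) and the final ``pigeonhole'' step agree with the paper, but the central step --- realizing a unit-pairing witness in the span of the classes $\mfz_{\cO,k}(\mcA)$ --- is both misformulated and unsubstantiated, and the misformulation is fatal.

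First, working \emph{modulo the boundary subgroup} cannot work. You observe that $\Eis_{2k-2}$ pairs integrally with $H_1(\partial Y^{\BS},\mcM_{2k-2})$, but integral values include $p$-adic units: indeed $\brk{\Eis_{2k-2},[\{\tau,\tau+1\}\otimes e_0]}=1$, and this boundary class is precisely the obvious witness of a unit pairing. If your witness $z$ satisfies $z=\sum_i c_i\,\mfz_{\cO_i,k}(\mcA_i)+b$ with $b$ a boundary class, then $\sum_i c_i\brk{\Eis_{2k-2},\mfz_{\cO_i,k}(\mcA_i)}=\brk{\Eis_{2k-2},z}-\brk{\Eis_{2k-2},b}$ is a difference of two elements of $\bZ_{(p)}$ and may perfectly well lie in $p\bZ_{(p)}$; nothing forces any individual zeta class to pair to a unit. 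What is actually needed --- and what the paper proves --- is that the zeta classes span a module containing the relevant \emph{boundary} class itself, so that the unit value $1$ is inherited by a $\bZ_p$-linear combination of zeta classes and then, by the upper bound $\brk{\Eis_{2k-2},\mfZ_k}\subset\frac{1}{N_{2k}}\bZ$, by a single one. Quotienting by the boundary discards exactly the information you need.

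Second, you give no mechanism for the spanning claim in weight $2k-2>0$; ``Hecke-equivariance and the latitude to vary $\cO$'' is not an argument. The paper's route (Theorem \ref{thm:fmZ-pairing-image-is-equal-to-1/N_2k-Z}) is genuinely nontrivial: it passes to an auxiliary level $\Gamma_1(Np)$, where (i) hyperbolic elements generate the group, so the associated geodesic classes generate all of $H_1(Y_1(Np),\bZ)$ \emph{including} the boundary classes (Lemma \ref{lem:parabolic gamma_1(N) generator}); (ii) Hida's control theorem identifies the $U_p$-ordinary part of $H_1$ with $\cM_{2k-2}$-coefficients mod $p^{r}$ with the weight-two case, compatibly with the geodesic classes up to units (Proposition \ref{prop:hida-redction-to-the-constant-sheaf-case}, Lemma \ref{lem:mod-q-compatibility-different-weights}); and (iii) since $\Eis_{2k-2}$ is ordinary, only the ordinary part matters, and the $U_p$-stabilization identity of Lemma \ref{lem:eisenstein class U_p-ordinary relation} transports the conclusion back to level one. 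Your sketch contains none of these ingredients --- in particular it never restricts to the ordinary part, where alone the spanning statement is proved (and plausibly true) --- so the argument does not go through as written.
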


In order to derive Theorem \ref{intro:sharpness} from Theorem \ref{intro:reformulated theorem}, we need to show that the narrow ideal classes of orders in real quadratic fields produce sufficiently large submodule of  the homology group $H_1(Y^{\BS}, \mcM_{2k-2})$, and this will be done in \S \ref{sec:sharpness} using some techniques from Hida theory.

\begin{rmk}
    As for the denominator or the integrality of the special values of partial zeta functions of real quadratic fields, or more generally of totally real fields, many works have been done by 
Coates and Sinnott \cite{CS74-Stickelberger, CS74-p-adic, CS77}, 
Deligne and Ribet \cite{DR80}, Cassou-Nogu\`es \cite{Cassou-Nogues}, Charollois, Dasgupta, and Greenberg \cite{CDG15}, Beilinson, Kings, and Levin \cite{BKL18}, Bannai, Hagihara, Yamada, and Yamamoto \cite{BHYY22}, Bergeron, Charollois, and Garcia \cite{BCG20} etc., by using variety of methods including Hilbert modular forms, Shintani zeta functions \cite{Shintani76}, Sczech's Eisenstein cocycles~\cite{Sczech93}, etc. 
Actually, when $\cO = \cO_F$, the upper bound in Proposition \ref{intro:denominator of partial zeta} follows from these preceding works. 
More precisely, the results proved by Coates and Sinnott in \cite{CS77} or Deligne and Ribet in \cite{DR80} show that for any prime number $p$, we have 
\[
2^{-1}(1 - p^{2k})\zeta_{\cO_F}(\mcA, 1- k) \in \bZ[1/p], 
\]
which implies that 
\[
J_{2k}\zeta_{\cO_F}(\mcA, 1- k) \in \bZ 
\]
(see \cite[pp. 73, 75]{Zagier76}). 

One feature of the method in the present paper is that by using Theorem \ref{intro:reformulated theorem}, we capture not only the upper bound for the denominators of the partial zeta functions associated with any orders, but also the sharpness of the upper bound. 
\end{rmk}

\begin{rmk}
In the paper \cite{Zagier77}, Zagier proved a certain formula \cite[p. 149, Corollaire]{Zagier77} which explicitly computes the special values $\zeta_\cO(\mcA, 1-k)$ of partial zeta functions of orders of real quadratic fields at negative integers in a uniform way. 
Then by using this formula, he obtained a universal upper bound $d_k$ of the denominators of the values $\zeta_\cO(\mcA, 1-k)$ and examined its sharpness briefly. 
More precisely, he observed that the upper bound $d_k$ is not sharp and discussed how one can improve this upper bound when $k = 2, 3$ (see \cite[pp. 149--150]{Zagier77}). 
Theorem \ref{intro:sharpness} can be seen as the complete answer to this problem of determining the sharp universal upper bound for the denominators of $\zeta_\cO(\mcA, 1-k)$. 
\end{rmk}

\subsection*{Acknowledgements}
We would like to express our deepest gratitude to G\"unter Harder who explained to us many beautiful ideas and showed us his notes and manuscripts on the proof of his theorem. 
We would also like to thank him for encouraging us to write the proof of his theorem in this paper. 
We are also grateful to Herbert Gangl, Christian Kaiser, and Don Zagier for the fruitful discussions and many valuable comments during the study. 
In particular, Herbert Gangl and Don Zagier suggested us to conduct a numerical experiment to test the sharpness of Proposition \ref{intro:denominator of partial zeta}, and helped us to write PARI/GP programs for the experiment, which provided us with some important ideas to prove Theorem \ref{intro:sharpness}. 
Thanks are also due to Toshiki Matsusaka who drew our attention to Duke's conjecture, which became one of the main motivations in this study. 
This research has been carried out during the first author's stay at the Max Planck Institute for Mathematics in Bonn.

\section{Preliminaries and the Eisenstein class}

In this section, we give the definition of the Eisenstein class and 
explain  Theorem \ref{intro:reformulated theorem} (see Theorem \ref{thm:main result}).

Throughout this paper,  $n \geq 2$ denotes an even integer.

\subsection{Definitions of Modular curve and Borel--Serre compactification}
\label{sec:Definitions of Modular curve and Borel--Serre compactification}

Let 
\[
\bbH:=\{z \in \C \mid \im(z)>0\}
\]
denote the upper half plane, and let 
\[
\bbH^{\BS}
:=\bbH \sqcup 
\bigsqcup_{ r \in \PP^1(\Q)}\PP^1(\R) \setminus \{r\}
\]
be the Borel--Serre compactification of $\bbH$ (see \cite{Gor05} or \cite[\S1,2,7]{CAG}). 
We set 
\[
\Gamma:= \mathrm{SL}_2(\bZ).
\] 
The group $\Gamma$ acts on $\bbH$ and $\bbH^{\BS}$ by the linear fractional transformation as usual. 
We denote by
\begin{align*}
Y:=\Gamma \bs \bbH \,\,\, \textrm{ and } \,\,\, Y^{\BS} :=\Gamma \bs \bbH^{\BS}
\end{align*}
the modular curve of level $\mathrm{SL}_2(\Z)$ and its Borel--Serre compactification,  respectively. 
Moreover, we denote by $\partial Y^{\BS}:= Y^{\BS} \setm Y$ the boundary of $Y^{\BS}$. 
The boundary $\partial Y^\BS$ is homeomorphic to the circle $S^1$ and the fundamental group $\pi_1(\partial Y^\BS)$ can be identified with  $\Gamma_\infty := \left\{ \begin{pmatrix}
1&a\\0&1 \end{pmatrix}
\, \middle| \, a \in \bZ
 \right\}$. 

In the following, $\bbH^{?}$ (resp. $Y^{?}$) means that it is either $\bbH$ or $\bbH^{\BS}$ (resp. $Y$ or $Y^{\BS}$). 
Any left $\Gamma$-module  $\mcM$ can be regarded as (co)sheaf on $Y^{?}$ in a natural way, and hence we can consider the homology groups 
\begin{align*}
H_{\bullet}(Y^{?}, \mcM), \,\,\, 
H_{\bullet}(\partial Y^{\BS}, \mcM), \,\,\, 
H_{\bullet}(Y^{\BS}, \partial Y^{\BS}, \mcM), 
\end{align*}
which fit into the long exact sequence
\begin{align*}
\cdots
\ra
H_1(\partial Y^{\BS}, \mcM)
\ra
H_1(Y^{\BS}, \mcM)
\ra
H_1(Y^{\BS}, \partial Y^{\BS}, \mcM)
\ra
H_0(\partial Y^{\BS}, \mcM)
\ra
\cdots. 
\end{align*}
Similarly, we have the cohomology groups
\begin{align*}
H^{\bullet}(Y^{?}, \mcM), \,\,\, 
H^{\bullet}(\partial Y^{\BS}, \mcM), \,\,\, 
H^{\bullet}(Y^{\BS}, \partial Y^{\BS}, \mcM), 
\end{align*}
which fit into the long exact sequence
\begin{align*}
\cdots
\ra
H^1(Y^{\BS}, \partial Y^{\BS}, \mcM)
\ra
H^1(Y^{\BS}, \mcM)
\ra
H^1(\partial Y^{\BS}, \mcM)
\ra
H^2(Y^{\BS}, \partial Y^{\BS}, \mcM)
\ra
\cdots. 
\end{align*}

We note that the inclusion map $Y \longhookrightarrow Y^\BS$ induces  isomorphisms
\[
H_\bullet(Y, \mcM)
\isomto
H_\bullet(Y^{\BS}, \mcM)
\,\,\, \textrm{ and } \,\,\, 
H^\bullet(Y^{\BS}, \mcM)
\isomto
H^\bullet(Y, \mcM). 
\]
Moreover, if $\cM$ has an action of $M_2^+(\bZ) := \{\gamma \in M_2(\bZ) \mid \det(\gamma) > 0\}$, then these homology groups (resp. cohomology groups) carry the structure of Hecke modules, namely, for each prime number $p$ we have a Hecke operator $T_p$ (resp. $T_p'$) on these homology groups (resp. cohomology groups), and the above long exact sequences are compatible with the Hecke operators.

In Section \ref{sec:modular symbol}, we give a way to compute these (co)homology groups, 
and in Section \ref{sec:Hecke operator}, we give an explicit description of the Hecke operators.



\begin{rmk}\label{rem:sheafification-functor-non-exactness}
    As a sheaf on $Y$, the stalk $\mcM_{x}$ at $x \in Y$ coincides with  $\mcM^{\Gamma_{\tilde{x}}}$, where $\tilde{x} \in \bbH$ is a lift of $x \in Y$ and 
    $\Gamma_{\tilde{x}} := \{\gamma \in \Gamma \mid \gamma \tilde{x} = \gamma \}$. 
    This fact shows that a short exact sequence of left $\Gamma$-modules does not give a short exact sequence of sheaves on $Y$ in general, that is, the sheafification functor is not exact.  
    However, a short exact sequence of left $\bZ[1/6][\Gamma$]-modules induces  
    a short exact sequence of sheaves on $Y^\BS$ since the order of $\Gamma_{\tilde{x}}$ divides $6$. 
\end{rmk}

\subsection{Modular symbols and (co)homology}\label{sec:modular symbol}
\label{sec:Modular symbols and (co)homology}

Let $X \in \{\bbH, \bbH^\BS, \partial \bbH^\BS\}$ and 
let $(S_{\bullet}(X), \partial)$ denote the usual singular chain complex of $X$, i.e., $S_q(X)$ is the free abelian group generated by singular $q$-simplices in $X$ and 
$\partial \colon S_q(X) \longrightarrow S_{q-1}(X)$ is the boundary operator. 

The left action of $M_2^+(\bZ)$ on $X$ induces a left action of $M_2^+(\bZ)$ on $S_{\bullet}(X)$, and $(S_{\bullet}(X), \partial)$ is actually an $M_2^+(\bZ)$-equivariant complex. Then it is known that for any left $M_2^+(\bZ)$-module $\mcM$, which is also seen as a (co)sheaf on $\Gamma\bs X$, 
we have natural isomorphisms
\begin{align*}
    H_{\bullet}(\Gamma\bs X, \mcM) &\cong H_{\bullet}((S_{\bullet}(X)\otimes \mcM)_{\Gamma}), 
\\
H_{\bullet}(Y^{\BS},\partial Y^{\BS} \mcM) &\cong H_{\bullet}(((S_{\bullet}(\bbH^{\BS})/S_{\bullet}(\partial \bbH^{\BS})\otimes \mcM)_{\Gamma}), 
\end{align*}
where $(-)_{\Gamma}$ denotes the $\Gamma$-coinvariant functor. 
Here the left $M_2^+(\bZ)$-action on $S_{\bullet}(X) \otimes \mcM$ is defined by 
\[
\gamma \cdot (\sigma \otimes m) := \gamma\sigma \otimes \gamma m,   
\]
where $\sigma \in S_{\bullet}(X)$, $m \in \cM$, and $\gamma \in M_2^+(\bZ)$. 
Set
\[
\MS(X) := \coker(S_2(X) \stackrel{\partial}{\longrightarrow} S_1(X)). 
\]
For any elements $\alpha, \beta \in X$, we denote the equivalence class of a path from $\alpha$ to $\beta$ in $\MS(X)$ by 
\[
\{\alpha, \beta\} \in \MS(X). 
\]
The boundary map $\partial\colon S_1(X)\ra S_0(X)$ induces a $\Gamma$-homomorphism $\partial\colon \MS(X)\ra S_0(X)$, and we have a natural isomorphism
\begin{align*}
    H_1(\Gamma\bs X, \mcM)\cong \ker((\MS(X)\otimes \mcM)_{\Gamma} \overset{\partial}{\ra} (S_0(X)\otimes \mcM)_{\Gamma}). 
\end{align*}
Similarly, we also have natural isomorphisms
\begin{align*}
    H^{\bullet}(\Gamma\bs X, \mcM) &\cong H^{\bullet}(\Hom_{\Z}(S_{\bullet}(X), \mcM)^{\Gamma}), 
    \\
    H^{\bullet}(Y^{\BS},\partial Y^{\BS} \mcM) &\cong H^{\bullet}( \Hom_\bZ(S_{\bullet}(\bbH^{\BS})/S_{\bullet}(\partial \bbH^{\BS}), \mcM)^{\Gamma}), 
\end{align*}
where $(-)^{\Gamma}$ denotes the $\Gamma$-invariant functor. 
Here the left $M_2^+(\bZ)$-action on $\Hom_{\Z}(S_{\bullet}(X), \mcM)$ is defined by 
\[
(\gamma\phi)(\sigma) := \gamma(\phi(\tilde{\gamma}\sigma)),   
\]
where $\phi \in \Hom_{\Z}(S_{\bullet}(X), \mcM)$, $\sigma \in S_{\bullet}(X)$, and $\tilde{\gamma}$ is the adjugate  of $\gamma \in M_2^+(\bZ)$. 
Since  
\[
\ker\left(\Hom_{\Z}(S_{1}(X), \mcM) \ra \Hom_{\Z}(S_{2}(X), \mcM)\right) = \Hom_{\Z}(\MS(X), \mcM), 
\]
we have a natural isomorphism 
\[
H^1(\Gamma\bs X, \mcM) \cong  \coker(\Hom_{\Z}(S_0(X), \mcM)^{\Gamma}\ra\Hom_{\Z}(\MS(X), \mcM)^{\Gamma}). 
\]

\subsection{$M_2^+(\bZ)$-modules $\mcM_n$ and $\mcM_n^{\flat}$}

For any $2 \times 2$ matrix $\gamma=\begin{pmatrix}
    a&b\\
    c&d
\end{pmatrix}$, we denote the adjugate  of $\gamma$ by   
\[
\widetilde{\gamma}
:=
\begin{pmatrix}
    d&-b\\
    -c&a
\end{pmatrix}. 
\]
 Note that if $\gamma \in \Gamma$, then we have $\widetilde{\gamma}=\gamma^{-1}$. 

Let $\Z[X_1,X_2]$ denote the ring of polynomials of two variables over $\Z$, and we equip $\Z[X_1,X_2]$ with a left action of $M_2^+(\Z)$ by 
\begin{align*}
(\gamma P)(X_1,X_2)
:=&
P(dX_1-bX_2, -cX_1+aX_2)\\
=&
P\left( (X_1, X_2) \cdot  {}^{t}\widetilde{\gamma} \, \right),  
\end{align*}
where $P \in \Z[X_1,X_2]$ and $\gamma \in M_2^+(\Z)$. 
For each integer $0 \leq \nu \leq n$, we set
\begin{align*}
e_{\nu} :=
X_1^{\nu}X_2^{n-\nu} \,\,\, \textrm{ and } \,\,\, 
e_{\nu}^{\flat} :=
(-1)^{n-\nu}
\binom{n}{\nu}
X_1^{n-\nu}X_2^{\nu}. 
\end{align*}
We then define submodules $\mcM_n$ and $\mcM_n^\flat$ of $\bZ[X_1, X_2]$ by 
\begin{align*}
\mcM_n :=
\bigoplus_{\nu=0}^{n}
\Z
e_{\nu} \,\,\, \textrm{ and } \,\,\, 
\mcM_n^{\flat} :=
\bigoplus_{\nu=0}^{n}
\Z
e_{\nu}^{\flat}. 
\end{align*}
The $\bZ$-modules $\mcM_n$ and $\mcM_n^{\flat}$ are closed under the left action of $M_2^+(\Z)$ on $\bZ[X_1, X_2]$. In particular, both $\mcM_n$ and $\mcM_n^{\flat}$ are left $\Gamma$-modules. 
We also define the pairing
\[
\brk{~,~}
\colon 
\mcM_n^{\flat} \times \mcM_n \ra \Z
\]
by 
\[
\brk{e_{\nu}^{\flat}, e_{\mu}}=\delta_{\nu, \mu}, 
\]
where $\delta_{\nu,\mu}$ is the Kronecker delta. 
The pairing  $\brk{~,~}$ is  perfect and $M_2^+(\Z)$-equivariant in the sense that for any polynomials $P \in \mcM_n^{\flat}$ and $Q \in \mcM_n$ and matrix $\gamma \in M_2^+(\Z)$, we have 
\[
\brk{P, \gamma Q}=\brk{\widetilde{\gamma} P, Q}.  
\]
Hence the pairing $\brk{~,~}$ induces an $M_2^+(\Z)$-equivariant isomorphism
\[
\mcM_n^{\flat} \stackrel{\sim}{\longrightarrow} \mcM_n^{\vee}:=\Hom_{\Z}(\mcM_n, \Z), 
m' \mapsto \left(m \mapsto \brk{m',m}\right). 
\]
Here the left action of $M_2^+(\bZ)$  on $\mcM_n^{\vee}=\Hom_{\Z}(\mcM_n, \Z)$ is given by 
\[
(\gamma \phi)(Q) = \phi(\tilde{\gamma}Q), 
\]
where $\phi \in \Hom_{\Z}(\mcM_n, \Z)$, $Q \in \mcM_n$, and $\gamma \in M_2^+(\bZ)$.

\begin{rmk}
The left actions of $M_2^+(\Z)$ on $\mcM_n$ and $\mcM_n^{\flat}$ are slightly different from the left actions used in Harder's book \cite[(1.57)]{CAG}. However, since 
\[
\begin{pmatrix}
    &-1\\1&
\end{pmatrix}^{-1}
\begin{pmatrix}
   a &b\\c&d
\end{pmatrix}
\begin{pmatrix}
    &-1\\1&
\end{pmatrix}
= 
\begin{pmatrix}
    d&-b\\-c&a
\end{pmatrix}, 
\]
they are isomorphic as left $M_2^+(\Z)$-modules. Therefore, there are no essential differences. 
\end{rmk}

\subsection{Hecke operators}\label{sec:Hecke operator}

Let $X \in \{\bbH, \bbH^\BS, \partial \bbH\}$ and let $\cM$ be a left $M_2^+(\bZ)$-module. 
In this subsection, we define the Hecke operators on $H_{\bullet}(\Gamma\bs X, \mcM)$ and $H^{\bullet}(\Gamma\bs X, \mcM)$, explicitly. 

For each prime number $p$, we have the following double coset decomposition: 
\[
\Gamma \backslash \Gamma \begin{pmatrix}p & \\ & 1\end{pmatrix} \Gamma = \Gamma \begin{pmatrix}p & \\ & 1\end{pmatrix} \sqcup \bigsqcup_{j=0}^{p-1} \Gamma \begin{pmatrix}1 & j \\ & p\end{pmatrix}. 
\]
Hence the endomorphism 
\[
\cM \longrightarrow \cM; m \mapsto \begin{pmatrix}
    p&0\\
    0&1
\end{pmatrix}
m + 
\sum_{j=0}^{p-1}
\begin{pmatrix}
    1&j\\
    0&p
\end{pmatrix}
m
\]
 induces an endomorphism of $\cM_\Gamma$. 
Similarly, the endomorphism 
\[
\cM \longrightarrow \cM; m \mapsto \widetilde{\begin{pmatrix}
    p&0\\
    0&1
\end{pmatrix}}
m + 
\sum_{j=0}^{p-1}
\widetilde{\begin{pmatrix}
    1&j\\
    0&p
\end{pmatrix}}
m
\]
 induces an endomorphism of $\cM^\Gamma$. 

\begin{dfn}\label{dfn:Hecke operator}
Let $p$ be a prime number. 
\begin{enumerate}
\item We define the Hecke operator $T_{p}$ at $p$ on $S_{\bullet}(X) \otimes \mcM$  by
\begin{align*}
T_{p}(\sigma\otimes P)
&:=
\begin{pmatrix}
    p&0\\
    0&1
\end{pmatrix}
\sigma\otimes
\begin{pmatrix}
    p&0\\
    0&1
\end{pmatrix}
P
+
\sum_{j=0}^{p-1}
\begin{pmatrix}
    1&j\\
    0&p
\end{pmatrix}
\sigma\otimes
\begin{pmatrix}
    1&j\\
    0&p
\end{pmatrix}
P
\end{align*}
for any simplex $\sigma \in S_{\bullet}(X)$ and element $P\in \mcM$. 
The operator $T_p$ induces operators on $\MS(X)\otimes \mcM$ and $H_{\bullet}(\Gamma \bs X, \mcM)$, etc., also written as $T_p$.

\item 
We define the Hecke operator $T_{p}'$ at $p$ on $\Hom_{\Z}(S_{\bullet}(X), \mcM)$ by
\begin{align*}
(\phi|T_p')(\sigma)
:=
\widetilde{
\begin{pmatrix}
    p&0\\
    0&1
\end{pmatrix}
}
\left(\phi(
\begin{pmatrix}
    p&0\\
    0&1
\end{pmatrix}
\sigma
)\right)
+
\sum_{j=0}^{p-1}
\widetilde{
\begin{pmatrix}
    1&j\\
    0&p
\end{pmatrix}
}
\left(\phi(
\begin{pmatrix}
    1&j\\
    0&p
\end{pmatrix}
\sigma
)\right)
\end{align*}
for any homomorphism $\phi \in \Hom_{\Z}(S_{\bullet}(X), \mcM_n^{\flat})$ and simplex $\sigma \in S_{\bullet}(X)$. 
The operator $T_p'$ induces operators on 
$\Hom_{\Z}(\MS(X), \mcM)$ and $H^{\bullet}(\Gamma\bs X, \mcM)$, etc., also written as $T_p'$.
\end{enumerate}
\end{dfn}

For later use, we also define auxiliary operators $U_p$ and $V_p$ on $S_\bullet(X) \otimes \mcM$ by
\begin{align*}
V_p(\sigma \otimes P)
&:=
\begin{pmatrix}
p&0\\
0&1
\end{pmatrix}
\sigma \otimes
\begin{pmatrix}
p&0\\
0&1
\end{pmatrix}
P,
\\
U_p(\sigma \otimes P)
&:=
\sum_{j=0}^{p-1}
\begin{pmatrix}
1&j\\
0&p
\end{pmatrix}
\sigma \otimes
\begin{pmatrix}
1&j\\
0&p
\end{pmatrix}
P, 
\end{align*}
so that $T_p:=V_p+U_p$.

\begin{lem}\label{lem:V_pU_p=p^n+1}
The composite $V_pU_p$ acts on $(S_\bullet(X) \otimes \cM_n)_\Gamma$, and we have $V_pU_p = p^{n+1}$ as operators on $(S_\bullet(X) \otimes \cM_n)_\Gamma$. 
\end{lem}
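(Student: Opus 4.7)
The plan is to verify the identity by a direct computation on a generator $\sigma\otimes P$ of $S_\bullet(X)\otimes\cM_n$. Applying $U_p$ first and then $V_p$ and collecting the matrix products gives
\[
V_pU_p(\sigma\otimes P)=\sum_{j=0}^{p-1}\begin{pmatrix}p&pj\\0&p\end{pmatrix}\sigma\otimes\begin{pmatrix}p&pj\\0&p\end{pmatrix}P,
\]
where I have used that both actions are left actions, so the matrices compose by ordinary matrix multiplication. The decisive observation is the factorisation $\begin{pmatrix}p&pj\\0&p\end{pmatrix}=pI\cdot\begin{pmatrix}1&j\\0&1\end{pmatrix}$, which separates a central scalar matrix from an element of $\Gamma_\infty\subset\Gamma$.

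I then handle the two factors separately. The scalar $pI$ acts on $X$ through the trivial M\"obius transformation, so it acts as the identity on singular chains in $S_\bullet(X)$. On $\cM_n$ it acts by multiplication by $p^n$, because every element of $\cM_n$ is homogeneous of degree $n$ in $X_1,X_2$ and $\widetilde{pI}=pI$, so one reads this off directly from the formula $(\gamma P)(X_1,X_2)=P((X_1,X_2)\cdot {}^t\widetilde{\gamma})$. Hence each summand equals $p^n\cdot\bigl(\begin{pmatrix}1&j\\0&1\end{pmatrix}\sigma\otimes\begin{pmatrix}1&j\\0&1\end{pmatrix}P\bigr)$.

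Since $\begin{pmatrix}1&j\\0&1\end{pmatrix}\in\Gamma$, after passing to the $\Gamma$-coinvariants each summand becomes $p^n(\sigma\otimes P)$, and summing over the $p$ values of $j$ yields $p^{n+1}(\sigma\otimes P)$. The same computation simultaneously shows that $V_pU_p$ descends to $(S_\bullet(X)\otimes\cM_n)_\Gamma$, even though neither $V_p$ nor $U_p$ does individually. I expect no real obstacle; the only point requiring attention is the homogeneity check for the scalar action of $pI$ on $\cM_n$, which amounts to a one-line verification from the definitions in the preceding subsection.
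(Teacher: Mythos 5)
Your proposal is correct and follows essentially the same route as the paper: compute $V_pU_p(\sigma\otimes P)=\sum_{j}\begin{pmatrix}p&pj\\0&p\end{pmatrix}(\sigma\otimes P)$, extract the scalar $p^n$ using that diagonal matrices act trivially on $X$ and by degree-$n$ homogeneity on $\cM_n$, and absorb the remaining $\begin{pmatrix}1&j\\0&1\end{pmatrix}\in\Gamma$ in the coinvariants. Your explicit factorization through $pI$ is just a slightly more spelled-out version of the paper's one-line computation.
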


\begin{proof}
    Since diagonal matrices act trivially on $X$, we have 
    \begin{align*}
       V_pU_p(\sigma\otimes P) &=
\sum_{j=0}^{p-1}
\begin{pmatrix}
p&pj\\
0&p
\end{pmatrix}
\sigma \otimes
\begin{pmatrix}
p&pj\\
0&p
\end{pmatrix}
P
\\
&= p^{n}
\sum_{j=0}^{p-1}
\begin{pmatrix}
1&j\\
0&1
\end{pmatrix}
\left(\sigma\otimes
P\right)
    \end{align*}
for any simplex $\sigma \in S_\bullet(X)$ and polynomial $P \in \mcM_n$. 
Since $\begin{pmatrix}
1&j\\
0&1
\end{pmatrix} \in \Gamma$ for any integer $j$, we obtain this  lemma from this equality. 
\end{proof}

\subsection{Formal duality}\label{sec:formal-duality}

Let $X \in \{\bH, \bH^\BS, \partial\bH^\BS\}$. 
As explained in \S\ref{sec:modular symbol}, the homology and cohomology groups  can be computed as
\begin{align*}
    H_\bullet(\Gamma \bs X, \mcM_n) &\cong H_\bullet((S_\bullet(X) \otimes \cM_n)_\Gamma), 
    \\
    H^\bullet(\Gamma \bs X, \mcM_n^{\flat}) &\cong  H_\bullet(\Hom_\bZ(S_\bullet(X),\cM_n^\flat)^\Gamma). 
\end{align*}
The pairing $\brk{~,~} \colon \cM^{\flat} \times \cM \longrightarrow \bZ$ induces a pairing 
\begin{align*}
\brk{~,~}\colon 
\Hom_{\Z}(S_\bullet(X), \mcM_n^{\flat}) \times S_\bullet(X)\otimes \mcM_n \longrightarrow \Z, 
\end{align*}
which is computed as
\[
\big\langle\phi, \sigma \otimes P \big\rangle := \brk{\phi(\sigma), P}. 
\]
Note that for any matrix $\gamma \in M_2^+(\bZ)$, we have 
\[
\big\langle \widetilde{\gamma}\phi, \sigma \otimes P \big\rangle = \brk{\widetilde{\gamma}\phi(\gamma\sigma), P} = \brk{\phi(\gamma\sigma), {\gamma}P} = \big\langle \phi, {\gamma}(\sigma \otimes P) \big\rangle. 
\]
Therefore, we have 
\[
\big\langle \phi|T_p', \sigma \otimes P \big\rangle = \big\langle \phi, T_p(\sigma \otimes P) \big\rangle. 
\]
In particular, we obtain a Hecke-equivariant pairing 
\[
\brk{~,~}\colon 
H^\bullet(\Gamma \bs X, \mcM_n^{\flat})\times H_\bullet(\Gamma \bs X, \mcM_n) \longrightarrow \Z,  
\]
which induces an isomorphism
\[
H^{\bullet}(\Gamma \bs X, \mcM_n^{\flat})/\text{(torsion)}
\overset{\sim}{\longrightarrow}
\Hom_{\Z}(H_{\bullet}(\Gamma\bs X, \mcM_n), \Z). 
\]



\subsection{Eichler--Shimura homomorphism}\label{sec:Eichler--Shimura map} 

Let $M_{n+2}(\Gamma)$ denote the space of modular forms of weight $n+2$ and level $\Gamma = \mathrm{SL}_2(\Z)$. 
We define a homomorphism 
\begin{align*}
r\colon M_{n+2}(\Gamma) \longrightarrow \Hom_{\Z} (\MS(\bbH), \mcM_n^{\flat}\otimes \C)  
\end{align*}
by 
\begin{align*}
r(f)(\{\alpha, \beta\}) := \int_{\alpha}^{\beta} f(z) (X_1-zX_2)^n \, dz. 
\end{align*}
for any modular form $f \in M_{n+2}(\Gamma)$ and $\{\alpha, \beta\} \in \MS(\bbH)$. 
It is well-known that 
\[
r(M_{n+2}(\Gamma)) \subset \Hom_{\Z} (\MS(\bbH), \mcM_n^{\flat}\otimes \C)^\Gamma, 
\]
and the homomorphism $r$ induces an injective homomorphism (called Eichler--Shimura homomorphism)
\begin{align*}
r \colon M_{n+2}(\Gamma) \longhookrightarrow  H^1(Y, \mcM_n^{\flat})\otimes \C = H^1(Y^\BS, \mcM_n^{\flat})\otimes \C. 
\end{align*}
See \cite[Section 5.3]{Bel21} for example.

\begin{rmk}
The definition of the Eichler--Shimura homomorphism shows that, 
for any element $\sigma = \{\alpha, \beta\}\otimes P \in \MS(\bbH)\otimes \mcM_n$ and modular form $f \in M_{n+2}(\Gamma)$, the pairing $\brk{r(f), \sigma}$ can be computed as  
\[
\brk{r(f), \sigma} = \int_{\alpha}^{\beta} f(z)P(z,1) \, dz. 
\]
\end{rmk}

\begin{rmk}
For each prime number $p$, the double coset operator $T_p'' := \Gamma \begin{pmatrix}
    1&\\&p
\end{pmatrix}\Gamma$ acts on the space $M_{n+2}(\Gamma)$ of modular forms from the right by using the weight $n+2$ slash operator $|[~]_{n+2}$.  
One can easily show that 
\[
r(f|[\gamma]_{n+2}) = \widetilde{\gamma} \cdot r(f)
\]
for any matrix $\gamma \in M_2^+(\bZ)$. 
Hence the Eichler--Shimura homomorphism is Hecke-equivariant, that is, for all prime numbers $p$, we have 
\[
r(f|T_p'')=r(f)|T_p'. 
\]
In other words, our Hecke operator $T_p'$ coincides with the usual one via the  Eichler--Shimura homomorphism. 
\end{rmk}

The following lemma is well-known (see \cite[Theorem 5.3.27]{Bel21} for example). 

\begin{lem}\label{lem:boundary-H^1-is-eisenstein}
    The Eichler--Shimura homomorphism induces a Hecke-equivariant isomorphism 
    \[
    M_{n+2}(\Gamma)/S_{n+2}(\Gamma) \stackrel{\sim}{\longrightarrow} H^1(\partial Y^\BS, \cM_{n}^\flat) \otimes \bC. 
    \]
    Here $S_{n+2}(\Gamma)$ denotes the space of cusp forms of weight $n+2$ and level $\Gamma$. 
\end{lem}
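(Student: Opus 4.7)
The plan is to show that both sides have dimension $1$ over $\bC$ and that the composition $M_{n+2}(\Gamma)\overset{r}{\longrightarrow}H^1(Y^\BS,\cM_n^\flat)\otimes\bC\longrightarrow H^1(\partial Y^\BS,\cM_n^\flat)\otimes\bC$ has kernel exactly $S_{n+2}(\Gamma)$. For the source, $\dim_\bC M_{n+2}(\Gamma)/S_{n+2}(\Gamma)=1$ because in level one the Eisenstein part is spanned by $E_{n+2}$. For the target, $\partial Y^\BS\simeq B\Gamma_\infty$ gives $H^1(\partial Y^\BS,\cM_n^\flat\otimes\bC)\cong H^1(\Gamma_\infty,\cM_n^\flat\otimes\bC)\cong (\cM_n^\flat\otimes\bC)_{\Gamma_\infty}$, and a direct computation of the action of $T=\begin{pmatrix}1&1\\0&1\end{pmatrix}$ on $\cM_n^\flat$ shows this to be $1$-dimensional (the dual version is already asserted for $\cM_n^\vee$ in the Introduction).

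Next I would show the composition vanishes on $S_{n+2}(\Gamma)$. For a cusp form $f$, the exponential decay of $f$ at all cusps makes the path integral $\int_\alpha^\beta f(z)(X_1-zX_2)^n\,dz$ absolutely convergent for $\alpha,\beta\in\bH^\BS$, not just in $\bH$. This upgrade of the modular symbol gives a $\Gamma$-equivariant cocycle on $\MS(\bH^\BS)$, providing a lift $\tilde r(f)\in H^1(Y^\BS,\partial Y^\BS;\cM_n^\flat)\otimes\bC$ of $r(f)$. By the long exact sequence of the pair $(Y^\BS,\partial Y^\BS)$, $r(f)$ therefore restricts to zero on $\partial Y^\BS$.

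The remaining point is that $r(E_{n+2})$ has \emph{nonzero} boundary restriction. I would argue this abstractly. By the classical full Eichler--Shimura isomorphism, $H^1(Y^\BS,\cM_n^\flat)\otimes\bC=r(M_{n+2}(\Gamma))\oplus\overline{r(S_{n+2}(\Gamma))}$. The argument of the previous paragraph applies equally to $\bar f$ for $f\in S_{n+2}(\Gamma)$, so the anti-holomorphic summand also lies in the kernel of boundary restriction. On the other hand, the boundary restriction $H^1(Y^\BS,\cM_n^\flat)\otimes\bC\to H^1(\partial Y^\BS,\cM_n^\flat)\otimes\bC$ is surjective: its cokernel sits inside $H^2(Y^\BS,\partial Y^\BS;\cM_n^\flat)\otimes\bC\cong H^2_c(Y,\cM_n^\flat)\otimes\bC$, which vanishes by Poincaré duality, since $(\cM_n\otimes\bC)^\Gamma=0$ for $n\ge 2$ by the irreducibility of $\mathrm{Sym}^n$ as an $\mathrm{SL}_2$-representation. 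Surjectivity then forces $r(M_{n+2}(\Gamma))$ to surject onto $H^1(\partial Y^\BS,\cM_n^\flat)\otimes\bC$, so $r(E_{n+2})$ must be sent to a nonzero class, and the induced map on $M_{n+2}(\Gamma)/S_{n+2}(\Gamma)$ is an isomorphism by dimension count. Hecke equivariance is automatic because both $r$ and the restriction map are Hecke-equivariant (the latter by functoriality of the Hecke action on the long exact sequence of $(Y^\BS,\partial Y^\BS)$).

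The main obstacle is this last nonvanishing step. If one prefers to avoid invoking the full Eichler--Shimura isomorphism, one can compute the boundary cocycle directly: representing $r(E_{n+2})|_{\partial Y^\BS}$ by $T\mapsto\int_\tau^{\tau+1}E_{n+2}(z)(X_1-zX_2)^n\,dz$ for $\tau\in\bH$ with large imaginary part, split $E_{n+2}=1+g$ where $g=\tfrac{2}{\zeta(-1-n)}\sum_{k\ge1}\sigma_{n+1}(k)q^k$. The $g$-contribution is a coboundary by pushing $\tau\to i\infty$ (applying the previous paragraph), while the constant-term contribution evaluates to $-\tfrac{1}{(n+1)X_2}\bigl[(X_1-(\tau+1)X_2)^{n+1}-(X_1-\tau X_2)^{n+1}\bigr]$, which represents a nonzero element of $(\cM_n^\flat\otimes\bC)_{\Gamma_\infty}$.
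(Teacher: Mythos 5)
The paper does not prove this lemma at all: it is stated as well known and referred to \cite[Theorem 5.3.27]{Bel21}, so there is no in-paper argument to compare against. Your proof is correct and is essentially the standard one found in such references: (i) both sides are $1$-dimensional, the target because $H^1(\partial Y^\BS,\cM_n^\flat\otimes\bC)\cong H^1(\Gamma_\infty,\cM_n^\flat\otimes\bC)\cong(\cM_n^\flat\otimes\bC)_{\Gamma_\infty}$ and $(T-1)$ kills everything except the line spanned by the top monomial (this matches the paper's own Lemma \ref{lem:boundary-homology-0} on the homology side); (ii) cusp forms and their conjugates lift to relative classes by exponential decay, hence die on the boundary; (iii) the restriction is surjective because $H^2(Y^\BS,\partial Y^\BS,\cM_n^\flat\otimes\bC)\cong(\cM_n^\flat\otimes\bC)_\Gamma=0$, which the paper itself effectively verifies via the evaluation-at-the-fundamental-class isomorphism \eqref{eq:evaluation-at-fundamental-class}. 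One remark on economy: your closing direct computation of the boundary cocycle $T\mapsto\int_\tau^{\tau+1}E_{n+2}(z)(X_1-zX_2)^n\,dz$, whose constant-term part has leading monomial $X_1^n$ and is therefore nonzero in the coinvariants, already settles the nonvanishing on its own; with it, the appeal to the full Eichler--Shimura decomposition $H^1(Y^\BS,\cM_n^\flat)\otimes\bC=r(M_{n+2}(\Gamma))\oplus\overline{r(S_{n+2}(\Gamma))}$ in the third paragraph is not needed, and dropping it makes the argument both shorter and logically independent of the harder classical theorem. The only cosmetic slip is the justification ``$(\cM_n\otimes\bC)^\Gamma=0$ by irreducibility of $\mathrm{Sym}^n$ as an $\mathrm{SL}_2$-representation'': you should either invoke Zariski density of $\Gamma$ in $\mathrm{SL}_2(\bC)$ or simply intersect the invariants of the two unipotent generators, $\bC X_2^n\cap\bC X_1^n=0$ for $n\geq 2$.
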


\subsection{Definition of the Eisenstein  class}

 In this subsection, we define the Eisenstein class $\Eis_n$ and explain its basic properties. 

We put $i := \sqrt{-1}$ and let $\sigma_{n+1}(k)$ denote the sum-of-positive-divisors function, namely, $\sigma_{n+1}(k) := \sum_{0<d\mid k}d^{n+1}$. 
Let 
\begin{align*}
E_{n+2}(z) := 1 +\frac{2}{\zeta(-1-n)}\sum_{k=1}^{\infty} \sigma_{n+1}(k) e^{2\pi i kz} \in M_{n+2}(\Gamma) 
\end{align*}
denote the normalized holomorphic Eisenstein series of weight $n+2$.

\begin{lem}\label{lem:eisenstein-pairing-eigen-rational}\
\begin{enumerate}
\item 
For any element $\tau \in \bbH$, we have
\[
\brk{r(E_{n+2}), \{\tau, \tau+1\}\otimes e_0} = 1. 
\]
\item 
For any prime number $p$, we have
\[
r(E_{n+2})|{T_p'}
=
(1+p^{n+1})
r(E_{n+2}). 
\]
\end{enumerate}
\end{lem}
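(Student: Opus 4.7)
Both parts reduce to direct computations; the work is bookkeeping under the conventions fixed in the preceding subsections.

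For (1), the plan is to evaluate the pairing directly from the integral formula. First I would expand
\[
(X_1 - zX_2)^n = \sum_{\nu=0}^{n}\binom{n}{\nu}(-z)^\nu X_1^{n-\nu}X_2^\nu
\]
and rewrite it in the basis $\{e_\nu^\flat\}$ using $X_1^{n-\nu}X_2^\nu = (-1)^{n-\nu}\binom{n}{\nu}^{-1} e_\nu^\flat$. Since $n$ is even, the signs collapse and one obtains the clean expression $(X_1 - zX_2)^n = \sum_\nu z^\nu e_\nu^\flat$. Pairing against $e_0$ via $\langle e_\nu^\flat, e_\mu\rangle = \delta_{\nu,\mu}$ gives the coefficient $1$, so
\[
\langle r(E_{n+2}), \{\tau, \tau+1\}\otimes e_0\rangle = \int_\tau^{\tau+1} E_{n+2}(z)\,dz.
\]
Then I would insert the $q$-expansion of $E_{n+2}$ and use $\int_\tau^{\tau+1} e^{2\pi i k z}\,dz = 0$ for every integer $k \geq 1$; only the constant term survives and yields $1$.

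For (2), the plan is to reduce the statement on cohomology to the classical eigenvalue identity for $E_{n+2}$ by invoking the Hecke-equivariance of $r$ recorded in the remark preceding Lemma~\ref{lem:boundary-H^1-is-eisenstein}: $r(f|T_p'') = r(f)|T_p'$. It therefore suffices to verify
\[
E_{n+2}\,\big|\,T_p'' = (1+p^{n+1})\,E_{n+2}.
\]
I would do this at the level of $q$-expansions: the action of the double coset $\Gamma\begin{pmatrix}1&\\&p\end{pmatrix}\Gamma$ with the weight-$(n+2)$ slash sends $\sum a_k q^k$ to $\sum(a_{pk} + p^{n+1}a_{k/p})q^k$, and the identity
\[
\sigma_{n+1}(pk) + p^{n+1}\sigma_{n+1}(k/p) = (1+p^{n+1})\sigma_{n+1}(k)
\]
(with the usual convention $\sigma_{n+1}(k/p) = 0$ when $p\nmid k$) together with the matching of constant terms finishes the computation.

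\textbf{Main obstacle.} Neither step is technically deep; the only risk is sign and normalization errors. The subtle point is part (1), where one must use $n$ even at the right moment to collapse $(-1)^\nu(-1)^{n-\nu}(-1)^\nu$ to $1$ in the identification of $(X_1-zX_2)^n$ with its dual coordinates. Once that identity is in hand, both parts are routine, and the Hecke equivariance of $r$ (already stated in the paper) makes (2) an immediate consequence of the classical eigenvalue computation for $E_{n+2}$.
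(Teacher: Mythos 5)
Your proposal is correct and follows the same route as the paper: part (1) is the observation that the pairing reduces to $\int_{\tau}^{\tau+1}E_{n+2}(z)\,dz=1$ because only the constant term survives integration over a period (your basis computation just re-derives the formula $\brk{r(f),\{\alpha,\beta\}\otimes P}=\int_\alpha^\beta f(z)P(z,1)\,dz$ already recorded in the remark after the definition of $r$, with $e_0(z,1)=1$), and part (2) is exactly the paper's argument of combining the Hecke-equivariance $r(f|T_p'')=r(f)|T_p'$ with the classical eigenvalue identity for $E_{n+2}$. The extra $q$-expansion verification of $\sigma_{n+1}(pk)+p^{n+1}\sigma_{n+1}(k/p)=(1+p^{n+1})\sigma_{n+1}(k)$ is a correct filling-in of a fact the paper treats as well known.
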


\begin{proof}
 Claim (1) follows from the fact that the constant term of $E_{n+2}$ is $1$, and claim (2) follows from the fact that $E_{n+2}|{T_p''}=(1+p^{n+1})E_{n+2}$ and the Eichler--Shimura homomorphism is Hecke-equivariant. 
\end{proof}

\begin{dfn}
    We define the Eisenstein class $\Eis_n \in H^1(Y^{\BS}, \cM_n^{\flat}) \otimes \bC$ by 
    \[
    \Eis_n := r\left(E_{n+2}\right). 
    \]
\end{dfn}

\begin{rmk}
    The method of defining the Eisenstein class $\Eis_n$ in this paper differs from the method in Harder's book \cite[\S3.3.6 (3.74)]{CAG}.  However, thanks to Lemma \ref{lem:eisenstein-pairing-eigen-rational}, they  coincide. 
\end{rmk}

\subsection{Main theorem}

\begin{prop}\label{prop:eisenstein-rational}
   The Eisenstein class $\Eis_n$ is rational, that is,  $\Eis_n \in H^1(Y^\BS, \cM_n^{\flat}) \otimes \bQ$. 
\end{prop}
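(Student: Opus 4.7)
The plan is to show that $\Eis_n = r(E_{n+2})$ coincides with a $\bQ$-rational Manin--Drinfeld lift of $\omega_n \in H^1(\partial Y^\BS, \mcM_n^\flat) \otimes \bQ$.

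First, I would compute the boundary restriction $r_\partial(\Eis_n)$. Since $T = \begin{pmatrix} 1 & 1 \\ 0 & 1\end{pmatrix} \in \Gamma_\infty$ acts trivially on $e_0 = X_2^n$, pushing the modular symbol $\{\tau, \tau+1\} \otimes e_0$ toward the cusp realizes it as the image in $H_1(Y^\BS, \mcM_n)$ of a generator $\zeta_\partial$ of $H_1(\partial Y^\BS, \mcM_n)/(\mathrm{torsion})$ under the inclusion-induced map. Lemma \ref{lem:eisenstein-pairing-eigen-rational}(1) then gives $\brk{r_\partial(\Eis_n), \zeta_\partial} = 1$. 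Since $\omega_n$ is, by definition, the rational generator of the rank-one group $H^1(\partial Y^\BS, \mcM_n^\flat)/(\mathrm{torsion})$ dual to $\zeta_\partial$, this forces $r_\partial(\Eis_n) = \omega_n$ inside $H^1(\partial Y^\BS, \mcM_n^\flat) \otimes \bQ$.

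Next I would construct a rational Hecke-equivariant lift of $\omega_n$. The long exact sequence of the pair $(Y^\BS, \partial Y^\BS)$, together with the vanishing $H^2(Y^\BS, \partial Y^\BS, \mcM_n^\flat \otimes \bQ) = 0$ (by Poincar\'e--Lefschetz duality dual to $\mcM_n^\Gamma \otimes \bQ = 0$ for even $n \geq 2$), shows that $r_\partial$ is surjective on $\bQ$-cohomology. By Lemma \ref{lem:boundary-H^1-is-eisenstein} and the Hecke equivariance of Eichler--Shimura, $T_p'$ acts on $\omega_n$ by $\lambda := 1+p^{n+1}$ (since $E_{n+2}$ is the Hecke eigenform with those eigenvalues). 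By Hecke's classical trivial bound $|a_p(f)| = O(p^{(n+2)/2})$ for $f \in S_{n+2}(\Gamma)$, the value $\lambda$ is not a $T_p'$-eigenvalue on $H^1_{\mathrm{cusp}}(\bQ) := \ker(r_\partial|_\bQ)$ for all sufficiently large primes $p$. Fixing such a $p$ and letting $Q(X) \in \bZ[X]$ be the characteristic polynomial of $T_p'$ on $H^1_{\mathrm{cusp}}(\bQ)$, the formula $s(\omega_n) := Q(T_p')\widetilde{\omega}/Q(\lambda)$ defines an element of $H^1(Y^\BS, \mcM_n^\flat) \otimes \bQ$ that is independent of the rational lift $\widetilde{\omega}$ of $\omega_n$ (since different lifts differ by elements of $H^1_{\mathrm{cusp}}(\bQ) = \ker Q(T_p')$) and is a $T_p'$-eigenvector with eigenvalue $\lambda$ (since $(T_p'-\lambda)\widetilde{\omega} \in H^1_{\mathrm{cusp}}(\bQ)$ is killed by $Q(T_p')$).

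Finally, both $\Eis_n$ and $s(\omega_n)$ lift $\omega_n$ and are $T_p'$-eigenvectors with eigenvalue $\lambda$, so their difference lies in $(H^1_{\mathrm{cusp}} \otimes \bC) \cap \ker(T_p' - \lambda)$, which vanishes by the choice of $p$. Hence $\Eis_n = s(\omega_n) \in H^1(Y^\BS, \mcM_n^\flat) \otimes \bQ$. The main obstacle is the Manin--Drinfeld step: one must separate the Eisenstein Hecke eigenvalue $\lambda$ from the cuspidal Hecke spectrum, for which the elementary Hecke bound on Fourier coefficients of cusp forms is amply sufficient.
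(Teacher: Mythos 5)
Your argument is correct, but it is a genuinely different route from the paper's. The paper proves this statement (as Corollary \ref{cor:eisenstein class is rational}) by brute force on the homology side: Lemma \ref{lem:kernel-boundary-rational} shows that $H_1(Y^\BS, \cM_n)\otimes\bQ$ is generated by the boundary homology together with the explicit cycles $[\widetilde{C}_\nu(\tau)]$ for $1\le\nu\le n-1$, and Proposition \ref{prop:pairing-rational} computes the periods $\brk{\Eis_n,\widetilde{C_\nu(\tau)}}=\zeta(-\nu)\zeta(\nu-n)/\zeta(-1-n)-\zeta(-\nu)-\zeta(\nu-n)$, which are visibly rational; rationality of $\Eis_n$ then follows from the perfect duality of \S\ref{sec:formal-duality}. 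You instead run the classical Manin--Drinfeld argument: identify $r_\partial(\Eis_n)$ as the rational boundary class via $\brk{\Eis_n,\{\tau,\tau+1\}\otimes e_0\}=1$, use $H^2(Y^\BS,\partial Y^\BS,\cM_n^\flat\otimes\bQ)\cong(\cM_n\otimes\bQ)_\Gamma=0$ to get a rational lift, and separate the Eisenstein eigenvalue $1+p^{n+1}$ from the cuspidal $T_p'$-spectrum (Hecke's elementary bound $|a_p(f)|=O(p^{(n+2)/2})$ suffices for large $p$, since $(n+2)/2<n+1$) to produce the unique rational Hecke-equivariant section $Q(T_p')\widetilde{\omega}/Q(\lambda)$, which must equal $\Eis_n$. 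Both proofs are sound. What each buys: your argument is softer and shorter, needing no period computation, but it yields only rationality and says nothing about denominators; the paper's explicit periods are the input for the entire denominator computation (Theorem \ref{thm:main result}), so for them rationality falls out for free from work they must do anyway. Note also that the eigenvalue-separation idea you use does appear in the paper, but later and with Deligne's bound, in Lemma \ref{lem:corollary-of-Ramanujam-conjecture-T_p-eigen-implies-Hecke-eigen}.
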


This proposition is proved in Corollary \ref{cor:eisenstein class is rational}.

\begin{dfn}
    For any $\Gamma$-module $\cM$, we define 
    \[
    H^1_{\mathrm{int}}(Y^\BS, \cM) := \image\left(H^1(Y^{\BS}, \cM) \longrightarrow H^1(Y^\BS, \cM) \otimes \bQ \right). 
    \]
\end{dfn}

Thanks to Proposition \ref{prop:eisenstein-rational}, 
we define  the denominator $\Delta(\Eis_n)$ of the Eisenstein class $\Eis_n$ with respect to the integral structure $H^1_{\mathrm{int}}(Y^\BS, \cM^\flat_n)$ by 
\[
\Delta(\Eis_n)
:=
\min
\{
\Delta \in \Z_{>0} 
\mid
\Delta \Eis_n \in H^1_{\mathrm{int}}(Y^{\BS}, \mcM_n^{\flat})\}. 
\]
Then the following is the main theorem which we want to prove in the present paper.  

\begin{thm}[{\cite[Theorem 5.1.2]{CAG}}]\label{thm:main result}\
The denominator $\Delta(\Eis_n)$ of the Eisenstein class $\Eis_n$ is equal to the numerator of the special value $\zeta(-1-n)$ of the Riemann zeta function. 
 \end{thm}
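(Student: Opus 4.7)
The plan is to argue prime by prime, showing that for every prime $p$
\[
\ord_p(\Delta(\Eis_n)) = \ord_p\!\left(\text{numerator of }\zeta(-1-n)\right).
\]
By the formal duality of \S\ref{sec:formal-duality} combined with Proposition \ref{prop:eisenstein-rational}, the $p$-part $\ord_p(\Delta(\Eis_n))$ is the smallest $a \geq 0$ such that $\langle p^a \Eis_n, z\rangle \in \bZ_p$ for every $z \in H_1(Y^\BS, \cM_n)$. Equivalently, after modding out by $H_1(\partial Y^\BS, \cM_n)$ (on which $\Eis_n$ pairs to $\bZ$ by Lemma \ref{lem:eisenstein-pairing-eigen-rational}(1) and the fact that $E_{n+2}$ has constant term $1$), I need to pin down the denominators of $\Eis_n$ against a system of generators of the cokernel. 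The key observation is that $\Eis_n$ is an eigenvector of $T_p'$ with eigenvalue $1+p^{n+1}$ (Lemma \ref{lem:eisenstein-pairing-eigen-rational}(2)), so $\Eis_n$ belongs to the ordinary part with respect to $p$; therefore it suffices to analyze the ordinary quotient of the homology.

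The next step is to produce explicit $p$-integral ordinary homology classes to test against $\Eis_n$. For each $\tau \in \bbH$ and integer $1 \leq \nu \leq n-1$, I will introduce a cycle $C_\nu(\tau) \in (\MS(\bbH)\otimes \cM_n)_\Gamma$ built out of modular symbols of the form $\{\tau,\gamma\tau\}$ tensored with the monomials $e_\nu$, and then apply iterated Hecke operators $T_p^m$ to pass into the ordinary part. The identity $V_pU_p = p^{n+1}$ of Lemma \ref{lem:V_pU_p=p^n+1} will be used repeatedly to isolate a well-defined limit $\widetilde{T_p^{m!}(C_\nu(\tau))}$ as $m \to \infty$, independent of $\tau$, giving a system of classes in $H_1(Y^\BS, \cM_n \otimes \bZ_{(p)})$ that generates the ordinary part of $H_1(Y^\BS,\cM_n)/H_1(\partial Y^\BS,\cM_n)$ tensored with $\bZ_p$. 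The main technical obstacle here is verifying both $p$-integrality of the limit and the generation statement; the latter reduces to a careful count of the rank of the ordinary modular symbols together with a non-degeneracy argument against the Eisenstein eigenline.

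The heart of the proof is the period computation: using the Eichler--Shimura representation
\[
\langle \Eis_n, \{\alpha,\beta\}\otimes P\rangle = \int_\alpha^\beta E_{n+2}(z)\,P(z,1)\,dz,
\]
I will expand the integral over each $T_p$-translate, rearrange the resulting sums using the $q$-expansion of $E_{n+2}$, and recognize the emerging Dirichlet-series data as special values of Kubota--Leopoldt $p$-adic $L$-functions. The anticipated output is
\[
\lim_{m\to\infty}\langle \Eis_n, \widetilde{T_p^{m!}(C_\nu(\tau))}\rangle
= \frac{1-p^{n+1}}{(1-p^\nu)(1-p^{n-\nu})}D_p(n,\nu),
\]
with $D_p(n,\nu)$ as in the introduction. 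Combining this with the generation result of the previous paragraph produces
\[
\ord_p(\Delta(\Eis_n)) = -\min_{1 \leq \nu \leq n-1}\ord_p(D_p(n,\nu)),
\]
where we interpret negative $\ord_p$ as denominators. The Euler factors $(1-p^\nu)(1-p^{n-\nu})$ are $p$-adic units since $\nu, n-\nu \geq 1$, so they do not affect valuations.

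The final step is to show this minimum equals $\ord_p$ of the numerator of $\zeta(-1-n)$. Using the interpolation property $L_p(-k,\omega^{k+1}) = (1-p^k)\zeta(-k)$ of Kubota--Leopoldt, each term in $D_p(n,\nu)$ is a unit multiple of classical zeta values at negative integers. The denominator of $D_p(n,\nu)$ is controlled by $L_p(-1-n,\omega^{n+2})$ which is a $p$-adic unit multiple of $\zeta(-1-n)$. The upper bound is therefore immediate; for the matching lower bound, when $p \mid \zeta(-1-n)$ one needs to exhibit a particular $\nu$ for which the numerator of $D_p(n,\nu)$ does not cancel the pole. I expect this to be the main technical obstacle: it requires a non-vanishing statement modulo $p$ for the combination of $p$-adic $L$-values, which can be addressed either by a direct Kummer congruence argument or by exploiting the irregularity index and congruences among Bernoulli numbers. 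Once established, combining all four steps completes the proof of Theorem \ref{thm:main result}.
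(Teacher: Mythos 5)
Your proposal follows the paper's own strategy almost step for step: the same cycles $C_\nu(\tau)$, the same Hecke-iteration and $p$-adic limit, the same period computation landing on $D_p(n,\nu)$, and the same reduction of $\ord_p(\Delta(\Eis_n))$ to $-\min_\nu \ord_p(D_p(n,\nu))$ via the ordinary generation statement. The divergence — and the genuine gap — is in the final step.

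First, a smaller point: the upper bound $\delta_p(\Eis_n) \le \ord_p(N_{n+2})$ is not ``immediate.'' When $p-1 \mid 1+\nu$ the factor $L_p(-\nu,\omega^{1+\nu})$ is a value of $L_p(s,\mathbbm{1})$ and has $\ord_p = -1$, so individual terms of $D_p(n,\nu)$ have poles beyond the one coming from $L_p(-1-n,\omega^{n+2})$; one needs the congruences of Corollaries \ref{cor:case1} and \ref{cor:case2} to see that these extra poles cancel among the three terms. This is a checkable detail, not a structural problem.

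The serious gap is the lower bound. Your two proposed routes (``direct Kummer congruence'' or ``exploiting the irregularity index'') only work when $p < n$. The irregularity-index argument (Carlitz--Skula, as in Lemma \ref{lem:case1-p-1-nmid-n+2-equality}) produces an even $t \in \{2,\dots,p-3\}$ with the required unit conditions and then sets $\nu = t-1$; but $\nu$ is constrained to lie in $\{1,\dots,n-1\}$, so for $p \ge n$ the chosen $t$ may exceed $n$ and there is no guarantee that any admissible $\nu$ witnesses $\ord_p(D_p(n,\nu)) = -\ord_p(N_{n+2})$. This is exactly the case of an irregular pair $(p, n+2)$ with $p$ large relative to the weight, which is the interesting case of the theorem. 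The paper closes this by an entirely separate mechanism: the Hida-theoretic weight-change argument of \S\ref{sec:Eisenstein class} (control theorems for ordinary cohomology, the identification of $\Delta_p(\Eis_n)\bZ_p$ with the annihilator of an extension class $[\cE_{\ell,p,n}]$, and Proposition \ref{prop:change-delta}), which shows $\Delta_p(\Eis_n) = \Delta_p(\Eis_{n'})$ for $n' \equiv n \pmod{(p-1)p^{r-1}}$ with $r$ large; one then chooses $n' > p$ and applies the Carlitz--Skula argument there. Without some substitute for this weight-change step, your proof establishes only the inequality $\Delta(\Eis_n) \mid N_{n+2}$ together with equality for $p < n$, not the full theorem.
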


\begin{rmk}\label{rem:differences integral structure (1)-ordinary (2)-de rham--betti}
    \
    \begin{itemize}
        \item[(1)] Since $H^1(Y^{\BS}, \cM_n^\flat) \otimes \bQ = H^1(Y^{\BS}, \cM_n) \otimes \bQ$, we have another integral structure $H^1_{\mathrm{int}}(Y^{\BS}, \cM_n)$, and one can consider another denominator $\Delta'(\Eis_n)$ of the Eisenstein class $\Eis_n$: 
        \[
        \Delta'(\Eis_n) := \min \{\Delta \in \Z_{>0} \mid \Delta \Eis_n \in H^1_{\mathrm{int}}(Y^{\BS}, \mcM_n)\}. 
        \]
        However,  we show in Lemma \ref{lem:ordinary-flat-integral=non-flat-ntegral} that $\Delta(\Eis_n) = \Delta'(\Eis_n)$. 
        \item[(2)] By using the $q$-expansion at the cusp $i \infty$, one can regard $M_{n+2}(\Gamma)$ as a submodule of $\bC[[q]]$, and we obtain the de Rham rational structure of $M_{n+2}(\Gamma)$ by $M_{n+2}(\Gamma) \cap \bQ[[q]]$. 
        The rationality of the critical values of the $L$-function associated with a cusp form is obtained by studying the gap between the de Rham and Betti rational structures via the Eichler--Shimura homomorphism $r$.  
        However, Proposition \ref{prop:eisenstein-rational} shows that the Eisenstein parts of the two rational structures coincide. 
        Moreover,  Theorem \ref{thm:main result} says that the Eisenstein parts of the two integral structures $M_{n+2}(\Gamma) \cap \bZ[[q]]$ and $H^1_{\mathrm{int}}(Y^{\BS}, \mcM_n^\flat)$ coincide, namely, 
        \[
        r(\bQ E_{n+2}(z) \cap \bZ[[q]]) = \bQ \Eis_{n} \cap H^1_{\mathrm{int}}(Y^{\BS}, \mcM_n^\flat)
        \]
        since $2$ is a regular prime. Cf. \cite[\S 1.1]{harder_can_we}. 
    \end{itemize}
\end{rmk}

\section{Construction of the cycle $\widetilde{T_p^{m}(C_{\nu}(\tau))}$}\label{sec:construction of the lift}
Fix a prime number $p$. 
In this section, we construct a special homology cycle 
\[
\widetilde{T_p^{m}(C_{\nu}(\tau))} \in H_1(Y^\BS, \mcM_{n} \otimes \bZ_{(p)})  
\]
that is used to compute the $p$-part of the denominator $\Delta(\Eis_n)$ of the Eisenstein class $\Eis_n$. 

For any integer $1 \leq \nu \leq n-1$ and element $\tau \in \bbH^{\BS}$, we set
\[
C_{\nu}(\tau)
:=
\left\{
-\frac{1}{\tau}, \tau
\right\}
\otimes
e_{\nu} \in \MS(\bbH^{\BS}) \otimes  \mcM_n,  
\]
where recall that $e_{\nu}=X_1^{\nu}X_2^{n-\nu}$.

\subsection{Computation of $T_p^m(C_{\nu}(\tau))$}

Recall also the operators  
\begin{align*}
    V_p(\sigma \otimes P)
&:=
\begin{pmatrix}
p&0\\
0&1
\end{pmatrix}
\sigma \otimes
\begin{pmatrix}
p&0\\
0&1
\end{pmatrix}
P,\\
U_p(\sigma \otimes P)
&:=
\sum_{j=0}^{p-1}
\begin{pmatrix}
1&j\\
0&p
\end{pmatrix}
\sigma \otimes
\begin{pmatrix}
1&j\\
0&p
\end{pmatrix}
P 
\end{align*}
on $S_\bullet(\bbH^{\BS}) \otimes \mcM_n$. 
We have $T_p=V_p+U_p$. 
For each integer $m \geq 0$, set 
\begin{align*}
W_m := \sum_{k=0}^m U_p^kV_p^{m-k}. 
\end{align*}
Note that $W_0$ is the identity map. 
For any (commutative) ring $R$ and cycle $C \in S_\bullet(\bbH^{\BS}) \otimes (\cM_n \otimes R)$, we denote by $[C]$ the image of $C$ in $(S_\bullet(\bbH^{\BS}) \otimes (\cM_n \otimes R))_{\Gamma}$.

\begin{lem}\label{lem:expand hecke}

Let  $m\geq 1$ be an integer and $C \in S_\bullet(\bH^\BS) \otimes \cM_n$. 
\begin{itemize}
    \item[(1)]  We have $T_p([W_m(C)]) = [W_{m+1}(C)] + p^{n+1}[W_{m-1}(C)]$ and $T_p([W_0(C)]) = [W_1(C)]$. 
    \item[(2)] We have 
\end{itemize}
\begin{align*}
T_p^m([C])  = \sum_{A=0}^{\lfloor m/2 \rfloor}
C(m-A,A)
p^{(n+1)A}
[W_{m-2A}(C)], 
\end{align*}
where $\lfloor m/2 \rfloor$ is  the greatest integer less than or equal to $m/2$ and
\[
C(A, B) := \left(1 - \frac{B}{A+1}\right)\binom{A+B}{B} 
=
\binom{A+B}{B} - \binom{A+B}{B-1}
\in \bZ.  
\]
Here we assume $\binom{a}{b}=0$ if $b<0$. 
\end{lem}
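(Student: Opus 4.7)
The plan is to prove part (1) by a direct expansion and then obtain part (2) by induction on $m$. The single nontrivial ingredient is Lemma~\ref{lem:V_pU_p=p^n+1}, which asserts that $[V_pU_pX] = p^{n+1}[X]$ in $(S_\bullet(\bbH^\BS) \otimes \cM_n)_\Gamma$ for any chain $X$.

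For part (1), I would start from
\[
T_pW_m(C) = V_pW_m(C) + U_pW_m(C) = \sum_{k=0}^m V_pU_p^kV_p^{m-k}(C) + \sum_{k=0}^m U_p^{k+1}V_p^{m-k}(C).
\]
Passing to $\Gamma$-coinvariants and applying Lemma~\ref{lem:V_pU_p=p^n+1} to each $k \geq 1$ summand of the first sum (writing $V_pU_p^k = V_pU_p \cdot U_p^{k-1}$) yields $p^{n+1}[U_p^{k-1}V_p^{m-k}(C)]$. After reindexing, these terms assemble into $p^{n+1}[W_{m-1}(C)]$ when $m \geq 1$. The leftover $k = 0$ term $[V_p^{m+1}(C)]$ combines with the shifted second sum $\sum_{j=1}^{m+1}[U_p^jV_p^{m+1-j}(C)]$ to form $[W_{m+1}(C)]$. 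This yields the three-term recurrence $T_p[W_m(C)] = [W_{m+1}(C)] + p^{n+1}[W_{m-1}(C)]$ for $m \geq 1$; the case $m = 0$ is simpler, since only $V_p(C) + U_p(C) = W_1(C)$ appears.

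For part (2), I would induct on $m$, with trivial base cases $m = 0, 1$. In the inductive step, applying the recurrence from part (1) term-by-term to the assumed expansion of $T_p^m[C]$ (with the convention $[W_{-1}(C)] := 0$) shows that the coefficient of $[W_{m+1-2A}(C)]$ in $T_p^{m+1}[C]$ equals
\[
C(m-A, A)p^{(n+1)A} + p^{n+1}\,C(m-A+1, A-1)p^{(n+1)(A-1)} = \bigl(C(m-A,A) + C(m-A+1, A-1)\bigr)p^{(n+1)A}.
\]
Matching this against the claimed coefficient $C(m+1-A, A)\,p^{(n+1)A}$ thus reduces the induction to the combinatorial identity $C(a+1, b) = C(a, b) + C(a+1, b-1)$. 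Using $C(a,b) = \binom{a+b}{b} - \binom{a+b}{b-1}$ and Pascal's rule on both terms of $C(a+1,b)$, both sides equal $\binom{a+b}{b} - \binom{a+b}{b-2}$, which finishes the step.

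I do not foresee a substantive obstacle. The only point requiring care is that $V_p$ and $U_p$ are not individually well-defined on the coinvariants, so one must manipulate chain-level representatives and invoke $V_pU_p = p^{n+1}$ only once a $V_pU_p$ block has been isolated. The Catalan-type coefficients $C(A, B)$ appear naturally from the three-term recurrence in part (1), and the convention $[W_{-1}(C)] := 0$ automatically handles the boundary case $m - 2A = 0$ without any ad hoc argument.
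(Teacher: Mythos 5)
Your proposal is correct and follows essentially the same route as the paper: part (1) from the chain-level identity $V_pU_p = p^{n+1}$ of Lemma~\ref{lem:V_pU_p=p^n+1}, and part (2) by induction on $m$ via the three-term recurrence, reducing to the Pascal-type identity $C(a+1,b)=C(a,b)+C(a+1,b-1)$ (the paper phrases this as a recurrence for the coefficients $a_{k}^{(m)}$, but it is the same computation). Your explicit isolation of the $V_pU_p$ blocks in part (1) and the convention $[W_{-1}(C)]:=0$ are sound.
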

\begin{proof}
Claim (1) follows from the fact that $V_pU_p = p^{n+1}$ proved in Lemma \ref{lem:V_pU_p=p^n+1}. 
Let us prove claim (2). 
For notational simplicity, we put 
    \[
    w_m := [W_{m}(C)].   
    \]
    Then claim (1) shows that we can write 
    \[
    T_p^m([C]) = \sum_{k=0}^m a_{k}^{(m)}w_{m-k} 
    \]
    with $a_k^{(m)} \in \Z$ such that 
\begin{align*}
    a_{0}^{(m)} &= a_{0}^{(m-1)} = \cdots = a_{0}^{(1)} = a_{0}^{(0)} = 1, 
    \\
    a_{1}^{(m)} &= a_{1}^{(m-1)} = \cdots = a_{1}^{(1)} = 0, 
\\
a_{k}^{(m)} &= a_{k}^{(m-1)} + p^{n+1} a_{k-2}^{(m-1)}, \,\,\, \quad (2 \leq k \leq m-1)
\\
a_{m}^{(m)} &= p^{n+1} a_{m-2}^{(m-1)}. 
\end{align*}
Therefore, we have $a_{2k+1}^{(m)} = 0$ for any integer $0 \leq k \leq (m-1)/2$, and hence 
\[
T_p^m([C]) = \sum_{k=0}^{\lfloor m/2 \rfloor} a_{2k}^{(m)}w_{m-2k}. 
\]
Let us show that $a_{2k}^{(m)} = C(m-k, k)p^{k(n+1)}$ for any integer $0 \leq k \leq m/2$ by induction on $m$.  
    When $m=1$ or $k=0$, this claim is clear. 
    If $m > 1$ and $1 \leq k \leq (m-1)/2$, then the induction hypothesis shows that 
    \begin{align*}
a_{2k}^{(m)} &= a_{2k}^{(m-1)} + p^{n+1} a_{2k-2}^{(m-1)}
\\
&= C(m-1-k, k)p^{k(n+1)} +  C(m-k, k+1)p^{k(n+1)}
\\
&= C(m-k, k)p^{k(n+1)}. 
        \end{align*}
      Moreover, if $m$ is even, i.e., $m = 2t$, then we have 
\begin{align*}
    a_{m}^{(m)} &= p^{n+1}a^{(m-1)}_{m-2} 
    = C(t, t-1)p^{t(n+1)} 
    = \frac{1}{t+1}\frac{(2t)!}{(t!) (t!)}p^{t(n+1)} 
    = C(t,t)p^{t(n+1)}. 
\end{align*}
\end{proof}

By definition, we have 
\begin{align*}
W_{m-2A}(C_{\nu}(\tau))
&=
\sum_{k=0}^{m-2A}
U_p^{k}V_p^{m-2A-k}(C_{\nu}(\tau))\\
&=
\sum_{k=0}^{m-2A}
U_p^{k}
\begin{pmatrix}
    p^{m-2A-k}&0\\
    0&1
\end{pmatrix}
\left(\left\{
-\frac{1}{\tau}, \tau
\right\}
\otimes
e_{\nu}\right)\\
&=
\sum_{k=0}^{m-2A}
p^{(n-\nu)(m-2A-k)}
U_p^{k}
\lr{
\left\{
-\frac{p^{m-2A-k}}{\tau}, p^{m-2A-k}\tau
\right\}
\otimes
e_{\nu}
}. 
\end{align*}

\begin{dfn}
Take elements $\tau_0, \tau_1 \in \bbH^{\BS}$. 
For any integers $\nu$, $j$, and $k$ satisfying $1\leq \nu \leq n-1$ and $0\leq j \leq p^k-1$, we define  
\begin{align*}
C_{\nu,k,j}(\tau_0, \tau_1)
&:=
\begin{pmatrix}
1&j\\
0&p^k
\end{pmatrix}
\left\{-\frac{1}{\tau_0}, \tau_1\right\}
\otimes
\begin{pmatrix}
1&j\\
0&p^k
\end{pmatrix}
e_{\nu} \\
&=
\left\{
\frac{-\frac{1}{\tau_0}+j}{p^k}, 
\frac{\tau_1+j}{p^k}
\right\}
\otimes
\begin{pmatrix}
1&j\\
0&p^k
\end{pmatrix}
e_{\nu} \in \cM\cS(\bH^{\BS}) \otimes \cM_n. 
\end{align*}
Note that we have
\[
U_p^k\left(
\left\{-\frac{1}{\tau_0}, \tau_1\right\}\otimes e_{\nu}
\right)
=
\sum_{j=0}^{p^k-1}
C_{\nu,k,j}(\tau_0, \tau_1).  
\]
\end{dfn}

To sum up, we obtain the following corollary. 

\begin{cor}\label{cor:T_p^m}
    We have
    \begin{align*}
&T_p^m([C_{\nu}(\tau)])
\\
&=
\sum_{A=0}^{\lfloor m/2 \rfloor}
C(m-A, A)
p^{(n+1)A}
\sum_{k=0}^{m-2A}
p^{(n-\nu)(m-2A-k)}
\sum_{j=0}^{p^k-1}
\left[C_{\nu,k,j}
\lr{
\frac{\tau}{p^{m-2A-k}}, p^{m-2A-k}\tau
}\right]. 
\end{align*}
\end{cor}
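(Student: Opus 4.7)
The plan is to derive this formula as a direct bookkeeping consequence of Lemma \ref{lem:expand hecke}(2) combined with an explicit unwinding of $W_{m-2A}(C_\nu(\tau))$ via the definitions of $V_p$, $U_p$, and $C_{\nu,k,j}$. The only nontrivial algebraic point is tracking how the scalar $p^{(n-\nu)(m-2A-k)}$ emerges from the action of the diagonal matrix on $e_\nu$; everything else is a substitution.

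First, I would apply Lemma \ref{lem:expand hecke}(2) to $C = C_\nu(\tau)$, obtaining
\[
T_p^m([C_\nu(\tau)]) = \sum_{A=0}^{\lfloor m/2 \rfloor} C(m-A,A)\, p^{(n+1)A}\, [W_{m-2A}(C_\nu(\tau))].
\]
This produces the outer sum and the coefficient $C(m-A,A)p^{(n+1)A}$, reducing the problem to computing $[W_{m-2A}(C_\nu(\tau))]$ for each $A$.

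Next, I would expand $W_{m-2A} = \sum_{k=0}^{m-2A} U_p^{k} V_p^{m-2A-k}$ and apply this to $C_\nu(\tau) = \{-1/\tau,\tau\}\otimes e_\nu$. The operator $V_p^{m-2A-k}$ is induced by the diagonal matrix $\mathrm{diag}(p^{m-2A-k},1)$: as a linear fractional transformation it sends $\{-1/\tau,\tau\}$ to $\{-p^{m-2A-k}/\tau,\, p^{m-2A-k}\tau\}$, while by the formula $(\gamma P)(X_1,X_2)=P((X_1,X_2)\cdot {}^{t}\widetilde\gamma)$ one computes $\mathrm{diag}(p^{m-2A-k},1)\cdot e_\nu = p^{(n-\nu)(m-2A-k)} e_\nu$. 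This produces the factor $p^{(n-\nu)(m-2A-k)}$ that appears in the middle sum of the claimed formula.

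Finally, I would unfold $U_p^{k}$ as the sum over $j=0,\dots,p^k-1$ of the action of $\begin{pmatrix}1&j\\0&p^k\end{pmatrix}$ on both tensor factors, and observe that the resulting chain is exactly $C_{\nu,k,j}(\tau_0,\tau_1)$ with $\tau_0 = \tau/p^{m-2A-k}$ (so that $-1/\tau_0 = -p^{m-2A-k}/\tau$) and $\tau_1 = p^{m-2A-k}\tau$. Assembling the three layers and passing to $\Gamma$-coinvariants yields the displayed formula. The main (and only) obstacle is the clean identification of the endpoints $\tau_0,\tau_1$ with the variables appearing in the definition of $C_{\nu,k,j}$; once this matching is fixed, no further computation is needed.
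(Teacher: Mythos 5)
Your proposal is correct and follows exactly the route the paper takes: Lemma \ref{lem:expand hecke}(2) gives the outer sum, the computation of $V_p^{m-2A-k}$ on $\{-1/\tau,\tau\}\otimes e_\nu$ produces the factor $p^{(n-\nu)(m-2A-k)}$ and the rescaled endpoints, and the identity $U_p^k(\{-1/\tau_0,\tau_1\}\otimes e_\nu)=\sum_{j=0}^{p^k-1}C_{\nu,k,j}(\tau_0,\tau_1)$ finishes the assembly. No gaps.
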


\subsection{Computation of the boundary $\partial C_{\nu,k,j}\lr{\tau_0, \tau_1}$}

Next, we compute the boundary  $\partial C_{\nu,k,j}\lr{\tau_0, \tau_1}$.

\begin{dfn}\label{dfn:l,d,b}\ 
\begin{enumerate}
\item 
For any integers $j$ and $N$ with $p \nmid j$ and $N>0$, we denote by $d_N(j)$ and $b_N(j)$ the integers uniquely determined by
\[
1 \leq d_N(j)< p^N \,\,\, \textrm{ and } \,\,\, jd_N(j)-p^N b_N(j) = 1. 
\]
We also put $d_0(j) := 0$ and $b_0(j):=-1$ for any integer $j$. 
\item 
    For any integers $k$ and $j$, we set 
    \[
    l_k(j) := \min\{\ord_p(j), k\}. 
    \]
    Note that $l_k(0)=k$. 
    We also put $j' := j/p^{l_{k}(j)}$. 
\end{enumerate}
\end{dfn}

In the following, 
for integers $j$ and $k$ with $0\leq j \leq p^k-1$, 
we often write as 
\begin{align}\label{eqn:l,d,b}
\begin{split}
l &:= l_k(j), 
\\
j' &:= j/p^{l_k(j)}, 
\\
d &:= d_{k-l_k(j)}(j'), 
\\
b &:= b_{k-l_k(j)}(j')
\end{split}
\end{align}
for simplicity.

\begin{dfn}\label{dfn:E}
For any integers $\nu$, $j$, and $k$ with $1 \leq \nu \leq n-1$ and $0\leq j \leq p^k-1$, we define homogeneous polynomials $E_{\nu,k,j}^{(1)}$ and $E_{\nu,k,j}^{(0)}$ in $\mcM_n$ by
\begin{align*}
E_{\nu,k,j}^{(1)}(X_1,X_2)  &:= (p^kX_1-jX_2)^{\nu}X_2^{n-\nu}, 
\\
E_{\nu,k,j}^{(0)}(X_1, X_2) &:= (-1)^{\nu+1}(p^lX_2)^{\nu}(p^{k-l}X_1+dX_2)^{n-\nu}. 
\end{align*}
\end{dfn}

\begin{lem}\label{lem:boundary of C}
We have
\begin{align*}
\partial 
[C_{\nu,k,j}(\tau_0, \tau_1)]
=
\left[
\left\{
\frac{\tau_1+j}{p^k}
\right\}
\otimes
E_{\nu,k,j}^{(1)}
+
\left\{
\frac{p^l\tau_0-d}{p^{k-l}}
\right\}
\otimes
E_{\nu,k,j}^{(0)}
\right] 
\end{align*}
in $(S_{0}(\bbH^{\BS})\otimes \mcM_n)_{\Gamma}$. 
\end{lem}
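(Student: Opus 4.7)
The plan is to unwind the definition of the boundary, identify the polynomial part via the explicit action on $\cM_n$, and then use $\Gamma$-coinvariance to move the first endpoint $(-1/\tau_0 + j)/p^k$ to the form $(p^l \tau_0 - d)/p^{k-l}$ appearing in the statement. The second endpoint $(\tau_1 + j)/p^k$ will require no $\Gamma$-move.

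Concretely, first I compute $\partial C_{\nu,k,j}(\tau_0,\tau_1)$ directly as
\[
\left\{\tfrac{\tau_1+j}{p^k}\right\}\otimes \begin{pmatrix}1&j\\0&p^k\end{pmatrix}e_\nu \;-\; \left\{\tfrac{-1/\tau_0+j}{p^k}\right\}\otimes \begin{pmatrix}1&j\\0&p^k\end{pmatrix}e_\nu.
\]
Using the formula $(\gamma P)(X_1,X_2) = P((X_1,X_2)\cdot {}^t\widetilde{\gamma})$, a short calculation gives
\[
\begin{pmatrix}1&j\\0&p^k\end{pmatrix}e_\nu = (p^kX_1 - jX_2)^{\nu}X_2^{n-\nu} = E_{\nu,k,j}^{(1)},
\]
which handles the first term.

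For the second term, I exhibit an element $\gamma \in \Gamma$ that sends $(p^l\tau_0-d)/p^{k-l}$ to $(-1/\tau_0+j)/p^k$. Writing $l = l_k(j)$, $j' = j/p^l$, $d = d_{k-l}(j')$, $b = b_{k-l}(j')$ as in \eqref{eqn:l,d,b}, the identity $j'd - p^{k-l}b = 1$ (which holds even in the edge case $j=0$, $l=k$, where $d=0$, $b=-1$, and $\gamma = S = \bigl(\begin{smallmatrix}0&-1\\1&0\end{smallmatrix}\bigr)$) guarantees that
\[
\gamma := \begin{pmatrix} j' & b \\ p^{k-l} & d \end{pmatrix} \in \mathrm{SL}_2(\Z),
\]
and a direct computation using $j = p^l j'$ shows $\gamma \cdot \tfrac{p^l\tau_0 - d}{p^{k-l}} = \tfrac{-1/\tau_0+j}{p^k}$.

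It then remains to compute $\gamma^{-1}\!\cdot E_{\nu,k,j}^{(1)}$ and compare with $E_{\nu,k,j}^{(0)}$. Expanding out $(X_1,X_2)\cdot {}^t\widetilde{\gamma^{-1}} = (X_1,X_2)\cdot {}^t\gamma = (j'X_1+bX_2,\; p^{k-l}X_1+dX_2)$ and substituting into $E_{\nu,k,j}^{(1)}$ yields
\[
\gamma^{-1}E_{\nu,k,j}^{(1)}(X_1,X_2) = \bigl(p^k(j'X_1+bX_2) - j(p^{k-l}X_1+dX_2)\bigr)^{\nu}(p^{k-l}X_1+dX_2)^{n-\nu};
\]
the coefficient of $X_1$ inside the first factor vanishes because $p^kj' = jp^{k-l}$, and the coefficient of $X_2$ equals $p^kb - jd = -p^l$ by the defining relation of $b$ and $d$. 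Hence $\gamma^{-1}E_{\nu,k,j}^{(1)} = (-p^lX_2)^{\nu}(p^{k-l}X_1+dX_2)^{n-\nu} = -E_{\nu,k,j}^{(0)}$. By $\Gamma$-coinvariance we obtain
\[
-\left[\left\{\tfrac{-1/\tau_0+j}{p^k}\right\}\otimes E_{\nu,k,j}^{(1)}\right] = -\left[\left\{\tfrac{p^l\tau_0-d}{p^{k-l}}\right\}\otimes \gamma^{-1}E_{\nu,k,j}^{(1)}\right] = \left[\left\{\tfrac{p^l\tau_0-d}{p^{k-l}}\right\}\otimes E_{\nu,k,j}^{(0)}\right],
\]
which combined with the first term gives the claim.

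The main technical hurdle is identifying the correct $\Gamma$-move; once the matrix $\gamma$ is in hand, both the cusp identity $\gamma\beta = \alpha$ and the polynomial transformation $\gamma^{-1}E^{(1)} = -E^{(0)}$ reduce to the single Bézout relation $j'd - p^{k-l}b = 1$, together with care about the sign $(-1)^{\nu+1}$ arising from the factor $(-p^l)^\nu$ with $n$ even. The only subtlety is the boundary case $j=0$, $l=k$, where one must check that the conventions $d_0(0)=0$, $b_0(0)=-1$ make $\gamma = S$ and $\beta = p^k\tau_0$, and that the same computation goes through.
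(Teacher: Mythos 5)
Your proof is correct and follows essentially the same route as the paper's: the key $\Gamma$-move is the same B\'ezout matrix $\begin{pmatrix} j' & b \\ p^{k-l} & d \end{pmatrix}$, and the polynomial identity $\gamma^{-1}E_{\nu,k,j}^{(1)} = -E_{\nu,k,j}^{(0)}$ is the same computation; the only cosmetic difference is that you verify the endpoint relation by a direct M\"obius calculation where the paper uses the matrix factorization $\begin{pmatrix}1&j\\0&p^k\end{pmatrix}\begin{pmatrix}0&-1\\1&0\end{pmatrix}=\begin{pmatrix}j'&b\\p^{k-l}&d\end{pmatrix}\begin{pmatrix}p^l&-d\\0&p^{k-l}\end{pmatrix}$.
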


\begin{proof}
By definition, we have
\begin{align*}
\partial 
[C_{\nu,k,j}(\tau_0, \tau_1)]
= 
\left[
\begin{pmatrix}
1&j\\
0&p^k
\end{pmatrix}
\left\{\tau_1\right\}
\otimes
\begin{pmatrix}
1&j\\
0&p^k
\end{pmatrix}
e_{\nu}
-
\begin{pmatrix}
1&j\\
0&p^k
\end{pmatrix}
\left\{-\frac{1}{\tau_0}\right\}
\otimes
\begin{pmatrix}
1&j\\
0&p^k
\end{pmatrix}
e_{\nu}
\right]. 
\end{align*}
The definition of $E_{\nu,k,j}^{(1)}$ shows that 
\[
\begin{pmatrix}
1&j\\
0&p^k
\end{pmatrix}
\left\{\tau_1\right\}
\otimes
\begin{pmatrix}
1&j\\
0&p^k
\end{pmatrix}
e_{\nu} 
= \left\{\frac{\tau_1+j}{p^k}\right\} \otimes E_{\nu,k,j}^{(1)}. 
\]
Moreover, we have 
\begin{align*}
\begin{pmatrix}
1&j\\
0&p^k
\end{pmatrix}
\left\{-\frac{1}{\tau_0}\right\}
\otimes
\begin{pmatrix}
1&j\\
0&p^k
\end{pmatrix}
e_{\nu} 
&=
\begin{pmatrix}
1&j\\
0&p^k
\end{pmatrix}
\begin{pmatrix}
0&-1\\
1&0
\end{pmatrix}
\left\{\tau_0\right\}
\otimes
\begin{pmatrix}
1&j\\
0&p^k
\end{pmatrix}
e_{\nu}
\\
&=
\begin{pmatrix}
j'&b\\
p^{k-l}&d
\end{pmatrix}
\begin{pmatrix}
p^{l}&-d\\
0&p^{k-l}
\end{pmatrix}
\left\{\tau_0\right\}
\otimes
\begin{pmatrix}
1&j\\
0&p^k
\end{pmatrix}
e_{\nu}.
\end{align*}
Since 
\begin{align*}
\left[
\begin{pmatrix}
j'&b\\
p^{k-l}&d
\end{pmatrix}
\begin{pmatrix}
p^{l}&-d\\
0&p^{k-l}
\end{pmatrix}
\left\{\tau_0\right\}
\otimes
\begin{pmatrix}
1&j\\
0&p^k
\end{pmatrix}
e_{\nu}\right]
= \left[
\begin{pmatrix}
p^{l}&-d\\
0&p^{k-l}
\end{pmatrix}
\left\{\tau_0\right\}
\otimes
\begin{pmatrix}
d&-b\\
-p^{k-l}&j'
\end{pmatrix}
\begin{pmatrix}
1&j\\
0&p^k
\end{pmatrix}
e_{\nu}
\right]
\end{align*}
and $\begin{pmatrix}
d&-b\\
-p^{k-l}&j'
\end{pmatrix}
\begin{pmatrix}
1&j\\
0&p^k
\end{pmatrix}
e_{\nu} =   -E_{\nu,k,j}^{(0)}$, 
we obtain 
\begin{align*}
\left[\begin{pmatrix}
1&j\\
0&p^k
\end{pmatrix}
\left\{-\frac{1}{\tau_0}\right\}
\otimes
\begin{pmatrix}
1&j\\
0&p^k
\end{pmatrix}
e_{\nu} \right] 
= - \left[\left\{\frac{p^l\tau_0-d}{p^{k-l}}\right\} \otimes E_{\nu,k,j}^{(0)}\right]. 
\end{align*}

\end{proof}

\subsection{A cycle $\widetilde{T_p^m(C_{\nu}(\tau))}$ in $\cM\cS(\bH^\BS) \otimes (\cM_n \otimes \bQ)$}

In this subsection, we construct a cycle $\widetilde{T_p^m(C_{\nu}(\tau))}$ in $\cM\cS(\bH^\BS) \otimes (\cM_n \otimes \bQ)$ which is a lift of $T_p^m(C_{\nu}(\tau))$ and is $p$-adically integral for any sufficiently large integer $m$. 

\subsubsection{Bernoulli polynomials}
Since a key tool for constructing the cycle $\widetilde{T_p^m(C_{\nu}(\tau))}$ is the Bernoulli polynomials, we  briefly recall the basic properties of the Bernoulli polynomials.

Let $t$ be a non-negative integer. 
    We denote by $B_t(x)$ the $t$-th Bernoulli polynomial and by 
    \[
    B_t := B_t(0)
    \]
    the $t$-th Bernoulli number.  
For notational simplicity, we put 
\[
\widetilde{B}_t(x) := \frac{B_t(x)-B_t}{t}. 
\]
In this paper, we use the following well-known facts without any notice: 
\begin{gather*}
B_t(x) = \sum_{\mu=0}^{t}\binom{t}{\mu}B_{t-\mu}x^\mu, 
\\
\int_{x}^{x+1} B_t(x) \, dx= x^{t}, 
\\
\int_{a}^x B_t(x) \, dx= \frac{B_{t+1}(x) - B_{t+1}(a)}{t+1}, 
\\
\ord_p(B_t) \geq -1. 
\end{gather*}
The last fact is called the von Staudt--Clausen theorem. 
We note that the second and third facts imply that
\[
 \sum_{j=0}^{x-1}j^t =\widetilde{B}_{t+1}(x). 
\]

\subsubsection{$P^\dag$ and $P^\ddag$}
Set 
\[
\cM_{n, (p)}
:=
\bigoplus_{\mu = 0}^{n}\bZ_{(p)} X_1^{\mu}X_2^{n-\mu} 
\,\,\, \textrm{ and } \,\,\, 
\cM_{n, (p)}^{\circ}
:=
\bigoplus_{\mu = 0}^{n-1}\bZ_{(p)} X_1^{\mu}X_2^{n-\mu}, 
\]
and let 
\[
\dag\colon \mcM_{n, (p)}^{\circ} \otimes \Q \ra \cM_{n,(p)}\otimes \Q; 
P \mapsto P^{\dag}
\]
be the $\Q$-linear map defined by 
\[
(X_1^{\mu}X_2^{n-\mu})^{\dag}
:=
X_2^n\frac{B_{\mu+1}(X_1/X_2)-B_{\mu+1}}{\mu+1} = X_2^n \widetilde{B}_{\mu+1}(X_1/X_2).  
\]
For any integer $\mu \in \{0, \ldots, n-1\}$, we have $1 + \ord_p(\mu+1) \leq n$, and hence $p^nX_2^n \widetilde{B}_{\mu+1}(X_1/X_2) \in \cM_{n, (p)}$. 
This fact shows that 
\[
\dag(\cM_{n, (p)}^{\circ}) \subset \frac{1}{p^n}\cM_{n, (p)}. 
\]
Similarly, let 
\[
\ddag \colon \cM_{n, (p)}^{\circ} \otimes \Q \ra \cM_n \otimes \Q; 
P \mapsto P^{\ddag}
\]
be the $\Q$-linear map defined by 
\[
(X_1^{\mu}X_2^{n-\mu})^{\ddag}
:=
X_2^n\frac{(X_1/X_2)^{\mu+1}-B_{\mu+1}}{\mu+1}. 
\]
The following lemma follows from the definitions of $P^\dag$ and $P^\ddag$. 

\begin{lem}\label{lem:dagger-relations}
For any polynomial $P(X_1,X_2) \in \cM_n^{\circ} \otimes \Q$, we have 
\begin{align*}
    P^{\dag}(X_1+X_2, X_2)-P^{\dag}(X_1,X_2) &= P(X_1,X_2),  
\\
\int_{x}^{x+1}
P^{\dag}(z,1) \, dz
&=
P^{\ddag}(x,1),  
\\
\int_{x_1}^{x_2}
P(z,1) \, dz
&=
P^{\ddag}(x_2,1)
-
P^{\ddag}(x_1,1). 
\end{align*}
\end{lem}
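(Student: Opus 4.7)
The plan is to reduce each of the three identities to the case of monomials $P(X_1,X_2) = X_1^\mu X_2^{n-\mu}$ with $0 \le \mu \le n-1$, since both maps $\dag$ and $\ddag$ as well as both sides of each identity are $\bQ$-linear in $P$. Once reduced to this monomial case, every identity follows from the standard facts about the Bernoulli polynomials recalled just before the lemma, namely the difference formula $B_{t}(x+1)-B_{t}(x) = t\, x^{t-1}$ (which is equivalent to $\int_x^{x+1} B_t(x)\, dx = x^t$) together with the fundamental theorem of calculus.

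For the first identity, both sides are homogeneous of degree $n$ in $(X_1,X_2)$, so it is enough to verify it after setting $X_2 = 1$. On the monomial $P = X_1^\mu X_2^{n-\mu}$ this reduces to $\widetilde{B}_{\mu+1}(X_1+1)-\widetilde{B}_{\mu+1}(X_1) = X_1^\mu$, which is exactly the classical difference equation $B_{\mu+1}(x+1)-B_{\mu+1}(x) = (\mu+1)x^\mu$ divided by $\mu+1$. For the second identity, at $X_2 = 1$ we have $P^\dag(z,1) = \widetilde{B}_{\mu+1}(z)$, and integrating from $x$ to $x+1$ gives
\[
\int_x^{x+1} \widetilde{B}_{\mu+1}(z)\, dz
=
\frac{1}{\mu+1}\Bigl(\int_x^{x+1} B_{\mu+1}(z)\, dz - B_{\mu+1}\Bigr)
=
\frac{x^{\mu+1}-B_{\mu+1}}{\mu+1}
=
P^\ddag(x,1).
\]
For the third identity, $P(z,1) = z^\mu$, so $\int_{x_1}^{x_2} z^\mu\, dz = (x_2^{\mu+1}-x_1^{\mu+1})/(\mu+1)$, which matches $P^\ddag(x_2,1)-P^\ddag(x_1,1)$ since the constants $B_{\mu+1}/(\mu+1)$ cancel.

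There is no substantive obstacle here; the whole lemma is essentially a bookkeeping exercise that repackages classical Bernoulli identities in the homogeneous form adapted to the modules $\cM_n$. The only minor point to double-check is the homogenization step in (1), namely that replacing $X_1$ by $X_1/X_2$ and multiplying by $X_2^n$ is compatible with the substitution $X_1 \mapsto X_1 + X_2$; this is immediate from $((X_1+X_2)/X_2) = (X_1/X_2) + 1$.
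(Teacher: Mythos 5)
Your proposal is correct and matches the paper's (implicit) argument: the paper gives no written proof, simply asserting the lemma follows from the definitions of $P^\dag$ and $P^\ddag$ together with the recalled Bernoulli-polynomial identities, which is exactly the monomial-by-monomial verification you carry out.
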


\subsubsection{Definitions of polynomials $P^{(1)}_{\nu, k, j}$ and $P^{(0)}_{\nu, k, j}$}

\begin{dfn}\label{dfn:P}
For any integers $\nu$, $j$, and $k$ with $1 \leq \nu \leq n-1$ and $0\leq j \leq p^k-1$, we define polynomials $P_{\nu,k,j}^{(1)}$ and $P_{\nu,k,j}^{(0)}$ in $\mcM_n\otimes \Q$ by
\begin{align*}
P_{\nu,k,j}^{(1)} &:= E_{\nu,k,j}^{(1)\dag}, 
\\
P_{\nu,k,j}^{(0)} &:= E_{\nu,k,j}^{(0)\dag}. 
\end{align*}
\end{dfn}

\begin{lem}\label{lem:P-coeff order}
For each  $i \in \{0,1\}$,  we have $P_{\nu, k, j}^{(i)} \in p^{\min\{0, k-n\}}\cM_{n, (p)}$. 
\end{lem}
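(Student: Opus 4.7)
The plan is to derive an explicit closed-form expression for $P_{\nu,k,j}^{(i)}$ as a $\Q$-linear combination of special values of Bernoulli polynomials, and then bound the $p$-adic valuations coefficient-by-coefficient using the von Staudt--Clausen theorem.

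First, observe that both $E_{\nu,k,j}^{(1)}$ and $E_{\nu,k,j}^{(0)}$ have the shape $c\cdot (aX_1+bX_2)^m X_2^{n-m}$ for suitable $a,b,c \in \Z$ and $1 \leq m \leq n-1$: for $E_{\nu,k,j}^{(1)}$ one takes $(a,b,m,c) = (p^k, -j, \nu, 1)$, and for $E_{\nu,k,j}^{(0)}$ one takes $(a,b,m,c) = (p^{k-l}, d, n-\nu, (-1)^{\nu+1}p^{l\nu})$. Since $m \leq n-1$ in both cases, the $\dag$ operation is defined. Expanding $(aX_1+bX_2)^m X_2^{n-m}$ by the binomial theorem, applying $\dag$ monomial-wise, simplifying via $\frac{1}{\mu+1}\binom{\mu+1}{a'} = \frac{1}{a'}\binom{\mu}{a'-1}$, and then using the generating-function identity $\sum_{s=0}^{N}\binom{N}{s}B_s\,a^s b^{N-s} = a^N B_N(b/a)$ (a reformulation of $B_N(x) = \sum_s \binom{N}{s}B_{N-s}x^s$), one obtains the closed form
\begin{align*}
\bigl((aX_1+bX_2)^m X_2^{n-m}\bigr)^\dag = \sum_{a'=1}^{m+1} \frac{a^m}{a'}\binom{m}{a'-1}\, B_{m-a'+1}\!\left(\tfrac{b}{a}\right) X_1^{a'} X_2^{n-a'}.
\end{align*}

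The next step is to bound the $p$-adic valuation of each coefficient. Expand $B_N(b/a) = \sum_{s=0}^N \binom{N}{s}B_{N-s}b^s a^{-s}$ with $N = m - a'+1$, apply the von Staudt--Clausen bound $\ord_p(B_t) \geq -1$ to every nontrivial Bernoulli number, and isolate the $s = N$ term where $B_0 = 1$. For $P_{\nu,k,j}^{(1)}$ with $k \geq 1$ the minimum is realized at $s = N$, giving $\ord_p(c_{a'}) \geq k(a'-1) - \ord_p(a')$; when $k = 0$ necessarily $j = 0$ and the coefficient reduces to $\frac{1}{a'}\binom{\nu}{a'-1}B_{\nu-a'+1}$, giving $\ord_p(c_{a'}) \geq -\log_p(a') - 1$. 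For $P_{\nu,k,j}^{(0)}$ the case $k > l$ (equivalently $j \neq 0$) uses $\ord_p(d) = 0$ to produce $\ord_p(c_{a'}) \geq l\nu + (k-l)(a'-1) - \log_p(a')$; the subcase $l = k$ (i.e.\ $j=0$) reduces directly to $E^{(0)}_{\nu,k,0} = (-1)^{\nu+1}p^{k\nu}X_1^{n-\nu}X_2^\nu$, whence $\ord_p(c_{a'}) \geq k\nu - \log_p(a') - 1$.

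The claimed bound $\ord_p(c_{a'}) \geq \min\{0, k-n\}$ then follows in each subcase from the two elementary inequalities $a' - 1 \geq \log_p(a')$ (equivalently $p^{a'-1} \geq a'$) and $\log_p(n) + 1 \leq n$, both valid for $a' \geq 1$, $n \geq 2$, and $p \geq 2$. The main obstacle I anticipate is organizing the casework ($k = 0$ versus $k\geq 1$, and for $E^{(0)}$ additionally $j = 0$ versus $j \neq 0$) and verifying that the raw bounds coming from the closed form genuinely clear the threshold $\min\{0, k-n\}$ in each regime; the generating-function closed form is the decisive simplification that prevents the loss of an entire factor $p^n$ which a naive monomial-by-monomial estimate would incur via the bound $(X_1^\mu X_2^{n-\mu})^\dag \in p^{-n}\cM_{n,(p)}$.
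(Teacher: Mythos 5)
Your proof is correct, but it takes a much heavier route than the paper. The paper's entire argument is three observations: (i) $\dag(\cM_{n,(p)}^{\circ}) \subset p^{-n}\cM_{n,(p)}$ (already recorded just before the lemma, via $1+\ord_p(\mu+1)\leq n$); (ii) $(X_2^n)^{\dag}=X_1X_2^{n-1}$ is integral; and (iii) every coefficient of $X_1^{\mu}X_2^{n-\mu}$ with $\mu\geq 1$ in $E^{(i)}_{\nu,k,j}$ is divisible by $p^k$. The $\mu=0$ term then contributes integrally by (ii), and each $\mu\geq 1$ term contributes in $p^{k-n}\cM_{n,(p)}$ by (i) and (iii), which is exactly the claimed $p^{\min\{0,k-n\}}$. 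Your concern that ``a naive monomial-by-monomial estimate would incur'' a loss of $p^n$ is therefore slightly misplaced: the lemma's bound budgets precisely for that loss on the $\mu\geq 1$ part, and the only refinement genuinely needed beyond the crude estimate is the exact value of $(X_2^n)^{\dag}$. That said, your closed form $\bigl((aX_1+bX_2)^mX_2^{n-m}\bigr)^{\dag}=\sum_{a'=1}^{m+1}\frac{a^m}{a'}\binom{m}{a'-1}B_{m-a'+1}(b/a)X_1^{a'}X_2^{n-a'}$ is correct (I checked it against $m=0,1$ and the binomial identity $\frac{1}{\mu+1}\binom{\mu+1}{a'}=\frac{1}{a'}\binom{\mu}{a'-1}$), your case analysis is sound, and it buys strictly more than the lemma asks for: e.g.\ it shows $P^{(1)}_{\nu,k,j}\in\cM_{n,(p)}$ already for $k\geq 1$. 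So this is a valid alternative proof with sharper output, at the cost of considerably more bookkeeping than the paper's argument requires.
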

\begin{proof}
This lemma follows from the facts that  
$\dag(\cM_{n,(p)}^\circ) \subset p^{-n}\cM_{n,(p)}$ and  
$(X_2^n)^\dag = X_1X_2^{n-1}$ and that the coefficient of $X_1^{\mu}X_2^{n-\mu}$ in $E_{\nu,k,j}^{(i)}$ is divided by $p^k$ if $\mu\geq 1$. 
\end{proof}

\subsubsection{Definition of the cycle $\widetilde{T_p^m(C_{\nu}(\tau))}$}

Now we can define $\widetilde{C}_{\nu,k,j}(\tau_0, \tau_1)$ and $\widetilde{T_p^m(C_{\nu}(\tau))}$.

\begin{dfn}
Let $\tau_0, \tau_1 \in \bbH^{\BS}$. 
For any integers $\nu$, $k$, and $j$ with $1 \leq \nu \leq n-1$ and $0\leq j \leq p^k-1$, we define an element 
$\widetilde{C}_{\nu,k,j}(\tau_0, \tau_1) \in \cM\cS(\bH^{\BS}) \otimes (\cM_n \otimes \Q)$ by 
\begin{align*}
\widetilde{C}_{\nu,k,j}(\tau_0, \tau_1) :=
C_{\nu,k,j}(\tau_0, \tau_1) -
\left\{\frac{\tau_1+j}{p^k}, \frac{\tau_1+j}{p^k}+1\right\}
\otimes
P_{\nu,k,j}^{(1)} - \left\{\frac{p^{l}\tau_0-d}{p^{k-l}}, \frac{p^{l}\tau_0-d}{p^{k-l}}+1\right\}
\otimes P_{\nu,k,j}^{(0)}. 
\end{align*}
We also put 
\[
\widetilde{C}_{\nu,k,j}^{\rm int}(\tau_0, \tau_1) := p^{\max\{0, n-k\}}\widetilde{C}_{\nu,k,j}(\tau_0, \tau_1). 
\]
Note that $\widetilde{C}_{\nu,k,j}^{\rm int}(\tau_0, \tau_1) \in \cM\cS(\bH^{\BS}) \otimes \cM_{n, (p)}$ by Lemma \ref{lem:P-coeff order}. 

\end{dfn}

\begin{lem}\label{lem:partial=0-nu-k-j}
We have 
\[
\partial [\widetilde{C}_{\nu,k,j}^{\rm int}(\tau_0, \tau_1)] = 0 \,\,\, \textrm{ in } \,\,\,  (S_0(\bbH^{\BS})\otimes \mcM_{n, (p)})_{\Gamma}. 
\]
In particular, $[\widetilde{C}_{\nu,k,j}^{\rm int}(\tau_0, \tau_1)]$ defines a homology class 
\[
[\widetilde{C}_{\nu,k,j}^{\rm int}(\tau_0, \tau_1)] \in H_1(Y^{\BS}, \mcM_{n, (p)}). 
\]
\end{lem}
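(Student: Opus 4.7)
The plan is to compute $\partial[\widetilde{C}_{\nu,k,j}^{\rm int}(\tau_0,\tau_1)]$ directly by distributing the boundary over the three summands of $\widetilde{C}_{\nu,k,j}^{\rm int}$ and then showing that after passing to $\Gamma$-coinvariants, the three contributions cancel pairwise. The first summand $p^{\max\{0,n-k\}}\,[\partial C_{\nu,k,j}(\tau_0,\tau_1)]$ is given by Lemma~\ref{lem:boundary of C}; it equals the sum of
\[
p^{\max\{0,n-k\}}\left[\left\{\tfrac{\tau_1+j}{p^k}\right\}\otimes E^{(1)}_{\nu,k,j}\right] \quad\text{and}\quad p^{\max\{0,n-k\}}\left[\left\{\tfrac{p^l\tau_0-d}{p^{k-l}}\right\}\otimes E^{(0)}_{\nu,k,j}\right].
\]
The boundaries of the other two summands are the telescoping differences $\{a+1\}\otimes P-\{a\}\otimes P$ for $P=P^{(i)}_{\nu,k,j}$ and $a\in\{\tfrac{\tau_1+j}{p^k},\tfrac{p^l\tau_0-d}{p^{k-l}}\}$.

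Next I would use the unipotent translation $\gamma:=\begin{pmatrix}1&1\\0&1\end{pmatrix}\in\Gamma$, which acts on $\bbH^{\BS}$ by $z\mapsto z+1$. In coinvariants this gives
\[
[\{a+1\}\otimes P] = [\gamma\{a\}\otimes P] = [\{a\}\otimes \gamma^{-1}P],
\]
and a quick unwinding of the $M_2^+(\bZ)$-action shows $(\gamma^{-1}P)(X_1,X_2)=P(X_1+X_2,X_2)$. Hence
\[
[\{a+1\}\otimes P^{(i)}_{\nu,k,j}] - [\{a\}\otimes P^{(i)}_{\nu,k,j}] = [\{a\}\otimes(P^{(i)}_{\nu,k,j}(X_1+X_2,X_2)-P^{(i)}_{\nu,k,j}(X_1,X_2))].
\]
Since $P^{(i)}_{\nu,k,j}=E^{(i)\dag}_{\nu,k,j}$ by Definition~\ref{dfn:P}, the first identity of Lemma~\ref{lem:dagger-relations} gives $P^{(i)}_{\nu,k,j}(X_1+X_2,X_2)-P^{(i)}_{\nu,k,j}(X_1,X_2)=E^{(i)}_{\nu,k,j}$, so the last bracket is $[\{a\}\otimes E^{(i)}_{\nu,k,j}]$, which cancels precisely the corresponding piece from Lemma~\ref{lem:boundary of C}.

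Finally, I would address integrality: all three terms in $\widetilde{C}_{\nu,k,j}^{\rm int}$ lie in $\cM\cS(\bbH^{\BS})\otimes\cM_{n,(p)}$ by Lemma~\ref{lem:P-coeff order} and the choice of the factor $p^{\max\{0,n-k\}}$, so each individual boundary lies in $(S_0(\bbH^{\BS})\otimes\cM_{n,(p)})_\Gamma$. The cancellation identity $\gamma^{-1}P^{(i)}-P^{(i)}=E^{(i)}$ is a genuine equality of polynomials in $\cM_n\otimes\bQ$; multiplying through by $p^{\max\{0,n-k\}}$ yields an identity in $\cM_{n,(p)}$ because the right-hand side $p^{\max\{0,n-k\}}E^{(i)}_{\nu,k,j}$ is manifestly integral. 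Thus the same cancellation takes place in $(S_0(\bbH^{\BS})\otimes\cM_{n,(p)})_\Gamma$, giving $\partial[\widetilde{C}_{\nu,k,j}^{\rm int}(\tau_0,\tau_1)]=0$.

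The only real point requiring care is bookkeeping: ensuring that the $p$-power normalization $p^{\max\{0,n-k\}}$ is uniform across all three pieces of $\widetilde{C}_{\nu,k,j}^{\rm int}$, so that the cancellation computed over $\bQ$ actually lifts to $\bZ_{(p)}$-coefficients rather than being merely a $\bQ$-identity. Once that is checked, the argument is formal. Nothing deeper is needed; in particular we do not need to invoke non-trivial properties of Bernoulli numbers beyond the defining identity $\widetilde{B}_{t+1}(x+1)-\widetilde{B}_{t+1}(x)=x^t$ encoded in Lemma~\ref{lem:dagger-relations}.
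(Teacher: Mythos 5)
Your argument is correct and is precisely the one the paper intends: its proof of this lemma is a one-line citation of Definitions \ref{dfn:E} and \ref{dfn:P} and Lemmas \ref{lem:boundary of C} and \ref{lem:dagger-relations}, and you have simply written out the cancellation those references encode, including the translation-by-$\begin{pmatrix}1&1\\0&1\end{pmatrix}$ step and the integrality bookkeeping with the factor $p^{\max\{0,n-k\}}$. No gaps.
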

\begin{proof}
This lemma follows from Definitions \ref{dfn:E} and \ref{dfn:P} and Lemmas \ref{lem:boundary of C} and \ref{lem:dagger-relations}. 
\end{proof}

\begin{lem}\label{lem:indep-tau-C-nu-k-j}
    The homology class $[\widetilde{C}_{\nu,k,j}^{\rm int}(\tau_0, \tau_1)]$ is independent of the choices of $\tau_0$ and $\tau_1$. 
\end{lem}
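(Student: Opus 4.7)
The plan is to prove the stronger statement that $\widetilde{C}_{\nu,k,j}^{\rm int}(\tau_0', \tau_1') - \widetilde{C}_{\nu,k,j}^{\rm int}(\tau_0, \tau_1)$ already vanishes in the $\Gamma$-coinvariants $(\MS(\bbH^{\BS}) \otimes \mcM_{n,(p)})_\Gamma$; this implies independence of the homology class since $H_1(Y^{\BS}, \mcM_{n,(p)})$ is the kernel of $\partial$ on that coinvariant group. Treating the two variables separately, it suffices to handle the variations $\tau_1 \to \tau_1'$ with $\tau_0$ fixed, and $\tau_0 \to \tau_0'$ with $\tau_1$ fixed.

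For the $\tau_1$-variation, set $\alpha := (\tau_1 + j)/p^k$, $\alpha' := (\tau_1' + j)/p^k$, and $M := \begin{pmatrix} 1 & j \\ 0 & p^k \end{pmatrix}$, so that $M e_\nu = E^{(1)}_{\nu,k,j}$. A direct manipulation of the three defining terms of $\widetilde{C}_{\nu,k,j}$, combined with the modular symbol identity $\{\alpha, \alpha+1\} - \{\alpha', \alpha'+1\} = \{\alpha, \alpha'\} - \{\alpha+1, \alpha'+1\}$, expresses the difference as
\begin{align*}
\{\alpha, \alpha'\} \otimes E^{(1)}_{\nu,k,j} + \bigl(\{\alpha, \alpha'\} - \{\alpha+1, \alpha'+1\}\bigr) \otimes P^{(1)}_{\nu,k,j}.
\end{align*}
The key step now is the translation identity in $\Gamma$-coinvariants: using $T := \begin{pmatrix} 1 & 1 \\ 0 & 1 \end{pmatrix} \in \Gamma$, one has $\{\alpha+1, \alpha'+1\} \otimes P^{(1)}_{\nu,k,j} \equiv \{\alpha, \alpha'\} \otimes T^{-1} P^{(1)}_{\nu,k,j}$, and applying Lemma \ref{lem:dagger-relations} to $P^{(1)}_{\nu,k,j} = (E^{(1)}_{\nu,k,j})^\dag$ gives $T^{-1} P^{(1)}_{\nu,k,j} - P^{(1)}_{\nu,k,j} = E^{(1)}_{\nu,k,j}$. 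Substituting this produces a term-by-term cancellation.

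For the $\tau_0$-variation the pattern is identical, except that one must first rewrite the endpoints $(-1/\tau_0+j)/p^k, (-1/\tau_0'+j)/p^k$ appearing in $C_{\nu,k,j}$. This is achieved by the same $\Gamma$-factorization
\begin{align*}
\begin{pmatrix} 1 & j \\ 0 & p^k \end{pmatrix}\begin{pmatrix} 0 & -1 \\ 1 & 0 \end{pmatrix} = \begin{pmatrix} j' & b \\ p^{k-l} & d \end{pmatrix}\begin{pmatrix} p^l & -d \\ 0 & p^{k-l} \end{pmatrix}
\end{align*}
used in the proof of Lemma \ref{lem:boundary of C}: absorbing the $\Gamma$-matrix into the coinvariants converts the relevant modular symbol into one between $(p^l\tau_0 - d)/p^{k-l}$ and $(p^l\tau_0' - d)/p^{k-l}$, and replaces $M e_\nu$ by $-E^{(0)}_{\nu,k,j}$ in the coefficient. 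The argument then proceeds exactly as above, using the analogous identity $T^{-1} P^{(0)}_{\nu,k,j} - P^{(0)}_{\nu,k,j} = E^{(0)}_{\nu,k,j}$.

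Conceptually the argument is short; the main obstacle is bookkeeping---aligning the conventions for the left $M_2^+(\bZ)$-action on polynomials, the sign conventions for modular symbols, and the case distinctions built into the quadruple $(l, d, b, j')$. Once aligned, the correction terms $-\{\ast, \ast+1\} \otimes P^{(1)}_{\nu,k,j}$ and $-\{\ast, \ast+1\} \otimes P^{(0)}_{\nu,k,j}$ inserted into $\widetilde{C}_{\nu,k,j}$ are seen to be precisely those needed to absorb the $T$-coboundary ambiguity produced by the modular-symbol identity above, which is why the cancellation is exact at the chain level rather than merely homological.
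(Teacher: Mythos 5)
Your proposal is correct and is essentially the paper's own argument: the paper realizes your modular-symbol identity $\{\alpha,\alpha+1\}-\{\alpha',\alpha'+1\}=\{\alpha,\alpha'\}-\{\alpha+1,\alpha'+1\}$ as the boundaries of explicit $2$-chains $h_1,h_2,h_3$ in the simply connected $\bbH^{\BS}$, and then kills the residual terms $B^{(1)},B^{(0)}$ in the $\Gamma$-coinvariants via precisely your identity $T^{-1}P^{(i)}_{\nu,k,j}-P^{(i)}_{\nu,k,j}=E^{(i)}_{\nu,k,j}$ (stated there as $E^{(i)}-P^{(i)}(X_1+X_2,X_2)+P^{(i)}(X_1,X_2)=0$), together with the same $\Gamma$-factorization from Lemma \ref{lem:boundary of C} for the $\tau_0$-endpoint. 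The only bookkeeping you leave implicit is the overall factor $p^{\max\{0,n-k\}}$ needed to keep all coefficients in $\mcM_{n,(p)}$, which is harmless since every identity used is linear.
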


\begin{proof}
Let $\tau_0'$ and $\tau_1'$ be another pair of points in $\bH^{\BS}$. 
We will prove that
\[
[\widetilde{C}_{\nu,k,j}^{\rm int}(\tau_0, \tau_1)]
= 
[\widetilde{C}_{\nu,k,j}^{\rm int}(\tau_0', \tau_1')]
\,\,\,  
\text{ in } \,\,\,  
H_1(Y^{\BS}, \mcM_{n, {(p)}}). 
\]
It suffices to construct an element $h \in S_2(\bbH^{\BS})\otimes \mcM_{n, {(p)}}$ such that
\[
\partial [h] 
=
{[\widetilde{C}_{\nu,k,j}^{\rm int}(\tau_0, \tau_1)] - [\widetilde{C}_{\nu,k,j}^{\rm int}(\tau_0', \tau_1')]} \,\,\,    \text{ in } \,\,\,  (\MS(\bbH^{\BS})\otimes \mcM_{n, {(p)}})_{\Gamma}.  
\]
{
For notational simplicity, set $q:=p^{\max \{0, n-k\}}$. 
}
First, since $\bbH^{\BS}$ is simply connected, there exist elements 
$h_1, h_2, h_3 \in S_{2}(\bbH^{\BS})$  such that
\footnotesize
\begin{align*}
\partial h_1
&=
\left\{
\frac{-\frac{1}{\tau_0}+j}{p^k}, 
\frac{\tau_1+j}{p^k}
\right\}
-
\left\{
\frac{-\frac{1}{\tau_0'}+j}{p^k}, 
\frac{\tau_1'+j}{p^k}
\right\}
-\left\{
\frac{\tau_1'+j}{p^k}
\frac{\tau_1+j}{p^k}
\right\}
+
\left\{
\frac{-\frac{1}{\tau_0'}+j}{p^k}, 
\frac{-\frac{1}{\tau_0}+j}{p^k}, 
\right\},
\\
\partial h_2 &=
\left\{\frac{\tau_1+j}{p^k}, \frac{\tau_1+j}{p^k}+1\right\} -
\left\{\frac{\tau_1'+j}{p^k}, \frac{\tau_1'+j}{p^k}+1\right\} - \left\{\frac{\tau_1'+j}{p^k}+1, \frac{\tau_1+j}{p^k}+1\right\} + \left\{\frac{\tau_1'+j}{p^k}, \frac{\tau_1+j}{p^k}\right\}, 
\\
\partial h_3
&=
\left\{\frac{p^l\tau_0-d}{p^{k-l}}, \frac{p^l\tau_0-d}{p^{k-l}}+1\right\}
- \left\{\frac{p^l\tau_0'-d}{p^{k-l}}, \frac{p^l\tau_0'-d}{p^{k-l}}+1\right\} 
- \left\{\frac{p^l\tau_0'-d}{p^{k-l}}+1, \frac{p^l\tau_0-d}{p^{k-l}}+1\right\} + \left\{\frac{p^l\tau_0'-d}{p^{k-l}}, \frac{p^l\tau_0-d}{p^{k-l}}\right\}. 
\end{align*}
\normalsize
Then we see that 
\[
h := {
{q}\left(h_1 \otimes
\begin{pmatrix}
    1&j\\
    0&p^k
\end{pmatrix}
e_{\nu} - h_2 \otimes P_{\nu,k,j}^{(1)} - h_3 \otimes P_{\nu,k,j}^{(0)}\right)}
\]
satisfies the desired property. 
Indeed, we have
\begin{align*}
\partial h = {{q}
\left( \widetilde{C}_{\nu,k,j}(\tau_0, \tau_1) - \widetilde{C}_{\nu,k,j}(\tau_0', \tau_1') -B^{(1)}-B^{(0)}\right)}, 
\end{align*}
where
\footnotesize
\begin{align*}
B^{(1)}
&:=
\left\{\frac{\tau_1'+j}{p^k}\frac{\tau_1+j}{p^k}\right\} \otimes
\begin{pmatrix}
    1&j\\
    0&p^k
\end{pmatrix}
e_{\nu}
-
\left\{\frac{\tau_1'+j}{p^k}+1, \frac{\tau_1+j}{p^k}+1\right\}\otimes P_{\nu,k,j}^{(1)}
+ \left\{\frac{\tau_1'+j}{p^k}, \frac{\tau_1+j}{p^k}\right\}\otimes P_{\nu,k,j}^{(1)}, 
\\
B^{(0)}
&:= -\left\{\frac{-\frac{1}{\tau_0'}+j}{p^k}, \frac{-\frac{1}{\tau_0}+j}{p^k}, \right\}\otimes
\begin{pmatrix}
    1&j\\
    0&p^k
\end{pmatrix}
e_{\nu}
- \left\{\frac{p^l\tau_0'-d}{p^{k-l}}+1, \frac{p^l\tau_0-d}{p^{k-l}}+1\right\} \otimes P_{\nu,k,j}^{(0)}
+ \left\{\frac{p^l\tau_0'-d}{p^{k-l}}, \frac{p^l\tau_0-d}{p^{k-l}}\right\}\otimes P_{\nu,k,j}^{(0)}. 
\end{align*}
\normalsize
Then the same computation as in the proof of Lemma \ref{lem:boundary of C} shows that
\footnotesize
\begin{align*}
{q}B^{(1)}
&\equiv
\left\{
\frac{\tau_1'+j}{p^k}
\frac{\tau_1+j}{p^k}
\right\}
\otimes {q}
\lr{
E_{\nu,k,j}^{(1)}(X_1, X_2)
-
P_{\nu,k,j}^{(1)}(X_1+X_2, X_2)
+
P_{\nu,k,j}^{(1)}(X_1, X_2)
}
=0, 
\\
{q}B^{(0)}
&\equiv
\left\{
\frac{p^l\tau_0'-d}{p^{k-l}}+1, 
\frac{p^l\tau_0-d}{p^{k-l}}+1
\right\}
\otimes {q}
\lr{
E_{\nu,k,j}^{(0)}(X_1, X_2)
-
P_{\nu,k,j}^{(0)}(X_1+X_2, X_2)
+
P_{\nu,k,j}^{(0)}(X_1, X_2)
}
=0,  
\end{align*}
\normalsize
where $\equiv$ means that it is an equality in the $\Gamma$-coinvariant $(\MS(\bbH^{\BS})\otimes \mcM_{n, {(p)}})_{\Gamma}$. 
Hence we obtain {$\partial [h] = [\widetilde{C}_{\nu,k,j}^{\rm int}(\tau_0, \tau_1)] - [\widetilde{C}_{\nu,k,j}^{\rm int}(\tau_0', \tau_1')]$}. 
\end{proof}

\begin{dfn}\label{def:widetilde-T_p^m-C-nu-tau}
For any integer $m > 0$ and element $\tau \in \bbH^{\BS}$, we define a cycle $\widetilde{T_p^m(C_{\nu}(\tau))} \in \MS(\bbH^{\BS}) \otimes (\cM_{n} \otimes \Q)$ by 
\begin{align*}
&\widetilde{T_p^m(C_{\nu}(\tau))}\\
&:=
\sum_{A=0}^{\lfloor m/2 \rfloor}
C(m-A,A)
p^{(n+1)A}
\sum_{k=0}^{m-2A}
p^{(n-\nu)(m-2A-k)}
\sum_{j=0}^{p^k-1}
\widetilde{C}_{\nu,k,j}\left(\frac{\tau}{p^{m-2A-k}}, p^{m-2A-k}\tau \right). 
\end{align*}
\end{dfn}

\begin{dfn}
 For any integer $\nu \in \{0, \ldots, n\}$, we define a homology class \[
 [C_\nu] \in H_1(Y^{\rm BS}, \partial Y^{\rm BS}, \cM_{n})\] to be the element represented by the cycle 
 \[
 C_\nu := \{0, i\infty\} \otimes e_\nu, 
 \]
 where $i\infty \in \partial \bbH^{\BS}$ is a point such that 
     \[
     i \infty:=\lim_{t\in \Rpos, t \to \infty} it. 
     \]
 \end{dfn}

\begin{lem}\label{lem:T_p-cycle-integral}
Let $1 \leq \nu \leq n-1$ be an integer. 
\begin{itemize}
    \item[(1)] $\partial [\widetilde{T_p^m(C_{\nu}(\tau))}] = 0$ in $(S_0(\bbH^{\BS})\otimes \mcM_n\otimes\Q)_{\Gamma}$. 
    \item[(2)] If $m \geq n$, then we have 
    $\widetilde{T_p^m(C_{\nu}(\tau))} \in \cM\cS(\bH) \otimes \cM_{n,(p)}$. 
    Hence $\widetilde{T_p^m(C_{\nu}(\tau))}$ defines a $p$-integral homology class $[\widetilde{T_p^m(C_{\nu}(\tau))}]$ in $H_1(Y^{\BS}, \cM_{n,(p)})$ which is independent of the choice of $\tau$. 
    \item[(3)] If $m \geq n$, then the image of the homology class $[\widetilde{T_p^m(C_{\nu}(\tau))}]$ under the homomorphism $H_1(Y^{\BS}, \cM_{n,(p)}) \longrightarrow H_1(Y^{\rm BS}, \partial Y^{\rm BS}, \cM_{n,(p)})$  is 
    $T_p^m([C_{\nu}])$. 
\end{itemize}
\end{lem}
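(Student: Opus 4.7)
The proof splits into three claims. Claim~(1) is formal: $\widetilde{T_p^m(C_{\nu}(\tau))}$ is by construction a $\bQ$-linear combination of chains $\widetilde{C}_{\nu,k,j}(\tau_0,\tau_1)$, each of which has zero boundary in $(S_0(\bbH^\BS)\otimes\cM_n)_\Gamma$ by Lemma~\ref{lem:partial=0-nu-k-j}; linearity yields the vanishing of $\partial[\widetilde{T_p^m(C_{\nu}(\tau))}]$.

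For Claim~(2), the $p$-integrality reduces via Lemma~\ref{lem:P-coeff order} to verifying that the prefactor $c_{A,k}:=C(m-A,A)\,p^{(n+1)A+(n-\nu)(m-2A-k)}$ absorbs the denominator $p^{\max\{0,n-k\}}$ present in $P_{\nu,k,j}^{(i)}$. Equivalently, one checks the elementary inequality
\[
(n+1)A+(n-\nu)(m-2A-k)\geq n-k
\]
for $0\leq A$, $0\leq k\leq\min\{n,m-2A\}$, $1\leq\nu\leq n-1$, and $m\geq n$. Since the left-hand side minus the right-hand side is non-increasing in $k$, it suffices to check $k=\min\{n,m-2A\}$, and in either sub-case the inequality reduces to a non-negativity statement that follows immediately from $m\geq n$. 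The $\tau$-independence of the resulting homology class is then inherited summand-by-summand from Lemma~\ref{lem:indep-tau-C-nu-k-j}.

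The main work lies in Claim~(3). The strategy is to push $\tau$ to the boundary point $i\infty\in\partial\bbH^\BS$ and then to pass to relative homology. Crucially, the proof of Lemma~\ref{lem:indep-tau-C-nu-k-j} uses only the simple-connectedness of $\bbH^\BS$, so it remains valid when $\tau_0,\tau_1$ are allowed to lie on $\partial\bbH^\BS$. Applying it with $(\tau_0',\tau_1')=(i\infty,i\infty)$ yields
\[
[\widetilde{C}_{\nu,k,j}^{\mathrm{int}}(\tau_0,\tau_1)]=[\widetilde{C}_{\nu,k,j}^{\mathrm{int}}(i\infty,i\infty)]
\]
in $H_1(Y^\BS,\cM_{n,(p)})$. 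At $\tau_0=\tau_1=i\infty$, both subtracted paths in the definition of $\widetilde{C}_{\nu,k,j}$ degenerate to paths supported entirely in the $\R$-stratum of $\partial\bbH^\BS$ lying over $\infty\in\PP^1(\bQ)$: for instance $(p^l\tau_0-d)/p^{k-l}$ becomes the boundary point over $\infty$ with real part $-d/p^{k-l}$. Consequently, in the relative chain complex these subtracted terms vanish, so each $\widetilde{C}_{\nu,k,j}(i\infty,i\infty)$ reduces modulo $S_\bullet(\partial\bbH^\BS)$ to $C_{\nu,k,j}(i\infty,i\infty)=\{j/p^k,i\infty\}\otimes \begin{pmatrix}1&j\\0&p^k\end{pmatrix}e_\nu$. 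Summing over $A,k,j$ with the coefficients $c_{A,k}$, Corollary~\ref{cor:T_p^m} applied formally at $\tau=i\infty$ identifies the result with $T_p^m(\{0,i\infty\}\otimes e_\nu)=T_p^m([C_\nu])$. The step requiring the most care is this Borel--Serre limiting argument: one must invoke the explicit description of the stratum at $\infty\in\PP^1(\bQ)$ to ensure that the two endpoints of each subtracted path really are distinct points in the boundary, so that the connecting segment genuinely lies in $S_1(\partial\bbH^\BS)$ rather than being a spurious degeneration.
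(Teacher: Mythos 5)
Your proof is correct and follows essentially the same route as the paper: claims (1) and (2) rest on the same reductions to Lemmas \ref{lem:partial=0-nu-k-j}, \ref{lem:P-coeff order}, \ref{lem:indep-tau-C-nu-k-j} and the same elementary valuation inequality, and your degeneration of $\tau$ to the boundary component over $\infty$ (killing the two correction paths in the relative complex and identifying the surviving terms with $T_p^m([C_\nu])$ via Corollary \ref{cor:T_p^m}) is precisely the content behind the paper's terse assertion that (3) follows from the definition and Lemma \ref{lem:expand hecke}. Your closing caveat about the endpoints of the subtracted paths being distinct is unnecessary: all that matters is that both endpoints lie in the same connected component of $\partial \bbH^{\BS}$, so the class $\{\alpha,\beta\}$ vanishes in $\MS(\bbH^{\BS})/\MS(\partial\bbH^{\BS})$ whether or not the path degenerates.
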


\begin{proof}
Claim (1) follows from Lemma \ref{lem:partial=0-nu-k-j}. 
Let us show claim (2). 
By Lemma \ref{lem:P-coeff order}, we have $p^{\max\{0, n-k\}}\widetilde{C}_{\nu, k,j}(\tau_0, \tau_1) \in \cM\cS(\bH^\BS) \otimes \cM_{n, (p)}$. 
Moreover, since $1 \leq \nu \leq n-1$ and $2 \leq n$, we have 
\[
(n+1)A + (n - \nu)(m-2A-k) \geq (n+1)A + m-2A-k \geq m-k
\]
for any non-negative integer $A$. 
This fact shows that $\widetilde{T_p^m(C_{\nu}(\tau))} \in \cM\cS(\bH) \otimes \cM_{n,(p)}$ if $m \geq n$. 
It follows from Lemma \ref{lem:indep-tau-C-nu-k-j} that the homology class $[\widetilde{T_p^m(C_{\nu}(\tau))}]$ does not depend on the choice of $\tau$. 
    Claim (3) follows from the definition of $\widetilde{T_p^m(C_{\nu}(\tau))}$ and Lemma \ref{lem:expand hecke}. 

\end{proof}

\section{Period}\label{sec:Period}

The aim of this section is to compute the value
    \begin{align*}
        \brk{\Eis_{n}, \widetilde{T_p^{m!}(C_{\nu}(\tau))}} 
\end{align*} 
and its $p$-adic limit as $m \to \infty$. 
In this paper, we often consider the $p$-adic limit, and hence the symbol 
$\lim_{m \to \infty}$ will always mean the $p$-adic limit. 
The following is the main result of this section.

\begin{thm}\label{thm:Tp^m and Eis}
   For any integer $\nu \in \{1, \ldots, n-1\}$, we have 
   \begin{align*}
   \lim_{m \to \infty}\brk{\Eis_n, \widetilde{T_p^{m!}(C_{\nu}(\tau))}} 
   =
   (1-p^{n+1}) \left(\frac{1}{1-p^{n+1}}\frac{\zeta(-\nu)\zeta(\nu-n)}{\zeta(-1-n)} - 
   \frac{\zeta(-\nu)}{1-p^{n-\nu}}
   - 
   \frac{\zeta(\nu-n)}{1-p^{\nu}} 
   \right). 
   \end{align*}
\end{thm}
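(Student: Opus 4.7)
The plan is to unfold the definition of $\widetilde{T_p^m(C_\nu(\tau))}$ (Definition \ref{def:widetilde-T_p^m-C-nu-tau}) and evaluate each piece using the path integral formula $\brk{\Eis_n, \{\alpha,\beta\}\otimes P}=\int_\alpha^\beta E_{n+2}(z)P(z,1)\,dz$. Since $[\widetilde{T_p^m(C_\nu(\tau))}]$ is independent of $\tau$ for $m\geq n$ by Lemma \ref{lem:T_p-cycle-integral}(2), I would let $\tau$ approach the cusp $i\infty$. Under this limit, in each atomic cycle $\widetilde{C}_{\nu,k,j}(\tau_0,\tau_1)$ the endpoints $\alpha_1=(\tau_1+j)/p^k$ and $\alpha_0=(p^l\tau_0-d)/p^{k-l}$ both tend to $i\infty$, while the remaining endpoint $\alpha_0'=(-1/\tau_0+j)/p^k$ of the main segment tends to the rational cusp $j/p^k$. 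Consequently, only the cusps $j/p^k$ produce nontrivial boundary contributions.

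I would then split $E_{n+2}(z) = 1 + \frac{2}{\zeta(-1-n)}\sum_{a\geq 1}\sigma_{n+1}(a)e^{2\pi i a z}$ and evaluate each integral separately. The contribution from the constant term $1$, combined with the defining property $P^{(i)}_{\nu,k,j}=E^{(i)\dag}_{\nu,k,j}$ and the dagger identities of Lemma \ref{lem:dagger-relations}, telescopes cleanly into a pure polynomial expression. The oscillating contribution is computed by repeated integration by parts against the polynomial factor; at $i\infty$ all exponentials vanish, so only boundary terms at $j/p^k$ survive, producing sums of the shape $\sum_{a\geq 1}\sigma_{n+1}(a)e^{2\pi iaj/p^k}/a^s$ for certain integers $s$. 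Performing the sum over $j=0,\dots,p^k-1$ then activates the character-sum filter $\sum_{j} e^{2\pi iaj/p^k} = p^k\cdot \mathbf{1}_{p^k\mid a}$, restricting the Dirichlet series to multiples of $p^k$; using $\sigma_{n+1}(a)=\sum_{d\mid a}d^{n+1}$, this factors as $\zeta(s)\zeta(s-n-1)$ with explicit $p$-Euler factor corrections.

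Finally, I would reassemble the resulting sums using the weights $C(m-A,A)\,p^{(n+1)A}\,p^{(n-\nu)(m-2A-k)}$ from Definition \ref{def:widetilde-T_p^m-C-nu-tau}, and pass to the $p$-adic limit as $m = m'!\to\infty$. The $A\geq 1$ contributions become $p$-adically negligible via $p^{(n+1)A}$ together with the $p$-integrality of $C(m-A,A)$ and the uniform bound provided by the integrality of $\widetilde{C}^{\mathrm{int}}_{\nu,k,j}$ from Lemma \ref{lem:partial=0-nu-k-j}. The remaining geometric-type sums over $k$ (in $p^{n-\nu}$) and their dual counterparts (in $p^\nu$) assemble $p$-adically into the Euler factors $(1-p^{n-\nu})^{-1}$ and $(1-p^\nu)^{-1}$. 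Evaluating the Dirichlet series $\zeta(s)\zeta(s-n-1)$ at the critical integer values via the classical Bernoulli number formulas yields the three-term expression of the theorem: the constant-term contribution of $E_{n+2}$ produces the two additive corrections $\zeta(-\nu)/(1-p^{n-\nu})$ and $\zeta(\nu-n)/(1-p^\nu)$ via the Bernoulli polynomial identities for $P^{(i)}$, while the Fourier-series contribution produces the main ratio $\zeta(-\nu)\zeta(\nu-n)/\zeta(-1-n)$, the denominator $\zeta(-1-n)$ coming from the normalization of $E_{n+2}$ itself.

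The main obstacle is the triple bookkeeping: one must simultaneously coordinate the combinatorial sum over $(A,k,j)$, the Dirichlet sum over $a$, and the $p$-adic limit, while carefully tracking the $\ord_p$-contributions of $C(m-A,A)$ and of the Euler-factor-like remainders so that only the advertised terms survive. A cleaner conceptual picture is to view the result as a ``diagonal restriction'' of a product of two Eisenstein distributions, with the overall prefactor $(1-p^{n+1})$ reflecting the normalization of $\Eis_n$ against its Hecke eigenvalue $1+p^{n+1}$ at $p$.
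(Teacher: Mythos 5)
Your overall route is essentially the paper's: unfold the cycle, evaluate the path integrals against $E_{n+2}$ split into its constant term and Fourier tail, let $\tau$ go to the cusp, filter with the character sum over $j$, sum the geometric series over $k$, and pass to the $p$-adic limit; your identification of which piece produces which of the three terms (the two additive corrections from the Bernoulli-polynomial terms $E^{(i)\ddag}$, the main ratio from the Fourier tail) is also correct. However, there is a genuine error in your treatment of the outer sum over $A$. You assert that the $A\geq 1$ contributions are $p$-adically negligible; if that were so, the limit would equal $\lim_{m\to\infty}\msW^{(m!)}$, i.e.\ the parenthetical expression \emph{without} the prefactor $1-p^{n+1}$, contradicting the statement you are proving. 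In fact the $A=1$ term survives: $C(m!-1,1)=\binom{m!}{1}-\binom{m!}{0}=m!-1\to -1$ in $\bZ_p$, so that term converges to $-p^{n+1}\cdot\lim_{m\to\infty}\msW^{(m)}$, and it is exactly this surviving term (the terms with fixed $A\geq 2$ do vanish, since then both $\binom{m!}{A}$ and $\binom{m!}{A-1}$ tend to $0$) that produces the factor $1-p^{n+1}$. The paper's Lemma \ref{lem:limit lemma} handles this by first replacing $\msW^{(m!-2A)}$ by its limit and then evaluating $\sum_{A}C(m!-A,A)p^{(n+1)A}$ via the binomial theorem, using $(1+p^{n+1})^{m!}\to 1$. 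Your closing remark about the Hecke eigenvalue $1+p^{n+1}$ points at the right answer, but the mechanism you actually describe would not produce it.

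A secondary, repairable gap: integrating the Fourier tail termwise against $z^{\nu}$ out to $i\infty$ produces the series $\sum_{a\geq 1}\sigma_{n+1}(a)e^{2\pi i a j/p^k}a^{-(\nu+1)}$, which diverges for every $\nu\leq n+1$, so the interchange of summation and integration is not legitimate as stated. The paper regularizes by introducing the Mellin transform $\msL_{k,j}(s)$ for $\re(s)>n+1$, proving its meromorphic continuation and holomorphy at $s=\nu$ (Lemma \ref{lem:meromorphic continuation}), and only then specializing; some such analytic-continuation step is needed before the functional equation can be invoked to convert $\zeta(\nu+1)\zeta(\nu-n)$ into $\zeta(-\nu)\zeta(\nu-n)/\zeta(-1-n)$.
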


In fact, we show in \S\ref{sec:proof of thm Tp^m and Eis} that 
\begin{align}\label{eqn:limit of Wm}
\begin{split}
\lim_{m \to \infty}\sum_{k=0}^{m}
       p^{(n-\nu)(m-k)}
       &\sum_{j=0}^{p^k-1}\brk{\Eis_n, \widetilde{C}_{\nu,k,j}\left(\frac{\tau}{p^{m-k}}, p^{m-k}\tau \right)} \\
       &= 
       \frac{1}{1-p^{n+1}}\frac{\zeta(-\nu)\zeta(\nu-n)}{\zeta(-1-n)} - 
   \frac{\zeta(-\nu)}{1-p^{n-\nu}}- 
   \frac{\zeta(\nu-n)}{1-p^{\nu}}.  
\end{split}
\end{align}
Hence Theorem \ref{thm:Tp^m and Eis} follows from \eqref{eqn:limit of Wm} and the following lemma.

\begin{lem}\label{lem:limit lemma}
Suppose that the $p$-adic limit 
\[
\lim_{m \to \infty}\sum_{k=0}^{m}
       p^{(n-\nu)(m-k)}
       \sum_{j=0}^{p^k-1}\brk{\Eis_n, \widetilde{C}_{\nu,k,j}\left(\frac{\tau}{p^{m-k}}, p^{m-k}\tau \right)} 
\]
exists. 
    We then have 
    \begin{align*}
       \lim_{m \to \infty}
       \brk{\Eis_n, \widetilde{T_p^{m!}(C_{\nu}(\tau))}} 
       = 
       (1-p^{n+1})\lim_{m \to \infty}\sum_{k=0}^{m}
       p^{(n-\nu)(m-k)}
       \sum_{j=0}^{p^k-1}\brk{\Eis_n, \widetilde{C}_{\nu,k,j}\left(\frac{\tau}{p^{m-k}}, p^{m-k}\tau \right)}. 
    \end{align*}
\end{lem}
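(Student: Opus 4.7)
The plan is to introduce the auxiliary lifted cycle
\[
\widetilde{W}_m(C_\nu(\tau)) := \sum_{k=0}^m p^{(n-\nu)(m-k)}\sum_{j=0}^{p^k-1}\widetilde{C}_{\nu,k,j}\!\left(\tfrac{\tau}{p^{m-k}}, p^{m-k}\tau\right),
\]
so that Definition \ref{def:widetilde-T_p^m-C-nu-tau} rewrites as
\[
\widetilde{T_p^m(C_\nu(\tau))} = \sum_{A=0}^{\lfloor m/2\rfloor} C(m-A,A)\,p^{(n+1)A}\,\widetilde{W}_{m-2A}(C_\nu(\tau)).
\]
Writing $S_m := \langle \Eis_n, \widetilde{W}_m(C_\nu(\tau))\rangle$ and $L := \lim_{m\to\infty} S_m$ (which exists by hypothesis), the problem reduces, after splitting $S_{m!-2A} = L + (S_{m!-2A}-L)$, to establishing two claims:
(a) $\sum_{A=0}^{\lfloor m!/2\rfloor}C(m!-A,A)\,p^{(n+1)A} \to 1 - p^{n+1}$ as $m\to\infty$;
(b) $\sum_{A=0}^{\lfloor m!/2\rfloor}C(m!-A,A)\,p^{(n+1)A}(S_{m!-2A}-L) \to 0$ as $m\to\infty$.

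For (a), the inductive argument behind Lemma~\ref{lem:expand hecke}(2) is really a polynomial identity in commuting indeterminates,
\[
(X+Y)^m = \sum_{A=0}^{\lfloor m/2\rfloor} C(m-A,A)(XY)^A\sum_{j=0}^{m-2A}X^{m-2A-j}Y^j,
\]
provable in $\bZ[X,Y]$ by the same three-term recursion $W_{m+1}=(X+Y)W_m - XY\cdot W_{m-1}$. Specializing $X = 1$, $Y = p^{n+1}$ and multiplying by $1-p^{n+1}$ yields
\[
(1-p^{n+1})(1+p^{n+1})^m = \sum_A C(m-A,A)\,p^{(n+1)A} - p^{n+1}\sum_A C(m-A,A)\,p^{(n+1)(m-A)}.
\]
The residual sum on the right has every term divisible by $p^{(n+1)\lceil m/2\rceil}$ and so is $p$-adically negligible. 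Since $1+p^{n+1}$ is a principal unit (lying in $1+p^2\bZ_p$ because $n+1 \geq 3$), the continuous exponentiation $x\mapsto (1+p^{n+1})^x$ on $\bZ_p$, together with $m!\to 0$ in $\bZ_p$, gives $(1+p^{n+1})^{m!} \to 1$, proving (a).

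For (b), I would argue by a standard ultrametric splitting. Given target precision $p^{-N}$, let $p^B$ be a uniform bound on $|S_k-L|_p$ (such a bound exists because $S_k\to L$), and set $A_0 := \lceil (N+B)/(n+1)\rceil$. For $A\geq A_0$ we have $|p^{(n+1)A}|_p\leq p^{-N-B}$, so every such term is bounded by $p^{-N}$. For the finitely many $A < A_0$, the integers $m!-2A$ tend to $\infty$ with $m$, hence $|S_{m!-2A}-L|_p < p^{-N}$ for $m$ sufficiently large. The ultrametric inequality then gives the overall bound $p^{-N}$, and letting $N\to\infty$ proves (b).

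The only nontrivial step is (a): one must recognize that the combinatorial content of Lemma~\ref{lem:expand hecke}(2) carries over to $\bZ[X,Y]$, choose the correct specialization $X=1, Y=p^{n+1}$, and isolate the ``telescoped'' tail. Once this polynomial identity is in hand, the remainder of the argument is routine $p$-adic analysis using only the hypothesized convergence of $S_m$.
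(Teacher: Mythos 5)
Your proof is correct and follows essentially the same strategy as the paper's: write the cycle as $\sum_A C(m!-A,A)p^{(n+1)A}\widetilde W_{m!-2A}$, split $S_{m!-2A}=L+(S_{m!-2A}-L)$, kill the error sum by the standard ultrametric estimate (bounded sequence plus high powers of $p$ for large $A$, convergence for the finitely many small $A$), and identify $\sum_A C(m!-A,A)p^{(n+1)A}$ with $(1-p^{n+1})(1+p^{n+1})^{m!}$ up to a $p$-adically negligible tail, concluding via $(1+p^{n+1})^{m!}\to 1$. The only cosmetic difference is that you obtain the coefficient identity from the polynomial version of Lemma \ref{lem:expand hecke} specialized at $X=1$, $Y=p^{n+1}$, whereas the paper telescopes directly using $C(m!-A,A)=\binom{m!}{A}-\binom{m!}{A-1}$; both are valid.
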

\begin{proof}
For notational simplicity, we put 
\[
\msW^{(m)} := \sum_{k=0}^{m}
       p^{(n-\nu)(m-k)}
       \sum_{j=0}^{p^k-1}\brk{\Eis_n, \widetilde{C}_{\nu,k,j}\left(\frac{\tau}{p^{m-k}}, p^{m-k}\tau \right)}
\,\,\, \textrm{ and } \,\,\,  \msW := \lim_{m \to \infty}\msW^{(m)}. 
 \]
We then have 
\[
\brk{\Eis_n, \widetilde{T_p^{m!}(C_{\nu}(\tau))}}  = 
\sum_{A=0}^{m!/2}
C(m!-A,A)
p^{(n+1)A}
\msW^{(m!-2A)}. 
\]
Take a positive integer $Q$. Then there is a positive integer $r$ such that 
$\msW^{(s)} - \msW \in p^Q\bZ_p$ for any integer $s \geq r$. 
Hence we have 
\begin{align*}
    &\sum_{A=0}^{\lfloor m/2 \rfloor}C(m-A,A)p^{(n+1)A}(\msW^{(m-2A)} - \msW) 
\\
&\equiv   
\sum_{A=\lfloor (m-r)/2 \rfloor + 1}^{\lfloor m/2 \rfloor}C(m-A,A)p^{(n+1)A}(\msW^{(m-2A)} - \msW) \pmod{p^Q\bZ_p}. 
\end{align*}
The sequence $(\msW^{(m)} - \msW)_{m= 0}^{\infty}$ is bounded in $\bQ_p$, and hence for any sufficiently large integer  $m$, we have 
\[
\sum_{A=\lfloor (m-r)/2 \rfloor + 1}^{\lfloor m/2 \rfloor}C(m-A,A)p^{(n+1)A}(\msW^{(m-2A)} - \msW)  \in p^Q\bZ_p. 
\]
This implies that 
\[
\lim_{m \to \infty}
\brk{\Eis_n, \widetilde{T_p^{m!}(C_{\nu}(\tau))}}  = \lim_{m \to \infty}
\sum_{A=0}^{m!/2}
C(m!-A,A)
p^{(n+1)A}
\msW. 
\]
Since $C(m!-A,A) = \binom{m!}{A} - \binom{m!}{A-1}$ (note that $\binom{m}{-1} = 0$), we have 
\begin{align*}
\sum_{A=0}^{ m!/2}C(m!-A,A) p^{(n+1)A} 
&= \sum_{A=0}^{ m!/2}\binom{m!}{A} p^{(n+1)A} - \sum_{A=0}^{ m!/2 - 1}\binom{m!}{A} p^{(n+1)(A+1)} 
\\
&= (1-p^{n+1}) \sum_{A=0}^{ m!/2  - 1}\binom{m!}{A} p^{(n+1)A} 
+ \binom{m!}{ m!/2} p^{(n+1) m!/2 }. 
\end{align*}
Since 
\begin{align*}
\sum_{A=0}^{ m!/2  - 1}\binom{m!}{A} p^{(n+1)A}  &\equiv \sum_{A=0}^{ m! }\binom{m!}{A} p^{(n+1)A}  \pmod{p^{m!/2}}
\\
&= (1+p^{n+1})^{m!}, 
\end{align*}
we obtain that 
\[
\lim_{m \to \infty}\sum_{A=0}^{ m!/2}C(m!-A,A) p^{(n+1)A} = (1-p^{n+1}) \lim_{m \to \infty} (1+p^{n+1})^{m!} = 1-p^{n+1}, 
\]
which completes the proof. 
\end{proof}

Therefore, it remains to prove \eqref{eqn:limit of Wm}, and this will be done in  Proposition \ref{prop:main term limit}.

\subsection{$\brk{\Eis_n, \widetilde{C}_{\nu,k,j}\left(\tau_0, \tau_1\right)}$}

We start with computing the value $\brk{\Eis_n, \widetilde{C}_{\nu,k,j}\left(\tau_0, \tau_1\right)}$. 
In this subsection, we fix integers $k$ and $j$ with $0 \leq j \leq p^k - 1$. Recall that  
\[
l := l_k(j) \,\,\, \textrm{ and } \,\,\, d := d_{k - l_k(j)}(j') = d_{k - l_k(j)}(j/p^{l_k(j)}) 
\]
are taken as in Definition \ref{dfn:l,d,b} and \eqref{eqn:l,d,b} and  that 
\begin{align*}
\widetilde{C}_{\nu,k,j}
\left(
\tau_0, \tau_1
\right)
=
\left\{
\frac{-\frac{1}{\tau_0}+j}{p^k}, 
\frac{\tau_1+j}{p^k}
\right\}
\otimes
(p^kX_1-jX_2)^{\nu}X_2^{n-\nu}
&-
\left\{
\frac{\tau_1+j}{p^k}, 
\frac{\tau_1+j}{p^k}+1
\right\}
\otimes
P_{\nu,k,j}^{(1)}\\
&-
\left\{
\frac{p^l\tau_0-d}{p^{k-l}}, 
\frac{p^l\tau_0-d}{p^{k-l}}+1
\right\}
\otimes
P_{\nu,k,j}^{(0)}. 
\end{align*}
Hence we have
\small
\begin{align}
\begin{split}\label{eq:paring-C_nu-k-j and Eis}
    \brk{\Eis_n, \widetilde{C}_{\nu,k,j}
\left(
\tau_0, \tau_1
\right)}
&=
\int_{\frac{-\frac{1}{\tau_0}+j}{p^k}}^
{\frac{\tau_1+j}{p^k}}
E_{n+2}(z)
(p^kz-j)^{\nu}\, dz\\
&\quad -
\int_{\frac{\tau_1+j}{p^k}}^{\frac{\tau_1+j}{p^k}+1}
E_{n+2}(z)
P_{\nu,k,j}^{(1)}(z,1)\, dz
-
\int_{\frac{p^l\tau_0-d}{p^{k-l}}}^{\frac{p^l\tau_0-d}{p^{k-l}}+1}
E_{n+2}(z)
P_{\nu,k,j}^{(0)}(z,1)\, dz\\
&=
\frac{1}{p^k}
\int_{-\frac{1}{\tau_0}}^{\tau_1}
E_{n+2}\left(\frac{z+j}{p^k}\right)
z^{\nu}\, dz\\
&\quad -
\int_{\frac{\tau_1+j}{p^k}}^{\frac{\tau_1+j}{p^k}+1}
E_{n+2}(z)
P_{\nu,k,j}^{(1)}(z,1)\, dz
-
\int_{\frac{p^l\tau_0-d}{p^{k-l}}}^{\frac{p^l\tau_0-d}{p^{k-l}}+1}
E_{n+2}(z)
P_{\nu,k,j}^{(0)}(z,1)\, dz. 
\end{split}
\end{align}
\normalsize
The definition of the Eisenstein series $E_{n+2}$ shows that 
\begin{align*}
E_{n+2}\left(\frac{z+j}{p^k}\right)-1
&=
O(e^{-2\pi\im(z)}) 
\quad \text{ for } \im(z) \geq 1, 
\\
E_{n+2}\left(\frac{z+j}{p^k}\right)-\frac{p^{l(n+2)}}{z^{n+2}}
&=
O
\left(
\frac{p^{l(n+2)}}{|z|^{n+2}}e^{-2\pi p^{2l-k}/\im(z)}
\right) 
\quad \text{ for } \im(z)\leq 1, 
\end{align*}
where $f(z) = O(g(z))$ means that there is a constant $C$ which does not depend on $k$ and $j$ such that $|f(z)| \leq Cg(z)$. 
Set
\begin{align*}
\msL_{k,j}(s)
:=
\int_{0}^{\infty}
\lr{
E_{n+2}\lr{\frac{iy+j}{p^k}}-1
}
y^{s}dy. 
\end{align*}

We have the following Lemma \ref{lem:meromorphic continuation} and Proposition \ref{prop:C_nu,k,j and Eis-pairing computation}, whose proofs will be given in \S\ref{sec:Computation of the first term} and \S\ref{sec:proof of prop cpmputation pairing}, respectively.

\begin{lem}\label{lem:meromorphic continuation}
The function 
$\msL_{k,j}(s)$  converges for $\re(s)>n+1$, and continued to a meromorphic function on $\C$. 
Moreover, it has at most simple poles at $s=-1$ and $s=n+1$. In particular, $\msL_{k,j}(s)$ is holomorphic at $s=\nu$ for any integer $1 \leq \nu \leq n-1$. 
\end{lem}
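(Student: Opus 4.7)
\medskip

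\textbf{Proof plan.} My approach is to split the integral at $y=1$ and analyze the two tails separately using the asymptotic estimates for $E_{n+2}\bigl((z+j)/p^k\bigr)$ provided just before the statement. Specifically, I write
\[
\mathscr{L}_{k,j}(s) = \int_0^1 \!\Bigl(E_{n+2}\bigl(\tfrac{iy+j}{p^k}\bigr) - 1\Bigr) y^{s}\, dy + \int_1^\infty \!\Bigl(E_{n+2}\bigl(\tfrac{iy+j}{p^k}\bigr) - 1\Bigr) y^{s}\, dy.
\]

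The integral over $[1,\infty)$ is handled immediately by the first asymptotic: since $E_{n+2}\bigl((iy+j)/p^k\bigr) - 1 = O(e^{-2\pi y /p^k})$ for $y\geq 1$ and this bound is uniform on compact subsets of $\mathbb{C}$ when multiplied by $y^s$, the tail integral converges absolutely and defines an entire function of $s$.

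For the integral over $(0,1]$, I use the second asymptotic, which (with $z=iy$ and noting that $(iy)^{n+2} = (-1)^{(n+2)/2} y^{n+2}$ since $n$ is even) isolates the leading singular behavior $c_n p^{l(n+2)}/y^{n+2}$ at $y \to 0$, where $c_n := (-1)^{(n+2)/2}$. I then split
\[
E_{n+2}\bigl(\tfrac{iy+j}{p^k}\bigr) - 1 = \Bigl(\tfrac{c_n p^{l(n+2)}}{y^{n+2}} - 1\Bigr) + \Bigl(E_{n+2}\bigl(\tfrac{iy+j}{p^k}\bigr) - \tfrac{c_n p^{l(n+2)}}{y^{n+2}}\Bigr).
\]
The remainder term is, by the asymptotic, of order $y^{-(n+2)} e^{-2\pi p^{2l-k}/y}$ as $y\to 0^+$, hence multiplication by $y^s$ and integration over $(0,1]$ yields an entire function of $s$. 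The explicit leading term contributes
\[
c_n p^{l(n+2)} \int_0^1 y^{s-n-2}\, dy - \int_0^1 y^s\, dy = \frac{c_n p^{l(n+2)}}{s-n-1} - \frac{1}{s+1},
\]
valid first for $\mathrm{Re}(s) > n+1$ and then by analytic continuation to all of $\mathbb{C}$, with simple poles only at $s=n+1$ and $s=-1$.

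Combining the three pieces proves the convergence for $\mathrm{Re}(s)>n+1$, the meromorphic continuation, and the assertion that the only possible poles are simple poles at $s=n+1$ and $s=-1$. Since $1\leq\nu\leq n-1$ avoids both, $\mathscr{L}_{k,j}(s)$ is holomorphic at $s=\nu$. The only mildly delicate point is verifying that the asymptotic estimates cited from the paper are indeed uniform enough in $y$ to be multiplied by $y^s$ and integrated; this is routine since both error terms decay faster than any polynomial as $y\to\infty$ or $y\to 0^+$ respectively.
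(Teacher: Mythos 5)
Your proof is correct and follows essentially the same route as the paper: split the integral into a tail at infinity (entire, by the exponential decay of $E_{n+2}-1$) and a piece near $0$ from which the singular term $p^{l(n+2)}/(iy)^{n+2}$ and the constant $1$ are subtracted, leaving an entire remainder plus the explicit terms $\frac{c_np^{l(n+2)}}{s-n-1}-\frac{1}{s+1}$ carrying the two simple poles. The paper merely places its cut points at $1/t_0$ and $t_1$ instead of $1$ (because it is simultaneously evaluating the pairing in Proposition \ref{prop:C_nu,k,j and Eis-pairing computation}), so there is no substantive difference.
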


\begin{prop}\label{prop:C_nu,k,j and Eis-pairing computation}
We have
\[
\brk{\Eis_n, \widetilde{C}_{\nu,k,j}
\left(
\tau_0, \tau_1
\right)}
=
\frac{i^{\nu+1}}{p^k}
\msL_{k,j}(\nu)
-
E_{\nu,k,j}^{(1) \ddag}
\lr{
\frac{j}{p^k}, 1
}
-
E_{\nu,k,j}^{(0) \ddag}
\lr{
-\frac{d}{p^{k-l}}, 1
}. 
\]
\end{prop}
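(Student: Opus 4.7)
By Lemma \ref{lem:indep-tau-C-nu-k-j}, the pairing $\brk{\Eis_n, \widetilde{C}_{\nu,k,j}(\tau_0, \tau_1)}$ is independent of $(\tau_0, \tau_1)$, so I would set $\tau_0 = \tau_1 = it$ with $t > 0$ large and evaluate the three integrals $I_1, I_2, I_3$ appearing on the right-hand side of \eqref{eq:paring-C_nu-k-j and Eis} in the limit $t \to \infty$. The strategy is to show that $I_2$ and $I_3$ produce the closed-form Bernoulli-type contributions $E^{(1)\ddag}_{\nu,k,j}(j/p^k,1)$ and $E^{(0)\ddag}_{\nu,k,j}(-d/p^{k-l},1)$ plus explicit divergent terms, and that $I_1$ produces $\tfrac{i^{\nu+1}}{p^k}\msL_{k,j}(\nu)$ plus the same divergent terms, so that everything cancels.

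For $I_2(it)$, on the integration path the function $E_{n+2}(z)$ differs from $1$ by $O(e^{-ct})$, so after replacing $E_{n+2}$ by $1$ and using $P^{(1)}_{\nu,k,j}=E^{(1)\dag}_{\nu,k,j}$ with Lemma \ref{lem:dagger-relations}, the integral equals $E^{(1)\ddag}_{\nu,k,j}\bigl(\tfrac{it+j}{p^k},1\bigr) + O(e^{-ct})$. Expanding $E^{(1)}_{\nu,k,j}(X_1,X_2)=(p^kX_1-jX_2)^\nu X_2^{n-\nu}$ and evaluating $\ddag$ explicitly, I would use the identity $\sum_{\mu}\binom{\nu}{\mu}p^{k\mu}(-j)^{\nu-\mu}\tfrac{X^{\mu+1}}{\mu+1}=\tfrac{(p^kX-j)^{\nu+1}-(-j)^{\nu+1}}{p^k(\nu+1)}$ to derive
\[
E^{(1)\ddag}_{\nu,k,j}\!\Bigl(\tfrac{it+j}{p^k},1\Bigr) = E^{(1)\ddag}_{\nu,k,j}(j/p^k,1) + \tfrac{(it)^{\nu+1}}{p^k(\nu+1)}.
\]
An entirely analogous computation, using that $p^{k-l}\cdot z + d = ip^lt$ when $z=(ip^lt-d)/p^{k-l}$, gives
\[
I_3(it) = E^{(0)\ddag}_{\nu,k,j}(-d/p^{k-l},1) + (-1)^{\nu+1}i^{n-\nu+1}\tfrac{p^{l(n+2)-k}t^{n-\nu+1}}{n-\nu+1} + O(e^{-ct}).
\]

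For $I_1(it)$, I would parametrize $z=iy$ to get $I_1(it) = \tfrac{i^{\nu+1}}{p^k}\int_{1/t}^{t}E_{n+2}(\tfrac{iy+j}{p^k})y^\nu\,dy$ and split at $y=1$. On $[1,t]$ the integrand minus its limit value decays exponentially and contributes $\int_{1}^{\infty}(E_{n+2}(\tfrac{iy+j}{p^k})-1)y^\nu\,dy$ plus $\tfrac{t^{\nu+1}}{\nu+1}$. To analyze $[1/t,1]$ I would produce a matrix $\gamma = \begin{pmatrix}-d&b\\p^{k-l}&-j'\end{pmatrix}\in\Gamma$ (with $j=p^lj'$) sending the cusp $j/p^k$ to $i\infty$: a direct check shows $\gamma\cdot\tfrac{iy+j}{p^k} = -\tfrac{d}{p^{k-l}}+\tfrac{ip^{2l-k}}{y}$ and $E_{n+2}\bigl(\tfrac{iy+j}{p^k}\bigr) = (p^l/(iy))^{n+2}E_{n+2}(\gamma\cdot\tfrac{iy+j}{p^k})$ by weight-$(n+2)$ modularity. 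Substituting $u=1/y$ converts the near-cusp region into an integral over $[1,t]$ involving $E_{n+2}(-d/p^{k-l}+ip^{2l-k}u)u^{n-\nu}$, which produces the divergent term $(p^l/i)^{n+2}\tfrac{t^{n-\nu+1}}{n-\nu+1}$ together with the second half of the meromorphic continuation formula for $\msL_{k,j}(s)$ at $s=\nu$. Reassembling, $I_1(it)$ equals $\tfrac{(it)^{\nu+1}}{p^k(\nu+1)} + (-1)^{\nu+1}i^{n-\nu+1}\tfrac{p^{l(n+2)-k}t^{n-\nu+1}}{n-\nu+1} + \tfrac{i^{\nu+1}}{p^k}\msL_{k,j}(\nu) + o(1)$; subtracting $I_2$ and $I_3$ cancels both divergent terms exactly, leaving the claimed identity.

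\textbf{Main obstacle.} The hard part is the treatment of $I_1$ near the rational cusp $j/p^k$: identifying the correct $\Gamma$-matrix $\gamma$ in terms of the data $(l,d,b)$ from Definition \ref{dfn:l,d,b}, applying the modular transformation, and matching the resulting expression, after the change of variables $u=1/y$, to the precise integral representation of the meromorphic continuation of $\msL_{k,j}(s)$ provided by Lemma \ref{lem:meromorphic continuation}. The algebraic identity rewriting $E^{(i)\ddag}$ so as to extract exactly the polynomial pieces $\tfrac{(it)^{\nu+1}}{p^k(\nu+1)}$ and $(-1)^{\nu+1}i^{n-\nu+1}\tfrac{p^{l(n+2)-k}t^{n-\nu+1}}{n-\nu+1}$ then has to agree, up to sign and power of $i$ (using $n$ even), with the divergent contributions of $I_1$; verifying this cancellation is the crux of the argument.
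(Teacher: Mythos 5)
Your proposal is correct and follows essentially the same route as the paper: split the pairing into the three integrals of \eqref{eq:paring-C_nu-k-j and Eis}, extract the values $E^{(1)\ddag}_{\nu,k,j}(j/p^k,1)$ and $E^{(0)\ddag}_{\nu,k,j}(-d/p^{k-l},1)$ from the second and third integrals via Lemma \ref{lem:dagger-relations} together with explicit divergent polynomial terms, match these against the divergent terms produced by the first integral near $i\infty$ and near the cusp $j/p^k$ (your explicit matrix $\gamma=\begin{pmatrix}-d&b\\p^{k-l}&-j'\end{pmatrix}$ is exactly the source of the paper's asymptotic $E_{n+2}((z+j)/p^k)\sim p^{l(n+2)}/z^{n+2}$ and of the decomposition $\msS_{k,j}=\msL_{k,j}-\msR^{(0)}_{k,j}-(\text{pole terms})$), and let $t\to\infty$. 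The sign and power-of-$i$ bookkeeping you flag as the crux does check out, using $n$ even, exactly as in the paper's §\ref{sec:proof of prop cpmputation pairing}.
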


Note that since $\partial [\widetilde{C}_{\nu,k,j}\left(\tau_0, \tau_1\right)]=0$ by Lemma \ref{lem:partial=0-nu-k-j}, the value $\brk{\Eis_n, \widetilde{C}_{\nu,k,j}
\left(
\tau_0, \tau_1
\right)}$ 
does not depend on the choices of $\tau_0$ and $\tau_1$. 
Therefore, in the following we take $\tau_0=it_0$ and $\tau_1=it_1$ for $t_0, t_1 \in \Rpos$.

\subsubsection{Computation of the first term of \eqref{eq:paring-C_nu-k-j and Eis}}\label{sec:Computation of the first term}

Here we compute the first term of \eqref{eq:paring-C_nu-k-j and Eis}: 
\[
\frac{1}{p^k}
\int_{\frac{i}{t_0}}^{it_1}
E_{n+2}\left(\frac{z+j}{p^k}\right)
z^{\nu} \, dz. 
\]
This integral is transformed as follows:
\begin{align*}
&\frac{1}{p^k}
\int_{\frac{i}{t_0}}^{it_1}
E_{n+2}\left(\frac{z+j}{p^k}\right)
z^{\nu}\, dz
\\
&= \frac{1}{p^k}
\int_{\frac{i}{t_0}}^{\infty}
\left(
E_{n+2}\left(\frac{z+j}{p^k}\right)-1
\right)
z^{\nu} \, dz
+
\frac{1}{p^k}
\int_{\frac{i}{t_0}}^{it_1}
z^{\nu}\, dz
-
\frac{1}{p^k}
\int_{it_1}^{\infty}
\left(
E_{n+2}\left(\frac{z+j}{p^k}\right)-1
\right)
z^{\nu} \,
dz\\
&=
\frac{i^{\nu+1}}{p^k}
\left\{
\int_{\frac{1}{t_0}}^{\infty}
\lr{
E_{n+2}\lr{\frac{iy+j}{p^k}}-1
}
y^{\nu}\,dy
+
\int_{\frac{1}{t_0}}^{t_1}
y^{\nu}\,dy
-
\int_{t_1}^{\infty}
\lr{
E_{n+2}\lr{\frac{iy+j}{p^k}}-1
}
y^{\nu}\,dy
\right\}. 
\end{align*}
Set
\begin{align*}
\msS_{k,j}(t_0,t_1,s)
&:=
\int_{\frac{1}{t_0}}^{\infty}
\lr{
E_{n+2}\lr{\frac{iy+j}{p^k}}-1
}
y^{s} \,dy
+
\int_{\frac{1}{t_0}}^{t_1}
y^{s}\,dy, 
\\
\msR_{k,j}^{(1)}(t_1,s)
&:=
\int_{t_1}^{\infty}
\lr{
E_{n+2}\lr{\frac{iy+j}{p^k}}-1
}
y^{s}\,dy, 
\\
\msR_{k,j}^{(0)}(t_0,s)
&:=
\int_{0}^{\frac{1}{t_0}}
\lr{
E_{n+2}\lr{
\frac{iy+j}{p^k}
}
-
\frac{p^{l(n+2)}}{(iy)^{n+2}}
}
y^s\,dy. 
\end{align*}
Then we have 
\begin{align*}
\frac{1}{p^k}
\int_{\frac{i}{t_0}}^{it_1}
E_{n+2}\left(\frac{z+j}{p^k}\right)
z^{\nu}\,dz 
=
\frac{i^{\nu+1}}{p^k}
\lr{
\msS_{k,j}(t_0,t_1,\nu)
-
\msR_{k,j}^{(1)}(t_1,\nu)}.
\end{align*}
Now, we see that 
\begin{itemize}
    \item the first terms of $\msS_{k,j}(t_0,t_1,s)$ and $\msR^{(i)}_{k,j}(t_i,s)$ converge for all $s \in \C$, 
    \item the second term of $\msS_{k,j}(t_0,t_1,s)$ is meromorphic and has at most simple pole at $s=-1$. 
\end{itemize}
In addition, we also see that 
\begin{align*}
\msS_{k,j}(t_0,t_1,s)
&=
\msL_{k,j}(s)
-
\msR_{k,j}^{(0)}(t_0,s)
-
\int_{0}^{\frac{1}{t_0}}
\frac{p^{l(n+2)}}{(iy)^{n+2}}y^s\,dy
+
\int_{0}^{t_1}
y^s\,dy \\
&=
\msL_{k,j}(s)
-
\msR_{k,j}^{(0)}(t_0,s)
-
\frac{p^{l(n+2)}}{i^{n+2}}
\frac{1}{s-n-1}
\frac{1}{t_0^{s-n-1}}
+
\frac{1}{s+1}
t_1^{s+1}. 
\end{align*} 
In particular, all of these functions are meromorphically continued to $s \in \C$ and are holomorphic at $s=\nu$. This proves Lemma \ref{lem:meromorphic continuation}, and moreover, we get the following. 

\begin{lem}
    \begin{align*}
&\frac{1}{p^k}
\int_{\frac{i}{t_0}}^{it_1}
E_{n+2}\left(\frac{z+j}{p^k}\right)
z^{\nu}\,dz\\
&=
\frac{i^{\nu+1}}{p^k}
\left\{
\msL_{k,j}(\nu)
+
\frac{t_1^{\nu+1}}{\nu+1}
+
\frac{p^{l(n+2)}}{i^{n+2}}
\frac{t_0^{n-\nu+1}}{n-\nu+1}
-
\msR_{k,j}^{(1)}(t_1,\nu)
-
\msR_{k,j}^{(0)}(t_0,\nu)
\right\}. 
\end{align*}
\end{lem}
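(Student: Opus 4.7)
The plan is to derive the lemma by directly combining the two identities already established in the preceding text, specialized to $s = \nu$. In the calculation above the lemma, the substitution $z = iy$ together with the trivial decomposition that isolates the constant term $1$ of the $q$-expansion of $E_{n+2}$ yields
\[
\frac{1}{p^k}\int_{i/t_0}^{it_1} E_{n+2}\!\left(\tfrac{z+j}{p^k}\right) z^{\nu}\,dz = \frac{i^{\nu+1}}{p^k}\Bigl(\msS_{k,j}(t_0, t_1, \nu) - \msR_{k,j}^{(1)}(t_1, \nu)\Bigr).
\]
The second identity,
\[
\msS_{k,j}(t_0, t_1, s) = \msL_{k,j}(s) - \msR_{k,j}^{(0)}(t_0, s) - \frac{p^{l(n+2)}}{i^{n+2}}\frac{1}{(s-n-1)\,t_0^{\,s-n-1}} + \frac{t_1^{s+1}}{s+1},
\]
is obtained by splitting the integral defining $\msL_{k,j}(s)$ at $y = 1/t_0$, subtracting the dominant term $p^{l(n+2)}/(iy)^{n+2}$ on the subinterval $(0, 1/t_0)$ to extract $\msR_{k,j}^{(0)}(t_0, s)$, and evaluating the resulting elementary integrals $\int_0^{1/t_0} y^{s-n-2}\,dy$ and $\int_0^{t_1} y^s\,dy$ in closed form.

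Substituting $s = \nu$ into the second identity and plugging into the first, the lemma follows upon noting that $-1/(\nu - n - 1) = 1/(n - \nu + 1)$. Since $1 \leq \nu \leq n-1$, both denominators $\nu + 1$ and $n - \nu + 1$ are strictly positive integers, so no poles are encountered in the specialization.

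The one point that requires attention is the justification that all quantities in the final formula are finite at $s = \nu$. The functions $\msR_{k,j}^{(0)}(t_0, s)$ and $\msR_{k,j}^{(1)}(t_1, s)$ are entire in $s$ because the respective integrands decay exponentially as $y \to 0$ and $y \to \infty$ thanks to the bounds on $E_{n+2}((iy+j)/p^k)$ recalled above; the potential poles of $\msL_{k,j}(s)$ at $s = -1$ and $s = n+1$ lie outside the range $1 \leq \nu \leq n-1$ by Lemma \ref{lem:meromorphic continuation}. Hence the formula makes sense and holds as an identity of holomorphic functions specialized at $s = \nu$, which proves the lemma. This argument is essentially bookkeeping, so I do not expect a serious obstacle; the only routine care needed is to verify the signs and powers of $i$ coming from the change of variables $z = iy$.
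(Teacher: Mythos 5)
Your proposal is correct and follows essentially the same route as the paper: decompose the path integral to isolate the constant term of $E_{n+2}$, substitute $z=iy$ to get $\frac{i^{\nu+1}}{p^k}(\msS_{k,j}(t_0,t_1,\nu)-\msR^{(1)}_{k,j}(t_1,\nu))$, then rewrite $\msS_{k,j}$ via $\msL_{k,j}$, $\msR^{(0)}_{k,j}$ and the two elementary integrals, and specialize at $s=\nu$ using $-1/(\nu-n-1)=1/(n-\nu+1)$. The holomorphy check at $s=\nu$ matches the paper's remark following Lemma \ref{lem:meromorphic continuation}.
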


\subsubsection{Computation of the second and the third terms of \eqref{eq:paring-C_nu-k-j and Eis}}\label{sec: Computation of the second and the third terms}

Here we compute
\[
\int_{\frac{\tau_1+j}{p^k}}^{\frac{\tau_1+j}{p^k}+1}
E_{n+2}(z)
P_{\nu,k,j}^{(1)}(z,1) \, dz
+
\int_{\frac{p^l\tau_0-d}{p^{k-l}}}^{\frac{p^l\tau_0-d}{p^{k-l}}+1}
E_{n+2}(z)
P_{\nu,k,j}^{(0)}(z,1) \, dz. 
\]
We put 
\begin{align*}
\msT_{\nu,k,j}(\tau_0, \tau_1)
:=
\int_{\frac{\tau_1+j}{p^k}}^{\frac{\tau_1+j}{p^k}+1}
\lr{
E_{n+2}(z)-1
}
P_{\nu,k,j}^{(1)}(z,1) \, dz
+
\int_{\frac{p^l\tau_0-d}{p^{k-l}}}^{\frac{p^l\tau_0-d}{p^{k-l}}+1}
\lr{
E_{n+2}(z)-1
}
P_{\nu,k,j}^{(0)}(z,1)\, dz. 
\end{align*}
Note that $\msT_{\nu,k,j}(\tau_0, \tau_1) \to 0$ as $\tau_0, \tau_1 \to \infty$. 
On the other hand by Lemma \ref{lem:dagger-relations}, we have
\begin{align*}
\int_{\frac{\tau_1+j}{p^k}}^{\frac{\tau_1+j}{p^k}+1}
P_{\nu,k,j}^{(1)}(z,1) \, dz
&=
E_{\nu,k,j}^{(1)\ddag}
\lr{
\frac{\tau_1+j}{p^k}, 1
}\\
&=
E_{\nu,k,j}^{(1)\ddag}
\lr{
\frac{j}{p^k}, 1
}
+
\int_{\frac{j}{p^k}}^{\frac{\tau_1+j}{p^k}}
E_{\nu,k,j}^{(1)}(z,1)
dz\\
&=
E_{\nu,k,j}^{(1)\ddag}
\lr{
\frac{j}{p^k}, 1
}
+
\int_{\frac{j}{p^k}}^{\frac{\tau_1+j}{p^k}}
(p^kz-j)^{\nu} \, 
dz\\
&=
E_{\nu,k,j}^{(1)\ddag}
\lr{
\frac{j}{p^k}, 1
}
+
\frac{1}{p^k}
\frac{(it_1)^{\nu+1}}{\nu+1}. 
\end{align*}
Similarly we have
\begin{align*}
\int_{\frac{p^l\tau_0-d}{p^{k-l}}}^{\frac{p^l\tau_0-d}{p^{k-l}}+1}
P_{\nu,k,j}^{(0)}(z,1)\,dz
=
E_{\nu,k,j}^{(0)\ddag}
\lr{
-\frac{d}{p^{k-l}}, 1
}
+
(-1)^{\nu+1}
\frac{p^{l(n+2)}}{p^k}
\frac{(it_0)^{n-\nu+1}}{n-\nu+1}. 
\end{align*}

\subsubsection{Proof of Proposition \ref{prop:C_nu,k,j and Eis-pairing computation}}\label{sec:proof of prop cpmputation pairing}

By combining the computations in subsections \ref{sec:Computation of the first term} and \ref{sec: Computation of the second and the third terms}, we find
\begin{align*}
&\brk{\Eis_n, \widetilde{C}_{\nu,k,j}
\left(
\tau_0, \tau_1
\right)}\\
&=
\frac{i^{\nu+1}}{p^k}
\msL_{\nu,k,j}(\nu)
+
\frac{1}{p^k}
\frac{(it_1)^{\nu+1}}{\nu+1}
+
(-1)^{\nu-1}
\frac{p^{l(n+2)}}{p^k}
\frac{(it_0)^{n-\nu+1}}{n-\nu+1}
\\
&\quad-
E_{\nu,k,j}^{(1)\ddag}
\lr{
\frac{j}{p^k}, 1
}
-
\frac{1}{p^k}
\frac{(it_1)^{\nu+1}}{\nu+1}
-
E_{\nu,k,j}^{(0)\ddag}
\lr{
-\frac{d}{p^{k-l}}, 1
}
-
(-1)^{\nu+1}
\frac{p^{l(n+2)}}{p^k}
\frac{(it_0)^{n-\nu+1}}{n-\nu+1}\\
&\quad -
\frac{i^{\nu+1}}{p^k}
\msR_{k,j}^{(1)}(t_1,\nu)
-
\frac{i^{\nu+1}}{p^k}
\msR_{k,j}^{(0)}(t_0,\nu)
-\msT_{k,j}(\tau_0,\tau_1), 
\end{align*}
and hence
\begin{align*}
\brk{\Eis_n, \widetilde{C}_{\nu,k,j}
\left(
\tau_0, \tau_1
\right)} 
&=
\frac{i^{\nu+1}}{p^k}
\msL_{k,j}(\nu)
-
E_{\nu,k,j}^{(1)\ddag}
\lr{
\frac{j}{p^k}, 1
}
-
E_{\nu,k,j}^{(0)\ddag}
\lr{
-\frac{d}{p^{k-l}}, 1
}\\
&\quad -
\frac{i^{\nu+1}}{p^k}
\msR_{k,j}^{(1)}(t_1,\nu)
-
\frac{i^{\nu+1}}{p^k}
\msR_{k,j}^{(0)}(t_0,\nu)
-\msT_{k,j}(\tau_0,\tau_1). 
\end{align*}
Since the value $\brk{\widetilde{C}_{\nu,k,j}
\left(
\tau_0, \tau_1
\right), 
\Eis_n}$ does not depend on $t_0$ and $t_1$, we can take the limit $t_0, t_1 \to \infty$. 
Then the last three terms vanish and we obtain the desired identity. 
\qed

\subsection{Summation over $j$}

In this subsection, we compute the sum 
\begin{align*}
\sum_{j=0}^{p^k-1}
\brk{\Eis_n, \widetilde{C}_{\nu,k,j}
\left(
\tau_0, \tau_1
\right)} 
=
\sum_{j=0}^{p^k-1}
\left\{
\frac{i^{\nu+1}}{p^k}
\msL_{k,j}(\nu)
-
E_{\nu,k,j}^{(1)\ddag}
\lr{
\frac{j}{p^k}, 1
}
-
E_{\nu,k,j}^{(0)\ddag}
\lr{
-\frac{d}{p^{k-l}}, 1
}
\right\}. 
\end{align*}
We keep using the abbreviation 
\[
l := l_k(j) \,\,\, \textrm{ and } \,\,\, d := d_{k - l_k(j)}(j') = d_{k - l_k(j)}(j/p^{l_k(j)}) 
\]
that are actually depending on $k$ and $j$. 
Recall that $\widetilde{B}_t(x) = (B_t(x) - B_t)/t$. 

\begin{lem}\label{lem:L, E^1, E^0}
\
\begin{enumerate}
\item 
We have
\begin{align*}
\frac{i^{\nu+1}}{p^k}\sum_{j=0}^{p^k-1}
\msL_{k,j}(\nu)
=
\frac{(1-p^{(n+1)(k+1)})-(1-p^{(n+1)k})p^{n-\nu}}{1-p^{n+1}}
\frac{\zeta(-\nu)\zeta(\nu-n)}{\zeta(-1-n)}. 
\end{align*}
\item 
We have
\begin{align*}
\sum_{j=0}^{p^k-1}
E_{\nu,k,j}^{(1)\ddag}
\lr{
\frac{j}{p^k}, 1
}
=
\frac{(-1)^{\nu}}{(\nu+1)}
\frac{1}{p^k}
\widetilde{B}_{\nu + 2}(p^k)
+
\sum_{j=0}^{p^k-1}
E_{\nu,k,j}^{(1), \ddag}
\lr{
0, 1
}. 
\end{align*}
\item 
We have
\begin{align*}
&\sum_{j=0}^{p^k-1}
E_{\nu,k,j}^{(0)\ddag}
\lr{
-\frac{d}{p^{k-l}}, 1
}\\
&=
\frac{(-1)^{\nu}}{n-\nu+1}
\frac{1}{p^k}
\sum_{l'=0}^{k-1}
p^{l'(\nu+1)}
\lr{
\widetilde{B}_{n-\nu+2}(p^{k-l'})
-p^{n-\nu+1}
\widetilde{B}_{n-\nu+2}(p^{k-l'-1})
} 
+
\sum_{j=0}^{p^k-1}
E_{\nu,k,j}^{(0)\ddag}
\lr{
0, 1
}. 
\end{align*}
\end{enumerate}
\end{lem}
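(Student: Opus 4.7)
The three claims will be handled independently, with part (1) being analytic and parts (2)--(3) being elementary calculations with the $\ddag$-operator.

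For part (1), I use the Fourier expansion
\[
E_{n+2}\!\lr{\frac{iy+j}{p^k}} - 1 = \frac{2}{\zeta(-1-n)}\sum_{k'\geq 1}\sigma_{n+1}(k')\,e^{-2\pi k' y/p^k}\,e^{2\pi i k' j/p^k},
\]
substitute into $\msL_{k,j}(s)$ in the convergent range $\re(s) > n+1$, interchange sum and integral, and sum over $j$. Orthogonality $\sum_{j=0}^{p^k-1}e^{2\pi i k' j/p^k}$ vanishes unless $p^k \mid k'$, so writing $k' = p^k m$ yields
\[
\sum_{j=0}^{p^k-1}\msL_{k,j}(s) = \frac{2p^k\,\Gamma(s+1)}{(2\pi)^{s+1}\,\zeta(-1-n)}\sum_{m \geq 1}\frac{\sigma_{n+1}(p^k m)}{m^{s+1}}.
\]
The Dirichlet series is evaluated by splitting $m = p^a m_0$ with $p \nmid m_0$ and using multiplicativity of $\sigma_{n+1}$ to factor out $\sigma_{n+1}(p^{k+a})$; the remaining sum over $m_0$ equals $\zeta(s+1)\zeta(s-n)(1-p^{-s-1})(1-p^{n-s})$, while the geometric sum over $a$ is summed in closed form via $\sigma_{n+1}(p^b) = (p^{(b+1)(n+1)}-1)/(p^{n+1}-1)$. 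Specializing to $s = \nu$ and applying the functional equation $\zeta(-\nu) = 2i^{\nu+1}(2\pi)^{-\nu-1}\Gamma(\nu+1)\zeta(\nu+1)$ (valid for $\nu$ odd; both sides vanishing for even $\nu$) converts $\Gamma(\nu+1)\zeta(\nu+1)/(2\pi)^{\nu+1}$ into $i^{\nu+1}\zeta(-\nu)/2$. Multiplying by $i^{\nu+1}/p^k$ absorbs the remaining power of $i$ via $i^{2(\nu+1)} = (-1)^{\nu+1}$, and a routine identity identifies $(p^{(k+1)(n+1)}(1-p^{-\nu-1}) - (1-p^{n-\nu}))/(p^{n+1}-1)$ with the target rational factor. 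Meromorphic continuation past the region $\re(s) > n+1$ is supplied by Lemma~\ref{lem:meromorphic continuation}.

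Parts (2) and (3) both hinge on the monomial-wise identity
\[
P^\ddag(x,1) = \int_0^x P(z,1)\, dz + P^\ddag(0,1),
\]
immediate from the definition of $\ddag$. Hence each sum splits into a ``boundary'' piece $\sum_j E^{(i)\ddag}_{\nu,k,j}(0,1)$ plus an integral piece. For (2), with $E^{(1)}_{\nu,k,j}(z,1) = (p^k z - j)^\nu$, integration from $0$ to $j/p^k$ gives $(-1)^\nu j^{\nu+1}/(p^k(\nu+1))$, and the identity $\sum_{j=0}^{p^k-1}j^{\nu+1} = \widetilde{B}_{\nu+2}(p^k)$ assembles the claim. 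For (3), with $E^{(0)}_{\nu,k,j}(z,1) = (-1)^{\nu+1}p^{l\nu}(p^{k-l}z + d)^{n-\nu}$, integration from $0$ to $-d/p^{k-l}$ contributes $(-1)^\nu p^{l(\nu+1)-k}d^{n-\nu+1}/(n-\nu+1)$. The combinatorial heart is the summation over $j$: I stratify by $l' = l_k(j)$, observing that for each $l' \in \{0,\dots,k-1\}$, the map $j' \mapsto d_{k-l'}(j')$ is a bijection on $(\bZ/p^{k-l'}\bZ)^\times$, so
\[
\sum_{\substack{j:\, l_k(j)=l'}} d^{n-\nu+1} = \sum_{\substack{0<d<p^{k-l'}\\ p\nmid d}}d^{n-\nu+1} = \widetilde{B}_{n-\nu+2}(p^{k-l'}) - p^{n-\nu+1}\widetilde{B}_{n-\nu+2}(p^{k-l'-1}),
\]
the last equality arising from subtracting the contribution of $d$ divisible by $p$ (reparametrized via $d = p d'$); the $j=0$ term contributes zero since $d = 0$ there. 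Combining across $l'$ yields the formula.

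The principal obstacle is the Dirichlet-series manipulation and the careful sign tracking in part (1), where the functional equation must be invoked in a manner consistent with the factor $i^{\nu+1}/p^k$ for both parities of $\nu$ (the even case serving as a sanity check that both sides vanish). Parts (2) and (3) are routine modulo the integral representation of $\ddag$ and the bijectivity of $j' \mapsto d$ on units modulo $p^{k-l}$.
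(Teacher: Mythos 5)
Your proposal is correct and follows essentially the same route as the paper's proof: the Mellin transform of the Fourier expansion with the orthogonality relation over $j$, the factorization of the Dirichlet series $\sum_{\mu}\sigma_{n+1}(p^k\mu)\mu^{-s-1}$ via multiplicativity and the closed form for $\sigma_{n+1}(p^{k+a})$, and the functional equation at $s=\nu$ for part (1); and for parts (2)--(3) the splitting $P^{\ddag}(x,1)=\int_0^xP(z,1)\,dz+P^{\ddag}(0,1)$ followed by the power-sum identity $\sum_j j^{\nu+1}=\widetilde{B}_{\nu+2}(p^k)$ and, in (3), the stratification by $l'=l_k(j)$ together with the permutation $j'\mapsto d_{k-l'}(j')$ on units mod $p^{k-l'}$. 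No gaps.
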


\begin{proof}
\item[(1)]
Recall that 
\begin{align*}
\msL_{k,j}(s)
=
\int_{0}^{\infty}
\lr{
E_{n+2}\lr{\frac{iy+j}{p^k}}-1
}
y^{s}dy. 
\end{align*}
Hence we have 
\begin{align*}
\frac{1}{p^k}\sum_{j=0}^{p^k-1}
\msL_{k,j}(s)
&=
\frac{1}{p^k}\sum_{j=0}^{p^k-1}
\frac{2}{\zeta(-1-n)}
\sum_{\mu=1}^{\infty}
\sigma_{n+1}(\mu)
e^{\frac{2\pi i \mu j}{p^k}}
\int_{0}^{\infty}
e^{-\frac{2\pi \mu y}{p^k}}y^{s+1}\frac{dy}{y}\\
&=
\frac{1}{p^k}\frac{2}{\zeta(-1-n)}\sum_{j=0}^{p^k-1}
\sum_{\mu=1}^{\infty}
\sigma_{n+1}(\mu)
e^{\frac{2\pi i \mu j}{p^k}}
\frac{p^{k(s+1)}}{(2\pi \mu)^{s+1}}\Gamma(s+1)\\
&=
\frac{2}{\zeta(-1-n)}
\frac{\Gamma(s+1)p^{ks}}{(2\pi)^{s+1}}
\sum_{\mu=1}^{\infty}
\left(\sum_{j=0}^{p^k-1}e^{\frac{2\pi i \mu j}{p^k}}\right)
\frac{\sigma_{n+1}(\mu)}{\mu^{s+1}}\\
&=
\frac{2\Gamma(s+1)}{\zeta(-1-n)(2\pi)^{s+1}}
\sum_{\mu=1}^{\infty}
\frac{\sigma_{n+1}(p^k \mu)}{\mu^{s+1}}. 
\end{align*}
For notational simplicity, we put 
\[
\cE_p(s) := 1-p^{-s}. 
\]
We then have 
\begin{align*}
    \sum_{a=0}^{\infty} \frac{\sigma_{n+1}(p^{k+a})}{p^{a(s+1)}} &= \sum_{a=0}^{\infty}\frac{1}{p^{a(s+1)}}\frac{1-p^{(k+a+1)(n+1)}}{1-p^{n+1}}
    \\
    &= \frac{\cE_p(1+s)^{-1} - p^{(k+1)(n+1)}\cE_p(s-n)^{-1}}{1-p^{n+1}}
    \\
    &= \frac{\cE_p(s-n) - p^{(k+1)(n+1)}\cE_p(1+s)}{1-p^{n+1}}\cE_p(1+s)^{-1}\cE_p(s-n)^{-1}. 
\end{align*}
Hence the well-known relation that   
$\zeta(1+s)\zeta(s-n) = \sum_{a=1}^{\infty}\sigma_{n+1}(a) a^{-(s+1)}$ implies that 
\begin{align*}
    \frac{1}{p^k}\sum_{j=0}^{p^k-1}
\msL_{k,j}(s)
&= \frac{2\Gamma(s+1)}{\zeta(-1-n)(2\pi)^{s+1}}\frac{\cE_p(s-n) - p^{(k+1)(n+1)}\cE_p(1+s)}{1-p^{n+1}}  \zeta(s+1)\zeta(s-n). 
\end{align*}
By setting $s=\nu$ and using the functional equation of the Riemann zeta function (see \cite[p.29]{Hida93} for example), we find
\begin{align*}
\frac{i^{\nu+1}}{p^k}\sum_{j=0}^{p^k-1}
\msL_{k,j}(\nu) 
&=
\frac{(\cE_p(\nu-n) - p^{(k+1)(n+1)}\cE_p(\nu+1))}{1-p^{n+1}}
\frac{\zeta(-\nu)\zeta(\nu-n)}{\zeta(-1-n)}
\\
&= 
\frac{(1-p^{(n+1)(k+1)})-(1-p^{(n+1)k})p^{n-\nu}}{1-p^{n+1}}
\frac{\zeta(-\nu)\zeta(\nu-n)}{\zeta(-1-n)}. 
\end{align*}

\item[(2)] 
By using Lemma \ref{lem:dagger-relations}, we find
\begin{align*}
\sum_{j=0}^{p^k-1}
E_{\nu,k,j}^{(1), \ddag}
\lr{
\frac{j}{p^k}, 1
}
&=
\sum_{j=0}^{p^k-1}
\lr{
E_{\nu,k,j}^{(1), \ddag}
\lr{
\frac{j}{p^k}, 1
}
-
E_{\nu,k,j}^{(1), \ddag}
\lr{
0, 1
}
}
+
\sum_{j=0}^{p^k-1}
E_{\nu,k,j}^{(1), \ddag}
\lr{
0, 1
}\\
&=
\sum_{j=0}^{p^k-1}
\int_{0}^{\frac{j}{p^k}}
E_{\nu,k,j}^{(1)}(z,1)dz
+
\sum_{j=0}^{p^k-1}
E_{\nu,k,j}^{(1), \ddag}
\lr{
0, 1
}\\
&=
\sum_{j=0}^{p^k-1}
\int_{0}^{\frac{j}{p^k}}
(p^kz-j)^{\nu}dz
+
\sum_{j=0}^{p^k-1}
E_{\nu,k,j}^{(1), \ddag}
\lr{
0, 1
}\\
&=
\frac{(-1)^{\nu}}{(\nu+1)}
\frac{1}{p^k}
\sum_{j=0}^{p^k-1}
j^{\nu+1}
+
\sum_{j=0}^{p^k-1}
E_{\nu,k,j}^{(1), \ddag}
\lr{
0, 1
}. 
\end{align*}
Therefore, claim (2) follows from the fact that $\sum_{j=0}^{p^k-1}j^{\nu+1} = \widetilde{B}_{\nu+2}(p^k)$.

\item[(3)] 
Lemma \ref{lem:dagger-relations} shows that 
\begin{align*}
E_{\nu,k,j}^{(0)\ddag}
\lr{
-\frac{d}{p^{k-l}}, 1
}
&=
-(-1)^{\nu}p^{l\nu}
\int_{0}^{-\frac{d}{p^{k-l}}}
(p^{k-l}z+d)^{n-\nu}
dz
+
E_{\nu,k,j}^{(0)\ddag}
\lr{
0, 1
}\\
&=
\frac{(-1)^{\nu}}{n-\nu+1}
\frac{1}{p^k}
p^{l(\nu+1)}
d^{n-\nu+1}
+
E_{\nu,k,j}^{(0)\ddag}
\lr{
0, 1
}. 
\end{align*}
Since $d_0(0) = 0$, $n-\nu+1 \geq 2$, and the map $a \mapsto d_{N}(a)$ induces a permutation on the set $\{1 \leq a < p^{N} \mid (a,p^N) = 1 \}$,  we find
\begin{align*}
\frac{1}{p^k}
\sum_{j=0}^{p^k-1} 
p^{l(\nu+1)}
d^{n-\nu+1}
&= \frac{1}{p^k}
\sum_{l'=0}^{k-1}
p^{l'(\nu+1)}
\sum_{\substack{d'=1\\(d',p^{k-l'})=1}}^{p^{k-l'}-1}
d'^{n-\nu+1}
\\
&= \frac{1}{p^k}
\sum_{l'=0}^{k-1}
p^{l'(\nu+1)}
\lr{
\widetilde{B}_{n-\nu+2}(p^{k-l'})
-p^{n-\nu+1}
\widetilde{B}_{n-\nu+2}(p^{k-l'-1})
}. 
\end{align*}
\end{proof}

\begin{lem}\label{lem:sum-detail_bel-E^0-E^1}\
\begin{enumerate}

\item 
We have
\begin{align*}
\sum_{j=0}^{p^k-1}
E_{\nu,k,j}^{(1)\ddag}(0,1)
= (-1)^{\nu+1}
\sum_{\mu=0}^{\nu}
\binom{\nu}{\mu}
(-1)^{\mu}p^{k\mu}
\frac{B_{\mu+1}}{\mu+1}
\widetilde{B}_{\nu-\mu+1}(p^k). 
\end{align*}

\item 
We have
\begin{multline*}
\sum_{j=0}^{p^k-1}
E_{\nu,k,j}^{(0)\ddag}
\lr{
0, 1
}
= (-1)^{\nu}p^{k\nu}\frac{B_{n-\nu+1}}{n-\nu+1} +
(-1)^{\nu}
\sum_{\mu=0}^{n-\nu}
\begin{pmatrix}
n-\nu\\
\mu
\end{pmatrix}
p^{k\mu}
\frac{B_{\mu+1}}{\mu+1}
\sum_{l'=0}^{k-1}
p^{l'(\nu-\mu)}\\
\times
\lr{
\widetilde{B}_{n-\nu-\mu+1}(p^{k-l'}) - p^{n-\nu-\mu}
\widetilde{B}_{n-\nu-\mu+1}(p^{k-l'-1})}.  
\end{multline*}
\end{enumerate}
\end{lem}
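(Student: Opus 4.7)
The plan is to reduce both parts of the lemma to routine binomial expansions combined with the standard identity $\sum_{j=0}^{x-1} j^{t} = \widetilde{B}_{t+1}(x)$ recalled earlier. The key observation that makes everything mechanical is that, from the definition of the operator $\ddag$, one has $(X_1^{\mu}X_2^{n-\mu})^{\ddag}(0,1) = -B_{\mu+1}/(\mu+1)$ for every $0 \leq \mu \leq n-1$, so applying $\ddag$ to any polynomial and evaluating at $(0,1)$ amounts to replacing each monomial by its Bernoulli-number analogue and then summing.

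For (1), I would first expand
\[
E_{\nu,k,j}^{(1)} \;=\; (p^kX_1 - jX_2)^{\nu} X_2^{n-\nu} \;=\; \sum_{\mu=0}^{\nu} \binom{\nu}{\mu}(-j)^{\nu-\mu} p^{k\mu}\, X_1^{\mu}X_2^{n-\mu},
\]
apply $\ddag$ termwise, and evaluate at $(0,1)$. Summing over $j \in \{0,\ldots,p^k-1\}$ and using $\sum_{j=0}^{p^k-1} j^{\nu-\mu} = \widetilde{B}_{\nu-\mu+1}(p^k)$ produces the sign $-(-1)^{\nu-\mu} = (-1)^{\nu+1}(-1)^{\mu}$, which is exactly the factor appearing in the claim.

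For (2), the same strategy works after a careful case split. Expanding
\[
E_{\nu,k,j}^{(0)} \;=\; (-1)^{\nu+1} p^{l\nu} \sum_{\mu=0}^{n-\nu} \binom{n-\nu}{\mu} d^{\,n-\nu-\mu} p^{(k-l)\mu}\, X_1^{\mu}X_2^{n-\mu}
\]
and applying $\ddag|_{(0,1)}$, I would separate the sum over $j$ into the contribution from $j = 0$ (where $l = k$ and $d = 0$, so that only the term $\mu = n - \nu$ in the binomial expansion survives) and the contribution from $j \in \{1,\ldots,p^k-1\}$. The $j=0$ term gives the isolated summand $(-1)^{\nu} p^{k\nu} B_{n-\nu+1}/(n-\nu+1)$. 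For the remaining range, writing $j = p^{l'}j'$ with $\gcd(j',p) = 1$ uniquely and noting that $j' \mapsto d_{k-l'}(j')$ permutes the units modulo $p^{k-l'}$ shows that the map $j \mapsto (l,d)$ is a bijection onto pairs $(l',d)$ with $0 \leq l' \leq k-1$ and $1 \leq d < p^{k-l'}$, $\gcd(d,p) = 1$. After using the algebraic rearrangement $p^{l'\nu}p^{(k-l')\mu} = p^{k\mu} p^{l'(\nu-\mu)}$ and the inclusion-exclusion identity
\[
\sum_{\substack{1 \leq d < p^{N} \\ \gcd(d,p)=1}} d^{t} \;=\; \widetilde{B}_{t+1}(p^N) - p^{t}\,\widetilde{B}_{t+1}(p^{N-1}),
\]
which is obtained by subtracting the contribution of $p \mid d$ via the substitution $d = p d'$, the resulting expression matches the right-hand side of the claim.

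There is no substantial obstacle in the proof; the entire argument is a careful piece of bookkeeping. The only mildly delicate point is verifying the bijection between $j \in \{1,\ldots,p^k-1\}$ and pairs $(l',d)$ in the $j \neq 0$ part of (2), together with the correct isolation of the $j = 0$ summand in (2) where the convention $d_0(0) = 0$ forces all but one term in the binomial expansion to vanish.
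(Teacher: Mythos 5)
Your proposal is correct and follows essentially the same route as the paper: expand $E^{(1)}_{\nu,k,j}$ and $E^{(0)}_{\nu,k,j}$ into monomials, use $(X_1^{\mu}X_2^{n-\mu})^{\ddag}(0,1)=-B_{\mu+1}/(\mu+1)$, isolate the $j=0$ term in (2), and convert the sum over $j\neq 0$ into a sum over pairs $(l',d)$ via the permutation $j'\mapsto d_{k-l'}(j')$ of units, finishing with the power-sum identities for $\widetilde{B}_{t+1}$. No gaps.
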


\begin{proof}
\item[(1)] Note that $(X_1^{\mu}X_2^{n-\mu})^{\ddag}(0,1) = -B_{\mu+1}/(\mu+1)$. 
Since $E_{\nu,k,j}^{(1)}(X_1,X_2) = (p^kX_1-jX_2)^{\nu}X_2^{n-\nu}$, we have 
\[
E_{\nu,k,j}^{(1)\ddag}
\lr{
0, 1
}
= \sum_{\mu=0}^{\nu}\binom{\nu}{\mu}p^{k\mu}\left(-\frac{B_{\mu+1}}{\mu+1}\right)(-j)^{\nu-\mu}. 
\]
Hence we have 
\begin{align*}
\sum_{j=0}^{p^k-1}
E_{\nu,k,j}^{(1)\ddag}(0,1)
= 
(-1)^{\nu+1}
\sum_{\mu=0}^{\nu}
\binom{\nu}{\mu}
(-1)^{\mu}p^{k\mu}
\frac{B_{\mu+1}}{\mu+1}
\widetilde{B}_{\nu-\mu+1}(p^k).
\end{align*}

\item[(2)] Since $E_{\nu,k,j}^{(0)}(X_1,X_2) = (-1)^{\nu+1}(p^{l}X_2)^{\nu}(p^{k-l}X_1+dX_2)^{n-\nu}$,  we have 
\[
E_{\nu,k,j}^{(0)\ddag}(0,1)
= (-1)^{\nu+1}p^{l\nu}\sum_{\mu=0}^{n-\nu}\binom{n-\nu}{\mu}p^{\mu(k-l)}\left(-\frac{B_{\mu+1}}{\mu+1}\right)d^{n-\nu-\mu}. 
\]
First note that in the case where $j=0$, since $d=d_0(0)=0$, we find
\[
E_{\nu,k,0}^{(0)\ddag}(0,1) = (-1)^{\nu}p^{k\nu}\frac{B_{n-\nu+1}}{n-\nu+1}.  
\]
Then by using the same argument as in the proof of Lemma \ref{lem:L, E^1, E^0}(3), we also obtain
\begin{align*}
\sum_{j=1}^{p^k-1} E_{\nu,k,j}^{(0)\ddag}(0,1) &= 
(-1)^{\nu}\sum_{l'=0}^{k-1}\sum_{\substack{d' = 1 \\ (d',p^{k-l'})=1}}^{p^{k-l'}-1}p^{l'\nu}\sum_{\mu=0}^{n-\nu}\binom{n-\nu}{\mu}p^{\mu(k-l')}\frac{B_{\mu+1}}{\mu+1}d'^{n-\nu-\mu}
\\
&= 
(-1)^{\nu}\sum_{l'=0}^{k-1} p^{l'\nu}\sum_{\mu=0}^{n-\nu}\binom{n-\nu}{\mu}p^{\mu(k-l')}\frac{B_{\mu+1}}{\mu+1}
 \\
&\quad\quad\quad \times 
\lr{
\widetilde{B}_{n-\nu-\mu+1}(p^{k-l'}) - p^{n-\nu-\mu}
\widetilde{B}_{n-\nu-\mu+1}(p^{k-l'-1})}.
\end{align*}
This completes the proof. 
\end{proof}

\subsection{Summation over $k$ and the $p$-adic limits}\label{sec:proof of thm Tp^m and Eis}

In this subsection, we compute the value 
\[
\msW^{(m)} := \sum_{k=0}^m
p^{(n-\nu)(m-k)}
\sum_{j=0}^{p^k-1}
\brk{\Eis_n, \widetilde{C}_{\nu,k,j}
\left(
\frac{\tau}{p^{m-k}}, p^{m-k}\tau
\right)}
\]
and its $p$-adic limit as $m \to \infty$. 
This enables us to complete the proof of Theorem \ref{thm:Tp^m and Eis}.

We keep the notation in the previous sections. 
Proposition \ref{prop:C_nu,k,j and Eis-pairing computation} shows that 
\[
\brk{\Eis_n, \widetilde{C}_{\nu,k,j}
\left(
\tau_0, \tau_1
\right)}
=
\frac{i^{\nu+1}}{p^k}
\msL_{k,j}(\nu)
-
E_{\nu,k,j}^{(1) \ddag}
\lr{
\frac{j}{p^k}, 1
}
-
E_{\nu,k,j}^{(0) \ddag}
\lr{
-\frac{d}{p^{k-l}}, 1
}, 
\]
and hence
\[
\msW^{(m)} = \sum_{k=0}^m
p^{(n-\nu)(m-k)}
\sum_{j=0}^{p^k-1}
\left(
\frac{i^{\nu+1}}{p^k}
\msL_{k,j}(\nu)
-
E_{\nu,k,j}^{(1) \ddag}
\lr{
\frac{j}{p^k}, 1
}
-
E_{\nu,k,j}^{(0) \ddag}
\lr{
-\frac{d}{p^{k-l}}, 1
}\right). 
\]
We set 
\begin{align*}
    \msW_1^{(m)} &:= \sum_{k=0}^m p^{(n-\nu)(m-k)}\sum_{j=0}^{p^k-1}\frac{i^{\nu+1}}{p^k}\msL_{k,j}(\nu), 
    \\
    \msW_2^{(m)} &:= \sum_{k=0}^m p^{(n-\nu)(m-k)} \sum_{j=0}^{p^k-1}
E_{\nu,k,j}^{(1) \ddag}
\lr{
\frac{j}{p^k}, 1
}, 
    \\
    \msW_3^{(m)} &:= \sum_{k=0}^m
p^{(n-\nu)(m-k)}
\sum_{j=0}^{p^k-1}
E_{\nu,k,j}^{(0) \ddag}
\lr{
-\frac{d}{p^{k-l}}, 1
}, 
\end{align*}
so that we have $\msW^{(m)} = \msW^{(m)}_1 - \msW^{(m)}_2 - \msW^{(m)}_3$.

\begin{lem}\label{lem:FT1}
We have 
\[
\lim_{m \to \infty} \msW_1^{(m)} = \frac{1}{1-p^{n+1}}\frac{\zeta(-\nu)\zeta(\nu-n)}{\zeta(-1-n)}. 
\]
\end{lem}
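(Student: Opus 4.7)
The plan is to substitute the explicit formula for the inner sum given by Lemma \ref{lem:L, E^1, E^0}(1), and then evaluate the resulting geometric sums in the $p$-adic limit. Writing
\[
Z := \frac{\zeta(-\nu)\zeta(\nu-n)}{\zeta(-1-n)} \quad \text{ and } \quad a := n-\nu \geq 1,
\]
Lemma \ref{lem:L, E^1, E^0}(1) gives
\[
\frac{i^{\nu+1}}{p^k}\sum_{j=0}^{p^k-1}\msL_{k,j}(\nu) = \frac{Z}{1-p^{n+1}}\left[(1-p^{(n+1)(k+1)}) - (1-p^{(n+1)k})p^{a}\right],
\]
so after multiplying by $p^{a(m-k)}$ and summing over $k$, the bracket rearranges to $(1-p^{a}) + p^{(n+1)k}(p^{a}-p^{n+1})$. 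This gives the clean decomposition
\[
\msW_1^{(m)} = \frac{Z}{1-p^{n+1}}\left[(1-p^{a})\,\Sigma_1(m) + (p^{a}-p^{n+1})\,\Sigma_2(m)\right],
\]
where $\Sigma_1(m) := \sum_{k=0}^{m} p^{a(m-k)}$ and $\Sigma_2(m) := \sum_{k=0}^{m}p^{a(m-k)+(n+1)k}$.

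Next, I would analyze the two geometric sums $p$-adically. Re-indexing by $j=m-k$ in $\Sigma_1$ gives $\Sigma_1(m) = \sum_{j=0}^{m}p^{aj}$; since $a \geq 1$, this converges $p$-adically to $1/(1-p^{a})$. For $\Sigma_2(m)$, the exponent rewrites as $am + (\nu+1)k$, so $\Sigma_2(m) = p^{am}\sum_{k=0}^{m}p^{(\nu+1)k}$, whose $p$-adic valuation is at least $am = (n-\nu)m \to \infty$; hence $\Sigma_2(m) \to 0$.

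Passing to the $p$-adic limit and using that the prefactor $Z/(1-p^{n+1})$ is a fixed element of $\bQ_p$, the second contribution vanishes and the first collapses to
\[
\lim_{m\to\infty}\msW_1^{(m)} = \frac{Z}{1-p^{n+1}}\cdot(1-p^{a})\cdot\frac{1}{1-p^{a}} = \frac{1}{1-p^{n+1}}\frac{\zeta(-\nu)\zeta(\nu-n)}{\zeta(-1-n)},
\]
which is the claim. There is no real obstacle here beyond keeping the combinatorics of exponents straight; the proof is essentially a one-line geometric-series evaluation once Lemma \ref{lem:L, E^1, E^0}(1) is in hand, with the crucial input being the assumption $1 \leq \nu \leq n-1$ that guarantees $a = n-\nu \geq 1$, which makes $\Sigma_2(m)$ vanish in the limit and $\Sigma_1(m)$ converge.
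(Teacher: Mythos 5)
Your proposal is correct and follows essentially the same route as the paper: substitute Lemma \ref{lem:L, E^1, E^0}(1), rearrange the bracket into $(1-p^{n-\nu}) - p^{(n+1)k}(p^{n+1}-p^{n-\nu})$, and evaluate the two resulting geometric sums, with the second vanishing $p$-adically because its valuation grows like $(n-\nu)m$. The only cosmetic difference is that you pass to the infinite geometric series $1/(1-p^{n-\nu})$ and cancel, while the paper evaluates the finite sum in closed form as $1-p^{(n-\nu)(m+1)}+(\cdots)\to 1$; these are the same computation.
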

\begin{proof}
Lemma \ref{lem:L, E^1, E^0}(1) shows that 
    \begin{align*}
\msW_1^{(m)} 
&= 
\frac{1}{1-p^{n+1}}\frac{\zeta(-\nu)\zeta(\nu-n)}{\zeta(-1-n)}
\sum_{k=0}^m 
p^{(n-\nu)(m-k)}
((1-p^{(n+1)(k+1)})-(1-p^{(n+1)k})p^{n-\nu}). 
\end{align*}
Moreover, we have 
    \begin{align*}
\sum_{k=0}^m 
p^{(n-\nu)(m-k)}
&
((1-p^{(n+1)(k+1)}) - (1-p^{(n+1)k})p^{n-\nu}) 
\\
&= 
p^{(n-\nu) m}(1-p^{n-\nu})
\sum_{k=0}^{m}p^{-(n-\nu) k} 
-  
p^{(n-\nu) m}(p^{n+1} - p^{n-\nu})
\sum_{k=0}^{m}p^{(\nu+1) k}
\\
&= 
p^{(n-\nu) m}(1-p^{n-\nu}) 
\frac{1-p^{-(n-\nu)(m+1)}}{1-p^{-(n-\nu)}} 
-  
p^{(n-\nu) m}(p^{n+1} - p^{n-\nu}) 
\frac{1-p^{(\nu+1)(m+1)}}{1-p^{\nu+1}}
\\
&= 
1-p^{(n-\nu)(m+1)} 
-  
p^{(n-\nu) m}(p^{n+1} - p^{n-\nu}) 
\frac{1-p^{(\nu+1)(m+1)}}{1-p^{\nu+1}}
\\
&\to 1 \quad\quad (m \to \infty). 
\end{align*}

\end{proof}

\begin{lem}\label{lem:p-adic limit-ber-poly}
Let $s$ and $t$ be integers with $t > 0$. 
\begin{itemize}
    \item[(1)] For any positive integer $u>0$, we have 
    \[
    \lim_{m \to \infty} \sum_{k=0}^m  \frac{p^{u(m-k)}}{p^{k-s}}\widetilde{B}_t(p^{k-s}) = \frac{B_{t-1}}{1 - p^u}. 
    \]
    \item[(2)] For any integer $u$ and any $\varepsilon >0$ with $u + \varepsilon > 0$, we have 
    \[
        \lim_{m \to \infty} p^{m\varepsilon}\sum_{k=0}^m  \frac{p^{u(m-k)}}{p^{k-s}}\widetilde{B}_t(p^{k-s}) = 0. 
    \]
    \end{itemize}
\end{lem}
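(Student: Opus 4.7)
The plan is to reduce the lemma to explicit geometric sums using the standard polynomial expansion of $\widetilde{B}_t$.

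First, I would expand
\[
\widetilde{B}_t(x) \;=\; \frac{1}{t}\sum_{\mu=1}^{t}\binom{t}{\mu} B_{t-\mu}\, x^{\mu},
\]
so that
\[
\frac{1}{p^{k-s}}\widetilde{B}_t(p^{k-s}) \;=\; \frac{1}{t}\sum_{\mu=1}^{t}\binom{t}{\mu} B_{t-\mu}\, p^{(\mu-1)(k-s)}.
\]
Substituting into the double sum and interchanging the order of summation gives
\[
\sum_{k=0}^m \frac{p^{u(m-k)}}{p^{k-s}}\widetilde{B}_t(p^{k-s}) \;=\; \frac{1}{t}\sum_{\mu=1}^{t}\binom{t}{\mu} B_{t-\mu}\, p^{-(\mu-1)s}\sum_{k=0}^{m} p^{um+(\mu-1-u)k}.
\]

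Next, I would isolate the $\mu=1$ term, which contributes $B_{t-1}\sum_{k=0}^{m} p^{u(m-k)} = B_{t-1}\,(p^{u(m+1)}-1)/(p^u-1)$ when $u\neq 0$ (and $B_{t-1}(m+1)$ when $u=0$). For part (1), since $u>0$, we have $p^{u(m+1)}\to 0$ $p$-adically, producing the main term $B_{t-1}/(1-p^u)$. For part (2), the prefactor $p^{m\varepsilon}$ forces this $\mu=1$ contribution to $0$ by splitting into the three cases $u=0$, $u>0$, $u<0$ and using $u+\varepsilon>0$.

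For the remaining terms $\mu\geq 2$, I would uniformly bound their $p$-adic valuation. The inner sum $\sum_{k=0}^m p^{um+(\mu-1-u)k}$ has terms whose exponents range between $um$ (at $k=0$) and $(\mu-1)m$ (at $k=m$), so by the ultrametric inequality its $p$-adic valuation is at least $m\min(u,\mu-1)$. Combined with $\ord_p(B_{t-\mu})\geq -1$ from von Staudt--Clausen and the fixed constants, the $\mu$-th term in (1) has $p$-adic valuation $\geq (\mu-1)m + O(1)\to\infty$ (using $u>0$ and $\mu-1\geq 1$), while in (2) the extra $p^{m\varepsilon}$ gives valuation $\geq m\varepsilon + m\min(u,\mu-1) + O(1)$, which tends to $\infty$ since $\mu-1\geq 1>0$ and $u+\varepsilon>0$.

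The routine part is the algebraic manipulation; the only point requiring some care is the case analysis on the sign of $u$ and the verification that $\min(u,\mu-1)+\varepsilon>0$ in every case of part (2). This is the step where the hypothesis $u+\varepsilon>0$ is used and where one must be slightly attentive to the possibility that $u$ is negative.
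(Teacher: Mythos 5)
Your proposal is correct and follows essentially the same route as the paper: expand $\widetilde{B}_t$ by the binomial formula, interchange the two sums, and analyze the resulting geometric sums in $k$, with the $\mu=1$ term producing $B_{t-1}/(1-p^u)$ and the $\mu\geq 2$ terms vanishing. The only (harmless) difference is that you bound the $\mu\geq 2$ contributions by the ultrametric inequality instead of writing the closed form $p^{um-(\mu-1)s}\frac{1-p^{(\mu-1-u)(m+1)}}{1-p^{\mu-1-u}}$ as the paper does, which in fact handles the degenerate case $\mu-1=u$ more cleanly.
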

\begin{proof}
    We have 
    \begin{align*}
        \sum_{k=0}^m p^{u(m-k)} \frac{1}{p^{k-s}}\widetilde{B}_t(p^{k-s})
        &= \frac{1}{t}\sum_{k=0}^m p^{u(m-k)} \sum_{\mu=1}^{t}\binom{t}{\mu}B_{t-\mu}p^{(\mu-1)(k-s)} 
        \\
        &= \frac{1}{t}\sum_{\mu=1}^{t}\binom{t}{\mu}B_{t-\mu}\sum_{k=0}^m p^{u(m-k) +(\mu-1)(k-s)} 
        \\
        &= \frac{1}{t} \sum_{\mu=1}^{t}\binom{t}{\mu}B_{t-\mu}
        p^{u m - (\mu-1)s} \frac{1-p^{(\mu-1-u)(m+1)}}{1-p^{\mu-1-u}}
\\
&=  \frac{1}{t}\sum_{\mu=1}^{t}\binom{t}{\mu}B_{t-\mu}
         \frac{p^{u m - (\mu-1)s} - p^{- (\mu-1)s + (m+1)(\mu-1) -u}}{1-p^{\mu-1-u}}. 
        \end{align*}
        Hence claim (2) is clear. Moreover if $u > 0$, then all the terms with $\mu \geq 2$ vanish as $m\to \infty$, and therefore, 
\[
 \lim_{m \to \infty} \sum_{k=0}^m  \frac{p^{u(m-k)}}{p^{k-s}}\widetilde{B}_t(p^{k-s}) 
 = B_{t-1} \frac{-p^{-u}}{1-p^{-u}} = \frac{B_{t-1}}{1 - p^u}. 
\]
\end{proof}

\begin{lem}\label{lem:FT2}
We have 
        \[
    \lim_{m \to \infty} \msW_2^{(m)} 
    = \frac{(-1)^{\nu}}{1-p^{n-\nu}}\frac{B_{\nu + 1}}{\nu+1}  
     = \frac{1}{1-p^{n-\nu}}\zeta(-\nu). 
    \]
\end{lem}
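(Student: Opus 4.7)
The plan is to apply Lemma \ref{lem:L, E^1, E^0}(2) to split $\msW_2^{(m)}$ into two pieces, then apply Lemma \ref{lem:sum-detail_bel-E^0-E^1}(1) to the second piece, and analyze each resulting sum in the $p$-adic limit. Explicitly, I write
\[
\msW_2^{(m)} = \frac{(-1)^\nu}{\nu+1}\mcA_m + \mcB_m,
\]
where
\[
\mcA_m := \sum_{k=0}^m \frac{p^{(n-\nu)(m-k)}}{p^k}\widetilde{B}_{\nu+2}(p^k),
\]
and $\mcB_m$ is a fixed ($m$-independent) $\bZ_{(p)}$-linear combination, indexed by $\mu \in \{0,\ldots,\nu\}$, of the sums
\[
\mcC_m^{(\mu)} := \sum_{k=0}^m p^{(n-\nu)(m-k)}p^{k\mu}\widetilde{B}_{\nu-\mu+1}(p^k).
\]

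For $\mcA_m$, I invoke Lemma \ref{lem:p-adic limit-ber-poly}(1) with parameters $u=n-\nu\geq 1$, $s=0$, and $t=\nu+2$ to obtain $\lim_{m\to\infty}\mcA_m = B_{\nu+1}/(1-p^{n-\nu})$.

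For $\mcB_m$, I will show that each $\mcC_m^{(\mu)}$ tends to $0$ as $m\to\infty$. Expanding $\widetilde{B}_{\nu-\mu+1}(p^k) = \frac{1}{\nu-\mu+1}\sum_{\lambda=1}^{\nu-\mu+1}\binom{\nu-\mu+1}{\lambda}B_{\nu-\mu+1-\lambda}\,p^{\lambda k}$ reduces the claim to showing that each geometric sum $p^{(n-\nu)m}\sum_{k=0}^m p^{k(\mu+\lambda-(n-\nu))}$ (for $1\leq\lambda\leq\nu-\mu+1$) tends $p$-adically to $0$. When $\mu+\lambda=n-\nu$, this collapses to $(m+1)p^{(n-\nu)m}\to 0$ since $n-\nu\geq 1$. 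Otherwise the sum equals
\[
\frac{p^{(n-\nu)m}-p^{(\mu+\lambda)m+(\mu+\lambda-n+\nu)}}{1-p^{\mu+\lambda-(n-\nu)}},
\]
whose numerator has $p$-adic valuation tending to $+\infty$ (since both $n-\nu\geq 1$ and $\mu+\lambda\geq 1$), while the denominator has fixed (possibly negative) valuation, so the quotient tends to $0$.

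Combining these gives $\lim_{m\to\infty}\msW_2^{(m)} = (-1)^\nu B_{\nu+1}/((\nu+1)(1-p^{n-\nu}))$. The identification with $\zeta(-\nu)/(1-p^{n-\nu})$ then follows from $\zeta(-\nu)=-B_{\nu+1}/(\nu+1)$: for odd $\nu$ this directly yields the claim, while for even $\nu\geq 2$ both $B_{\nu+1}$ and $\zeta(-\nu)$ vanish and the statement is trivial. The main technical point is the careful $p$-adic bookkeeping in $\mcB_m$ when $\mu+\lambda<n-\nu$, since then the denominator $1-p^{\mu+\lambda-(n-\nu)}$ itself acquires negative $p$-adic valuation; however, the valuation gained from the numerator strictly dominates, so the vanishing persists.
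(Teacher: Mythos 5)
Your proof is correct and takes essentially the same route as the paper: the same splitting of $\msW_2^{(m)}$ via Lemma \ref{lem:L, E^1, E^0}(2) and Lemma \ref{lem:sum-detail_bel-E^0-E^1}(1), with Lemma \ref{lem:p-adic limit-ber-poly}(1) handling the first piece, and your hand computation of the geometric sums for the second piece is just an inlined version of what the paper packages as Lemma \ref{lem:p-adic limit-ber-poly}(2). (One cosmetic slip: the coefficients $B_{\mu+1}/(\mu+1)$ in your combination of the $\mcC_m^{(\mu)}$ need not lie in $\bZ_{(p)}$, but since they are fixed constants this does not affect the limit argument.)
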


\begin{proof}
Lemma \ref{lem:L, E^1, E^0}(2) shows that
    \begin{align*}
          \msW_2^{(m)} &= \sum_{k=0}^m p^{(n-\nu)(m-k)} \sum_{j=0}^{p^k-1}
E_{\nu,k,j}^{(1) \ddag}
\lr{
\frac{j}{p^k}, 1
}
\\
&= \sum_{k=0}^m p^{(n-\nu)(m-k)} \left(\frac{(-1)^{\nu}}{(\nu+1)}
\frac{1}{p^k}
\widetilde{B}_{\nu+2}(p^k)
+
\sum_{j=0}^{p^k-1}
E_{\nu,k,j}^{(1), \ddag}
\lr{
0, 1
}\right).
    \end{align*}
    Hence Lemma \ref{lem:p-adic limit-ber-poly}(1) implies that 
    \[
    \lim_{m \to \infty} \msW_2^{(m)} 
    = \frac{(-1)^{\nu}}{1-p^{n-\nu}}\frac{B_{\nu + 1}}{\nu+1} + 
    \lim_{m \to \infty} \sum_{k=0}^m p^{(n-\nu)(m-k)} \sum_{j=0}^{p^k-1}
E_{\nu,k,j}^{(1), \ddag}(0,1). 
    \]
    Moreover, by Lemma \ref{lem:sum-detail_bel-E^0-E^1}, we have  
\begin{align*}
  \sum_{k=0}^m p^{(n-\nu)(m-k)} \sum_{j=0}^{p^k-1}E_{\nu,k,j}^{(1), \ddag}(0,1) 
  &= 
  \sum_{k=0}^m p^{(n-\nu)(m-k)} \sum_{\mu=0}^{\nu}
(-1)^{\nu+\mu+1}
\binom{\nu}{\mu}p^{k(\mu+1)}
\frac{B_{\mu+1}}{\mu+1}
\frac{\widetilde{B}_{\nu-\mu+1}(p^k)}{p^k}
\\
&= \sum_{\mu=0}^{\nu}(-1)^{\nu+\mu+1}\binom{\nu}{\mu}\frac{B_{\mu+1}}{\mu+1}
p^{m(\mu+1)}\sum_{k=0}^m p^{(n-\nu-\mu-1)(m-k)}  \frac{\widetilde{B}_{\nu-\mu+1}(p^k)}{p^k}. 
\end{align*}
Since $\mu+1 + n- \nu-\mu-1 = n- \nu > 0$, 
Lemma \ref{lem:p-adic limit-ber-poly}(2) shows that 
\[
\lim_{m \to \infty}p^{m(\mu+1)}\sum_{k=0}^m p^{(n-\nu-\mu-1)(m-k)}  \frac{\widetilde{B}_{\nu-\mu+1}(p^k)}{p^k} = 0. 
\]
Hence we conclude that 
    \[
    \lim_{m \to \infty} \msW_2^{(m)} 
    = \frac{(-1)^{\nu}}{1-p^{n-\nu}}\frac{B_{\nu + 1}}{\nu+1}. 
    \]
\end{proof}

\begin{lem}\label{lem:ber-poly-limit-FT3}
    Let $b$, $s$, $t$, and $u$ be integers with $s \geq 0$ and $t \geq 0$. 
\begin{itemize}
    \item[(1)] If $s > 0$ and $u > 0$, then 
\begin{align*}
 \lim_{m \to \infty}\sum_{k=0}^m p^{u(m-k)}  \sum_{l'=0}^{k-1}p^{l's} \left(\frac{\widetilde{B}_{t+1}(p^{k-l'})}{p^{k-l'}}- p^b \frac{\widetilde{B}_{t+1}(p^{k-l'-1})}{p^{k-l'-1}}\right) 
 = \frac{B_{t}}{1-p^{s}} \frac{1-p^b}{1-p^{u}}. 
\end{align*}
    \item[(2)] For any $\varepsilon > 0$ with $u + \varepsilon > 0$, we have 
    \begin{align*}
 \lim_{m \to \infty} p^{\varepsilon m}\sum_{k=0}^m p^{u(m-k)}  \sum_{l'=0}^{k-1}p^{l's} \left(\frac{\widetilde{B}_{t+1}(p^{k-l'})}{p^{k-l'}}- p^b \frac{\widetilde{B}_{t+1}(p^{k-l'-1})}{p^{k-l'-1}}\right) 
 = 0. 
\end{align*}
\end{itemize}
\end{lem}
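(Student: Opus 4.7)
The plan is to reduce the claim to elementary $p$-adic computations by expanding $\widetilde{B}_{t+1}(p^N)/p^N$ as a polynomial in $p^N$. From $B_{t+1}(x) = \sum_{\mu=0}^{t+1}\binom{t+1}{\mu}B_{t+1-\mu}x^{\mu}$ one obtains
\[
\frac{\widetilde{B}_{t+1}(p^N)}{p^N} = \sum_{\mu=1}^{t+1}c_\mu\,p^{N(\mu-1)}, \qquad c_\mu := \frac{1}{t+1}\binom{t+1}{\mu}B_{t+1-\mu},
\]
so that the inner difference appearing in the lemma equals
\[
\frac{\widetilde{B}_{t+1}(p^{k-l'})}{p^{k-l'}} - p^b\frac{\widetilde{B}_{t+1}(p^{k-l'-1})}{p^{k-l'-1}} = \sum_{\mu=1}^{t+1}c_\mu(p^{\mu-1}-p^b)\,p^{(k-l'-1)(\mu-1)}.
\]
Note $c_1=B_t$, and at $\mu=1$ the factor $p^{(k-l'-1)(\mu-1)}$ trivializes, producing the main contribution $B_t(1-p^b)$; for each $\mu\geq 2$ an explicit factor $p^{(k-l'-1)(\mu-1)}$ appears, which I will exploit for $p$-adic decay.

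For assertion (1), I would isolate the $\mu=1$ contribution, namely $B_t(1-p^b)\sum_{k=0}^{m}p^{u(m-k)}\sum_{l'=0}^{k-1}p^{l's}$. Using $s>0$, the inner sum equals $(p^{ks}-1)/(p^s-1)$; splitting into two pieces, the part $\sum_k p^{u(m-k)+ks}$ has every summand of valuation $\geq m\min(u,s)>0$ and hence vanishes $p$-adically, while $\sum_k p^{u(m-k)}\to 1/(1-p^u)$. Combining these yields the advertised limit $\frac{B_t}{1-p^s}\cdot\frac{1-p^b}{1-p^u}$. For each $\mu\geq 2$, after exchanging the order of summation and handling separately the cases $s\neq\mu-1$ (where $\sum_{l'}p^{l'(s-\mu+1)}$ is geometric) and $s=\mu-1$ (where it equals $k$), the $\mu$th contribution becomes a linear combination, with coefficients of bounded $p$-adic norm independent of $m$, of expressions of the form $\sum_k p^{u(m-k)+k\alpha}$ with $\alpha\in\{s,\mu-1\}$, possibly multiplied by a factor of $k$; since $u,\alpha\geq 1$, the $p$-adic norm of each such sum is $\leq p^{-m\min(u,\alpha)}\to 0$.

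For assertion (2), the plan is to bound $|A_m|_p$ rather than to compute a limit, where $A_m$ denotes the double sum. By the von Staudt--Clausen theorem each $c_\mu$ lies in $p^{-1}\bZ_{(p)}$, so $|\widetilde{B}_{t+1}(p^N)/p^N|_p$ is bounded by some constant $C$ independent of $N$. Hence
\[
|A_m|_p \leq C\max_{0\leq l'<k\leq m}|p^{u(m-k)+l's}|_p = Cp^{-\min_{k,l'}(u(m-k)+l's)}.
\]
Since $s\geq 0$ the minimum over $l'$ is attained at $l'=0$, and the minimum of $u(m-k)$ over $0\leq k\leq m$ is $0$ if $u\geq 0$ and $um$ if $u<0$. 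Consequently $|p^{\varepsilon m}A_m|_p \leq C'p^{-(\varepsilon+\min(u,0))m}\to 0$, thanks to $\varepsilon>0$ and $u+\varepsilon>0$. The main bookkeeping obstacle will be the case analysis in (1) for $\mu\geq 2$, and especially the degenerate case $s=\mu-1$; there the factor of $k$ from $\sum_{l'=0}^{k-1}1$ is harmless because it is absorbed by the genuine exponential $p$-adic decay coming from $p^{u(m-k)+k(\mu-1)}$.
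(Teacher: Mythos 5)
Your proposal is correct, and its core is the same as the paper's: expand $\widetilde{B}_{t+1}(p^N)/p^N$ via $B_{t+1}(x)=\sum_{\mu}\binom{t+1}{\mu}B_{t+1-\mu}x^{\mu}$, exchange the order of summation, and reduce everything to geometric sums in $p$. The differences are in execution, and they are mildly in your favor. First, the paper writes the whole expression as a single closed formula by summing the geometric series $\sum_{l'}p^{l'(s-\mu+1)}$ and $\sum_k p^{k(\mu-1-u)}$, which implicitly divides by $1-p^{s-\mu+1}$, $1-p^{\mu-1-u}$ and $1-p^{s-u}$; these vanish in the degenerate cases $\mu-1=s$, $\mu-1=u$, $s=u$. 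You instead isolate the $\mu=1$ main term, estimate the $\mu\geq 2$ terms by valuation, and treat the degenerate case $s=\mu-1$ (where the inner sum is $k$ rather than geometric) explicitly — this is more careful than the paper at exactly the point where its formula is formally problematic. Second, for part (2) you bypass the expansion entirely and use the ultrametric inequality together with the uniform bound on $|\widetilde{B}_{t+1}(p^N)/p^N|_p$; this is shorter and cleaner than reading the decay off the closed formula. One small imprecision: $c_\mu=\frac{1}{t+1}\binom{t+1}{\mu}B_{t+1-\mu}=\frac{1}{\mu}\binom{t}{\mu-1}B_{t+1-\mu}$ need not lie in $p^{-1}\bZ_{(p)}$ because of the $1/\mu$ factor, but all you use is that $|c_\mu|_p$ is bounded by a constant depending only on $t$ and $p$, which is true (von Staudt--Clausen plus $\mu\leq t+1$), so the argument is unaffected.
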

\begin{proof}
We have 
    \begin{align*}
        &\sum_{k=0}^m p^{u(m-k)}  \sum_{l'=0}^{k-1}p^{l's} \left(\frac{\widetilde{B}_{t+1}(p^{k-l'})}{p^{k-l'}}- p^b \frac{\widetilde{B}_{t+1}(p^{k-l'-1})}{p^{k-l'-1}}\right) 
        \\
        &= \frac{1}{t+1}\sum_{k=0}^m p^{u(m-k)}  \sum_{l'=0}^{k-1}p^{l's} 
        \sum_{\mu=1}^{t+1}\binom{t+1}{\mu}B_{t+1-\mu}
        \times
        (p^{(k-l')(\mu-1)} - p^{(k-l'-1)(\mu-1) + b})
        \\
        &= \frac{1}{t+1}\sum_{\mu=1}^{t+1}\binom{t+1}{\mu}B_{t+1-\mu}
        \times
        (1-p^{b-(\mu-1)})p^{um}
        \sum_{k=0}^m p^{k(\mu-1 - u)}  \sum_{l'=0}^{k-1}p^{l'(s- \mu + 1)} 
        \\
        &= \frac{1}{t+1}\sum_{\mu=1}^{t+1}\binom{t+1}{\mu}B_{t+1-\mu}
        \times
        (1-p^{b-(\mu-1)})p^{um}
        \sum_{k=0}^m p^{k(\mu-1 - u)}  \frac{1-p^{k(s-\mu+1)}}{1-p^{s-\mu+1}}
        \\
        &= \frac{1}{t+1}\sum_{\mu=1}^{t+1}\binom{t+1}{\mu}\frac{B_{t+1-\mu}}{1-p^{s-\mu+1}}\times(1-p^{b-(\mu-1)})
        \left(
        \frac{p^{um} - p^{(m+1)(\mu-1) - u}}{1-p^{\mu-1-u}} - 
        \frac{p^{um}-p^{(m+1)s-u}}{1-p^{s-u}} 
        \right), 
    \end{align*}
     which implies this lemma in the same way as in the proof of Lemma \ref{lem:p-adic limit-ber-poly}. 
\end{proof}

\begin{lem}\label{lem:FT3}
We have 
    \[
    \lim_{m \to \infty} \msW_3^{(m)} 
= \frac{(-1)^{\nu}}{1-p^{\nu}}\frac{B_{n-\nu+1}}{n-\nu+1} 
= \frac{1}{1-p^{\nu}}\zeta(\nu-n). 
    \]
\end{lem}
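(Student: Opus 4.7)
The plan mirrors the strategy of Lemma \ref{lem:FT2}: first substitute the identity from Lemma \ref{lem:L, E^1, E^0}(3) into the definition of $\msW_3^{(m)}$, splitting it as $\msW_3^{(m)} = \msW_{3,\mathrm{main}}^{(m)} + \msW_{3,\mathrm{rem}}^{(m)}$, where $\msW_{3,\mathrm{main}}^{(m)}$ is the sum coming from the $\widetilde{B}_{n-\nu+2}$-difference and $\msW_{3,\mathrm{rem}}^{(m)} = \sum_{k=0}^m p^{(n-\nu)(m-k)}\sum_{j=0}^{p^k-1}E^{(0)\ddag}_{\nu,k,j}(0,1)$. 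The expected behaviour is that $\msW_{3,\mathrm{main}}^{(m)}$ produces the answer while $\msW_{3,\mathrm{rem}}^{(m)}$ vanishes $p$-adically.

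For $\msW_{3,\mathrm{main}}^{(m)}$ I would factor $p^{k-l'}$ out of both $\widetilde{B}_{n-\nu+2}$-values so that the inner sum over $l'$ takes the normalised form $\sum_{l'=0}^{k-1}p^{l'\nu}\bigl(\widetilde{B}_{n-\nu+2}(p^{k-l'})/p^{k-l'} - p^{n-\nu}\widetilde{B}_{n-\nu+2}(p^{k-l'-1})/p^{k-l'-1}\bigr)$ required by Lemma \ref{lem:ber-poly-limit-FT3}; the resulting $p^k$-factor exactly cancels the $1/p^k$ in front of the inner sum. This matches Lemma \ref{lem:ber-poly-limit-FT3}(1) with parameters $t = n-\nu+1$, $s = \nu$, $u = n-\nu$, $b = n-\nu$, all strictly positive since $1\leq\nu\leq n-1$. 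The output $B_{n-\nu+1}(1-p^{n-\nu})/\bigl((1-p^{\nu})(1-p^{n-\nu})\bigr) = B_{n-\nu+1}/(1-p^\nu)$, multiplied by the prefactor $(-1)^\nu/(n-\nu+1)$, gives the claimed value.

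For $\msW_{3,\mathrm{rem}}^{(m)}$ I would invoke Lemma \ref{lem:sum-detail_bel-E^0-E^1}(2). Its leading Bernoulli-constant piece contributes a constant multiple of $\sum_{k=0}^{m}p^{(n-\nu)(m-k)+k\nu}$, whose exponents are bounded below by $\min(\nu,n-\nu)\,m\to\infty$, so this piece clearly vanishes. For the remaining double sum indexed by $\mu\in\{0,\ldots,n-\nu\}$, the same $p^{k-l'}$-rescaling reduces each inner sum to the normalised form, and rearranging the outer powers yields a prefactor $p^{(\mu+1)m}$ in front of $\sum_{k}p^{u(m-k)}(\cdots)$ with $u = n-\nu-\mu-1$. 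Since $\mu+1\geq 1$ and $u+(\mu+1) = n-\nu > 0$, the argument of Lemma \ref{lem:ber-poly-limit-FT3}(2) carries over to force each $\mu$-contribution to $0$ (its proof only uses the Bernoulli-polynomial expansion, which is valid regardless of the sign of $s$, and the possibly degenerate geometric sum is absorbed by the positive factor $p^{(\mu+1)m}$).

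Finally, the translation $(-1)^\nu B_{n-\nu+1}/(n-\nu+1) = \zeta(\nu-n)$ uses $\zeta(1-k) = -B_k/k$: when $\nu$ is odd both sides equal $-B_{n-\nu+1}/(n-\nu+1)$, and when $\nu$ is even both vanish since $B_{n-\nu+1}$ is an odd-index Bernoulli number of index $\geq 3$ and $\zeta(\nu-n)$ is evaluated at a negative even integer. The main obstacle is the bookkeeping in the rescaling step, in particular tracking the $p$-powers produced by renormalising $\widetilde{B}_{t+1}(x)$ to $x\cdot\widetilde{B}_{t+1}(x)/x$ and verifying the positivity conditions of Lemma \ref{lem:ber-poly-limit-FT3} uniformly over the summation ranges for $\mu$, $k$, and $l'$.
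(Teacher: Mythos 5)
Your proof is correct and follows essentially the same route as the paper: decompose $\msW_3^{(m)}$ via Lemma \ref{lem:L, E^1, E^0}(3) and Lemma \ref{lem:sum-detail_bel-E^0-E^1}(2), evaluate the main term with Lemma \ref{lem:ber-poly-limit-FT3}(1) with the same parameters $t=n-\nu+1$, $s=\nu$, $u=b=n-\nu$, dispose of the remainder by the same $p^{k-l'}$-rescaling together with Lemma \ref{lem:ber-poly-limit-FT3}(2), and conclude with $\zeta(1-k)=-B_k/k$. The only deviation is that you treat all $\mu$ uniformly, which pushes Lemma \ref{lem:ber-poly-limit-FT3}(2) outside its stated hypothesis $s\geq 0$ once $\mu\geq \nu$; your justification that the proof still goes through is sound (the dangerous term $p^{(\mu+1)m}\,p^{(m+1)(\nu-\mu-1)-u}$ has $p$-adic valuation growing like $\nu m>0$), whereas the paper instead kills the $\mu\geq 1$ terms by the direct exponent identity $(n-\nu)(m-k)+k\mu=(n-\nu-\mu)(m-k)+m\mu$ and only invokes Lemma \ref{lem:ber-poly-limit-FT3}(2) for $\mu=0$, where $s=\nu-1\geq 0$.
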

\begin{proof}
    Recall that in Lemma \ref{lem:L, E^1, E^0}(3) we have shown that 
    \begin{multline*}
            \sum_{j=0}^{p^k-1}
E_{\nu,k,j}^{(0)\ddag}
\lr{
-\frac{d}{p^{k-l}}, 1
}
\\
=
\frac{(-1)^{\nu}}{n-\nu+1}
\frac{1}{p^k}
\sum_{l'=0}^{k-1}
p^{l'(\nu+1)}
\lr{
\widetilde{B}_{n-\nu+2}(p^{k-l'})
-p^{n-\nu+1}
\widetilde{B}_{n-\nu+2}(p^{k-l'-1})
}
+
\sum_{j=0}^{p^k-1}
E_{\nu,k,j}^{(0)\ddag}(0,1). 
    \end{multline*}
Since $1 \leq \nu \leq n-1$, Lemma \ref{lem:ber-poly-limit-FT3}(1) shows that   
\begin{align*}
    & \sum_{k=0}^m p^{(n-\nu)(m-k)}
\sum_{l'=0}^{k-1}
p^{l'\nu}
\lr{
\frac{\widetilde{B}_{n-\nu+2}(p^{k-l'})}{p^{k-l'}}
-p^{n-\nu}
\frac{\widetilde{B}_{n-\nu+2}(p^{k-l'-1})}{p^{k-l'-1}}
}
\\
&\to  \frac{B_{n-\nu + 1}}{1-p^{\nu}} \quad\quad (m \to \infty). 
\end{align*}
By Lemma \ref{lem:sum-detail_bel-E^0-E^1}, we have 
\begin{align*}
\sum_{j=0}^{p^k-1}
E_{\nu,k,j}^{(0)\ddag}
\lr{
0, 1
}
=
(-1)^{\nu}p^{k\nu}\frac{B_{n-\nu+1}}{n-\nu+1} 
&+
(-1)^{\nu}
\sum_{\mu=0}^{n-\nu}
\begin{pmatrix}
n-\nu\\
\mu
\end{pmatrix}
p^{k\mu}
\frac{B_{\mu+1}}{\mu+1}
\sum_{l'=0}^{k-1}
p^{l'(\nu-\mu)}
\\
&\quad \times\lr{
\widetilde{B}_{n-\nu-\mu+1}(p^{k-l'})
-
p^{n-\nu-\mu}
\widetilde{B}_{n-\nu-\mu+1}(p^{k-l'-1})}. 
\end{align*}
Since $(n-\nu)(m-k)+k\mu = (n-\nu-\mu)(m-k)+m\mu$ and $0 \leq \mu \leq n-\nu$, all the terms with $\mu \geq 1$  vanish when $m \to \infty$, and hence
\begin{align*}
&\lim_{m \to \infty}\sum_{k=0}^m p^{(n-\nu)(m-k)}\sum_{j=0}^{p^k-1}
E_{\nu,k,j}^{(0)\ddag}(0,1) 
\\
&= 
\lim_{m\to \infty}
\sum_{k=0}^m (-1)^{\nu} p^{(n-\nu)(m-k) + k\nu}\frac{B_{n-\nu+1}}{n-\nu+1}
\\
&+
(-1)^{\nu}B_1\lim_{m \to \infty}\sum_{k=0}^m p^{(n-\nu)(m-k)}
\sum_{l'=0}^{k-1}
p^{l'\nu}
\left(
\widetilde{B}_{n-\nu+1}(p^{k-l'})
-
p^{n-\nu}
\widetilde{B}_{n-\nu+1}(p^{k-l'-1})\right). 
\end{align*}
Since 
\begin{align*}
\lim_{m\to \infty}
\sum_{k=0}^m p^{(n-\nu)(m-k)+k\nu} =
\lim_{m\to \infty}
\frac{p^{(n-\nu)(m+1)}-p^{\nu(m+1)}}{p^{n-\nu}-p^{\nu}}
=0, 
\end{align*}
the first limit vanishes. 
Moreover, Lemma \ref{lem:ber-poly-limit-FT3}(2) implies that 
\begin{align*}
&\sum_{k=0}^m p^{(n-\nu)(m-k)}
\sum_{l'=0}^{k-1}
p^{l'\nu}
\left(
\widetilde{B}_{n-\nu+1}(p^{k-l'})
-
p^{n-\nu}
\widetilde{B}_{n-\nu+1}(p^{k-l'-1})\right)   
\\
&= p^{m}\sum_{k=0}^m p^{(n-\nu-1)(m-k)}
\sum_{l'=0}^{k-1}
p^{l'(\nu-1)}
\left(
\frac{\widetilde{B}_{n-\nu+1}(p^{k-l'})}{p^{k-l'}}
-
p^{n-\nu-1}
\frac{\widetilde{B}_{n-\nu+1}(p^{k-l'-1})}{p^{k-l'-1}}\right)
\\
&\to 0 \quad\quad (m \to \infty), 
\end{align*}
which implies this lemma. 
\end{proof}

By Lemmas \ref{lem:FT1}, \ref{lem:FT2}, and \ref{lem:FT3}, we obtain the following.

\begin{prop}\label{prop:main term limit}
    We have 
\begin{align*}
\lim_{m \to \infty} \sum_{k=0}^m
p^{(n-\nu)(m-k)}
&\sum_{j=0}^{p^k-1}
\brk{\Eis_n, \widetilde{C}_{\nu,k,j}
\left(
\frac{\tau}{p^{m-k}}, p^{m-k}\tau
\right)} 
\\
&=    \frac{1}{1-p^{n+1}}\frac{\zeta(-\nu)\zeta(\nu-n)}{\zeta(-1-n)} - 
   \frac{\zeta(-\nu)}{1-p^{n-\nu}}- 
   \frac{\zeta(\nu-n)}{1-p^{\nu}}.  
        \end{align*}
\end{prop}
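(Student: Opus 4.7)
The plan is to split the pairing $\brk{\Eis_n, \widetilde{C}_{\nu,k,j}(\cdot,\cdot)}$ using Proposition \ref{prop:C_nu,k,j and Eis-pairing computation}, so that $\msW^{(m)}$ decomposes as $\msW_1^{(m)} - \msW_2^{(m)} - \msW_3^{(m)}$, and then to invoke Lemmas \ref{lem:FT1}, \ref{lem:FT2}, and \ref{lem:FT3} to evaluate the three $p$-adic limits separately and assemble them.

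Concretely, Proposition \ref{prop:C_nu,k,j and Eis-pairing computation} gives
\[
\brk{\Eis_n, \widetilde{C}_{\nu,k,j}(\tau_0,\tau_1)} = \frac{i^{\nu+1}}{p^k}\msL_{k,j}(\nu) - E^{(1)\ddag}_{\nu,k,j}\lr{\tfrac{j}{p^k},1} - E^{(0)\ddag}_{\nu,k,j}\lr{-\tfrac{d}{p^{k-l}},1},
\]
and Lemma \ref{lem:partial=0-nu-k-j} guarantees that the left-hand side does not depend on $(\tau_0,\tau_1)$, so one is free to use the specific choice $(\tau_0,\tau_1) = (\tau/p^{m-k},\, p^{m-k}\tau)$ throughout. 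Multiplying by $p^{(n-\nu)(m-k)}$ and summing over $0 \le j \le p^k-1$ and $0 \le k \le m$, and matching with the three expressions defined just before Lemma \ref{lem:FT1}, gives the clean identity $\msW^{(m)} = \msW_1^{(m)} - \msW_2^{(m)} - \msW_3^{(m)}$.

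The remaining step is to take $p$-adic limits term by term. Lemma \ref{lem:FT1} yields
\[
\lim_{m\to\infty}\msW_1^{(m)} = \frac{1}{1-p^{n+1}}\frac{\zeta(-\nu)\zeta(\nu-n)}{\zeta(-1-n)},
\]
Lemma \ref{lem:FT2} yields $\lim_{m\to\infty}\msW_2^{(m)} = \zeta(-\nu)/(1-p^{n-\nu})$, and Lemma \ref{lem:FT3} yields $\lim_{m\to\infty}\msW_3^{(m)} = \zeta(\nu-n)/(1-p^{\nu})$. Since each of the three limits exists individually in $\bQ_p$, the $p$-adic limit of the (finite) alternating sum is the alternating sum of the limits, which is exactly the right-hand side of the proposition.

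The genuinely substantive content has already been absorbed into the preceding auxiliary lemmas, so at the level of Proposition \ref{prop:main term limit} itself there is no further obstacle. The only non-trivial work is the Mellin-transform computation that sits inside Lemma \ref{lem:FT1} (it identifies $p^{-k}\sum_j \msL_{k,j}(s)$ with an Euler-factor-twisted version of $\zeta(s+1)\zeta(s-n)$ and applies the functional equation at $s = \nu$), and the $p$-adic Bernoulli-polynomial estimates of Lemmas \ref{lem:p-adic limit-ber-poly} and \ref{lem:ber-poly-limit-FT3} that feed Lemmas \ref{lem:FT2} and \ref{lem:FT3}; these are all already in hand.
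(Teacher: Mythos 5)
Your proposal is correct and follows exactly the paper's own route: the decomposition $\msW^{(m)} = \msW^{(m)}_1 - \msW^{(m)}_2 - \msW^{(m)}_3$ via Proposition \ref{prop:C_nu,k,j and Eis-pairing computation}, followed by the termwise limits from Lemmas \ref{lem:FT1}, \ref{lem:FT2}, and \ref{lem:FT3}. Nothing further is needed.
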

By the argument in the beginning of \S\ref{sec:Period}, this completes the proof of Theorem \ref{thm:Tp^m and Eis}.

\subsection{Rationality of the Eisenstein class $\Eis_n$}

The  computations (in the proofs of) Proposition \ref{prop:C_nu,k,j and Eis-pairing computation} and Lemmas \ref{lem:L, E^1, E^0} and \ref{lem:sum-detail_bel-E^0-E^1} imply the following proposition as a special case. 

\begin{prop}\label{prop:pairing-rational}
      For any integer $\nu \in \{1, \ldots, n-1\}$, we have 
   \begin{align*}
   \brk{\Eis_n, \widetilde{C_{\nu}(\tau)}} 
   =
  \frac{\zeta(-\nu)\zeta(\nu-n)}{\zeta(-1-n)} - \zeta(-\nu) - \zeta(\nu-n) \in \bQ. 
   \end{align*}
\end{prop}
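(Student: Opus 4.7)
By unwinding the definition of $\widetilde{T_p^m(C_{\nu}(\tau))}$ at $m=0$ (where the only surviving indices are $A=k=j=0$ and $C(0,0)=1$), the cycle $\widetilde{C_{\nu}(\tau)}$ coincides with $\widetilde{C}_{\nu,0,0}(\tau,\tau)$. Hence the plan is to specialize the pairing formula of Proposition \ref{prop:C_nu,k,j and Eis-pairing computation} to $(k,j)=(0,0)$, noting that this forces $l=l_0(0)=0$ and $d=d_0(0)=0$, and obtain
\[
\brk{\Eis_n,\widetilde{C_{\nu}(\tau)}} = i^{\nu+1}\msL_{0,0}(\nu) - E_{\nu,0,0}^{(1)\ddag}(0,1) - E_{\nu,0,0}^{(0)\ddag}(0,1),
\]
and then to evaluate each of the three terms separately.

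\textbf{First term.} I would reuse the Mellin-transform calculation from the proof of Lemma \ref{lem:L, E^1, E^0}(1). At $k=0$ the Euler-type prefactor $(\cE_p(\nu-n)-p^{n+1}\cE_p(\nu+1))/(1-p^{n+1})$ simplifies algebraically to $1$ (as is immediate upon expanding $\cE_p(s)=1-p^{-s}$), so that after invoking the functional equation of the Riemann zeta function one gets
\[
i^{\nu+1}\msL_{0,0}(\nu) = \frac{\zeta(-\nu)\zeta(\nu-n)}{\zeta(-1-n)},
\]
with all apparent $p$-dependence disappearing, as it must for a $p$-independent answer.

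\textbf{Other two terms.} These are elementary polynomial evaluations. From Definitions \ref{dfn:E} and \ref{dfn:P} one has $E_{\nu,0,0}^{(1)}=X_1^{\nu}X_2^{n-\nu}$ and $E_{\nu,0,0}^{(0)}=(-1)^{\nu+1}X_1^{n-\nu}X_2^{\nu}$. Using the identity $(X_1^{\mu}X_2^{n-\mu})^{\ddag}(0,1)=-B_{\mu+1}/(\mu+1)$ together with the standard formula $\zeta(1-t)=-B_t/t$, I expect to find $E_{\nu,0,0}^{(1)\ddag}(0,1)=\zeta(-\nu)$ and $E_{\nu,0,0}^{(0)\ddag}(0,1)=\zeta(\nu-n)$. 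Substituting these back into the pairing formula yields the asserted identity, and the rationality is then immediate from the rationality of $\zeta$ at non-positive integers.

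\textbf{Main obstacle.} There is no genuinely deep obstacle: the proposition is essentially a bookkeeping consequence of computations already carried out in the preceding subsections. The only point requiring mild care is the sign $(-1)^{\nu+1}$ in $E_{\nu,0,0}^{(0)}$: for odd $\nu$ it combines with $-B_{n-\nu+1}/(n-\nu+1)$ to produce exactly $\zeta(\nu-n)$, while for even $\nu$ both the Bernoulli number $B_{n-\nu+1}$ and the value $\zeta(\nu-n)$ vanish identically, so the two parity cases collapse into a single uniform formula.
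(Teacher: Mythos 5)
Your proposal is correct and follows essentially the same route as the paper's own proof: identify $\widetilde{C_\nu(\tau)}$ with $\widetilde{C}_{\nu,0,0}$, specialize Proposition \ref{prop:C_nu,k,j and Eis-pairing computation} and Lemma \ref{lem:L, E^1, E^0}(1) to $(k,j)=(0,0)$, and evaluate $E_{\nu,0,0}^{(1)\ddag}(0,1)=\zeta(-\nu)$ and $E_{\nu,0,0}^{(0)\ddag}(0,1)=\zeta(\nu-n)$ exactly as in (the proof of) Lemma \ref{lem:sum-detail_bel-E^0-E^1}. Your explicit treatment of the sign $(-1)^{\nu+1}$ via the parity of $\nu$ (using that $B_{n-\nu+1}=0$ when $\nu$ is even) is a detail the paper leaves implicit, and it is handled correctly.
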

\begin{proof}
By Proposition \ref{prop:C_nu,k,j and Eis-pairing computation}, 
we have
\[
\brk{\Eis_n, \widetilde{C_{\nu}(\tau)}}
= \brk{\Eis_n, \widetilde{C}_{\nu,0,0}}
=
i^{\nu+1}\msL_{0,0}(\nu)
-
E_{\nu,0,0}^{(1) \ddag}
\lr{
0, 1
}
-
E_{\nu,0,0}^{(0) \ddag}
\lr{0, 1
}. 
\]
By Lemma \ref{lem:L, E^1, E^0}(1), we have 
\[
i^{\nu+1}\msL_{0,0}(\nu) = \frac{\zeta(-\nu)\zeta(\nu-n)}{\zeta(-1-n)}. 
\]
Moreover, in the proof of Lemma \ref{lem:sum-detail_bel-E^0-E^1}, we showed that 
\[
E_{\nu,0,0}^{(1) \ddag}
\lr{
0, 1
} = \zeta(-\nu)
\,\,\, \textrm{ and } \,\,\, 
E_{\nu,0,0}^{(0) \ddag}
\lr{0, 1
} = \zeta(\nu-n). 
\]
\end{proof}

The following lemma will be well-known to experts. 
For instance, Harder mentioned in \cite[\S5.1.3]{CAG} that this is proved by Gebertz in her diploma thesis. 
Here we give a proof for the completeness of the paper.

    \begin{lem}\label{lem:relative homology generator}
   The relative homology group $H_1(Y^\BS, \partial Y^\BS, \cM_n)$ is generated by the set $\{[C_\nu] \mid 0 \leq \nu \leq n\}$.      
    \end{lem}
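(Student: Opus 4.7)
The plan is to reduce the statement to Manin's classical presentation of modular symbols for $\Gamma = \mathrm{SL}_2(\bZ)$ in the Borel--Serre framework.

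First, using the contractibility of $\bbH^\BS$, every relative $1$-cycle in $(S_\bullet(\bbH^\BS)/S_\bullet(\partial\bbH^\BS) \otimes \cM_n)_\Gamma$ is equivalent (modulo boundaries) to a sum $\sum_j [\{\alpha_j, \beta_j\} \otimes P_j]_\Gamma$ with $\alpha_j, \beta_j \in \partial\bbH^\BS$: any singular $1$-simplex in $\bbH^\BS$ is equivalent in $\MS(\bbH^\BS)$ to the modular symbol between its endpoints (by the contractibility-based defining relation of $\MS$), and endpoints lying in the interior $\bbH$ can be dragged to the boundary by adding a $2$-chain. Moreover, since each boundary component $\PP^1(\bR) \setminus \{r\}$ of $\partial\bbH^\BS$ is homeomorphic to $\bR$ and hence connected, any boundary endpoint can be slid within its component to a canonical cusp-limit of the form $\gamma \cdot i\infty$ with $\gamma \in \Gamma$, introducing only chains in $S_\bullet(\partial\bbH^\BS)$ which vanish in the relative complex.

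Second, I would apply Manin's continued-fraction lemma: each modular symbol $\{\alpha_j, \beta_j\}$ with endpoints above cusps in $\PP^1(\bQ)$ decomposes as a finite sum $\sum_k \gamma_{jk} \cdot \{0, i\infty\}$ with $\gamma_{jk} \in \Gamma$. This rests on the classical observation that successive convergents $p_{k-1}/q_{k-1}, p_k/q_k$ of a continued-fraction expansion satisfy $p_k q_{k-1} - p_{k-1} q_k = \pm 1$, so the matrix with these columns lies in $\mathrm{GL}_2(\bZ)$ and (up to a sign adjustment bringing it into $\Gamma$) sends $\{0, i\infty\}$ to a symbol between consecutive cusp-approaches, so that a telescoping sum recovers the original symbol.

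Third, $\Gamma$-coinvariance gives $[\gamma \cdot \{0, i\infty\} \otimes P]_\Gamma = [\{0, i\infty\} \otimes \gamma^{-1}P]_\Gamma$, so every class rewrites as $[\{0, i\infty\} \otimes Q]_\Gamma$ for some $Q \in \cM_n$; expanding $Q = \sum_{\nu=0}^n c_\nu e_\nu$ in the basis $\{e_0,\ldots,e_n\}$ of $\cM_n$ yields the $\bZ$-linear combination $\sum_\nu c_\nu [C_\nu]$. The main technical hurdle is the first step---transferring Manin's classical picture of paths between $\PP^1(\bQ)$-cusps to paths between canonical boundary points in the Borel--Serre compactification---and this is resolved by the connectedness of each boundary component together with the vanishing of chains supported on $\partial\bbH^\BS$ in the relative complex.
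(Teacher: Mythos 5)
Your second and third steps (Manin's continued--fraction decomposition into unimodular symbols, then pushing the group element onto the coefficient via $\Gamma$-coinvariance) are correct and constitute a genuinely different route from the paper's: the paper instead pivots every symbol at the single basepoint $i\infty$ and reduces $[\{i\infty,\gamma i\infty\}\otimes P]$ to the generators $\left(\begin{smallmatrix}0&-1\\1&0\end{smallmatrix}\right)$ and $\left(\begin{smallmatrix}1&1\\0&1\end{smallmatrix}\right)$ of $\Gamma$ using the relation $[\{i\infty,\gamma_1\gamma_2 i\infty\}\otimes P]=[\{i\infty,\gamma_1 i\infty\}\otimes P]+[\{i\infty,\gamma_2 i\infty\}\otimes\gamma_1^{-1}P]$. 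The two devices are interchangeable here, and your version is fine \emph{granted} the reduction to symbols with both endpoints in $\partial\bbH^{\BS}$.

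That reduction --- your first step --- is where the gap is. A relative $1$-cycle is a chain $\sigma=\sum_j\{a_j,b_j\}\otimes P_j$ whose boundary vanishes only in the $\Gamma$-\emph{coinvariants} $((S_0(\bbH^{\BS})/S_0(\partial \bbH^{\BS}))\otimes\cM_n)_\Gamma$, not in $S_0(\bbH^{\BS})\otimes\cM_n$ itself. So the contributions of an interior endpoint $\tau\in\bbH$ need not cancel on the nose; what the cycle condition gives, for each $\tau$, is
\[
\sum_{b_j=\tau}P_j-\sum_{a_j=\tau}P_j=\sum_{\gamma_k d_k=\tau}\gamma_kQ_k-\sum_{d_k=\tau}Q_k+\sum_{c_l=\tau}R_l
\]
with $\gamma_k\in\Gamma$ and $c_l\in\partial\bbH^{\BS}$. ``Dragging an interior endpoint to the boundary by adding a $2$-chain'' is not an available move: replacing $\{\alpha,\beta\}\otimes P$ by $\{\alpha',\beta\}\otimes P$ with $\alpha'\in\partial\bbH^{\BS}$ costs the extra symbol $\{\alpha,\alpha'\}\otimes P$, which still has the interior endpoint $\alpha$ and does not vanish in the relative complex. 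The correct elimination writes every symbol as $\{a_j,b_j\}=\{i\infty,b_j\}-\{i\infty,a_j\}$ and substitutes the displayed identity into the resulting sum; after passing to coinvariants the $\Gamma$-translate terms leave behind the extra summands $[\{i\infty,\gamma_k i\infty\}\otimes\gamma_kQ_k]$, which are precisely the classes your Manin step must then decompose. The justification you offer (connectedness of the boundary components plus vanishing of boundary-supported chains) only lets you slide endpoints that are \emph{already} on the boundary within their component; it says nothing about endpoints in the interior, so as written the first step does not go through.
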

\begin{proof}
Note that the relative homology group $H_1(Y^\BS, \partial Y^\BS, \cM_n)$ can be computed as
\begin{align*}
&H_1(Y^\BS, \partial Y^\BS, \cM_n)\\
&=
\ker\lr{
((\MS(\bbH^{\BS})/\MS(\partial \bbH^\BS))\otimes \mcM_n)_{\Gamma}
\overset{\partial}{\ra}
((S_0(\bbH^{\BS})/S_0(\partial \bbH^{\BS}))\otimes \mcM_n)_{\Gamma}
}. 
\end{align*}
Let $[\sigma] \in H_1(Y^\BS, \partial Y^\BS, \cM_n)$ be a class represented by a $1$-chain
\[
\sigma=
\sum_{j}\{a_j, b_j\}\otimes P_j 
\in 
(\MS(\bbH^{\BS})/\MS(\partial \bbH^\BS)) \otimes \mcM_n, 
\]
where $a_j, b_j \in \bbH^{\BS}$ and $P_j \in \mcM_n$. 
The condition that 
\[
\partial \sigma=0 
\,\,\, \text{ in }\,\,\, 
((S_0(\bbH^{\BS})/S_0(\partial \bbH^{\BS}))\otimes \mcM_n)_{\Gamma} 
\]
implies that 
\begin{align}\label{eq:relative-homology-eq1}
\sum_j \{b_j\}\otimes P_j -\{a_j\}\otimes P_j
=
\sum_{k} (\gamma_k-1)(\{d_k\}\otimes Q_k)
+
\sum_{l} \{c_l\} \otimes R_l
\end{align}
in $S_0(\bbH^{\BS})\otimes \mcM_n$
for some $\gamma_k \in \Gamma, d_k \in \bbH^{\BS}, c_l \in \partial \bbH^{\BS}$, and $Q_k, R_l \in \mcM_n$. 
Then we can rewrite the identity \eqref{eq:relative-homology-eq1} as
\begin{align*}
\sum_{\tau \in \bbH^{\BS}}
\{\tau\}\otimes
\lr{
\sum_{\substack{j \\ b_j=\tau}}P_j
-\sum_{\substack{j\\ a_j=\tau}}P_j
-\sum_{\substack{k\\ \gamma_k d_k=\tau}}\gamma_k Q_k
+\sum_{\substack{k\\ d_k=\tau}}Q_k
-\sum_{\substack{l\\ c_l=\tau}}R_l
}
=0 
\end{align*}
in $S_0(\bbH^{\BS})\otimes \mcM_n$. 
Since $S_0(\bbH^{\BS}) = \bigoplus_{\tau \in \bbH^\BS}\bZ\{\tau\}$, this shows that for any $\tau \in \bbH^{\BS}$, we have
\begin{align}\label{eq:relative-homology-eq2}
\sum_{\substack{j\\ b_j=\tau}}P_j
-\sum_{\substack{j\\ a_j=\tau}}P_j
-\sum_{\substack{k\\ \gamma_k d_k=\tau}}\gamma_k Q_k
+\sum_{\substack{k\\ d_k=\tau}}Q_k
-\sum_{\substack{l\\ c_l=\tau}}R_l
=0  \,\,\, \textrm{ in } \,\,\,  \mcM_n. 
\end{align}

Now, let $i \infty \in \partial \bbH^{\BS}$ denote the point defined by
\[
i \infty := \lim_{t\in \Rpos, t \to \infty} it. 
\]
Then the identity \eqref{eq:relative-homology-eq2} implies that in $\MS(\bbH^{\BS}) \otimes \mcM_n$, we have
\begin{align}\label{eq:relative-homology-eq3}
\sum_{\tau \in \bbH^{\BS}}
\{i\infty, \tau\}\otimes
\lr{
\sum_{\substack{j\\ b_j=\tau}}P_j
-\sum_{\substack{j\\ a_j=\tau}}P_j
-\sum_{\substack{k\\ \gamma_k d_k=\tau}}\gamma_k Q_k
+\sum_{\substack{k\\ d_k=\tau}}Q_k
-\sum_{\substack{l\\ c_l=\tau}}R_l
}
=0.  
\end{align}
Using the identity \eqref{eq:relative-homology-eq3}, in $((\MS(\bbH^{\BS})/\MS(\partial \bbH^\BS))\otimes \mcM_n)_{\Gamma}$, we compute 
\begin{align*}
[\sigma]
&=
\sum_j [\{i\infty, b_j\}\otimes P_j] -[\{i\infty, a_j\}\otimes P_j]\\
&=
\sum_{k} 
[
\{i\infty, \gamma_k d_k\}\otimes \gamma_k Q_k]
-
\sum_k [\{i \infty, d_k\} \otimes Q_k]
+
\sum_{l} [\{i\infty, c_l\} \otimes R_l]\\
&=
\sum_{k} 
[(\gamma_k-1)
\lr{\{i\infty, d_k\}\otimes Q_k
}]
+
\sum_k [\{i \infty, \gamma_k i \infty\} \otimes \gamma_k Q_k]
+
\sum_{l} [\{i\infty, c_l\} \otimes R_l]\\
&=
\sum_k [\{i \infty, \gamma_k i \infty\} \otimes \gamma_k Q_k]
+
\sum_{l} [\{i\infty, c_l\} \otimes R_l]. 
\end{align*}
Moreover, (as we are considering the relative homology classes) we may replace $c_l$ with any point in the same connected component of $\partial \bbH^{\BS}$ as $c_l$, 
and in particular, we may replace $c_l$ by $g_l i \infty$ for some $g_l \in \Gamma$. 
Thus we conclude that the class $[\sigma] \in H_1(Y^\BS, \partial Y^\BS, \cM_n)$ is represented by a $1$-chain of the form
\[
\sigma' 
=
\sum_{m} \{i \infty,\gamma_m' i \infty\}\otimes R_{m}'
\]
for some $\gamma_m' \in \Gamma$ and $R_m' \in \mcM_n$. 
Now the lemma follows from the facts that the group $\Gamma = \mathrm{SL}_2(\bZ)$ is generated by matrices $\begin{pmatrix}
    0&-1\\1&0
\end{pmatrix}$ and $\begin{pmatrix}
    1&1\\0&1
\end{pmatrix}$ 
and that $[\{i \infty, \gamma_1 \gamma_2 i \infty\} \otimes P] = [\{i \infty, \gamma_1 i \infty\} \otimes P] + [\{i \infty,  \gamma_2 i \infty\} \otimes \gamma_1^{-1} P]$ for any 
$\gamma_1, \gamma_2 \in \Gamma$ and $P \in \cM_n$. 
\end{proof}


Recall that $\Gamma_{\infty}$ is the subgroup of $\Gamma$ generated by $\begin{pmatrix}
        1&1\\0&1
    \end{pmatrix}$. 
Since $\mathbb{P}^1(\bbR) = \bbR \cup \{\infty\}$, 
 the boundary $\partial Y^\BS$ {can be} identified with $\Gamma_\infty \backslash \bbR$. 
Hence we obtain the following lemma. 

\begin{lem}\label{lem:boundary-homology-0}
We have an identification $H_0(\partial Y^\BS, \cM_{n}) =(\cM_{n})_{\Gamma_\infty}$, and 
hence $H_0(\partial Y^\BS, \cM_{n}) \otimes \bQ$ is a $1$-dimensional $\Q$-vector space generated by $[e_n]$, where $e_n = X_1^n$. 
\end{lem}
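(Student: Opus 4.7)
The plan is first to identify $\partial Y^{\BS}$ with the quotient $\Gamma_\infty\bs\R$ (as indicated in the paragraph preceding the lemma), then to compute the homology using the contractibility of $\R$, and finally to carry out a short linear-algebra computation.

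For the first assertion, I would use that $\partial\bbH^{\BS}=\bigsqcup_{r\in\PP^1(\Q)}(\PP^1(\R)\setminus\{r\})$, that $\Gamma$ acts transitively on the connected components, and that the stabilizer of $\PP^1(\R)\setminus\{\infty\}$ in $\Gamma$ is $\Gamma_\infty$. These give the identification $\partial Y^{\BS}=\Gamma_\infty\bs(\PP^1(\R)\setminus\{\infty\})=\Gamma_\infty\bs\R$. Since $\Gamma_\infty\cong\Z$ acts freely on the contractible space $\R$ by integer translations, the singular chain complex $S_\bullet(\R)$ is a free resolution of the trivial $\Z[\Gamma_\infty]$-module $\Z$. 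Combining this with the chain-level description recalled in \S\ref{sec:modular symbol}, I obtain
\[
H_0(\partial Y^{\BS},\cM_n)\cong H_0\big((S_\bullet(\R)\otimes\cM_n)_{\Gamma_\infty}\big)\cong(\cM_n)_{\Gamma_\infty},
\]
which is the first assertion.

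For the second assertion, write $T:=\begin{pmatrix}1&1\\0&1\end{pmatrix}$, a generator of $\Gamma_\infty$. The action formula $(\gamma P)(X_1,X_2)=P(dX_1-bX_2,-cX_1+aX_2)$ specializes to $(TP)(X_1,X_2)=P(X_1-X_2,X_2)$, so a short binomial expansion yields
\[
(T-1)e_\nu=\big((X_1-X_2)^\nu-X_1^\nu\big)X_2^{n-\nu}=\sum_{k=0}^{\nu-1}(-1)^{\nu-k}\binom{\nu}{k}e_k
\]
for each $\nu\ge 1$, while $(T-1)e_0=0$. In particular, $(T-1)(\cM_n\otimes\Q)\subseteq V':=\bigoplus_{k=0}^{n-1}\Q e_k$. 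Moreover, in $(T-1)e_\nu$ the coefficient of $e_{\nu-1}$ equals $-\nu\neq 0$ and the coefficient of $e_k$ vanishes for $k\ge\nu$; hence the vectors $(T-1)e_1,\ldots,(T-1)e_n$ are linearly independent in $V'$ and therefore form a $\Q$-basis of $V'$. It follows that $(T-1)(\cM_n\otimes\Q)=V'$, so $(\cM_n)_{\Gamma_\infty}\otimes\Q=(\cM_n\otimes\Q)/V'$ is the $1$-dimensional $\Q$-vector space spanned by $[e_n]$.

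The argument is completely elementary; the only points deserving care are the identification of $\partial Y^{\BS}$ as the $K(\Gamma_\infty,1)$-space $\Gamma_\infty\bs\R$ (so that the homology collapses to the group coinvariants $(\cM_n)_{\Gamma_\infty}$) and the leading-coefficient computation $-\nu$ in $(T-1)e_\nu$, both of which are routine.
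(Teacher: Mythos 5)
Your proof is correct and follows exactly the route the paper intends (the paper offers no written proof, merely asserting the lemma after noting $\partial Y^{\BS}=\Gamma_\infty\backslash\bbR$), so you are simply supplying the omitted details: the reduction of $H_0$ to coinvariants via the free $\Gamma_\infty$-action on the contractible $\bbR$, and the triangular computation of $(T-1)e_\nu$ with leading coefficient $-\nu$. The only nitpick is that the stabilizer in $\Gamma$ of the component $\PP^1(\R)\setminus\{\infty\}$ is $\pm\Gamma_\infty$ rather than $\Gamma_\infty$, but since $-I$ acts trivially on $\bbR$ and on $\cM_n$ (as $n$ is even) this changes nothing.
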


\begin{lem}\label{lem:kernel-boundary-rational}
The kernel of the boundary homomorphism 
\[
\partial\colon H_1(Y^\BS, \partial Y^\BS \cM_n) \otimes \bQ \longrightarrow H_0(\partial Y^\BS, \cM_n) \otimes \bQ
\]
is generated by the set $\{[C_\nu] \mid 1 \leq \nu \leq n-1\}$.
\end{lem}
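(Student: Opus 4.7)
The plan is to combine Lemma \ref{lem:relative homology generator} with an explicit computation of the boundary map on each generator $[C_\nu]$. By that lemma, every class in $H_1(Y^\BS,\partial Y^\BS,\cM_n)\otimes\bQ$ can be written as $\sum_{\nu=0}^{n}a_\nu[C_\nu]$ with $a_\nu\in\bQ$, and the goal is to show that any element of $\ker(\partial)$ is in fact a $\bQ$-combination of the $[C_\nu]$ for $1\leq\nu\leq n-1$. One inclusion will be handled by showing $\partial[C_\nu]=0$ for these $\nu$; the converse will follow from pinning down the value of $\partial$ on $[C_0]$ and $[C_n]$ and exhibiting a linear relation between them.

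I would first compute the boundary $\partial[C_\nu]$ explicitly. By definition $\partial C_\nu = \{i\infty\}\otimes e_\nu - \{0\}\otimes e_\nu$. Using the matrix $S:=\left(\begin{smallmatrix}0&-1\\1&0\end{smallmatrix}\right)\in\Gamma$, which sends $i\infty$ to $0$, the standard identity $\gamma\sigma\otimes m \equiv \sigma\otimes\widetilde{\gamma}\,m$ in $\Gamma$-coinvariants gives $\{0\}\otimes e_\nu \equiv \{i\infty\}\otimes S^{-1}e_\nu$. A direct calculation with the $M_2^+(\bZ)$-action of \S2.3 (and using that $n$ is even) yields $S^{-1}e_\nu = (-1)^{\nu}e_{n-\nu}$, so
\[
\partial[C_\nu] = \big[\{i\infty\}\otimes\big(e_\nu - (-1)^\nu e_{n-\nu}\big)\big]\in H_0(\partial Y^\BS,\cM_n).
\]

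Next, by Lemma \ref{lem:boundary-homology-0}, $H_0(\partial Y^\BS,\cM_n)\otimes\bQ$ is the one-dimensional space $\bQ[e_n]=(\cM_n\otimes\bQ)_{\Gamma_\infty}$. The key subsidiary observation is that $[e_\mu]=0$ in this rational coinvariant space for every $0\leq\mu\leq n-1$: indeed, for $T=\left(\begin{smallmatrix}1&1\\0&1\end{smallmatrix}\right)$ one has $(T-1)e_\nu = -\nu\,e_{\nu-1}+(\text{terms in }e_{\nu-2},\dots,e_0)$, so a straightforward induction on $\mu$ puts $e_\mu$ into $(T-1)\cM_n\otimes\bQ$ for $\mu<n$. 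Applying this to the boundary formula above, $\partial[C_\nu]=0$ rationally for $1\leq\nu\leq n-1$, whereas $\partial[C_0]=-[e_n]$ and $\partial[C_n]=[e_n]$. Consequently, the condition $\partial\bigl(\sum_{\nu}a_\nu[C_\nu]\bigr)=0$ is equivalent to $a_0=a_n$.

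Finally, to eliminate the $\nu\in\{0,n\}$ contributions I would exhibit the relation $[C_0]+[C_n]=0$ directly in $H_1(Y^\BS,\partial Y^\BS,\cM_n)$. It comes from $S\cdot\{0,i\infty\} = \{i\infty,0\} = -\{0,i\infty\}$ in $\MS(\bbH^\BS)$ together with $S\cdot e_0 = e_n$, which give $\{0,i\infty\}\otimes e_0 \equiv -\{0,i\infty\}\otimes e_n$ in the $\Gamma$-coinvariants. Combined with $a_0=a_n$, one gets $a_0[C_0]+a_n[C_n] = (a_n-a_0)[C_n] = 0$, so the class $\sum_\nu a_\nu[C_\nu]$ collapses to $\sum_{\nu=1}^{n-1}a_\nu[C_\nu]$, as required. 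The only real obstacle is careful bookkeeping with signs under the adjugate-twisted $M_2^+(\bZ)$-action on $\cM_n$ when computing $S^{-1}e_\nu$ and $S e_0$; no deeper input beyond \S2 is needed.
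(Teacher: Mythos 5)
Your proposal is correct and follows essentially the same route as the paper: decompose via Lemma \ref{lem:relative homology generator}, compute $\partial[C_\nu]=[\{i\infty\}\otimes(e_\nu-(-1)^{n-\nu}e_{n-\nu})]$, use Lemma \ref{lem:boundary-homology-0} to reduce the kernel condition to $a_0=a_n$, and then invoke the relation $[C_0]=-[C_n]$ coming from $S=\left(\begin{smallmatrix}0&-1\\1&0\end{smallmatrix}\right)$. The only difference is that you spell out the inductive argument showing $[e_\mu]=0$ in $(\cM_n\otimes\bQ)_{\Gamma_\infty}$ for $\mu<n$, which the paper leaves implicit in Lemma \ref{lem:boundary-homology-0}.
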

\begin{proof}
Let $\sigma \in \ker(H_1(Y^\BS, \partial Y^\BS \cM_n) \otimes \bQ \longrightarrow H_0(\partial Y^\BS, \cM_n) \otimes \bQ)$. 
By Lemma\ref{lem:relative homology generator}, we can write as
$
\sigma 
=
\sum_{\nu=0}^n a_{\nu} [C_{\nu}] 
$ 
for some rational  numbers $a_{0}, \dots, a_n \in \Q$. 
Then by using Lemma \ref{lem:boundary-homology-0}, we find that
\[
0 = \partial \sigma
=
\sum_{\nu=0}^n
a_{\nu}
[\{i \infty\}\otimes (e_{\nu}-(-1)^{n-\nu}e_{n-\nu})]
=
- (a_0 -a_n) [\{i \infty\}\otimes e_n]. 
\]
Therefore, Lemma \ref{lem:boundary-homology-0} shows that  $a_0=a_n$. On the other hand we see that 
\[
[C_0]=-[C_{\nu}]
\,\,\, \textrm{ in } \,\,\,  
H_1(Y^{\BS}, \partial Y^{\BS}, \mcM_{n}).
\]
This proves the lemma. 
\end{proof}

\begin{cor}\label{cor:eisenstein class is rational}
We have $\Eis_n \in H^1(Y^\BS, \cM_n) \otimes \bQ$. 
\end{cor}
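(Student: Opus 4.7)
The plan is to invoke the formal $\bQ$-bilinear perfect pairing from \S\ref{sec:formal-duality},
$$\brk{~,~}\colon H^1(Y^\BS, \cM_n^\flat)\otimes\bQ \times H_1(Y^\BS, \cM_n)\otimes\bQ \longrightarrow \bQ,$$
which identifies $H^1(Y^\BS, \cM_n^\flat)\otimes\bQ$ with the $\bQ$-linear dual of $H_1(Y^\BS, \cM_n)\otimes\bQ$. Combined with the canonical $\bQ$-linear isomorphism $H^1(Y^\BS, \cM_n^\flat)\otimes\bQ \isomto H^1(Y^\BS, \cM_n)\otimes\bQ$ induced by $\cM_n^\flat \hookrightarrow \cM_n\otimes\bQ$, this reduces the corollary to the assertion that $\brk{\Eis_n, c} \in \bQ$ for every $c$ in some $\bQ$-spanning set of $H_1(Y^\BS, \cM_n)\otimes\bQ$.

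To produce such a spanning set I would appeal to the long exact sequence of the pair $(Y^\BS, \partial Y^\BS)$, which exhibits $H_1(Y^\BS, \cM_n)\otimes\bQ$ as generated by the image of $H_1(\partial Y^\BS, \cM_n)\otimes\bQ$ together with any $\partial_*$-preimages of generators of $\ker\bigl(H_1(Y^\BS, \partial Y^\BS, \cM_n)\otimes\bQ \to H_0(\partial Y^\BS, \cM_n)\otimes\bQ\bigr)$. Lemma \ref{lem:kernel-boundary-rational} identifies the latter kernel with the $\bQ$-span of $\{[C_\nu] : 1 \leq \nu \leq n-1\}$, and each $[C_\nu]$ is lifted by the rational closed cycle $[\widetilde{C_\nu(\tau)}] := [\widetilde{C}_{\nu, 0, 0}(\tau, \tau)] \in H_1(Y^\BS, \cM_n\otimes\bQ)$ produced in \S\ref{sec:construction of the lift}; the cycle is closed by Lemma \ref{lem:partial=0-nu-k-j}, and the assertion that its image modulo $\partial Y^\BS$ equals $[C_\nu]$ is the $m=0$ case of Lemma \ref{lem:T_p-cycle-integral}(3), whose argument is formal in the coefficients and requires no integrality hypothesis when $\bQ$-coefficients suffice. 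Proposition \ref{prop:pairing-rational} then gives $\brk{\Eis_n, \widetilde{C_\nu(\tau)}} \in \bQ$ for each $1 \leq \nu \leq n-1$.

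The boundary contribution is handled separately: since $\partial Y^\BS \cong \Gamma_\infty\backslash\partial\bbH^\BS$ with $\Gamma_\infty$ generated by $T = \begin{pmatrix}1 & 1 \\ 0 & 1\end{pmatrix}$, a short group-homology computation shows $H_1(\partial Y^\BS, \cM_n) \cong \cM_n^T = \bZ e_0$ with $e_0 = X_2^n$, so the image of $H_1(\partial Y^\BS, \cM_n)\otimes\bQ$ in $H_1(Y^\BS, \cM_n)\otimes\bQ$ is at most one-dimensional and generated by $[\{\tau, \tau+1\}\otimes e_0]$. Lemma \ref{lem:eisenstein-pairing-eigen-rational}(1) gives $\brk{\Eis_n, \{\tau, \tau+1\}\otimes e_0} = 1 \in \bQ$, which simultaneously verifies rationality of the pairing on this generator and confirms that the class is genuinely nonzero in $H_1(Y^\BS, \cM_n)\otimes\bQ$. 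Putting the two pieces together, $\brk{\Eis_n, \cdot}$ is $\bQ$-valued on the full spanning set, giving $\Eis_n \in H^1(Y^\BS, \cM_n^\flat)\otimes\bQ = H^1(Y^\BS, \cM_n)\otimes\bQ$. The main technical point requiring care is the bookkeeping in the relative chain complex and the $\Gamma$-coinvariants needed to justify the identification $\partial_*[\widetilde{C_\nu(\tau)}] = [C_\nu]$ in relative homology, which amounts to checking that the correction paths $\{\tau,\tau+1\}\otimes P^{(i)}_{\nu,0,0}$ introduced to close up the cycle do not contribute to the image in $H_1(Y^\BS, \partial Y^\BS, \cM_n)\otimes\bQ$.
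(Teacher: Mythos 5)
Your argument is correct and follows essentially the same route as the paper: the paper's proof also generates $H_1(Y^\BS, \cM_n)\otimes\bQ$ by the image of $H_1(\partial Y^\BS, \cM_n)$ together with the lifts $[\widetilde{C}_\nu(\tau)]$ via Lemma \ref{lem:kernel-boundary-rational}, then concludes from Lemma \ref{lem:eisenstein-pairing-eigen-rational} and Proposition \ref{prop:pairing-rational} by formal duality. The extra details you supply (the computation of $H_1(\partial Y^\BS,\cM_n)$ as $\cM_n^{\Gamma_\infty}$ and the verification that $[\widetilde{C}_{\nu,0,0}]$ maps to $[C_\nu]$ in relative homology) are exactly the content the paper delegates to its earlier lemmas.
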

\begin{proof}
    By Lemma \ref{lem:kernel-boundary-rational}, the homology group $H_1(Y^\BS, \cM_n) \otimes \bQ$ is generated by the image of $H_1(\partial Y^\BS, \cM_n)$ and the set $\{[\widetilde{C}_\nu(\tau)] \mid 1 \leq \nu \leq n-1\}$. 
    Therefore, by Lemma \ref{lem:eisenstein-pairing-eigen-rational} and Proposition \ref{prop:pairing-rational} we have 
    \[
    \brk{\Eis_n, H_1( Y^\BS, \cM_n) \otimes \bQ} \subset \bQ, 
    \]
    which implies $\Eis_n \in H^1(Y^\BS, \cM_n) \otimes \bQ$. 
\end{proof}

\section{Denominator of an ordinary cohomology class}\label{sec:denominator-cohomology-class}

In order to study the denominator $\Delta(\Eis_n)$ of the Eisenstein class $\Eis_n$, in this section, we interpret the denominator $\Delta(\Eis_n)$ in terms of the values $\brk{\Eis_n, [\widetilde{T_p^m C_\nu(\tau)}]}$ of the pairing between the Eisenstein class $\Eis_n$ and the cycles $[\widetilde{T_p^m C_\nu(\tau)}] \in H_1(Y^\BS, \cM_{n, (p)})$.

\subsection{Definition of the ordinary part}
Let $p$ be a prime number and $M$ a finitely generated $\bZ_{p}$-module with an endomorphism $f \colon M \longrightarrow M$. 
In this subsection, we introduce the notion of the $f$-ordinary part of $M$. 

Since $M$ is a finitely generated $\bZ_{p}$-module, the $p$-adic limit 
\[
e_{f} := \lim_{m \to \infty}f^{m!} \in \mathrm{End}_{\bZ_p}(M)
\]
always exists, and $e_f^2 = e_f$.  
We define the $f$-ordinary part $M_{\rm ord}$ of $M$ by 
\[
M_{\rm ord} := e_{f}M, 
\]
and we say that $m \in M$ is ($f$-)ordinary if $m \in M_\ord$, that is, $e_f m = m$. 
We also put $M_{\textrm{nilp}} := (1-e_{f})M$. 
We then have $M = M_{\ord} \oplus M_{\rm nilp}$. 

The following lemma follows from the fact that $e_f^2 = e_f$, 

\begin{lem}
The functor $M \mapsto M_{\ord}$ is exact. 
\end{lem}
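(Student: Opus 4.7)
The plan is to exploit the idempotency $e_{f}^{2}=e_{f}$ (which the paper has already asserted) to show that the decomposition $M=M_{\ord}\oplus M_{\nilp}$ is functorial for $f$-equivariant morphisms; once this is done, exactness of $(-)_{\ord}$ becomes a formal consequence of the exactness of direct-summand projection.

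The first step is to check that the formation of $e_{f}$ is natural. Suppose $\alpha\colon M'\longrightarrow M$ is a $\bZ_{p}$-linear map that is equivariant for the chosen endomorphisms, that is $\alpha\circ f'=f\circ \alpha$. Then $\alpha\circ (f')^{m!}=f^{m!}\circ \alpha$ for every $m$. Because $M'$ and $M$ are finitely generated $\bZ_{p}$-modules, $\mathrm{End}_{\bZ_{p}}(M')$ and $\mathrm{End}_{\bZ_{p}}(M)$ are $p$-adically complete, and $\alpha$ is continuous for the $p$-adic topologies; passing to the $p$-adic limit in $m$ therefore gives $\alpha\circ e_{f'}=e_{f}\circ \alpha$. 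In particular $\alpha(M'_{\ord})\subset M_{\ord}$, so $(-)_{\ord}$ is well defined as an additive functor on the category of finitely generated $\bZ_{p}$-modules equipped with an endomorphism.

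Now consider a short exact sequence
\[
0\longrightarrow M'\stackrel{\alpha}{\longrightarrow} M\stackrel{\beta}{\longrightarrow} M''\longrightarrow 0
\]
of such modules with $\alpha,\beta$ equivariant. Exactness of $(-)_{\ord}$ amounts to three verifications, each of which uses only $\alpha\circ e_{f'}=e_{f}\circ\alpha$, $\beta\circ e_{f}=e_{f''}\circ\beta$, and the idempotency of the various $e$'s. Injectivity of $\alpha|_{M'_{\ord}}$ is inherited from that of $\alpha$. The composite $\beta\circ\alpha=0$ restricts to $0$ on the ordinary parts. To see surjectivity at $M''_{\ord}$, pick $m''\in M''_{\ord}$ and lift it to $m\in M$ via the surjection $\beta$; then $e_{f}m\in M_{\ord}$ and $\beta(e_{f}m)=e_{f''}\beta(m)=e_{f''}m''=m''$. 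Finally, if $m\in M_{\ord}$ satisfies $\beta(m)=0$, write $m=\alpha(m')$ for some $m'\in M'$; applying $e_{f}$ and using naturality gives $m=e_{f}m=e_{f}\alpha(m')=\alpha(e_{f'}m')$ with $e_{f'}m'\in M'_{\ord}$.

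There is essentially no obstacle here: the only point requiring care is the compatibility of $e_{f}$ with morphisms, which follows directly from continuity of $\bZ_{p}$-linear maps between finitely generated $\bZ_{p}$-modules. Alternatively, one may phrase the whole argument structurally by observing that the idempotent $e_{f}$ induces a natural direct-sum decomposition $\mathrm{id}=e_{f}+(1-e_{f})$, so that $(-)_{\ord}$ is a direct summand of the identity functor and is therefore automatically exact.
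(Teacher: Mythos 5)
Your proof is correct and follows exactly the route the paper intends: the paper itself only remarks that the lemma ``follows from the fact that $e_f^2=e_f$,'' and your argument is precisely the spelled-out version of that remark, establishing naturality of $e_f$ by passing to the $p$-adic limit in $\alpha\circ (f')^{m!}=f^{m!}\circ\alpha$ and then performing the standard three exactness checks for a functorial direct summand. No gaps.
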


\subsection{Denominator of a cohomology class}

Recall 
\[
H^1_{\rm int}(Y^{\BS}, \cM_n^\flat) = \image\left(H^1(Y^{\BS}, \cM_n^\flat) \longrightarrow H^1(Y^{\BS}, \cM_n^\flat) \otimes \bQ\right). 
\]

\begin{dfn}\label{dfn:denominator-of-general-cohomology-class}
For any cohomology class $c \in H^1(Y^{\BS}, \cM_n^\flat) \otimes \bQ$, we define the denominator $\Delta(c) \in \bZ_{>0}$ of $c$ by 
\[
\Delta(c) := \min\{\Delta \in \bZ_{>0} \mid \Delta c \in H^1_{\rm int}(Y^{\BS}, \cM_n^\flat)\}, 
\]
    and for each prime number $p$, we set 
    \[
    \delta_p(c) := \ord_p(\Delta(c)) \,\,\, \textrm{ and } \,\,\, \Delta_p(c) := p^{\delta_p(c)}. 
    \]
\end{dfn}

\begin{lem}
    Let $c \in  H^1(Y^{\BS}, \cM_n) \otimes \bQ$ be a cohomology class. 
    We have 
    \[
    \Delta(c) = \min\{\Delta \in \bZ_{>0} \mid 
    \Delta \brk{c, H_1(Y^{\BS}, \cM_n)} 
    \subset \bZ\}. 
    \]
    Moreover, for any prime number $p$, we have 
    \[
    \delta_p(c) = \min\{\delta \in \bZ_{\geq 0} \mid p^{\delta} 
   \brk{c,H_1(Y^{\BS}, \cM_n \otimes \bZ_p)}
    \subset \bZ_p \}. 
    \]
\end{lem}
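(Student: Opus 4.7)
The plan is to deduce both assertions directly from the formal duality of Section \ref{sec:formal-duality}. Set $A := H^1_{\rm int}(Y^\BS, \cM_n^\flat) = H^1(Y^\BS, \cM_n^\flat)/(\text{torsion})$ and $B := H_1(Y^\BS, \cM_n)$; these are finitely generated $\bZ$-modules, and the rational identification $\cM_n \otimes \bQ = \cM_n^\flat \otimes \bQ$ (both equal the degree-$n$ homogeneous part of $\bQ[X_1, X_2]$ with the same $M_2^+(\bZ)$-action) yields $H^1(Y^\BS, \cM_n) \otimes \bQ = A \otimes \bQ$, so any $c$ as in the lemma defines an element of $A \otimes \bQ$ to which the definition of $\Delta(c)$ applies. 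The formal duality furnishes a perfect pairing $\brk{~,~} \colon A \times B \to \bZ$, equivalently an isomorphism $A \isomto \Hom_\bZ(B, \bZ)$.

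For the first formula, I would observe that for $\Delta \in \bZ_{>0}$, the class $\Delta c \in A \otimes \bQ$ lies in $A$ if and only if the associated rational functional $b \mapsto \Delta \brk{c, b}$ on $B$ is $\bZ$-valued; the first formula then follows by minimizing over $\Delta$. For the $p$-adic assertion, tensoring with $\bZ_p$ preserves the perfect pairing and, by flatness of $\bZ_p$ over $\bZ$, identifies $H_1(Y^\BS, \cM_n \otimes \bZ_p)$ with $B \otimes \bZ_p$. The standard identity $A \otimes \bZ_{(p)} = (A \otimes \bQ) \cap (A \otimes \bZ_p)$ (both viewed inside $A \otimes \bQ_p$) gives $\delta_p(c) = \min\{\delta \geq 0 \mid p^\delta c \in A \otimes \bZ_p\}$, and applying the perfect $\bZ_p$-pairing then converts this into the stated integrality condition on $\brk{p^\delta c, B \otimes \bZ_p}$.

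I do not expect a substantial obstacle: this is a bookkeeping lemma reducing $\bZ$- and $\bZ_p$-integrality of a cohomology class to the integrality of its values against homology, via the formal duality already in place. The only points worth flagging are the natural identification of the two rational cohomology groups and the passage between $p$-local and $p$-adic integrality for finitely generated abelian groups.
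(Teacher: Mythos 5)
Your argument is correct and is essentially the paper's own proof: the authors also derive both formulas directly from the formal duality isomorphism $H^{1}(Y^\BS, \mcM_n^{\flat})/(\text{torsion}) \isomto \Hom_{\Z}(H_{1}(Y^\BS, \mcM_n), \Z)$ of \S\ref{sec:formal-duality}, stating the conclusion as immediate. Your write-up merely spells out the bookkeeping (the rational identification of $\cM_n$ and $\cM_n^\flat$ coefficients and the passage between $\bZ_{(p)}$- and $\bZ_p$-integrality) that the paper leaves implicit.
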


\begin{proof}
This lemma follows immediately from the formal duality (see \S \ref{sec:formal-duality}): 
\[
H^{\bullet}(\Gamma \bs X, \mcM_n^{\flat})/\text{(torsion)}
\overset{\sim}{\longrightarrow}
\Hom_{\Z}(H_{\bullet}(\Gamma\bs X, \mcM_n), \Z). 
\]
\end{proof}

\subsection{Denominator of an ordinary cohomology class}
 In this subsection, we fix a prime number $p$. 

\begin{dfn}\
\begin{itemize}
    \item[(1)] We put $\cM_{n, p} := \cM_{n} \otimes \bZ_p$  and $\cM_{n, p}^\flat := \cM_{n}^\flat \otimes \bZ_p$. 
\item[(2)] For any finitely generated $\bZ_p$-algebra, we put 
\begin{align*}
    H_\bullet^\ord(Y^\BS, \cM_{n,p} \otimes R) &:= e_{T_p}H_\bullet(Y^\BS, \cM_{n,p} \otimes R), 
    \\
    H_\bullet^\ord(Y^\BS, \partial Y^\BS, \cM_{n,p} \otimes R) &:= e_{T_p}H_\bullet(Y^\BS, \partial Y^\BS,  \cM_{n,p} \otimes R), 
    \\
    H_\bullet^\ord(\partial Y^\BS, \cM_{n,p} \otimes R) &:= e_{T_p}H_\bullet(\partial Y^\BS, \cM_{n,p} \otimes R). 
\end{align*}
\item[(3)] For any finitely generated $\bZ_p$-algebra $R$, we put 
\begin{align*}
    H^\bullet_\ord(Y^\BS, \cM_{n,p}^\flat \otimes R) &:= e_{T_p'}H^\bullet(Y^\BS, \cM_{n,p}^\flat \otimes R), 
    \\
    H^\bullet_\ord(Y^\BS, \partial Y^\BS, \cM_{n,p}^\flat \otimes R) &:= e_{T_p'}H^\bullet(Y^\BS, \partial Y^\BS,  \cM_{n,p}^\flat \otimes R), 
    \\
    H^\bullet_\ord(\partial Y^\BS, \cM_{n,p}^\flat \otimes R) &:= e_{T_p'}H^\bullet(\partial Y^\BS, \cM_{n,p}^\flat \otimes R). 
\end{align*}
\end{itemize}
\end{dfn}

\begin{lem}\label{lem:e_p-pairing-ordinary}
   Let $c \in  H^1(Y^{\BS}, \cM_{n,p}) \otimes \bQ_p$ be a cohomology class.  
   For any homology class $C \in H_1(Y^{\BS}, \cM_{n,p}) \otimes \bQ_p$, we have 
\[
\brk{c, e_{T_p}C}  = \lim_{m \to \infty} \brk{c, T_p^{m!}C}  = \lim_{m \to \infty} \brk{c|T_p'^{m!}, C}
=\brk{e_{T_p'}c, C}. 
\]
In particular, if $c$ is ordinary, that is, $e_{T_p'}c = c$, then 
\[
\brk{c, C} = \brk{c, e_{T_p}C}, 
\]
and hence
\[
    \delta_p(c) = \min\{\delta \in \bZ_{\geq 0} \mid p^{\delta} \brk{c, H_1^{\ord}(Y^{\BS}, \cM_{n, p})} \subset \bZ_p \}. 
\]
\end{lem}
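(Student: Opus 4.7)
The plan is to establish the chain of four equalities first, then derive the ordinary specialization and the reformulation of $\delta_p(c)$.

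For the chain of equalities, I would argue as follows. The pairing $\brk{~,~}$ on integral classes takes values in $\bZ$ (and thus on the $\bZ_p$-coefficient classes, in $\bZ_p$), and is $\bZ_p$-bilinear; after extending $\bQ_p$-linearly it takes integral classes to $\bZ_p$ and in general to $\bQ_p$. Fix any $a \in \bZ_{\geq 0}$ with $p^a c \in H^1(Y^\BS, \cM_{n,p}^\flat)$ (identifying $\cM_n \otimes \bQ = \cM_n^\flat \otimes \bQ$); then for every integral $C$ we have $p^a\brk{c, C} \in \bZ_p$, so the pairing is continuous in $C$ with respect to the natural $p$-adic topology. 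Since $H_1(Y^\BS, \cM_{n,p})$ is a finitely generated $\bZ_p$-module, the iterates $T_p^{m!}$ converge $p$-adically to $e_{T_p}$, and passing this limit through the pairing yields the first equality $\brk{c, e_{T_p}C} = \lim_{m \to \infty} \brk{c, T_p^{m!}C}$. The third equality follows symmetrically using the continuity in the first argument and $T_p'^{m!} \to e_{T_p'}$ on $H^1 \otimes \bQ_p$. The middle equality is then a direct iteration of the adjointness $\brk{\phi|T_p', \sigma \otimes P} = \brk{\phi, T_p(\sigma \otimes P)}$ established in \S\ref{sec:formal-duality}: applying it $m!$ times gives $\brk{c|T_p'^{m!}, C} = \brk{c, T_p^{m!}C}$ for all $m$.

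For the ordinary case: assuming $e_{T_p'}c = c$, the chain of equalities immediately gives $\brk{c, C} = \brk{e_{T_p'}c, C} = \brk{c, e_{T_p}C}$ for every $C \in H_1(Y^\BS, \cM_{n,p}) \otimes \bQ_p$.

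For the formula for $\delta_p(c)$: by the preceding lemma in the excerpt,
\[
\delta_p(c) = \min\{\delta \in \bZ_{\geq 0} \mid p^\delta \brk{c, H_1(Y^\BS, \cM_{n,p})} \subset \bZ_p\}.
\]
The inclusion $H_1^\ord \subset H_1$ gives $\geq$. For $\leq$, any $C \in H_1(Y^\BS, \cM_{n,p})$ satisfies $\brk{c, C} = \brk{c, e_{T_p}C}$ with $e_{T_p}C \in H_1^\ord(Y^\BS, \cM_{n,p})$ (since $e_{T_p}$ is an endomorphism of the integral homology, being defined as a $p$-adic limit of the integral operator $T_p^{m!}$), so if $p^\delta \brk{c, H_1^\ord} \subset \bZ_p$ then $p^\delta \brk{c, H_1} \subset \bZ_p$.

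The only delicate point is justifying continuity of the pairing on $\bQ_p$-coefficient classes and the convergence $T_p^{m!} \to e_{T_p}$ in the endomorphism ring of a finitely generated $\bZ_p$-module with $p$-adic topology; both are standard once one fixes a common denominator for $c$ and works with integral representatives. No further obstacle is expected.
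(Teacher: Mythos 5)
Your proposal is correct and follows essentially the same route as the paper, whose proof is the one-line observation that the pairing is continuous and satisfies the adjunction $\brk{c|T_p',C}=\brk{c,T_pC}$; you have simply spelled out the continuity (via a common denominator for $c$) and the deduction of the $\delta_p(c)$ formula in more detail. No gaps.
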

\begin{proof}
    This lemma follows from the facts that the pairing $\brk{~,~}$ is continuous and $\brk{c|T_p', C} = \brk{c, T_pC}$. 
\end{proof}

Recall the identification $H_0(\partial Y^\BS, \cM_{n,p}) = (\cM_{n,p})_{\Gamma_{\infty}}$ by Lemma \ref{lem:boundary-homology-0}. 

\begin{lem}\label{lem:H_0-ordinary-structure}
    The $\bZ_p$-module $H_0^{\ord}(\partial Y^\BS, \cM_{n,p}) = e_{T_p}(\cM_{n,p})_{\Gamma_{\infty}}$ is free of rank $1$ and is generated by $e_{T_p}[e_n]$. 
\end{lem}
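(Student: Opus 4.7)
The plan is to exhibit a $\Gamma_\infty$- and $T_p$-stable filtration on $\cM_{n,p}$ whose top quotient is a rank-one $\bZ_p$ on which $T_p$ acts by a unit, and whose bottom piece becomes killed by $e_{T_p}$ after passage to $\Gamma_\infty$-coinvariants. The conclusion then follows from exactness of the ordinary projector. Throughout, I would work with the standard basis $\{e_\nu\}_{\nu=0}^n$ of $\cM_{n,p}$.

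The first step is to record the two actions explicitly. The generator $\gamma = \begin{pmatrix} 1 & 1 \\ 0 & 1 \end{pmatrix}$ of $\Gamma_\infty$ sends $e_\nu$ to
\[
\gamma\cdot e_\nu = (X_1-X_2)^\nu X_2^{n-\nu} \in \bigoplus_{i \leq \nu}\bZ_p\, e_i,
\]
so that $(\gamma - 1)e_1 = -e_0$ and consequently $[e_0] = 0$ in $(\cM_{n,p})_{\Gamma_\infty}$. Using the coset representatives of Definition~\ref{dfn:Hecke operator} and expanding $(pX_1 - jX_2)^\nu$, a direct computation gives
\[
T_p(e_\nu) = (p^{n-\nu}+p^{\nu+1})\,e_\nu + (-1)^\nu S_\nu\, e_0 + \sum_{i=1}^{\nu-1}\binom{\nu}{i}\,p^i(-1)^{\nu-i}S_{\nu-i}\,e_i,
\]
where $S_k := \sum_{j=0}^{p-1} j^k$; thus $T_p$ is upper triangular in this basis, with diagonal entries $p^{n-\nu}+p^{\nu+1}$.

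Next, I would set $N := \bigoplus_{\nu=0}^{n-1}\bZ_p\, e_\nu$, which is stable under both $\Gamma_\infty$ and $T_p$ by the above, and observe that $\cM_{n,p}/N \cong \bZ_p$ is trivial as a $\Gamma_\infty$-module while $T_p$ acts on it by the $p$-adic unit $1+p^{n+1}$. Taking $\Gamma_\infty$-coinvariants yields the right-exact sequence
\[
\overline N \longrightarrow (\cM_{n,p})_{\Gamma_\infty} \longrightarrow \bZ_p \longrightarrow 0,
\]
where $\overline N$ denotes the image of $N_{\Gamma_\infty}$ and the last arrow sends $[e_n] \mapsto 1$. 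The key computational point is that in $\overline N$ the relation $[e_0] = 0$ eliminates the only term of $T_p(e_\nu)$ not manifestly divisible by $p$: the diagonal coefficient $p^{n-\nu}+p^{\nu+1}$ is a multiple of $p$ for $\nu < n$, and each remaining cross-term carries a factor $p^i$ with $i\geq 1$. Hence $T_p(\overline N) \subseteq p\overline N$, so $T_p^m \to 0$ on the finitely generated $\bZ_p$-module $\overline N$ and therefore $e_{T_p}\overline N = 0$.

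Finally, I would apply the exact functor $M \mapsto e_{T_p}M$ to the above sequence to obtain $e_{T_p}(\cM_{n,p})_{\Gamma_\infty} \cong \bZ_p$, free of rank one. Since the quotient map is Hecke-equivariant with $T_p$ acting on $\bZ_p$ by $1+p^{n+1}$, the image of $e_{T_p}[e_n]$ is $\lim_m (1+p^{n+1})^{m!} = 1$, so $e_{T_p}[e_n]$ generates the ordinary part. The main obstacle, and the reason for the detour through $\Gamma_\infty$-coinvariants, is that $T_p$ is \emph{not} divisible by $p$ on $N$ itself: the coefficient $S_\nu$ of $e_0$ in $T_p(e_\nu)$ need not be a multiple of $p$ (for instance $S_1 = 1$ when $p = 2$); it is precisely the coinvariant relation $[e_0] = 0$ that kills this offending term.
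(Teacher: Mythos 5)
Your proof is correct and follows essentially the same route as the paper's: both compute $T_p$ explicitly on the monomial basis $\{e_\nu\}$, observe that it is upper triangular with diagonal entries $p^{n-\nu}+p^{\nu+1}$ (a $p$-adic unit only for $\nu=n$), and conclude by exactness of the ordinary projector $e_{T_p}$. The one (minor) divergence is how the non-$p$-divisible $e_0$-coefficient $S_\nu$ is disposed of: the paper runs an induction starting from $T_p[e_0]=(p^n+p)[e_0]$, hence $e_{T_p}[e_0]=0$, whereas you kill that term outright via the coinvariant relation $[e_0]=[(1-\gamma)e_1]=0$ so that $T_p$ becomes divisible by $p$ on all of $\overline{N}$ at once; both devices work, and yours trades the induction for one extra explicit relation.
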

\begin{proof}
    Let $k \in \{0, \ldots, n\}$ be an integer. Since $e_k = X_1^kX_2^{n-k}$, we have
    \[
    T_p([e_k]) = p^{n-k}[e_k] + \sum_{j=0}^{p-1} \sum_{k'=0}^{k} \binom{k}{k'} p^{k'}(-j)^{k-k'}[e_{k'}]. 
    \]
    In particular, we have $T_p([e_0]) = (p^{n} + p)[e_0]$, and hence $e_{T_p}[e_0] = 0$. 
    Therefore, inductively, we obtain that $e_{T_p}[e_k] = 0$ for any integer $k \in \{0, \ldots, n-1\}$ and that $e_{T_p}[e_n]=[e_n]$, 
    which implies that the $\bZ_p$-module $e_{T_p}(\cM_{n,p})_{\Gamma_{\infty}}$ is generated by  $e_{T_p}[e_n]$. 
    Hence by Lemma \ref{lem:boundary-homology-0}, $e_{T_p}(\cM_{n,p})_{\Gamma_{\infty}}$ is a free $\bZ_p$-module of rank $1$.
     \end{proof}

Recall $[C_\nu] = [\{0, i \infty\} \otimes e_\nu] \in H_1(Y^\BS, \partial Y^\BS, \cM_n)$. 

\begin{prop}\label{prop:homology_generator_ordinary}
    For any integer $m \geq 0$, 
    the $\bZ_p$-module $H_{1}^{\ord}(Y^{\BS}, \cM_{n, p})$ is generated by 
    (the image of) $H_1^\ord(\partial Y^\BS, \cM_{n,p})$ and a set of lifts of $e_{T_p}T_p^m[C_\nu]$ $(1 \leq \nu \leq n-1)$. 
\end{prop}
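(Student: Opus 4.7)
The plan is to work with the ordinary part of the long exact sequence
\begin{equation*}
H_1^{\ord}(\partial Y^{\BS}, \cM_{n,p}) \longrightarrow H_1^{\ord}(Y^{\BS}, \cM_{n,p}) \longrightarrow H_1^{\ord}(Y^{\BS}, \partial Y^{\BS}, \cM_{n,p}) \stackrel{\partial}{\longrightarrow} H_0^{\ord}(\partial Y^{\BS}, \cM_{n,p}),
\end{equation*}
which remains exact since $e_{T_p}$ is a projector. It suffices to show that the kernel of the last $\partial$ is generated as a $\bZ_p$-module by $\{e_{T_p} T_p^m [C_\nu] : 1 \leq \nu \leq n-1\}$; any lifts of these to $H_1^{\ord}(Y^{\BS}, \cM_{n,p})$ will then, together with the image of $H_1^{\ord}(\partial Y^{\BS}, \cM_{n,p})$, generate $H_1^{\ord}(Y^{\BS}, \cM_{n,p})$ by exactness.

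First I would record the standard fact that $T_p$ acts invertibly on any ordinary module: since $T_p^{m!} \to \mathrm{id}$ on $e_{T_p} M$ as $m \to \infty$, the operator $T_p$ becomes a unit modulo $p$, hence a unit on the finitely generated $\bZ_p$-module $e_{T_p} M$ by Nakayama's lemma. In particular $T_p^m$ is an automorphism of $H_1^{\ord}(Y^{\BS}, \partial Y^{\BS}, \cM_{n,p})$. Combined with Lemma~\ref{lem:relative homology generator} and the exactness of $M \mapsto e_{T_p} M$, this yields that $\{e_{T_p} T_p^m [C_\nu] : 0 \leq \nu \leq n\}$ generates $H_1^{\ord}(Y^{\BS}, \partial Y^{\BS}, \cM_{n,p})$. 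Using the relation $[C_0] + [C_n] = 0$ in $H_1(Y^{\BS}, \partial Y^{\BS}, \cM_n)$ noted at the end of the proof of Lemma~\ref{lem:kernel-boundary-rational}, I would discard $\nu = 0$ to reduce to the generators $\{e_{T_p} T_p^m [C_\nu] : 1 \leq \nu \leq n\}$.

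It then remains to compute $\partial$ on each of these generators. Since $\partial$ commutes with both $T_p$ and $e_{T_p}$, and a direct computation using the $S$-action gives
\begin{equation*}
\partial [C_\nu] = [\{i\infty\} \otimes (e_\nu - (-1)^{n-\nu} e_{n-\nu})] \in H_0(\partial Y^{\BS}, \cM_n),
\end{equation*}
invoking (the proof of) Lemma~\ref{lem:H_0-ordinary-structure}, which gives $e_{T_p}[e_k] = 0$ for $0 \leq k \leq n-1$ and $e_{T_p}[e_n] = [e_n]$, I obtain $\partial(e_{T_p} T_p^m [C_\nu]) = 0$ for $1 \leq \nu \leq n-1$, while $\partial(e_{T_p} T_p^m [C_n]) = T_p^m [e_n] = (1+p^{n+1})^m [e_n]$, a $\bZ_p^{\times}$-multiple of the free generator of $H_0^{\ord}(\partial Y^{\BS}, \cM_{n,p}) \cong \bZ_p$. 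Expanding any kernel class as a $\bZ_p$-linear combination of $e_{T_p} T_p^m [C_\nu]$ for $1 \leq \nu \leq n$ and comparing with the image under $\partial$ then forces the coefficient on $[C_n]$ to vanish, producing the required generation.

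The only mildly subtle point is the invertibility of $T_p$ on ordinary parts, but that is immediate from the definition of $e_{T_p}$ via Nakayama's lemma; everything else reduces to formal manipulation of the ordinary exact sequence together with the relative-homology computations already performed in Lemmas~\ref{lem:relative homology generator}, \ref{lem:kernel-boundary-rational}, and \ref{lem:H_0-ordinary-structure}.
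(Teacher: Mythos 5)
Your proposal is correct and follows essentially the same route as the paper: the ordinary long exact sequence, Lemma~\ref{lem:relative homology generator} for generators of the relative homology, the boundary computation of Lemma~\ref{lem:kernel-boundary-rational} adapted via Lemma~\ref{lem:H_0-ordinary-structure}, and the invertibility of $T_p$ on the ordinary part (which the paper asserts without proof and you justify via Nakayama). The only difference is cosmetic — you apply $T_p^m$ before cutting down to the kernel of $\partial$, whereas the paper identifies the kernel first and then applies the automorphism $T_p^m$.
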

\begin{proof}
    By definition, we have an exact sequence of Hecke modules
    \begin{align*}
            0 \longrightarrow H_1(\partial Y^\BS, \, &\cM_{n,p}) \longrightarrow H_1( Y, \cM_{n,p}) 
            \longrightarrow 
            H_1( Y^\BS, \partial Y^\BS, \cM_{n,p}) \stackrel{\partial}{\longrightarrow} H_0(\partial Y^\BS, \cM_{n,p}) \longrightarrow 0. 
     \end{align*}
     By Lemma \ref{lem:relative homology generator}, the ordinary part $H_1^{\ord}( Y^\BS, \partial Y^\BS, \cM_{n,p})$ of the relative homology group is generated by the set $\{e_{T_p}[C_\nu] \mid 0 \leq \nu \leq n\}$. 
     Then by the same argument as in Lemma \ref{lem:kernel-boundary-rational} using Lemma \ref{lem:H_0-ordinary-structure} instead of Lemma \ref{lem:boundary-homology-0}, we find that 
     the kernel of the boundary map 
     $\partial \colon H_1^{\ord}( Y^\BS, \partial Y^\BS, \cM_{n,p}) \longrightarrow H_0^{\ord}(\partial Y^\BS, \cM_{n,p})$ is generated by the set $\{e_{T_p}[C_\nu] \mid 1 \leq \nu \leq n-1\}$. 
     Since the homomorphism 
    \[
    T_p \colon H_1^\ord(Y^\BS, \partial Y^\BS, \cM_{n,p}) \longrightarrow H_1^\ord(Y^\BS, \partial Y^\BS, \cM_{n,p})
    \]
    is an isomorphism and the boundary map $\partial$ is Hecke equivariant,  
    it follows that the kernel of the boundary map is generated by the set $\{e_{T_p}T_p^m[C_\nu] \mid 1 \leq \nu \leq n-1\}$. 
    This fact together with the above exact sequence implies this proposition. 
\end{proof}

\begin{cor}\label{cor:ordinary class denominator}
     Let $m \geq n$ be an integer. 
     For each integer $\nu \in \{1, \ldots, n-1\}$, recall the cycle  $[\widetilde{T_p^m C_\nu(\tau)}] \in H_1(Y^\BS, \cM_{n, (p)})$ defined in Definition \ref{def:widetilde-T_p^m-C-nu-tau} (see also Lemma \ref{lem:T_p-cycle-integral}(2)). 
     Then for any ordinary cohomology class $c \in H^1_{\rm ord}(Y^{\BS}, \cM_{n,p}) \otimes \bQ_p$ satisfying  
     $\brk{c, H_1(\partial Y^{\rm BS}, \cM_n)} \subset \bZ_p$, we have  
\[
\delta_p(c) = \min\{ \delta \in \bZ_{\geq0} \mid p^\delta \brk{c, [\widetilde{T_p^m C_\nu(\tau)}]} \in \bZ_p \textrm{ for any integer } 1 \leq \nu \leq n-1 \}. 
\]
\end{cor}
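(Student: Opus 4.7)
The plan is to successively reduce the set of homology classes against which one needs to pair $c$ in order to read off $\delta_p(c)$: first from all of $H_1(Y^\BS, \cM_{n,p})$ to its ordinary part, then from the ordinary part to an explicit finite list of generators, and finally from those generators to the specific cycles $[\widetilde{T_p^m C_\nu(\tau)}]$ we care about.

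The first step will be Lemma \ref{lem:e_p-pairing-ordinary}: since $c$ is ordinary, it reads off the denominator only from the ordinary part of the homology, giving
\[
\delta_p(c) = \min\{\delta \in \bZ_{\geq 0} \mid p^\delta \brk{c, H_1^\ord(Y^\BS, \cM_{n,p})} \subset \bZ_p\}.
\]
The second step will be Proposition \ref{prop:homology_generator_ordinary}, which asserts that over $\bZ_p$ the module $H_1^\ord(Y^\BS, \cM_{n,p})$ is generated by (the image of) $H_1^\ord(\partial Y^\BS, \cM_{n,p})$ together with any chosen lifts of $e_{T_p}T_p^m[C_\nu]$ for $1 \leq \nu \leq n-1$.

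The key bookkeeping point is then to identify $e_{T_p}[\widetilde{T_p^m C_\nu(\tau)}]$ as such a lift. This will follow from Lemma \ref{lem:T_p-cycle-integral}(3), which says that $[\widetilde{T_p^m C_\nu(\tau)}]$ maps to $T_p^m[C_\nu]$ in the relative homology, combined with the Hecke-equivariance of the boundary map (so that the ordinary projector $e_{T_p}$ commutes with it). Having this lift in hand, I would write any $C \in H_1^\ord(Y^\BS, \cM_{n,p})$ as
\[
C = C_\partial + \sum_{\nu=1}^{n-1} a_\nu \, e_{T_p}[\widetilde{T_p^m C_\nu(\tau)}],
\]
with $C_\partial \in H_1^\ord(\partial Y^\BS, \cM_{n,p})$ and $a_\nu \in \bZ_p$, and apply the ordinarity of $c$ once more (via Lemma \ref{lem:e_p-pairing-ordinary}) to rewrite $\brk{c, e_{T_p}[\widetilde{T_p^m C_\nu(\tau)}]} = \brk{c, [\widetilde{T_p^m C_\nu(\tau)}]}$. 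The boundary contribution $\brk{c, C_\partial}$ is forced to lie in $\bZ_p$ by the hypothesis $\brk{c, H_1(\partial Y^\BS, \cM_n)} \subset \bZ_p$ together with continuity of the pairing to pass from $\cM_n$-coefficients to $\cM_{n,p}$-coefficients.

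I do not anticipate any serious obstacle here; the substantive structural work is already encoded in Proposition \ref{prop:homology_generator_ordinary}, and this corollary is essentially a matter of combining that with the ordinarity hypothesis on $c$. The reverse inequality will be automatic, since for each $\nu$ the value $\brk{c, [\widetilde{T_p^m C_\nu(\tau)}]}$ equals $\brk{c, e_{T_p}[\widetilde{T_p^m C_\nu(\tau)}]}$, which is already a pairing of $c$ against an element of $H_1^\ord(Y^\BS, \cM_{n,p})$.
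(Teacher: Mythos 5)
Your proposal is correct and follows essentially the same route as the paper's own proof: both reduce to the ordinary part via Lemma \ref{lem:e_p-pairing-ordinary}, invoke Proposition \ref{prop:homology_generator_ordinary} with $e_{T_p}[\widetilde{T_p^m C_\nu(\tau)}]$ identified as the required lift through Lemma \ref{lem:T_p-cycle-integral}(3), and then use ordinarity of $c$ again to replace $\brk{c, e_{T_p}[\widetilde{T_p^m C_\nu(\tau)}]}$ by $\brk{c, [\widetilde{T_p^m C_\nu(\tau)}]}$ while the boundary contribution is absorbed by the hypothesis. No gaps.
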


\begin{proof}
     By Lemma \ref{lem:T_p-cycle-integral} (3), we see that $e_{T_p} [\widetilde{T_p^m C_\nu(\tau)}] \in H_1^\ord(Y^\BS, \cM_{n,p})$ maps to $e_{T_p} T_p^m [C_\nu(\tau)] \in H_1^\ord(Y^\BS, \partial Y^\BS, \cM_{n,p})$ under the homomorphism
     \[
     H_1^\ord(Y^\BS, \cM_{n,p})\longrightarrow  H_1^\ord(Y^\BS, \partial Y^\BS, \cM_{n,p}). 
     \]
     Therefore, Proposition \ref{prop:homology_generator_ordinary} shows that 
     the $\bZ_p$-module $H_1^\ord(Y^\BS, \cM_{n,p})$ is generated by 
     the image of $H_1^\ord(\partial Y^{\rm BS}, \cM_n)$ and the set 
     $\{e_{T_p} [\widetilde{T_p^m C_\nu(\tau)}] \mid 1 \leq \nu \leq n-1\}$. 
    On the other hand, by Lemma \ref{lem:e_p-pairing-ordinary}, we have 
    \[
    \brk{c, e_{T_p}[\widetilde{T_p^m C_\nu(\tau)}]} = \brk{c, [\widetilde{T_p^m C_\nu(\tau)}]}. 
    \]
    Now, since $\brk{c, H_1(\partial Y^{\BS}, \cM_n)} \subset \bZ_p$ by assumption, Lemma \ref{lem:e_p-pairing-ordinary} shows that 
\[
\delta_p(c) = \min\{ \delta \in \bZ_{\geq 0} \mid p^\delta \brk{c, [\widetilde{T_p^m C_\nu(\tau)}]} \in \bZ_p \textrm{ for any integer } 1 \leq \nu \leq n-1 \}. 
\]
\end{proof}

\begin{cor}\label{cor:ordinary Eisenstein denominator}
    For any integer $m \geq n$, we have  
\[
\delta_p(\Eis_n) = \min\{ \delta \in \bZ_{\geq0} \mid p^\delta {\brk{\Eis_n, [\widetilde{T_p^m C_\nu(\tau)}]}} \in \bZ_p \textrm{ for any integer } 1 \leq \nu \leq n-1 \}. 
\]
\end{cor}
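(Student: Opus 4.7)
The plan is to deduce this corollary directly from Corollary \ref{cor:ordinary class denominator} applied with $c = \Eis_n$. For this specialization to be legitimate, two hypotheses must be verified: (a) that $\Eis_n$ lies in $H^1_{\rm ord}(Y^{\BS}, \cM_{n,p}^\flat) \otimes \bQ_p$, i.e., $e_{T_p'}\Eis_n = \Eis_n$; and (b) that the pairing $\brk{\Eis_n, H_1(\partial Y^{\BS}, \cM_n)}$ is contained in $\bZ_p$. Once both are established, the statement of the corollary is exactly the assertion of Corollary \ref{cor:ordinary class denominator} applied to $c = \Eis_n$ (noting that $\Eis_n$ is already known to be rational by Proposition \ref{prop:eisenstein-rational}, so $\delta_p(\Eis_n)$ is well-defined).

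For (a), I would cite Lemma \ref{lem:eisenstein-pairing-eigen-rational}(2), which gives $\Eis_n | T_p' = (1+p^{n+1})\Eis_n$. Since $1+p^{n+1} \equiv 1 \pmod{p}$, the scalar $1+p^{n+1}$ is a unit in $\bZ_p$, so $(1+p^{n+1})^{m!} \to 1$ $p$-adically as $m \to \infty$. Consequently, in $H^1(Y^{\BS}, \cM_{n,p}^\flat) \otimes \bQ_p$, we have $e_{T_p'}\Eis_n = \lim_{m\to\infty}(1+p^{n+1})^{m!}\Eis_n = \Eis_n$, proving ordinarity.

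For (b), I would use the identification of the boundary $\partial Y^\BS$ with $\Gamma_\infty \backslash \bbR$ together with the modular symbol computation of $H_1(\partial Y^{\BS}, \cM_n)$. A direct inspection of the $\Gamma_\infty$-action on $\cM_n$ (where $T = \begin{pmatrix}1 & 1\\ 0 & 1\end{pmatrix}$ sends $X_1^\nu X_2^{n-\nu}$ to $(X_1-X_2)^\nu X_2^{n-\nu}$) yields $\cM_n^{\Gamma_\infty} = \bZ \cdot e_0$, so the free part of $H_1(\partial Y^{\BS}, \cM_n)$ is of rank one, generated by a class of the form $\{\tau, \tau+1\} \otimes e_0$ for any $\tau \in \partial \bbH^\BS$. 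Torsion classes pair to $0$ with any rational cohomology class (as any $N$-torsion class $h$ satisfies $N\brk{\Eis_n, h} = 0$ in $\bQ$), so it suffices to check the pairing on the generator. By Lemma \ref{lem:eisenstein-pairing-eigen-rational}(1), $\brk{\Eis_n, \{\tau,\tau+1\} \otimes e_0} = 1$, which is a $p$-adic integer. Hence $\brk{\Eis_n, H_1(\partial Y^{\BS}, \cM_n)} \subset \bZ \subset \bZ_p$.

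With both hypotheses verified, applying Corollary \ref{cor:ordinary class denominator} to $c = \Eis_n$ yields exactly the claimed formula for $\delta_p(\Eis_n)$. This proof is essentially an assembly of already-established facts, and I anticipate no real obstacle; the only mildly delicate point is the careful treatment of torsion in $H_1(\partial Y^{\BS}, \cM_n)$ in step (b), which is handled by the standard observation above.
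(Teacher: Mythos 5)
Your proposal is correct and follows essentially the same route as the paper: the paper's proof likewise verifies the two hypotheses of Corollary \ref{cor:ordinary class denominator} for $c=\Eis_n$ by citing Lemma \ref{lem:eisenstein-pairing-eigen-rational} (ordinarity from the eigenvalue $1+p^{n+1}$, and integrality of the boundary pairing from the normalization $\brk{\Eis_n,\{\tau,\tau+1\}\otimes e_0}=1$). Your additional details on $\cM_n^{\Gamma_\infty}=\bZ e_0$ and on torsion are correct and merely make explicit what the paper leaves implicit.
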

\begin{proof}
    By Lemma \ref{lem:eisenstein-pairing-eigen-rational}, we have 
    $\brk{\Eis_n, H_1(\partial Y^{\rm BS}, \cM_n)} \subset \bZ_p$ and 
    $e_{T_p'}\Eis_n = \Eis_n$, 
    this corollary follows from Corollary \ref{cor:ordinary class denominator}. 
\end{proof}

\section{Relation between the denominators of the Eisenstein classes}\label{sec:Eisenstein class}

Recall that $\Delta_p(\Eis_n)$ denotes the $p$-part of the denominator $\Delta(\Eis_n)$ of the Eisenstein class (see Definition \ref{dfn:denominator-of-general-cohomology-class}). 
In this section, we fix a prime number $p \geq 5$ and discuss another expression for the denominator $\Delta(\Eis_n)$ of the Eisenstein class $\Eis_n$. Moreover, we study a relation  of the denominators $\Delta_p(\Eis_n)$ and $\Delta_p(\Eis_{n'})$ of the Eisenstein classes when $n$ and $n'$ are $p$-adically close.

\subsection{Structure of the ordinary part of cohomology groups}

In this subsection, we study the structure of the ordinary part of cohomology groups. 
Results similar to those obtained in this subsection can be found in the paper \cite{Hida86} of Hida. 
In the papers \cite{Hida86, Hida88}, Hida studied the ordinary part of cohomology groups for $\Gamma' \bs \bbH$ in the case that $\Gamma'/\{\pm 1\}$ is torsion-free.  
However, in the present paper we consider the group $\Gamma = \mathrm{SL}_2(\bZ)$ which has torsion elements other than $\pm \begin{pmatrix}
    1&\\&1
\end{pmatrix}$. 
Hence, for the completeness of the present paper, we give the details of the proof of all the necessary facts. 

Note that since we assume that $p \geq 5$, any short exact sequence of $\Gamma$-modules induces a long exact sequence in cohomology.

\begin{lem}\label{lem:ordinary-flat-integral=non-flat-ntegral}
The inclusion map $\cM_{n,p}^\flat \longhookrightarrow \cM_{n,p}$ induces an isomorphism $H^{\bullet}_\ord(Y^\BS, \cM_{n,p}^\flat) \stackrel{\sim}{\longrightarrow} H^{\bullet}_\ord(Y^\BS, \cM_{n,p})$. 
\end{lem}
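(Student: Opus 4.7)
The plan is to exploit the short exact sequence of $M_2^+(\bZ)$-stable $\bZ_p$-lattices
\[
0 \longrightarrow \cM_{n,p}^\flat \longrightarrow \cM_{n,p} \longrightarrow Q \longrightarrow 0, \qquad Q := \cM_{n,p}/\cM_{n,p}^\flat.
\]
Writing $e_\mu := X_1^\mu X_2^{n-\mu}$, one has $Q \cong \bigoplus_{\mu=0}^{n}(\bZ_p/\binom{n}{\mu}\bZ_p)\cdot e_\mu$, so $Q$ is a finite $p$-torsion $\bZ_p$-module whose $\mu=0$ and $\mu=n$ components are trivial. Because $p \geq 5$ is invertible modulo $6$, the sheafification functor remains exact on this sequence (Remark \ref{rem:sheafification-functor-non-exactness}), yielding a long exact Hecke-equivariant sequence in cohomology. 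Since the ordinary projector $e_{T_p'}$ is idempotent and therefore exact, the lemma reduces to the vanishing $H^\bullet_{\ord}(Y^\BS, Q) = 0$, i.e.\ to the nilpotence of $T_p'$ on $H^\bullet(Y^\BS, Q)$.

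To prove this nilpotence, I would compute the action at the cochain level. By Definition \ref{dfn:Hecke operator}, $(\phi|T_p')(\sigma)$ is a sum of terms $\widetilde{\gamma}\bigl(\phi(\gamma\sigma)\bigr)$ as $\gamma$ runs over the coset representatives $\begin{pmatrix}p&0\\0&1\end{pmatrix}$ and $\begin{pmatrix}1&j\\0&p\end{pmatrix}$ for $0\leq j \leq p-1$. Using the formula $(\gamma P)(X_1,X_2) = P((X_1,X_2)\cdot {}^{t}\widetilde{\gamma})$, one obtains
\[
\widetilde{\begin{pmatrix}p&0\\0&1\end{pmatrix}}\cdot e_\mu = p^\mu e_\mu, \qquad \widetilde{\begin{pmatrix}1&j\\0&p\end{pmatrix}}\cdot e_\mu = p^{n-\mu}(X_1+jX_2)^\mu X_2^{n-\mu}.
\]
For each non-trivial component $1\leq \mu \leq n-1$ of $Q$, both expressions lie visibly in $p\cdot \cM_{n,p}$, while the boundary values $\mu = 0, n$ cause no trouble since those components of $Q$ are already zero (as $\binom{n}{0} = \binom{n}{n} = 1$). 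Hence every such $\widetilde{\gamma}$ carries $Q$ into $pQ$, and consequently $(\phi|T_p')\in \Hom_\bZ(S_\bullet(\bbH^\BS), pQ)^\Gamma$ for every $\phi$.

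By $\bZ_p$-linearity each $\widetilde{\gamma}$ also sends $p^{k-1}Q$ into $p^kQ$, so inductively $(T_p')^k\phi$ takes values in $p^k Q$. Since $p^N Q = 0$ for $N := \max_\mu \ord_p\binom{n}{\mu}$, the operator $(T_p')^N$ vanishes on the whole cochain complex, and in particular on $H^\bullet(Y^\BS, Q)$. Therefore $e_{T_p'} = \lim_m (T_p')^{m!} = 0$ on $H^\bullet(Y^\BS, Q)$, and the long exact sequence sandwiches the map $H^\bullet_\ord(Y^\BS, \cM_{n,p}^\flat) \to H^\bullet_\ord(Y^\BS, \cM_{n,p})$ between two zeros. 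The only delicate point in the argument is the case analysis verifying $\widetilde{\gamma}\cdot e_\mu \in p\cM_{n,p}$ for both types of coset representatives; as noted, the seemingly bad extreme values $\mu=0, n$ are handled automatically by the triviality of the corresponding components of $Q$.
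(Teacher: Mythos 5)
Your proof is correct and takes essentially the same route as the paper's: reduce, via the long exact sequence and the exactness of $e_{T_p'}$, to showing that the ordinary part of $H^{\bullet}(Y^\BS, \cM_{n,p}/\cM_{n,p}^\flat)$ vanishes, and then verify that both types of coset representatives carry the quotient into $p$ times itself, so that $T_p'$ is nilpotent there. The paper packages your "middle components $1 \leq \mu \leq n-1$" observation by representing classes of the quotient by polynomials divisible by $X_1X_2$, which is the same thing.
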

\begin{proof}
    It suffices to prove that $e_{T_p'}H^{i}(Y^\BS, \cM_{n,p}/\cM_{n,p}^\flat) = 0$ for any integer $i \geq 0$. 
    Since $X_1^n, X_2^n \in \cM_{n, p}^\flat$, any element in $\cM_{n,p}/\cM_{n,p}^\flat$ can be represented by a polynomial of the form $X_1X_2f(X_1, X_2)$, where $f \in \cM_{n-2, p}$.  
    Hence the fact that 
    \begin{align*}
        \widetilde{\begin{pmatrix}
        p&\\&1
    \end{pmatrix}} \cdot X_1X_2f(X_1, X_2) &= pX_1X_2f(pX_1, X_2), 
    \\
    \widetilde{\begin{pmatrix}
        1&j\\&p
    \end{pmatrix}} \cdot X_1X_2f(X_1, X_2) &= p(X_1+jX_2)X_2f(X_1+jX_2, pX_2)
    \end{align*} 
    shows that $e_{T_p'} c = 0$ for any element $c \in \Hom_\bZ(S_\bullet(\bH^\BS), \cM_{n,p}/\cM_{n,p}^\flat)$. 
    In particular, we have $e_{T_p'}H^{i}(Y^\BS, \cM_{n,p}/\cM_{n,p}^\flat) = 0$. 
\end{proof}

Thanks to  Lemma \ref{lem:ordinary-flat-integral=non-flat-ntegral}, in the following, we focus on the ordinary cohomology groups with coefficient $\mcM_{n,p}$.

\begin{lem}\label{lem:action of Hecke operator at p on mod p polynomials is nilpotent modulo X_2}   
For any polynomial $f(X_1, X_2) \in \cM_n/p\cM_n$, we have $f|({T_p'})^2  \in \bF_p  X_2^n$. 
\end{lem}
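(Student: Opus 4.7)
The plan is to compute the induced endomorphism of $\cM_n$ given by $T_p'$ explicitly. Using $\widetilde{\begin{pmatrix}p&0\\0&1\end{pmatrix}} = \begin{pmatrix}1&0\\0&p\end{pmatrix}$ and $\widetilde{\begin{pmatrix}1&j\\0&p\end{pmatrix}} = \begin{pmatrix}p&-j\\0&1\end{pmatrix}$ together with the $M_2^+(\bZ)$-action on $\bZ[X_1,X_2]$ from \S 2.3, one checks directly that this endomorphism sends
\[
P(X_1,X_2) \longmapsto P(pX_1,X_2) + \sum_{j=0}^{p-1} P(X_1 + jX_2,\, pX_2).
\]

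Writing $P = \sum_{k=0}^n a_k X_1^k X_2^{n-k}$ and reducing modulo $p$, the first summand collapses to $a_0 X_2^n$ and each summand $P(X_1+jX_2, pX_2)$ collapses to $a_n (X_1+jX_2)^n$. Expanding via the binomial theorem and using the standard congruence $\sum_{j=0}^{p-1} j^s \equiv -1 \pmod p$ when $(p-1) \mid s$ and $s > 0$, and $\equiv 0 \pmod p$ otherwise (the case $s=0$ giving $\sum_{j=0}^{p-1} 1 = p \equiv 0$ as well), I obtain
\[
P|T_p' \equiv a_0 X_2^n - a_n \sum_{\substack{0 \leq k < n \\ (p-1)\mid(n-k)}} \binom{n}{k} X_1^k X_2^{n-k} \pmod p.
\]
The crucial consequence is that the coefficient of $X_1^n$ in $P|T_p' \bmod p$ vanishes, because the only potential contribution comes from $k=n$ in the second sum, where the condition $n-k > 0$ fails.

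Applying the same formula to $Q := P|T_p' \bmod p$, the output $Q|T_p' \bmod p$ is a linear combination of $X_2^n$ (from the $X_2^n$-coefficient of $Q$) and of monomials whose coefficients are all multiples of the $X_1^n$-coefficient of $Q$. Since that latter coefficient vanishes by the previous paragraph, $Q|T_p'$ is a scalar multiple of $X_2^n$, giving $f|(T_p')^2 \in \bF_p X_2^n$ as desired. The whole argument reduces to a formal binomial calculation once one has the explicit formula for $T_p'$ on polynomials; the only mildly subtle point is the standard power-sum congruence, and I anticipate no real obstacle.
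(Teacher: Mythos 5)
Your proof is correct and follows essentially the same route as the paper: compute the action of $T_p'$ on $\cM_n/p\cM_n$ explicitly, observe that the image has vanishing $X_1^n$-coefficient, and conclude that a second application lands in $\bF_p X_2^n$. The paper reaches the same conclusion slightly more economically by noting that mod $p$ the formula reads $(f|T_p')(X_1,X_2)=f(0,X_2)+\sum_{j=0}^{p-1}f(X_1+jX_2,0)$, so that $(f|T_p')(X_1,0)=p\,f(X_1,0)=0$, which makes your binomial expansion and power-sum congruence unnecessary.
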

\begin{proof}
By definition, we have 
\begin{align*}
(f|{T_p'})(X_1, X_2) = f(0, X_2) + \sum_{j=0}^{p-1} f(X_1+jX_2, 0). 
\end{align*}
Hence we see that $(f|{T_p'})(X_1, 0) = \sum_{j=0}^{p-1} f(X_1, 0) = 0$, and we obtain 
\[
(f|({T_p'})^2)(X_1, X_2) = (f|{T_p'})(0, X_2) \in \bF_p  X_2^n. 
\]
\end{proof}

The boundary $\partial Y^\BS$ is of real dimension $1$, and hence $H^{2}(\partial Y^\BS, \cM)$ vanishes for any $\Gamma$-module $\cM$. 
Therefore, for any integer $r \geq 0$, the short exact sequence  $0 \longrightarrow \cM_{n,p} \stackrel{\times p^r}{\longrightarrow} \cM_{n,p} \longrightarrow \cM_{n,p}/p^r \cM_{n,p} \longrightarrow 0$ induces an isomorphism 
\begin{align}\label{eq:reduction-isomo-H^1-ordinary-boundary}
    H^{1}_{\ord}(\partial Y^\BS, \cM_{n,p}) \otimes \bZ_p/(p^r) \stackrel{\sim}{\longrightarrow} H^{1}_{\ord}(\partial Y^\BS, \cM_{n,p}/p^r \cM_{n,p}). 
\end{align}

\begin{lem}\label{lem:H^1-ordinary-boundary-torsion-free}
    The ordinary part $H^{1}_{\ord}(\partial Y^\BS, \cM_{n, p})$ is torsion-free. 
\end{lem}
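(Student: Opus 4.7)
\medskip

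\noindent\emph{Proof plan.}
The plan is to use the circle description $\partial Y^\BS \simeq S^1$ with $\pi_1(\partial Y^\BS) = \Gamma_\infty$, which forces $H^i(\partial Y^\BS, -) = 0$ for $i \geq 2$. The long exact cohomology sequence of $0 \to \cM_{n,p} \xrightarrow{\times p} \cM_{n,p} \to \cM_{n,p}/p\cM_{n,p} \to 0$ will then yield the exact piece
\[
H^0(\partial Y^\BS, \cM_{n,p}) \stackrel{\rho}{\longrightarrow} H^0(\partial Y^\BS, \cM_{n,p}/p\cM_{n,p}) \stackrel{\delta}{\longrightarrow} H^1(\partial Y^\BS, \cM_{n,p})[p] \longrightarrow 0.
\]
Since the ordinary projector $e_{T_p'}$ is exact on finitely generated $\bZ_p$-modules, it will suffice to verify that $e_{T_p'}\circ\rho$ is surjective, as this forces $H^1_\ord(\partial Y^\BS, \cM_{n,p})[p] = 0$ and hence torsion-freeness.

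To that end I would first identify $H^0(\partial Y^\BS, -) = (-)^{\Gamma_\infty}$ and compute the invariants. Since $\Gamma_\infty$ is generated by $T = \begin{pmatrix} 1 & 1 \\ 0 & 1 \end{pmatrix}$, whose action sends $P(X_1, X_2) \in \cM_{n,p}$ to $P(X_1 - X_2, X_2)$, the invariant polynomials of degree $n$ are exactly the multiples of $X_2^n$, so $\cM_{n,p}^{\Gamma_\infty} = \bZ_p X_2^n$. A direct calculation from Definition \ref{dfn:Hecke operator} gives $T_p'(X_2^n) = (1 + p^{n+1}) X_2^n$, which is a principal $1$-unit in $\bZ_p^\times$, so $e_{T_p'}$ acts as the identity on $\bZ_p X_2^n$ and $H^0_\ord(\partial Y^\BS, \cM_{n,p}) = \bZ_p X_2^n$.

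Next I would compute $H^0_\ord(\partial Y^\BS, \cM_{n,p}/p\cM_{n,p}) = e_{T_p'}\bigl((\cM_n/p\cM_n)^{\Gamma_\infty}\bigr)$ by invoking Lemma \ref{lem:action of Hecke operator at p on mod p polynomials is nilpotent modulo X_2}: applying $(T_p')^2$ carries any element of $\cM_n/p\cM_n$ into $\bF_p X_2^n$, on which $T_p'$ acts as the identity since $1 + p^{n+1} \equiv 1 \pmod p$. Hence $e_{T_p'}$ maps $(\cM_n/p\cM_n)^{\Gamma_\infty}$ onto $\bF_p X_2^n$, so that $H^0_\ord(\partial Y^\BS, \cM_{n,p}/p\cM_{n,p}) = \bF_p X_2^n$. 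The composition $e_{T_p'}\circ\rho$ then reduces to the natural surjection $\bZ_p X_2^n \twoheadrightarrow \bF_p X_2^n$.

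The main point I expect to require care is the bookkeeping of the Hecke action at the level of group cohomology: I must verify that, under the identification of $H^\bullet(\partial Y^\BS, -)$ with the group cohomology of $\Gamma_\infty$, the operator $T_p'$ induced on $H^0$ coincides with the polynomial-level operator described in Lemma \ref{lem:action of Hecke operator at p on mod p polynomials is nilpotent modulo X_2}, and that this lemma can be applied after restriction to $\Gamma_\infty$-invariants. Once this compatibility is in place the proof collapses to the two explicit computations above, and no further analysis of $H^1$ itself is required.
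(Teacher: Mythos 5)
Your proposal is correct and follows essentially the same route as the paper: the mod-$p$ long exact sequence identifies $H^1(\partial Y^\BS,\cM_{n,p})[p]$ with the cokernel of $\cM_{n,p}^{\Gamma_\infty}\to(\cM_{n,p}/p\cM_{n,p})^{\Gamma_\infty}$, the computation $\cM_{n,p}^{\Gamma_\infty}=\bZ_pX_2^n$ and Lemma \ref{lem:action of Hecke operator at p on mod p polynomials is nilpotent modulo X_2} show the ordinary part of the target is exactly $\bF_pX_2^n$, and exactness of $e_{T_p'}$ then kills the $p$-torsion. The paper's proof is precisely this argument, so no changes are needed.
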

\begin{proof}
    By using the exact sequence of $M_2^+(\bZ)$-modules 
\[
0 \longrightarrow \cM_{n, p} 
\stackrel{\times p}{\longrightarrow}
\cM_{n, p} \longrightarrow \cM_{n, p}/p\cM_{n, p} \longrightarrow 0, 
\]
we obtain an isomorphism of Hecke modules 
\[
\coker\left(\cM_{n, p}^{\Gamma_\infty}/p \cM_{n, p}^{\Gamma_\infty} 
\longrightarrow (\cM_{n, p}/p\cM_{n, p})^{\Gamma_\infty}\right) \stackrel{\sim}{\longrightarrow} H^{1}(\partial Y^\BS, \cM_{n, p})[p],  
\]
where for an abelian group $M$ we write $M[p] := \ker(M \stackrel{\times p}{\longrightarrow} M)$ for the subgroup of $p$-torsion elements of $M$. 
A direct computation shows that $\cM_{n, p}^{\Gamma_\infty} = \bZ_p X_2^n$. 
Hence Lemma \ref{lem:action of Hecke operator at p on mod p polynomials is nilpotent modulo X_2} implies that $H^{1}_\ord(\partial Y^\BS, \cM_{n, p})[p] = \coker\left(\bF_pX_2^n
\longrightarrow e_{T_p'}(\cM_{n, p}/p\cM_{n, p})^{\Gamma_\infty}\right)  = 0$. 
\end{proof}

\begin{cor}\label{cor:H^1-ordinary-boundary}
\
\begin{itemize}
    \item[(1)] The ordinary part $H^{1}_{\ord}(\partial Y^\BS, \cM_{n, p})$ is a free $\bZ_p$-module of rank $1$. 
    \item[(2)] We have a canonical isomorphism $H^{1}_{\mathrm{int}}(\partial Y^\BS, \cM_{n}) \otimes \bZ_p \stackrel{\sim}{\longrightarrow} H^{1}_{\ord}(\partial Y^\BS, \cM_{n, p})$. 
    \item[(3)]  We have $c|T_{\ell}' = (1+\ell^{n+1})c$  for any element $c \in H^{1}_{\ord}(\partial Y^\BS, \cM_{n, p})$ and prime number $\ell$. 
\end{itemize}
\end{cor}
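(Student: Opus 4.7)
The plan is to prove the three claims in turn, exploiting that $\partial Y^{\BS}$ is a circle with fundamental group $\Gamma_\infty\cong\bZ$, so that $H^\bullet(\partial Y^{\BS},-)$ computes group cohomology of $\Gamma_\infty$.

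For (1), by Lemma~\ref{lem:H^1-ordinary-boundary-torsion-free} the module $H^1_\ord(\partial Y^{\BS},\cM_{n,p})$ is torsion-free, so it remains to verify that its rationalization is one-dimensional over $\bQ_p$. Since $\cM_n\otimes\bQ=\cM_n^{\flat}\otimes\bQ$ as $\Gamma$-modules, Lemma~\ref{lem:boundary-H^1-is-eisenstein} yields $\dim_{\bQ}H^1(\partial Y^{\BS},\cM_n)\otimes\bQ=\dim_{\bC}M_{n+2}(\Gamma)/S_{n+2}(\Gamma)=1$. Moreover a generator of this line is the boundary restriction of $\Eis_n=r(E_{n+2})$, which by Lemma~\ref{lem:eisenstein-pairing-eigen-rational}(2) and the Hecke-equivariance of the Eichler--Shimura map is a $T_p'$-eigenvector with eigenvalue $1+p^{n+1}$. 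Because $1+p^{n+1}\in 1+p\bZ_p$, the powers $(1+p^{n+1})^{m!}$ converge to $1$ in $\bZ_p$, so $e_{T_p'}$ acts as the identity on this $\bQ_p$-line. Combined with torsion-freeness this proves (1).

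For (2), the idempotent $e_{T_p'}$ yields a direct-sum decomposition $H^1(\partial Y^{\BS},\cM_{n,p})=H^1_\ord\oplus(1-e_{T_p'})H^1(\partial Y^{\BS},\cM_{n,p})$; the second summand vanishes rationally by (1), so it is $\bZ_p$-torsion, while $H^1_\ord$ is torsion-free. Hence $H^1(\partial Y^{\BS},\cM_{n,p})/\text{torsion}\cong H^1_\ord(\partial Y^{\BS},\cM_{n,p})$. Combined with the flat base-change isomorphism $H^1(\partial Y^{\BS},\cM_n)\otimes\bZ_p\cong H^1(\partial Y^{\BS},\cM_{n,p})$ (which is compatible with the torsion quotient), this gives the canonical isomorphism $H^1_{\mathrm{int}}(\partial Y^{\BS},\cM_n)\otimes\bZ_p\isomto H^1_\ord(\partial Y^{\BS},\cM_{n,p})$ claimed in (2).

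For (3), since $H^1_\ord(\partial Y^{\BS},\cM_{n,p})$ is torsion-free, it embeds into its rationalization, so it suffices to verify the eigenvalue claim rationally. By the identification from the proof of (1), $T_\ell'$ acts on this $\bQ_p$-line by $1+\ell^{n+1}$ for every prime $\ell$ (coming from the $T_\ell''$-eigenvalue of $E_{n+2}$ via Hecke-equivariant Eichler--Shimura), so $c|T_\ell'=(1+\ell^{n+1})c$ for every $c\in H^1_\ord(\partial Y^{\BS},\cM_{n,p})$. The only genuinely nontrivial input is the torsion-freeness supplied by Lemma~\ref{lem:H^1-ordinary-boundary-torsion-free} together with identifying the Hecke eigenvalue on the rationalized boundary cohomology via the Eisenstein class $\Eis_n$; the remainder is routine bookkeeping with idempotents and flat base change.
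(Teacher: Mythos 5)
Your proof is correct and follows essentially the same route as the paper: both arguments combine the torsion-freeness from Lemma \ref{lem:H^1-ordinary-boundary-torsion-free} with Lemma \ref{lem:boundary-H^1-is-eisenstein} (the boundary cohomology is the rank-one Eisenstein line with $T_\ell'$-eigenvalue $1+\ell^{n+1}$), the unit eigenvalue $1+p^{n+1}$ forcing $e_{T_p'}$ to act as the identity there. You merely spell out the details the paper's terse proof leaves implicit (why the surjection from $H^{1}_{\mathrm{int}}(\partial Y^\BS, \cM_{n}) \otimes \bZ_p$ exists and why the ordinary part is nonzero), so no substantive difference.
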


\begin{proof}
By Lemma \ref{lem:H^1-ordinary-boundary-torsion-free}, we have a surjective homomorphism   
\[
H^{1}_{\mathrm{int}}(\partial Y^\BS, \cM_{n}) \otimes \bZ_p \longrightarrow H^{1}_\ord(\partial Y^\BS, \cM_{n, p}). 
\]
Lemma \ref{lem:boundary-H^1-is-eisenstein} shows that $H^{1}_{\mathrm{int}}(\partial Y^\BS, \cM_{n}) \cong \bZ$ and $c|T_{\ell}' = (1+\ell^{n+1})c$  for any element $c \in H^{1}_{\mathrm{int}}(\partial Y^\BS, \cM_{n})$ and prime number $\ell$. 
These facts imply this corollary. 
\end{proof}

\begin{prop}
\label{prop:H^1-ord-torsion-free}
The ordinary part $H^{1}_{\rm ord}(Y^\BS, \cM_{n, p})$ is torsion-free. 
\end{prop}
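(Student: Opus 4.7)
The plan is to adapt the strategy used in the proof of Lemma~\ref{lem:H^1-ordinary-boundary-torsion-free}. Specifically, I would apply the cohomology long exact sequence to the short exact sequence of $\Gamma$-modules
\[
0 \longrightarrow \cM_{n,p} \stackrel{\times p}{\longrightarrow} \cM_{n,p} \longrightarrow \cM_{n,p}/p\cM_{n,p} \longrightarrow 0.
\]
The hypothesis $p \geq 5$ ensures all three terms are $\bZ[1/6][\Gamma]$-modules, so by Remark~\ref{rem:sheafification-functor-non-exactness} the sequence sheafifies to a short exact sequence on $Y^\BS$ and hence yields a long exact sequence of cohomology groups. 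Applying the exact ordinary projector $e_{T_p'}$ and using that $H^0(Y^\BS, -)$ is just the $\Gamma$-invariants functor, I extract a surjection
\[
e_{T_p'}(\cM_n/p\cM_n)^\Gamma \longrightarrow H^1_\ord(Y^\BS, \cM_{n,p})[p] \longrightarrow 0.
\]
Since $H^1_\ord(Y^\BS, \cM_{n,p})$ is a finitely generated $\bZ_p$-module, torsion-freeness is equivalent to the vanishing of its $p$-torsion, and the plan thus reduces to proving $e_{T_p'}(\cM_n/p\cM_n)^\Gamma = 0$.

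For this last step I would invoke Lemma~\ref{lem:action of Hecke operator at p on mod p polynomials is nilpotent modulo X_2}, which asserts that $(T_p')^2$ sends $\cM_n/p\cM_n$ into $\bF_p X_2^n$. In particular, $(T_p')^2$ maps the $\Gamma$-invariants $(\cM_n/p\cM_n)^\Gamma$ into $\bF_p X_2^n \cap (\cM_n/p\cM_n)^\Gamma$. It then suffices to observe that this intersection is zero: a $\Gamma$-invariant element of $\bF_p X_2^n$ must in particular be invariant under $w := \begin{pmatrix} 0 & -1 \\ 1 & 0 \end{pmatrix} \in \Gamma$, which acts by sending $X_2^n$ to $(-X_1)^n = X_1^n$ (using that $n$ is even), and $X_1^n$ is linearly independent of $X_2^n$ in $\cM_n/p\cM_n$. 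Thus $(T_p')^2$ vanishes on $(\cM_n/p\cM_n)^\Gamma$, and consequently so does the idempotent $e_{T_p'} = \lim_{m \to \infty}(T_p')^{m!}$.

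I do not anticipate any serious obstacle in carrying out this plan. The essential inputs are already in place: the long exact sequence in cohomology is available thanks to the $\bZ[1/6]$-module hypothesis coming from $p \geq 5$ (as in the proof of Lemma~\ref{lem:H^1-ordinary-boundary-torsion-free}), and the required nilpotency on the coefficient side is exactly Lemma~\ref{lem:action of Hecke operator at p on mod p polynomials is nilpotent modulo X_2} combined with the elementary observation about the action of $w$ on $X_2^n$. The only mild conceptual point worth emphasizing is that the ordinary projector is applied at the level of coefficients (via Lemma~\ref{lem:action of Hecke operator at p on mod p polynomials is nilpotent modulo X_2}) rather than directly at the level of $H^1$, which is what makes the argument tractable.
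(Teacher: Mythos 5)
Your proposal is correct and follows essentially the same route as the paper: the multiplication-by-$p$ long exact sequence identifies (or, in your version, surjects onto) the $p$-torsion of $H^1_{\ord}$ from $e_{T_p'}(\cM_n/p\cM_n)^\Gamma$, and Lemma~\ref{lem:action of Hecke operator at p on mod p polynomials is nilpotent modulo X_2} together with $\bF_pX_2^n\cap(\cM_n/p\cM_n)^\Gamma=0$ kills that group. The only cosmetic difference is that the paper additionally notes $H^0(Y^\BS,\cM_n)=\cM_n^\Gamma=0$ to get an equality rather than just a surjection, which is immaterial.
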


\begin{proof}
Since $H^0(Y^{\BS}, \mcM_n)=\mcM_n^{\Gamma}=0$, the exact sequence 
\[
0 \longrightarrow \cM_{n, p}  \stackrel{\times p}{\longrightarrow} \cM_{n, p}  \longrightarrow  \cM_{n, p}/p \cM_{n, p}\longrightarrow 0
\]
implies that 
\[
H^0_{\ord}(Y^\BS, \cM_{n, p}/p \cM_{n, p}) = H^{1}_{\ord}(Y^\BS, \cM_{n, p})[p]. 
\]
Since $\bF_p X_2^n \cap H^0(Y^\BS, \cM_{n, p}/p \cM_{n, p}) = (\bF_p X_2^n)\cap (\mcM_{n,p}/p\mcM_{n,p})^\Gamma = 0$, 
Lemma \ref{lem:action of Hecke operator at p on mod p polynomials is nilpotent modulo X_2} shows that  the module 
$H^0_{\ord}(Y^\BS, \cM_{n, p}/p \cM_{n, p})$ vanishes. 
\end{proof}

\begin{lem}\label{lem:H^2_ord(Y^BS, partial Y^BS, cM) = H^2_ord(Y^BS, cM) = 0}
   We have $H^2_\ord(Y^\BS, \partial Y^\BS, \cM_{n, p}) = H^2_\ord(Y^\BS,  \cM_{n, p})  = 0$. 
\end{lem}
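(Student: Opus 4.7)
The lemma asserts two vanishings. For $H^2_\ord(Y^\BS, \cM_{n,p}) = 0$, I would argue that since $\mathrm{PSL}_2(\bZ) \cong \bZ/2 \ast \bZ/3$ is a free product of finite groups of order coprime to $p \geq 5$, the group $\Gamma = \SL_2(\bZ)$ has cohomological dimension $1$ over $\bZ_p$. Hence $H^2(Y^\BS, \cM_{n,p}) \cong H^2(Y, \cM_{n,p}) = H^2(\Gamma, \cM_{n,p}) = 0$, and the ordinary part vanishes \emph{a fortiori}.

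For $H^2_\ord(Y^\BS, \partial Y^\BS, \cM_{n,p}) = 0$, I would invoke Poincar\'e--Lefschetz duality for the oriented $2$-orbifold-with-boundary $(Y^\BS, \partial Y^\BS)$---valid over $\bZ_p$ for $p \geq 5$ since all orbifold isotropy orders divide $6$ and are invertible---to get a Hecke-equivariant isomorphism
\[
H^2(Y^\BS, \partial Y^\BS, \cM_{n,p}) \;\cong\; H_0(Y^\BS, \cM_{n,p}) \;=\; (\cM_{n,p})_\Gamma,
\]
identifying the cohomological operator $T_p'$ with the homological $T_p = V_p + U_p$. It then suffices to show that $(\cM_{n,p})_\Gamma$ has trivial $T_p$-ordinary part. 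Since $\cM_{n,p} \otimes \bQ_p = \mathrm{Sym}^n(\bQ_p^2)$ is an irreducible $\Gamma$-representation with no coinvariants, $(\cM_{n,p})_\Gamma$ is a finite $p$-group. Direct computation on monomials gives
\[
V_p(X_1^a X_2^b) = p^b X_1^a X_2^b, \qquad U_p(X_1^a X_2^b) \equiv (-1)^a X_2^n \sum_{j=0}^{p-1} j^a \pmod{p},
\]
so $T_p$ modulo $p$ sends every class into $\bF_p \cdot [X_1^n] + \bF_p \cdot [X_2^n]$; the $S$-relation (with $n$ even) gives $[X_1^n] = [X_2^n]$, so the image lies in $\bF_p \cdot [X_2^n]$. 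A further computation at $[X_2^n]$ itself gives $V_p[X_2^n] = p^n[X_2^n] \equiv 0$ and $U_p[X_2^n] = p[X_2^n] \equiv 0 \pmod{p}$, hence $T_p[X_2^n] \equiv 0 \pmod{p}$. Therefore $T_p^2 \equiv 0 \pmod{p}$ on $(\cM_{n,p})_\Gamma$, which iterates to $T_p^{2k}(\cM_{n,p})_\Gamma \subseteq p^k(\cM_{n,p})_\Gamma$; since $(\cM_{n,p})_\Gamma$ is a finite $p$-group, $T_p$ is nilpotent and $e_{T_p}(\cM_{n,p})_\Gamma = 0$.

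The main obstacle will be pinning down the Hecke-equivariance of Poincar\'e--Lefschetz duality with the correct adjugate conventions, since the cohomological $T_p'$ is defined using $\widetilde\gamma$ while the homological $T_p$ is not; fortunately, the same mod-$p$ nilpotency argument goes through verbatim with the adjugate operators $\widetilde V_p, \widetilde U_p$ in place of $V_p, U_p$, so the conclusion does not depend on exactly how this convention issue is resolved.
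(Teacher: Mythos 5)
Your overall strategy coincides with the paper's at the crucial point: reduce to the relative group, identify $H^2(Y^\BS,\partial Y^\BS,-)$ with the coinvariants of $\cM_n$ via the fundamental class, and show the Hecke operator at $p$ is topologically nilpotent there. Two harmless differences: you kill the absolute $H^2(Y^\BS,\cM_{n,p})$ outright via the cohomological dimension of $\Gamma$ over $\bZ[1/6]$ (a stronger statement), whereas the paper only kills its ordinary part by noting that $H^2(\partial Y^\BS,\cM_{n,p})=0$ makes the map from the relative group surjective; and you work integrally with the finite $p$-group $(\cM_{n,p})_\Gamma$, whereas the paper first reduces mod $p$ using $H^3(Y^\BS,\partial Y^\BS,\cM_{n,p})=0$.

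The genuine gap is precisely in the step you flagged and then waved away. The operator acting on $H^2$ is $T_p'$, built from adjugates, and the claim that your mod-$p$ nilpotency computation "goes through verbatim" with $\widetilde V_p,\widetilde U_p$ is false: $\widetilde{\begin{pmatrix}p&0\\0&1\end{pmatrix}}$ sends $X_1^aX_2^b$ to $p^aX_1^aX_2^b$, so modulo $p$ it \emph{fixes} $X_2^n$ rather than killing it, and your two-step chain collapses to $T_p'[X_2^n]\equiv[X_2^n]\pmod p$, which is not nilpotence. The missing ingredient is that $[X_2^n]=0$ in $(\cM_n/p\cM_n)_\Gamma$ already, since $\bigl(\begin{smallmatrix}1&-1\\0&1\end{smallmatrix}\bigr)X_1X_2^{n-1}-X_1X_2^{n-1}=X_2^n$ exhibits it as a unipotent coboundary; with this, every target of $T_p'$ mod $p$ (namely $X_2^n$ and the $(X_1+jX_2)^n$, which are $\Gamma$-translates of $X_2^n$) vanishes in the coinvariants, so $T_p'\equiv 0\pmod p$ in a single step. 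This is exactly the paper's closing computation. Relatedly, the Hecke-equivariance of Poincar\'e--Lefschetz duality never needs to be addressed: as in the paper, one evaluates $\phi|T_p'$ directly at a fundamental class $\msF$, treats the values $\phi(\gamma\msF)$ as unknown elements of $\cM_n/p\cM_n$, and checks that the output is a sum of terms that die in the coinvariants.
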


\begin{proof}
Since the boundary $\partial Y^\BS$ is homeomorphic to the circle, we have $H^2(\partial Y^\BS,  \cM_{n, p}) = 0$. 
Hence the canonical homomorphism $H^2_\ord(Y^\BS, \partial Y^\BS, \cM_{n, p}) \longrightarrow H^2_\ord(Y^\BS,  \cM_{n, p})$ is surjective. 
Therefore, we only need to show that $H^2_\ord(Y^\BS, \partial Y^\BS, \cM_{n, p})  = 0$. 

Moreover, since $Y$ is a two-dimensional real manifold, we have $H^3(Y^\BS, \partial Y^\BS, \cM_{n, p}) = 0$, and hence the short exact sequence $0 \longrightarrow \cM_{n, p} \stackrel{\times p}{\longrightarrow} \cM_{n, p} \longrightarrow \cM_{n, p} /p\cM_{n, p} \longrightarrow 0$ induces an isomorphism 
\[
H^2(Y^\BS, \partial Y^\BS, \cM_{n, p}) \otimes \bZ/(p) \stackrel{\sim}{\longrightarrow} H^2(Y^\BS, \partial Y^\BS, \cM_{n, p} /p\cM_{n, p}). 
\]
Therefore, it suffices to prove that $H^2_\ord(Y^\BS, \partial Y^\BS, \cM_{n, p} /p\cM_{n, p}) = 0$. 

For notational simplicity, set $\overline{\cM}_{n, p} := \cM_{n, p} /p\cM_{n, p}$. 
Let 
\[ 
\msF  \in S_2(\bbH^{\BS})
\] 
be a representative of a fundamental class of $H_2(Y^{\BS}, \partial Y^{\BS}, \Z) \cong \bZ$. 
Then it is known that 
the homomorphism $\Hom_\bZ(S_2(\bbH^{\BS}), \overline{\cM}_{n, p}) \longrightarrow \overline{\cM}_{n, p}; \phi \mapsto \phi(\msF)$ induces an isomorphism 
\begin{align}\label{eq:evaluation-at-fundamental-class}
\mathrm{ev}_{\msF} \colon H^2(Y^\BS, \partial Y^\BS, \overline{\cM}_{n, p}) \stackrel{\sim}{\longrightarrow} (\overline{\cM}_{n, p})_\Gamma. 
\end{align}
See \cite[Proposition 8.2]{Shimura-book-arith-autom-funct} or \cite[Proposition 1, \S6.1]{Hida93} for example. 

We will show that for any $[\phi] \in H^2(Y^\BS, \partial Y^\BS, \overline{\cM}_{n, p})$, we have $[\phi]|_{T_p'}=0$. 
By \eqref{eq:evaluation-at-fundamental-class}, it suffices to show that $\mathrm{ev}_{\msF}([\phi]|_{T_p'})\equiv 0$. 
Here we use $\equiv$ to emphasize that it is an identity in $(\overline{\cM}_{n, p})_\Gamma$. 
We then compute 
\begin{align*}
\mathrm{ev}_{\msF}([\phi]|_{T_p'})
&\equiv \phi|_{T_p'}(\msF)(X_1, X_2)
\\
&=
\widetilde{\begin{pmatrix}
    p&0\\
    0&1
\end{pmatrix}}
\phi\left(\begin{pmatrix}
    p&0\\
    0&1
\end{pmatrix}\msF\right)(X_1, X_2)
+
\sum_{j=0}^{p-1}
\widetilde{\begin{pmatrix}
    1&j\\
    0&p
\end{pmatrix}}
\phi\left(\begin{pmatrix}
    1&j\\
    0&p
\end{pmatrix}\msF \right)(X_1, X_2)
\\
&\equiv
\phi\left(\begin{pmatrix}
    p&0\\
    0&1
\end{pmatrix}\msF \right)(0, X_2)
+
\sum_{j=0}^{p-1}
\phi\left(\begin{pmatrix}
    1&j\\
    0&p
\end{pmatrix}\msF \right)(X_1+jX_2, 0). 
\end{align*}
Put
\[
a_{p}:=\phi\left(\begin{pmatrix}
    p&0\\
    0&1
\end{pmatrix}\msF\right)(0, 1) \in \bF_p, 
\quad
a_{j}:=\phi\left(\begin{pmatrix}
    1&j\\
    0&p
\end{pmatrix}\msF\right)(1, 0)
 \in \bbF_p. 
\]
Then we find that
\begin{align*}
\mathrm{ev}_{\msF}([\phi]|_{T_p'}) 
&\equiv
a_{p}X_2^n
+
\sum_{j=0}^{p-1}
a_{j}(X_1+jX_2)^n
\\
&=
a_{p}X_2^n
+
\sum_{j=0}^{p-1}
a_{j}
\begin{pmatrix}
    1&1\\
    0&1
\end{pmatrix}^{-j}\begin{pmatrix}
    0&1\\
    -1&0
\end{pmatrix}
X_2^n
\\
&\equiv
a_{p}X_2^{n}
+
\sum_{j=0}^{p-1}
a_{j}X_2^{n}
\\
&=
\left(a_{p}
+
\sum_{j=0}^{p-1}
a_{j}\right)\lr{\begin{pmatrix}
    1&-1\\
    0&1
\end{pmatrix}-1}X_1X_2^{n-1}
\\
&\equiv
0. 
\end{align*}
\end{proof}

For any $\Gamma$-module $\cM$, we define the inner cohomology $H^1_{!}(Y^\BS, \cM)$ by 
\[
H^1_{!}(Y^\BS, \cM) := \image\left(H^1(Y^\BS, \partial Y^\BS \cM) \longrightarrow H^1(Y^\BS, \cM)\right) 
\]
and,  when $\cM$ is a finitely generated $\bZ_p$-module, we put 
\[
H^1_{!, \ord}(Y^\BS, \cM) := e_{T_p'}H^1_{!}(Y^\BS, \cM). 
\]
Then the following corollary follows from Lemma \ref{lem:H^2_ord(Y^BS, partial Y^BS, cM) = H^2_ord(Y^BS, cM) = 0} and the isomorphism \eqref{eq:reduction-isomo-H^1-ordinary-boundary}. 

\begin{cor}\label{cor:exact-seq-ordinary-H^1-inner-boundary}
Let $r$ be a non-negative integer and $\cM \in \{\cM_{n, p}, \cM_{n, p}/p^r\cM_{n, p}\}$. 
Then we have a natural exact sequence of Hecke modules: 
\begin{align*}
    0 \longrightarrow H^{1}_{!, \ord}(Y^\BS, \cM) \longrightarrow H^{1}_{\ord}(Y^\BS, \cM)  \longrightarrow  H^{1}_{\ord}(\partial Y^\BS,  \cM) \longrightarrow 0. 
\end{align*}
\end{cor}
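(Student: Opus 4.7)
The plan is to read off the claimed short exact sequence from the long exact cohomology sequence of the pair $(Y^\BS, \partial Y^\BS)$ with coefficients in $\cM$, using two key inputs: the exactness of the ordinary projector $e_{T_p'}$ on finitely generated $\bZ_p$-modules equipped with a $T_p'$-action, and the vanishing of $H^2_\ord(Y^\BS, \partial Y^\BS, \cM)$ in both cases under consideration.

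Concretely, I would begin with the long exact sequence attached to the pair, whose terms and connecting maps are all $T_p'$-equivariant, and apply $e_{T_p'}$ termwise. Since this functor is exact, the result is
\[
\cdots \to H^1_\ord(Y^\BS, \partial Y^\BS, \cM) \xrightarrow{\alpha} H^1_\ord(Y^\BS, \cM) \xrightarrow{\beta} H^1_\ord(\partial Y^\BS, \cM) \to H^2_\ord(Y^\BS, \partial Y^\BS, \cM) \to \cdots.
\]
Because $\alpha$ is obtained by restriction from the Hecke-equivariant map $H^1(Y^\BS, \partial Y^\BS, \cM) \to H^1(Y^\BS, \cM)$ and $e_{T_p'}$ commutes with it, one has
\[
\image(\alpha) = e_{T_p'}\bigl(\image\bigl(H^1(Y^\BS, \partial Y^\BS, \cM) \to H^1(Y^\BS, \cM)\bigr)\bigr) = H^1_{!,\ord}(Y^\BS, \cM)
\]
by definition of $H^1_{!,\ord}$. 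Exactness at the middle term then identifies $H^1_{!,\ord}(Y^\BS, \cM)$ with $\ker \beta$, which gives the left-injectivity and middle-exactness of the desired sequence in one stroke.

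It remains to verify that $\beta$ is surjective, equivalently that $H^2_\ord(Y^\BS, \partial Y^\BS, \cM) = 0$. When $\cM = \cM_{n,p}$ this is exactly Lemma \ref{lem:H^2_ord(Y^BS, partial Y^BS, cM) = H^2_ord(Y^BS, cM) = 0}. When $\cM = \cM_{n,p}/p^r\cM_{n,p}$, I would reduce to that lemma by applying cohomology to $0 \to \cM_{n,p} \xrightarrow{\times p^r} \cM_{n,p} \to \cM_{n,p}/p^r\cM_{n,p} \to 0$, taking ordinary parts, and invoking $H^3(Y^\BS, \partial Y^\BS, \cM_{n,p}) = 0$ (since $Y^\BS$ is a manifold with boundary of real dimension $2$); this produces a surjection
\[
H^2_\ord(Y^\BS, \partial Y^\BS, \cM_{n,p})/p^r \twoheadrightarrow H^2_\ord(Y^\BS, \partial Y^\BS, \cM_{n,p}/p^r\cM_{n,p}),
\]
and the integral vanishing then forces the quotient to vanish. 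The argument is entirely formal once the vanishing lemma and the exactness of $e_{T_p'}$ are in hand, so I do not anticipate any substantive obstacle; the only point worth stating carefully is the commutation of $e_{T_p'}$ with images that identifies $H^1_{!,\ord}$ with $\image(\alpha)$.
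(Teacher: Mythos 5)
Your argument is correct and follows essentially the same route as the paper: the long exact sequence of the pair, the exactness of the ordinary projector, and the vanishing of $H^2_{\ord}(Y^\BS,\partial Y^\BS,\cM_{n,p})$ from Lemma \ref{lem:H^2_ord(Y^BS, partial Y^BS, cM) = H^2_ord(Y^BS, cM) = 0}. The only (immaterial) divergence is in the mod $p^r$ case, where the paper deduces surjectivity onto $H^1_{\ord}(\partial Y^\BS,\cM_{n,p}/p^r\cM_{n,p})$ by a diagram chase through the integral row using the surjectivity of the reduction map \eqref{eq:reduction-isomo-H^1-ordinary-boundary}, whereas you instead prove the vanishing of $H^2_{\ord}$ of the pair with mod $p^r$ coefficients directly from the integral vanishing and $H^3(Y^\BS,\partial Y^\BS,\cM_{n,p})=0$ --- the same reduction the paper itself uses inside the proof of that lemma.
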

\begin{proof}

For notational simplicity, set $\cM_{n,p}/p^r:=\cM_{n,p}/p^r\cM_{n,p}$. 
Consider the natural commutative diagram
\begin{align*}
\xymatrix{
0\ar[r]&
H^{1}_{!, \ord}(Y^\BS, \cM_{n,p})\ar[r]\ar[d]&
H^{1}_{\ord}(Y^\BS, \cM_{n,p})\ar[r]\ar[d]&
H^{1}_{\ord}(\partial Y^\BS, \cM_{n,p})\ar[r]\ar[d]&
0\\
0\ar[r]&
H^{1}_{!, \ord}(Y^\BS, \cM_{n,p}/p^r)\ar[r]&
H^{1}_{\ord}(Y^\BS, \cM_{n,p}/p^r)\ar[r]&
H^{1}_{\ord}(\partial Y^\BS, \cM_{n,p}/p^r)\ar[r]&
0.  
}
\end{align*}
The upper row is exact by Lemma \ref{lem:H^2_ord(Y^BS, partial Y^BS, cM) = H^2_ord(Y^BS, cM) = 0}. 
Moreover, by \eqref{eq:reduction-isomo-H^1-ordinary-boundary}, the right vertical map is surjective, and the bottom row is also exact. 
\end{proof}

\begin{cor}\label{cor:conpatible-reduction-ordinary-inner}
For any integer $r \geq 0$, the canonical homomorphism  $\cM_{n,p} \longrightarrow \cM_{n,p}/p^r \cM_{n,p}$ induces isomorphisms
\begin{align*}
H^{1}_{\ord}(Y^\BS, \cM_{n,p}) \otimes \bZ_{p}/(p^{r}) &\stackrel{\sim}{\longrightarrow} H^{1}_{\ord}(Y^\BS, \cM_{n, p}/p^r \cM_{n,p}), 
\\
H^{1}_{!, \ord}(Y^\BS, \cM_{n, p}) \otimes \bZ_{p}/(p^{r})  &\stackrel{\sim}{\longrightarrow}  H^{1}_{!, \ord}(Y^\BS, \cM_{n, p}/p^r \cM_{n,p}).  
\end{align*}
\end{cor}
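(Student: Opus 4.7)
The plan is to derive both isomorphisms as formal consequences of the vanishing results and the exact sequence already proved, so the argument reduces to the standard Bockstein and Five Lemma manipulations.

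First, for the statement about $H^1_\ord(Y^\BS, \cM_{n,p})$, I would apply the ordinary projector $e_{T_p'}$ (which is exact on finitely generated $\bZ_p$-modules) to the long exact sequence in cohomology associated with the short exact sequence
\[
0 \longrightarrow \cM_{n,p} \xrightarrow{\;\times p^r\;} \cM_{n,p} \longrightarrow \cM_{n,p}/p^r\cM_{n,p} \longrightarrow 0
\]
of $\Gamma$-modules. This yields an exact sequence
\[
H^1_\ord(Y^\BS,\cM_{n,p}) \xrightarrow{\;\times p^r\;} H^1_\ord(Y^\BS,\cM_{n,p}) \longrightarrow H^1_\ord(Y^\BS,\cM_{n,p}/p^r\cM_{n,p}) \longrightarrow H^2_\ord(Y^\BS,\cM_{n,p}).
\]
By Lemma \ref{lem:H^2_ord(Y^BS, partial Y^BS, cM) = H^2_ord(Y^BS, cM) = 0}, the last term vanishes, which gives precisely the first asserted isomorphism.

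For the second statement, I would exploit the two short exact sequences supplied by Corollary \ref{cor:exact-seq-ordinary-H^1-inner-boundary} for $\cM = \cM_{n,p}$ and for $\cM = \cM_{n,p}/p^r\cM_{n,p}$. Since $H^1_\ord(\partial Y^\BS,\cM_{n,p})$ is a free $\bZ_p$-module of rank one by Corollary \ref{cor:H^1-ordinary-boundary}(1), the short exact sequence for $\cM_{n,p}$ splits as $\bZ_p$-modules, and hence remains exact after tensoring with $\bZ_p/(p^r)$. Using the isomorphism from the first part together with the isomorphism \eqref{eq:reduction-isomo-H^1-ordinary-boundary} for the boundary, I obtain a commutative diagram
\[
\xymatrix@C=0.9em{
0 \ar[r] & H^1_{!,\ord}(Y^\BS,\cM_{n,p})\otimes \bZ_p/(p^r) \ar[r]\ar[d] & H^1_\ord(Y^\BS,\cM_{n,p})\otimes \bZ_p/(p^r) \ar[r]\ar[d]^{\cong} & H^1_\ord(\partial Y^\BS,\cM_{n,p})\otimes \bZ_p/(p^r) \ar[r]\ar[d]^{\cong} & 0 \\
0 \ar[r] & H^1_{!,\ord}(Y^\BS,\cM_{n,p}/p^r) \ar[r] & H^1_\ord(Y^\BS,\cM_{n,p}/p^r) \ar[r] & H^1_\ord(\partial Y^\BS,\cM_{n,p}/p^r) \ar[r] & 0
}
\]
with exact rows, and the Five Lemma forces the remaining vertical arrow to be an isomorphism as well.

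The argument is essentially formal, so there is no single hard step; the only point one must be careful about is that the ordinary projector is an exact functor (which is immediate from $e_{T_p'}^2 = e_{T_p'}$ and the assumption $p \geq 5$ ensuring that short exact sequences of $\Gamma$-modules remain exact in cohomology), and that the splitting in the second step is legitimate, which is guaranteed by the freeness of $H^1_\ord(\partial Y^\BS,\cM_{n,p})$. Beyond these, the result follows from the inputs already assembled in the previous subsection.
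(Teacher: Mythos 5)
Your proposal is correct and follows essentially the same route as the paper: the first isomorphism comes from the Bockstein sequence together with the vanishing of $H^2_\ord$, and the second from the freeness of $H^{1}_{\ord}(\partial Y^\BS, \cM_{n,p})$ (which the paper phrases as a $\Tor_1$-vanishing rather than a splitting) combined with the two exact sequences of Corollary \ref{cor:exact-seq-ordinary-H^1-inner-boundary}. Your use of the Five Lemma is just a cosmetic repackaging of the paper's direct identification of $H^{1}_{!,\ord}(Y^\BS,\cM_{n,p})\otimes \bZ_p/(p^r)$ as the kernel of the map to the boundary.
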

\begin{proof}
The exact sequence 
\[
0 \longrightarrow \cM_{n, p}  \stackrel{\times p^r}{\longrightarrow} \cM_{n, p}  \longrightarrow  \cM_{n, p}/p^r \cM_{n, p}\longrightarrow 0 
\]
shows that we have an exact sequence 
\begin{align*}
    0 \longrightarrow H^{1}_{\ord}(Y^\BS, \cM_{n,p}) \otimes \bZ_{p}/(p^{r})  {\longrightarrow} H^{1}_{\ord}(Y^\BS, \cM_{n, p}/p^r \cM_{n,p})  \longrightarrow  H^{2}_{\ord}(Y^\BS,  \cM_{n,p})[p^r] \longrightarrow 0. 
\end{align*}
Hence by Lemma \ref{lem:H^2_ord(Y^BS, partial Y^BS, cM) = H^2_ord(Y^BS, cM) = 0}, we obtain the first isomorphism.

By Corollary \ref{cor:H^1-ordinary-boundary}(1), we see that $\Tor_{1}^{\Z_p}(H^{1}_{\ord}(\partial Y^\BS, \cM_{n, p}), \Z_p/(p^r))=0$, and hence Corollary \ref{cor:exact-seq-ordinary-H^1-inner-boundary} for $\cM = \cM_{n,p}$ shows that 
\small
\begin{align*}
    H^{1}_{!, \ord}(Y^\BS, \cM_{n, p}) \otimes \bZ_p/(p^r) = \ker\left( H^{1}_{\ord}(Y^\BS, \cM_{n, p}) \otimes \bZ_p/(p^r) \longrightarrow H^{1}_{\ord}(\partial Y^\BS, \cM_{n, p}) \otimes \bZ_p/(p^r)\right). 
\end{align*}
\normalsize
Hence the second isomorphism follows from the first isomorphism, the isomorphism \eqref{eq:reduction-isomo-H^1-ordinary-boundary}, and Corollary \ref{cor:exact-seq-ordinary-H^1-inner-boundary} for $\cM = \cM_{n,p}/p^r \cM_{n,p}$.   
\end{proof}

\begin{thm}\label{thm:harder-hida}
For any positive integers $r$ and $n'$ with $n \equiv n' \pmod{(p-1)p^{r-1}}$, we have the following canonical isomorphism of exact sequences which is $T_\ell'$-equivalent for any prime number $\ell \neq p$: 
\begin{align*}
\xymatrix@C=20pt{
0 \ar[r] &  H^{1}_{!, \ord}(Y^\BS, \cM_{n, p}/ p^{r}) \ar[d]^-\cong \ar[r] & H^{1}_{\ord}(Y^\BS, \cM_{n, p}/ p^{r})  \ar[d]^-\cong \ar[r] &    H^{1}_{\ord}(\partial Y^\BS, \cM_{n, p}/ p^{r}) \ar[d]^-\cong \ar[r] & 0
\\
0 \ar[r] &   H^{1}_{!, \ord}(Y^\BS, \cM_{n', p}/ p^{r})   \ar[r] &  H^{1}_{\ord}(Y^\BS, \cM_{n', p}/ p^{r})  \ar[r] &  H^{1}_{\ord}(\partial Y^\BS, \cM_{n', p}/ p^{r}) \ar[r] & 0, 
}
\end{align*}
where  $\cM_{n, p}/p^r := \cM_{n, p}/ p^{r}\cM_{n, p}$. 
\end{thm}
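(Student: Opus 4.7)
The proof plan is to adapt the classical Hida-theoretic argument (cited in the paper as \cite{Hida86, Hida88}) to the present setting of $\Gamma = \mathrm{SL}_2(\bZ)$. Two even weights $n$ and $n'$ satisfying $n \equiv n' \pmod{(p-1)p^{r-1}}$ are exactly those for which the characters $x \mapsto x^n$ and $x \mapsto x^{n'}$ of $\bZ_p^\times$ agree modulo $p^r$; the goal is to upgrade this scalar-level congruence to a canonical isomorphism of the three ordinary cohomology groups, and the natural way to do so is to construct a single ``universal'' coefficient module from which both weights specialize.

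My first step would be to introduce a $\Gamma$-module $\Lambda_r$ over $\bZ/p^r$ (for instance, an appropriate lattice in the module of continuous $\bZ/p^r$-valued measures on $\bZ_p$, equipped with an $\mathrm{SL}_2(\bZ)$-action extending the standard matrix action) together with specialization maps $\mathrm{sp}_n\colon \Lambda_r \to \cM_{n,p}/p^r$ for every even $n \geq 2$, given by an integration formula of the shape $\mu \mapsto \int (X_1+tX_2)^n\,d\mu(t)$. The crucial property is that $\mathrm{sp}_n \equiv \mathrm{sp}_{n'} \pmod{p^r}$ whenever $n \equiv n' \pmod{(p-1)p^{r-1}}$, because the ordinary projector $e_{T_p'}$ concentrates everything near the ``unit'' locus where $t^n \equiv t^{n'} \pmod{p^r}$; this is essentially the content of Lemma \ref{lem:action of Hecke operator at p on mod p polynomials is nilpotent modulo X_2}, which tells us that modulo $p$ the ordinary part of $\cM_{n,p}^\flat$ is one-dimensional and is captured by the direction $\bF_p\cdot X_2^n$ that varies ``smoothly'' in $n$.

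The second step is a control theorem: $\mathrm{sp}_n$ induces Hecke-equivariant (for $T_\ell'$ with $\ell\neq p$) isomorphisms
\[
H^1_{\ord}(Y^\BS, \Lambda_r) \xrightarrow{\ \sim\ } H^1_{\ord}(Y^\BS, \cM_{n,p}/p^r),
\]
and the parallel isomorphisms for the boundary cohomology and the interior cohomology. Given the structural results already established in the paper, this reduces essentially to a mod-$p$ dimension count: Corollary \ref{cor:H^1-ordinary-boundary} handles the boundary piece in a weight-independent way, Proposition \ref{prop:H^1-ord-torsion-free} and Lemma \ref{lem:H^2_ord(Y^BS, partial Y^BS, cM) = H^2_ord(Y^BS, cM) = 0} ensure torsion-freeness and the absence of obstructions, and Corollary \ref{cor:conpatible-reduction-ordinary-inner} guarantees that reduction modulo $p^r$ commutes with the ordinary projector in the required sense. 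The hypothesis $p \geq 5$ enters here to make the sheafification functor on $\bZ_{(p)}[\Gamma]$-modules exact (Remark \ref{rem:sheafification-functor-non-exactness}), so that all long exact sequences used are valid.

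Finally, the theorem is a formal consequence: the desired vertical isomorphisms are the composites $\mathrm{sp}_{n'}^*\circ(\mathrm{sp}_n^*)^{-1}$, which are well-defined because $\mathrm{sp}_n = \mathrm{sp}_{n'}$ as maps out of $\Lambda_r/p^r$. Functoriality with respect to the inclusion $\partial Y^\BS \hookrightarrow Y^\BS$ translates the isomorphism of Corollary \ref{cor:exact-seq-ordinary-H^1-inner-boundary} into a morphism of short exact sequences, and $T_\ell'$-equivariance for $\ell \neq p$ is built into the construction of $\Lambda_r$. The principal obstacle is the first step: constructing $\Lambda_r$ with a full $\mathrm{SL}_2(\bZ)$-action (as opposed to the more natural $\Gamma_0(p)$-action of classical Hida theory) and verifying the control theorem in this slightly nonstandard setting. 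Once that construction is carried out cleanly, the rest of the argument is essentially bookkeeping with the exact sequences from Section \ref{sec:Eisenstein class}.
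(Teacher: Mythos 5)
Your proposal circles the right ideas (Hida's control theory) but has a genuine gap at its central step, one you yourself flag as ``the principal obstacle'' without resolving. The module of measures $\Lambda_r$ with specialization $\mu \mapsto \int (X_1+tX_2)^n\,d\mu(t)$ carries a natural action only of the monoid of matrices that are upper-triangular modulo $p$ (the $\Sigma_0(p)$-action of classical Hida--Stevens theory); it does not admit a full $\mathrm{SL}_2(\bZ)$-action compatible with all the maps $\mathrm{sp}_n$ simultaneously, so the asserted isomorphism $H^1_{\ord}(Y^\BS,\Lambda_r)\cong H^1_{\ord}(Y^\BS,\cM_{n,p}/p^r)$ at level $\Gamma=\mathrm{SL}_2(\bZ)$ cannot even be formulated. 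The paper's proof deals with precisely this point by \emph{first changing the level}: Hida's stabilization isomorphism (\cite[Proposition 4.7]{Hida86}) gives $e_{T_p'}H^1(\Gamma,\cM_{m,p}/p^r)\xrightarrow{\ \sim\ } e_{U_p'}H^1(\Gamma_0(p^r),\cM_{m,p}/p^r)$, $x\mapsto e_{U_p'}\mathrm{res}(x)$, compatibly with $T_\ell'$ for $\ell\neq p$ and with the parabolic subgroups. Only after moving to level $\Gamma_0(p^r)$ does one change coefficients, and there no universal module is needed: the $\Gamma_0(p^r)$-equivariant evaluation $i_r\colon \cM_{m,p}/p^r\to L_{m,r}$, $f\mapsto f(1,0)$, onto the rank-one module on which $\begin{pmatrix} a&b\\ c&d\end{pmatrix}$ acts by $a^m$, induces an isomorphism on $U_p$-ordinary and parabolic cohomology by \cite[Corollary 4.5 and (6.8)]{Hida86}, and $L_{n,r}=L_{n',r}$ visibly when $n\equiv n'\pmod{(p-1)p^{r-1}}$. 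This level-raising step is the missing idea; any version of your ``big module'' approach (Ohta, Emerton) also passes through level $\Gamma_0(p^r)$ or $\Gamma_1(p^r)$ for exactly this reason.

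Two smaller problems: the assertion $\mathrm{sp}_n\equiv\mathrm{sp}_{n'}\pmod{p^r}$ does not typecheck, since the targets $\cM_{n,p}/p^r$ and $\cM_{n',p}/p^r$ are different modules of different ranks; what one actually proves is that both specializations become isomorphisms after applying the ordinary projector, and that the resulting composite is canonical. And your claim that the control theorem ``reduces essentially to a mod-$p$ dimension count'' via Lemma \ref{lem:action of Hecke operator at p on mod p polynomials is nilpotent modulo X_2} is too optimistic: that lemma controls the ordinary part only modulo $p$, whereas the theorem requires control modulo $p^r$. The substantive input is Hida's control theorem itself, which the paper imports as a black box rather than rederiving from the structural lemmas of \S\ref{sec:Eisenstein class}.
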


\begin{proof}
    Theorem \ref{thm:harder-hida}  follows from  the results proved by Hida in \cite{Hida86} (see also \cite{Harder11}). In the following, we briefly explain how we derive Theorem \ref{thm:harder-hida} from  Hida's results in \cite{Hida86}.

First, note that since $p \geq 5$, we have canonical isomorphisms between a sheaf cohomology on $Y^\BS$ and a group cohomology of $\Gamma$: 
\begin{align}\label{eq:isom sheaf and group cohomologies}
H^{1}(Y^\BS, \cM_{m, p}/p^r) &\stackrel{\sim}{\longrightarrow} H^{1}(\Gamma, \cM_{m, p}/p^r). 
\end{align}
Moreover, the inner cohomology group $H^{1}_{!}(Y^\BS, \cM_{m, p}/p^r)$ corresponds to the parabolic subgroup $H^1_P(\Gamma, \cM_{m, p}/p^r)$ of $H^{1}(\Gamma, \cM_{m, p}/p^r)$ under the isomorphism \eqref{eq:isom sheaf and group cohomologies} (see \cite[(4.1a)]{Hida86} for the definition of the parabolic subgroup). 

Let $m \in \{n, n'\}$. 
Hida showed in \cite[Proposition 4.7]{Hida86} that  we have  isomorphisms 
    \begin{align}\label{eq:isom-T_p-ordinary-and-U_p-ordinary}
    \begin{split}
                    e_{T_p'}H^1(\Gamma, \cM_{m, p}/p^r ) &\stackrel{\sim}{\longrightarrow}  e_{U_p'}H^1(\Gamma_0(p^r), \cM_{m, p}/p^r ); x \mapsto e_{U_p'}  \mathrm{res}(x), 
            \\
             e_{T_p'}H^1_P(\Gamma, \cM_{m, p}/p^r ) &\stackrel{\sim}{\longrightarrow}  e_{U_p'}H^1_P(\Gamma_0(p^r), \cM_{m, p}/p^r ); x \mapsto e_{U_p'}  \mathrm{res}(x)
                 \end{split}
        \end{align}
    which are $T_\ell'$-equivariant for any prime number $\ell \neq p$. 
    Here $\mathrm{res}$ denotes the restriction map.

Let $L_{m, r}$ denote the $\Gamma_0(p^r)$-module whose underlying abelian group is $\bZ_p/(p^r)$ and the $\Gamma_0(p^r)$-action is given  by the homomorphism 
$\Gamma_0(p^r) \longrightarrow (\bZ_p/(p^r))^\times; \begin{pmatrix}
    a&b\\c&d
\end{pmatrix} \mapsto a^{m} \bmod{p^r}$. 
Then  Hida also showed in \cite[Corollary 4.5 and (6.8)]{Hida86} 
that the $\Gamma_0(p^r)$-homomorphism  $i_r \colon \cM_{m, p}/p^r \longrightarrow L_{m, r}; f(X_1,X_2) \mapsto f(1, 0)$ induces  Hecke-equivariant isomorphisms
    \begin{align}\label{eq:isom-hida-U_p-ordinary-n-and-n'}
    \begin{split}
             e_{U_p'}H^{1}(\Gamma_0(p^r), \cM_{m, p}/p^r) &\stackrel{\sim}{\longrightarrow} e_{U_p'}H^{1}(\Gamma_0(p^r), L_{m, r}); x \mapsto i_{r, *}(x), 
     \\
    e_{U_p'}H^{1}_P(\Gamma_0(p^r), \cM_{m, p}/p^r) &\stackrel{\sim}{\longrightarrow} e_{U_p'}H^{1}_P(\Gamma_0(p^r), L_{m, r}); x \mapsto i_{r, *}(x). 
        \end{split}
    \end{align}
 Since $n \equiv n' \pmod{(p-1)p^r}$, we have $L_{n, r} = L_{n', r}$ as $\Gamma_0(p^r)$-modules, by combining the isomorphisms \eqref{eq:isom-T_p-ordinary-and-U_p-ordinary} and \eqref{eq:isom-hida-U_p-ordinary-n-and-n'} for $m = n,n'$, we obtain the following commutative diagram:  
\small
 \begin{align*}\label{eq:isom-hida-T_p-ordinary-n-and-n'}
    \xymatrix@C=18pt{
        H^{1}_{!, \ord}(Y^\BS, \cM_{n, p}/p^r) \ar[r]^-{\cong} \ar@{^{(}->}[d]  & 
e_{U_p'}H^{1}_{P}(\Gamma_0(p^{r}), L_{n,r}) = e_{U_p'}H^{1}_{P}(\Gamma_0(p^{r}), L_{n',r}) \ar@{^{(}->}[d] 
& \ar[l]_-{\cong} H^{1}_{!, \ord}(Y^\BS, \cM_{n', p}/p^r) \ar@{^{(}->}[d]
        \\
        H^{1}_{\ord}(Y^\BS, \cM_{n, p}/p^r)
        \ar[r]^-{\cong} & 
e_{U_p'}H^{1}(\Gamma_0(p^{r}), L_{n,r}) =  e_{U_p'}H^{1}(\Gamma_0(p^{r}), L_{n',r}) & 
\ar[l]_-{\cong} H^{1}_{\ord}(Y^\BS, \cM_{n', p}/p^r), 
    } 
\end{align*}  
        \normalsize
        where horizontal arrows are isomorphisms and $T_\ell'$-equivariant for any prime number $\ell \neq p$. 
            This completes the proof. 
\end{proof}

\subsection{Another expression for $\Delta_p(\Eis_n)$}

Let $p \geq 5$ be a prime number and take a prime number $\ell \neq p$. 
Let 
\[
\cH_{\ell, p} := \bZ_{p}[X]
\]
be the polynomial ring over $\bZ_{p}$, and by using the Hecke operator $T_{\ell}'$ at $\ell$, we regard cohomology groups that appear in the present paper as $\cH_{\ell,p}$-modules. 
For notational simplicity, we put 
\[
x_{\ell, n} := X - (1+ \ell^{n+1}) \,\,\, \textrm{ and } \,\,\, \mathcal{B}_{\ell, p, n} := \cH_{\ell, p}/(x_{\ell, n}). 
\]
Note that by Corollary \ref{cor:H^1-ordinary-boundary}, we have $\mathcal{B}_{\ell, p, n} \stackrel{\sim}{\longrightarrow} H^{1}_{\ord}(\partial Y^\BS, \cM_{n, p}); 1 \mapsto e_{T_p'}[e_{n}]$ as $\cH_{\ell, p}$-modules.

\begin{lem}\label{lem:corollary-of-Ramanujam-conjecture-T_p-eigen-implies-Hecke-eigen}
Let  $c \in H^{1}(Y^\BS, \cM_{n}) \otimes \bC$ be a cohomology class.  
If $c|T_{\ell}' = (1 + \ell^{n+1})c$, then we have $c|T_{\ell'}' = (1 + \ell'^{n+1})c$ for any prime number $\ell'$, that is, the cohomology class $c$ is a scalar multiple of $\Eis_{n}$. 
\end{lem}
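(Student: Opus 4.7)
The plan is to decompose $H^{1}(Y^{\BS}, \mcM_{n}) \otimes \bC$ into Hecke-stable summands via the Eichler--Shimura isomorphism and then to isolate the Eisenstein line by invoking Deligne's proof of the Ramanujan conjecture, which forces every cuspidal $T_{\ell}'$-eigenvalue to be strictly smaller than $1 + \ell^{n+1}$.

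First, I would pass from coefficients in $\mcM_{n}$ to $\mcM_{n}^{\flat}$: the two are isomorphic as $\Gamma$-modules after $\otimes \bQ$, via the $\mathrm{SL}_{2}$-invariant symplectic pairing on $\bZ^{2}$ extended to $\mathrm{Sym}^{n}$, so we obtain a Hecke-equivariant isomorphism $H^{1}(Y^{\BS}, \mcM_{n})\otimes\bC \cong H^{1}(Y^{\BS}, \mcM_{n}^{\flat})\otimes\bC$. Combining the Eichler--Shimura map of \S\ref{sec:Eichler--Shimura map} with its antiholomorphic counterpart (cf.~\cite[Ch.~5]{Bel21}) and Lemma~\ref{lem:boundary-H^1-is-eisenstein}, and using that the space of weight $n+2$ Eisenstein series for $\mathrm{SL}_{2}(\bZ)$ is one-dimensional and spanned by $E_{n+2}$, I obtain a Hecke-equivariant decomposition
\[
H^{1}(Y^{\BS}, \mcM_{n}^{\flat}) \otimes \bC \;\cong\; \bC\cdot\Eis_{n} \,\oplus\, S_{n+2}(\Gamma) \,\oplus\, \overline{S_{n+2}(\Gamma)}.
\]

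Next, I would invoke Deligne's theorem: every normalized Hecke eigenform $f \in S_{n+2}(\Gamma)$ satisfies $|a_{\ell}(f)| \leq 2\ell^{(n+1)/2}$ at every prime $\ell$. By AM--GM, $1 + \ell^{n+1} \geq 2\ell^{(n+1)/2}$, with strict inequality whenever $\ell^{n+1} \neq 1$, hence always. Writing $c = \alpha\Eis_{n} + c_{\mathrm{cusp}}$ according to the above decomposition and projecting the hypothesis $c|T_{\ell}' = (1+\ell^{n+1})c$ onto the cuspidal summand gives $c_{\mathrm{cusp}}|T_{\ell}' = (1+\ell^{n+1})c_{\mathrm{cusp}}$. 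Since $1 + \ell^{n+1}$ is not a $T_{\ell}'$-eigenvalue on the cuspidal part, this forces $c_{\mathrm{cusp}} = 0$. Consequently $c = \alpha\Eis_{n}$, and Lemma~\ref{lem:eisenstein-pairing-eigen-rational}(2) immediately yields $c|T_{\ell'}' = (1+\ell'^{n+1})c$ for every prime $\ell'$.

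The main external input is Deligne's deep result giving the Ramanujan bound; once this is taken as a black box, the argument is entirely formal. The only minor technicality is ensuring that the Eichler--Shimura decomposition is Hecke-equivariant with respect to the specific operator $T_{\ell}'$ used here (rather than the classical $T_{\ell}''$), which is covered by the Hecke-equivariance of $r$ recorded in the remark immediately following the definition of the Eichler--Shimura homomorphism.
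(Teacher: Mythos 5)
Your proposal is correct and follows essentially the same route as the paper: decompose $H^{1}(Y^{\BS},\mcM_{n})\otimes\bC$ via Eichler--Shimura into the Eisenstein line and the (anti)holomorphic cuspidal parts, then use Deligne's Ramanujan bound to see that $1+\ell^{n+1}$ cannot occur as a cuspidal $T_{\ell}'$-eigenvalue. The only difference is that you spell out the inequality $2\ell^{(n+1)/2} < 1+\ell^{n+1}$ explicitly, which the paper leaves implicit.
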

\begin{proof}
It is well-known that one can take a $T_\ell'$-Hecke-eigen basis $f_{1}, \ldots, f_{t} \in H^{1}(Y^\BS, \cM_{n}) \otimes \bC$ such that 
$f_{1} = \Eis_n$ and that the elements $f_2, \ldots, f_t$ correspond to either cusp forms or their complex conjugates via the Eichler--Shimura homomorphism. 
Then the Ramanujan conjecture proved by Deligne shows that the absolute value of the $T_\ell'$-eigenvalue of $f_i$ ($2 \leq i \leq t$) is less than $1 + \ell^{n+1}$, which implies this lemma. 
\end{proof}

\begin{lem}\label{lem:denominator-ordinary-Ann-x_p}
We have $H^{1}_{\ord}(Y^\BS, \cM_{n, p})[x_{\ell,n}] = \bZ_p\Delta_p(\Eis_{n})\Eis_{n}$.  
\end{lem}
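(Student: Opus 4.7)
The plan is to prove the two inclusions separately. For $\supseteq$, the point is just to confirm that $\Delta_p(\Eis_n)\Eis_n$ sits inside the stated eigenspace. By definition of $\Delta(\Eis_n)$, the class $\Delta(\Eis_n)\Eis_n$ lies in $H^1_{\rm int}(Y^\BS, \cM_n^\flat)$; tensoring with $\bZ_p$ and discarding the prime-to-$p$ part of $\Delta(\Eis_n)$, I obtain $\Delta_p(\Eis_n)\Eis_n \in H^1_{\rm int}(Y^\BS, \cM_n^\flat)\otimes \bZ_p \subseteq H^1(Y^\BS, \cM_{n,p}^\flat)$. By Lemma \ref{lem:eisenstein-pairing-eigen-rational}(2), $\Eis_n$ is a $T_p'$-eigenvector with unit eigenvalue $1+p^{n+1}$, hence ordinary, so $\Delta_p(\Eis_n)\Eis_n \in H^1_{\ord}(Y^\BS, \cM_{n,p}^\flat)$. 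Applying the canonical isomorphism of Lemma \ref{lem:ordinary-flat-integral=non-flat-ntegral} lands us in $H^1_{\ord}(Y^\BS, \cM_{n,p})$, and Lemma \ref{lem:eisenstein-pairing-eigen-rational}(2) again shows this class is killed by $x_{\ell,n} = T_\ell' - (1+\ell^{n+1})$.

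For the reverse inclusion $\subseteq$, let $c \in H^{1}_{\ord}(Y^\BS, \cM_{n, p})[x_{\ell,n}]$. Via Lemma \ref{lem:ordinary-flat-integral=non-flat-ntegral}, I regard $c$ as an element of $H^1_{\ord}(Y^\BS, \cM_{n,p}^\flat)$. Proposition \ref{prop:H^1-ord-torsion-free} guarantees this module is torsion-free, so the natural map
\[
H^1_{\ord}(Y^\BS, \cM_{n,p}^\flat) \longhookrightarrow H^1_{\ord}(Y^\BS, \cM_{n,p}^\flat)\otimes \bQ_p \subseteq H^1(Y^\BS, \cM_n^\flat)\otimes \bC
\]
is injective. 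Since $c|T_\ell' = (1+\ell^{n+1})c$, Lemma \ref{lem:corollary-of-Ramanujam-conjecture-T_p-eigen-implies-Hecke-eigen} (the consequence of Deligne's bound that the only eigenclass with this specific Hecke eigenvalue is a scalar multiple of $\Eis_n$) forces $c = \lambda \Eis_n$ for some $\lambda \in \bQ_p$.

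It remains to pin down which $\lambda$'s actually arise. Flatness of $\bZ_p/\bZ$ gives $H^1(Y^\BS, \cM_{n,p}^\flat) \cong H^1(Y^\BS, \cM_n^\flat)\otimes \bZ_p$, and passing to ordinary parts identifies $H^1_{\ord}(Y^\BS, \cM_{n,p}^\flat)$ with $e_{T_p'}\bigl(H^1_{\rm int}(Y^\BS, \cM_n^\flat)\otimes \bZ_p\bigr)$ sitting inside $H^1(Y^\BS, \cM_n^\flat)\otimes \bQ_p$. Thus $\lambda \Eis_n$ being in this lattice is equivalent, by the definition of $\delta_p(\Eis_n)$ in Definition \ref{dfn:denominator-of-general-cohomology-class}, to $\lambda \in p^{\delta_p(\Eis_n)}\bZ_p = \Delta_p(\Eis_n)\bZ_p$. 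Writing $\lambda = a\,\Delta_p(\Eis_n)$ with $a \in \bZ_p$, we conclude $c = a\,\Delta_p(\Eis_n)\Eis_n \in \bZ_p\Delta_p(\Eis_n)\Eis_n$, finishing the proof.

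The main delicate point will be step 3: justifying cleanly that $H^1_{\ord}(Y^\BS, \cM_{n,p}^\flat)$ embeds as the $p$-adic lattice $e_{T_p'}(H^1_{\rm int}(Y^\BS, \cM_n^\flat)\otimes\bZ_p)$ inside the $\bQ_p$-vector space that contains $\Eis_n$. This requires combining torsion-freeness (Proposition \ref{prop:H^1-ord-torsion-free}), the comparison between $\cM_n$ and $\cM_n^\flat$ integral structures (Lemma \ref{lem:ordinary-flat-integral=non-flat-ntegral}), and coefficient flatness, so that the definition of $\Delta_p(\Eis_n)$ translates directly into the integrality condition on $\lambda$.
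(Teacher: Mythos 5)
Your proof is correct and follows essentially the same route as the paper: torsion-freeness (Proposition \ref{prop:H^1-ord-torsion-free}) plus the Ramanujan-bound lemma (Lemma \ref{lem:corollary-of-Ramanujam-conjecture-T_p-eigen-implies-Hecke-eigen}) reduce the $x_{\ell,n}$-torsion to a line $\bQ_p\Eis_n$, and then the definition of $\Delta_p(\Eis_n)$ together with the $\flat$-comparison (Lemma \ref{lem:ordinary-flat-integral=non-flat-ntegral}) identifies the lattice points on that line. The one point you leave tacit (as does the paper, which routes it through Lemma \ref{lem:e_p-pairing-ordinary}) is that the multiplier ideal $\{\lambda \in \bQ_p \mid \lambda\Eis_n \in L\}$ is exactly $p^{\delta_p(\Eis_n)}\bZ_p$ and not strictly larger, which requires knowing that $\Eis_n$ is not divisible by $p$ in the ordinary integral lattice; this follows from $\langle \Eis_n, [\{\tau,\tau+1\}\otimes e_0]\rangle = 1$ (Lemma \ref{lem:eisenstein-pairing-eigen-rational}(1)) via the duality of \S\ref{sec:formal-duality}.
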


\begin{proof}
    By Proposition \ref{prop:H^1-ord-torsion-free} and Lemma \ref{lem:corollary-of-Ramanujam-conjecture-T_p-eigen-implies-Hecke-eigen}, we have 
    $H^{1}_{\ord}(Y^\BS, \cM_{n, p})[x_{\ell,n}] \subset \bQ_p \cdot \Eis_n$. 
    Hence this lemma follows from the definition of the denominator $\Delta_p(\Eis_{n})$ of the Eisenstein class $\Eis_{n}$ and Lemmas \ref{lem:e_p-pairing-ordinary} and \ref{lem:ordinary-flat-integral=non-flat-ntegral}. 
\end{proof}

\begin{dfn}
We define $[\cE_{\ell, p, n}] \in \Ext^{1}_{\cH_{\ell, p}}(\mathcal{B}_{\ell, p, n}, H^{1}_{!, \ord}(Y^\BS, \cM_{n,p}))$ to be the element corresponding to the exact sequence of $\cH_{\ell, p}$-modules in Corollary \ref{cor:exact-seq-ordinary-H^1-inner-boundary} for $\cM = \cM_{n,p}$:  
\[
0 \longrightarrow H^{1}_{!, \ord}(Y^\BS, \cM_{n,p})   \longrightarrow H^{1}_{\ord}(Y^\BS, \cM_{n,p})    \longrightarrow  \mathcal{B}_{\ell, p, n} \longrightarrow 0. 
\]
\end{dfn}

The following lemma follows directly from Lemma \ref{lem:denominator-ordinary-Ann-x_p}. 

\begin{lem}\label{lem:ann-exact seq-cE_n,p}
$\Ann_{\bZ_{p}}([\cE_{\ell, p, n}]) = \Delta_{p}(\Eis_{n})\bZ_{p}$. 
\end{lem}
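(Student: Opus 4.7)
The plan is to use the classical interpretation of $\Ext^1$ in terms of extensions: for a short exact sequence $0 \to N \to E \to M \to 0$ of $\cH_{\ell, p}$-modules representing $[\cE] \in \Ext^1_{\cH_{\ell, p}}(M, N)$ and any $a \in \bZ_p$, one has $a [\cE] = 0$ if and only if the $\bZ_p$-scalar $a \colon M \to M$ lifts to an $\cH_{\ell,p}$-homomorphism $\tilde a \colon M \to E$ with $\pi \circ \tilde a = a$.

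For our extension $[\cE_{\ell, p, n}]$, the quotient $M = \mathcal{B}_{\ell, p, n}$ is cyclic as an $\cH_{\ell, p}$-module, generated by $1$ with annihilator $(x_{\ell, n})$. Thus specifying such a lift $\tilde a$ is equivalent to specifying $c := \tilde a(1) \in H^1_\ord(Y^\BS, \cM_{n,p})[x_{\ell, n}]$ with $\pi(c) = a \cdot 1$. By Lemma \ref{lem:denominator-ordinary-Ann-x_p}, this torsion-free module is the free rank-one $\bZ_p$-module generated by $\Delta_p(\Eis_n) \Eis_n$. So the question reduces to: for which $a \in \bZ_p$ does there exist $b \in \bZ_p$ with $\pi(b \Delta_p(\Eis_n) \Eis_n) = a \cdot 1$ in $\mathcal{B}_{\ell, p, n}$?

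To finish, I identify $\pi(\Eis_n)$ inside $\mathcal{B}_{\ell, p, n} \otimes \bQ_p$. Lemma \ref{lem:eisenstein-pairing-eigen-rational}(1) gives $\brk{\Eis_n, \{\tau, \tau+1\} \otimes e_0} = 1$, and $\{\tau, \tau+1\} \otimes e_0$ is the image in $H_1(Y^\BS, \cM_n)$ of the canonical generator $e_0 = X_2^n$ of $H_1(\partial Y^\BS, \cM_{n,p}) = \cM_{n,p}^{\Gamma_\infty}$. By the formal duality of \S\ref{sec:formal-duality}, this forces $\pi(\Eis_n) \in H^1_\ord(\partial Y^\BS, \cM^\flat_{n,p})$ to be the $\bZ_p$-dual basis element, and in particular a $\bZ_p$-generator of that rank-one free module (see Lemma \ref{lem:H^1-ordinary-boundary-torsion-free}). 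Composing with the canonical identifications $H^1_\ord(\partial Y^\BS, \cM^\flat_{n,p}) \cong H^1_\ord(\partial Y^\BS, \cM_{n,p}) \cong \mathcal{B}_{\ell, p, n}$ from Lemma \ref{lem:ordinary-flat-integral=non-flat-ntegral} and Corollary \ref{cor:H^1-ordinary-boundary}, I conclude that $\pi(\Eis_n)$ corresponds to $u \cdot 1$ for some $u \in \bZ_p^\times$. Consequently $\pi(b \Delta_p(\Eis_n) \Eis_n) = b u \Delta_p(\Eis_n)$ in $\bZ_p$, so the annihilator equals $u \Delta_p(\Eis_n) \bZ_p = \Delta_p(\Eis_n) \bZ_p$, as desired.

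The main obstacle is the third step: tracing through the chain of identifications (the $\flat$-vs.-non-$\flat$ isomorphism, the formal-duality pairing between $H^1$ and $H_1$ on the boundary, and the specific generator $e_{T_p'}[e_n]$ appearing in the definition of $\mathcal{B}_{\ell,p,n}$) to confirm that $\pi(\Eis_n)$ really corresponds to a $\bZ_p$-\emph{unit} multiple of the canonical generator, rather than merely to a rational generator. The first two steps are essentially formal consequences of the Ext--extension correspondence and Lemma \ref{lem:denominator-ordinary-Ann-x_p}.
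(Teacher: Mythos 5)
Your proof is correct and follows the route the paper intends: the paper disposes of this lemma in one line ("follows directly from Lemma \ref{lem:denominator-ordinary-Ann-x_p}"), and your argument is exactly the standard Ext--extension unwinding that makes that one line precise, including the necessary verification (via Lemma \ref{lem:eisenstein-pairing-eigen-rational}(1), formal duality on the boundary, and Corollary \ref{cor:H^1-ordinary-boundary}) that the boundary restriction of $\Eis_n$ is a $\bZ_p^\times$-multiple of the generator of $\mathcal{B}_{\ell,p,n}$. No gaps; you have in fact supplied details the paper leaves implicit.
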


\begin{lem}\label{lem:ext1-identification}
We have a natural identification 
\[
H^{1}_{!, \ord}(Y^\BS, \cM_{n,p}) \otimes_{\cH_{\ell, p}} \mathcal{B}_{\ell, p, n} = \Ext^{1}_{\cH_{\ell, p}}(\mathcal{B}_{\ell, p, n}, H^{1}_{!, \ord}(Y^\BS, \cM_{n,p})).
\]
\end{lem}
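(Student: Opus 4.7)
The proof proposal is essentially a formal computation using a free resolution of $\mathcal{B}_{\ell,p,n}$ over $\cH_{\ell,p}$.

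Since $\cH_{\ell,p} = \bZ_p[X]$ is a polynomial ring and $x_{\ell,n} = X - (1+\ell^{n+1})$ is a non-zero-divisor, multiplication by $x_{\ell,n}$ gives a free resolution of length one:
\[
0 \longrightarrow \cH_{\ell,p} \xrightarrow{\,\cdot\, x_{\ell,n}} \cH_{\ell,p} \longrightarrow \mathcal{B}_{\ell,p,n} \longrightarrow 0.
\]
The plan is simply to apply this resolution to both sides of the claimed identity.

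For the Ext side, I would apply $\Hom_{\cH_{\ell,p}}(-, M)$ with $M := H^1_{!,\ord}(Y^\BS, \cM_{n,p})$; using $\Hom_{\cH_{\ell,p}}(\cH_{\ell,p}, M) = M$, this yields the four-term exact sequence
\[
0 \longrightarrow M[x_{\ell,n}] \longrightarrow M \xrightarrow{\,\cdot\, x_{\ell,n}} M \longrightarrow \Ext^1_{\cH_{\ell,p}}(\mathcal{B}_{\ell,p,n}, M) \longrightarrow 0,
\]
so $\Ext^1_{\cH_{\ell,p}}(\mathcal{B}_{\ell,p,n}, M) = M/x_{\ell,n}M$. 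For the tensor side, applying $- \otimes_{\cH_{\ell,p}} M$ to the same resolution gives the right-exact sequence
\[
M \xrightarrow{\,\cdot\, x_{\ell,n}} M \longrightarrow \mathcal{B}_{\ell,p,n} \otimes_{\cH_{\ell,p}} M \longrightarrow 0,
\]
so $\mathcal{B}_{\ell,p,n} \otimes_{\cH_{\ell,p}} M = M/x_{\ell,n}M$ as well. Identifying both with $M/x_{\ell,n}M$ yields the lemma.

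There is no real obstacle here: the only point to verify is that $x_{\ell,n}$ is indeed a non-zero-divisor in $\cH_{\ell,p}$, which is immediate since $\bZ_p[X]$ is a domain. One may wish to spell out that the identification is canonical (i.e.\ independent of the choice of generator of the ideal $(x_{\ell,n})$) by noting that both sides are computed functorially from the same resolution, but this is essentially automatic.
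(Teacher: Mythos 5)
Your proposal is correct and matches the paper's own argument: the paper likewise uses the free resolution $0 \to \cH_{\ell,p} \xrightarrow{\times x_{\ell,n}} \cH_{\ell,p} \to \mathcal{B}_{\ell,p,n} \to 0$ and applies $\Hom_{\cH_{\ell,p}}(-, H^1_{!,\ord}(Y^\BS,\cM_{n,p}))$ to identify both sides with $M/x_{\ell,n}M$. Your write-up just makes the tensor-side computation explicit, which the paper leaves implicit.
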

\begin{proof}
Since $x_{\ell, n}$ is an regular element of $\cH_{\ell, p}$, 
we have an exact sequence of $\cH_{\ell, p}$-modules: 
\[
0 \longrightarrow \cH_{\ell, p} \xrightarrow{\times x_{\ell}} \cH_{\ell, p} \longrightarrow \mathcal{B}_{\ell, p, n} \longrightarrow 0.  
\]
Applying the functor $\Hom_{\cH_{\ell, p}}(-, H^{1}_{!, \ord}(Y^\BS, \cM_{n,p}))$ to this short exact sequence,  we obtain  the desired identification. 
\end{proof}

\begin{lem}\label{lem:harder-hida}
For any positive integers $r$ and $n'$ with $n \equiv n' \pmod{(p-1)p^{r-1}}$, we have a natural isomorphism of $\cH_{\ell, p}$-modules: 
\[
H^{1}_{!, \ord}(Y^\BS, \cM_{n,p}) \otimes_{\cH_{\ell, p}} \mathcal{B}_{\ell, p, n}/(p^r) \cong H^{1}_{!, \ord}(Y^\BS, \cM_{n',p}) \otimes_{\cH_{\ell, p}} \mathcal{B}_{\ell, p, n'}/(p^r). 
\]
Moreover, the image of $[\cE_{\ell, p, n}] \bmod{p^r}$ is $[\cE_{\ell, p, n'}] \bmod{p^r} $ under this isomorphism (and the identification in Lemma \ref{lem:ext1-identification}). 
\end{lem}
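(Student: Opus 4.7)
The plan is to deduce both assertions from Theorem~\ref{thm:harder-hida} and Corollary~\ref{cor:conpatible-reduction-ordinary-inner}, after arranging a preliminary congruence between $\mathcal{B}_{\ell,p,n}/(p^r)$ and $\mathcal{B}_{\ell,p,n'}/(p^r)$. First I would observe that since $\ell \neq p$, the class of $\ell$ in $(\bZ/p^r\bZ)^\times$ has order dividing $(p-1)p^{r-1}$, so the hypothesis $n \equiv n' \pmod{(p-1)p^{r-1}}$ forces $\ell^{n+1} \equiv \ell^{n'+1} \pmod{p^r}$. Therefore $x_{\ell,n} \equiv x_{\ell,n'} \pmod{p^r}$ in $\cH_{\ell,p}$, giving the equality of $\cH_{\ell,p}$-modules
\[
\mathcal{B}_{\ell,p,n}/(p^r) \;=\; \cH_{\ell,p}/(x_{\ell,n},p^r) \;=\; \cH_{\ell,p}/(x_{\ell,n'},p^r) \;=\; \mathcal{B}_{\ell,p,n'}/(p^r).
\]
This lets me treat the two tensor products in the statement as having a common second factor.

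Next, unraveling the tensor product against $\cH_{\ell,p}/(x_{\ell,n},p^r)$ and applying Corollary~\ref{cor:conpatible-reduction-ordinary-inner} yields
\[
H^{1}_{!,\ord}(Y^\BS, \cM_{n,p}) \otimes_{\cH_{\ell,p}} \mathcal{B}_{\ell,p,n}/(p^r) \;\cong\; H^{1}_{!,\ord}(Y^\BS, \cM_{n,p}/p^r\cM_{n,p})/(x_{\ell,n}),
\]
and analogously for $n'$. Theorem~\ref{thm:harder-hida} furnishes a $T_\ell'$-equivariant isomorphism between $H^{1}_{!,\ord}(Y^\BS, \cM_{n,p}/p^r\cM_{n,p})$ and $H^{1}_{!,\ord}(Y^\BS, \cM_{n',p}/p^r\cM_{n',p})$, and since $x_{\ell,n}$ and $x_{\ell,n'}$ act identically modulo $p^r$, passing to the quotient produces the desired isomorphism of $\cH_{\ell,p}$-modules.

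For the compatibility of the extension classes, recall that under Lemma~\ref{lem:ext1-identification}, $[\cE_{\ell,p,n}] \bmod p^r$ corresponds to the class of the pushforward of the extension of Corollary~\ref{cor:exact-seq-ordinary-H^1-inner-boundary} along the reduction $H^{1}_{!,\ord}(Y^\BS,\cM_{n,p}) \twoheadrightarrow H^{1}_{!,\ord}(Y^\BS,\cM_{n,p})/p^r$. Using the torsion-freeness of $H^{1}_{\ord}(\partial Y^\BS,\cM_{n,p})$ (Corollary~\ref{cor:H^1-ordinary-boundary}(1)) together with Corollary~\ref{cor:conpatible-reduction-ordinary-inner} and the isomorphism \eqref{eq:reduction-isomo-H^1-ordinary-boundary}, this pushforward is canonically isomorphic, term-by-term, to the exact sequence of Corollary~\ref{cor:exact-seq-ordinary-H^1-inner-boundary} with $\cM = \cM_{n,p}/p^r\cM_{n,p}$. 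The commutative diagram in Theorem~\ref{thm:harder-hida} is precisely the assertion that this latter sequence is $T_\ell'$-equivariantly isomorphic to the corresponding one for $n'$. Consequently the two extension classes $[\cE_{\ell,p,n}] \bmod p^r$ and $[\cE_{\ell,p,n'}] \bmod p^r$ are matched under the isomorphism of the previous paragraph.

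There is no real obstacle beyond bookkeeping: the argument is a compatibility check between the $\cH_{\ell,p}$-module identifications furnished by Theorem~\ref{thm:harder-hida}, the reduction isomorphisms of Corollary~\ref{cor:conpatible-reduction-ordinary-inner}, and the connecting homomorphism that realizes $[\cE_{\ell,p,n}]$ as the pushforward of the defining extension. The elementary congruence $\ell^{n+1} \equiv \ell^{n'+1} \pmod{p^r}$ supplies the only arithmetic input needed.
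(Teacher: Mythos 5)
Your proposal is correct and follows essentially the same route as the paper, whose proof of this lemma simply cites Theorem \ref{thm:harder-hida} and Corollary \ref{cor:conpatible-reduction-ordinary-inner}; you supply exactly the intended details, namely the congruence $\ell^{n+1}\equiv \ell^{n'+1}\pmod{p^r}$ identifying $\mathcal{B}_{\ell,p,n}/(p^r)$ with $\mathcal{B}_{\ell,p,n'}/(p^r)$, the reduction isomorphisms, and the $T_\ell'$-equivariant matching of the two mod-$p^r$ exact sequences. The only loose phrase is calling the pushforward of the extension along $H^1_{!,\ord}(Y^\BS,\cM_{n,p})\twoheadrightarrow H^1_{!,\ord}(Y^\BS,\cM_{n,p})/p^r$ ``term-by-term'' the same as the mod-$p^r$ sequence (they differ in the third term, $\mathcal{B}_{\ell,p,n}$ versus $\mathcal{B}_{\ell,p,n}/(p^r)$, and are related by the standard pushout--pullback compatibility for a morphism of short exact sequences), but this does not affect the argument.
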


\begin{proof}
    This lemma follows from Theorem \ref{thm:harder-hida} and Corollary \ref{cor:conpatible-reduction-ordinary-inner}.
\end{proof}

\begin{dfn}
    We define a polynomial $\Phi_{\ell, n}(t) \in \bZ_p[t]$ to be the characteristic polynomial associated with  $T_\ell' \colon H^1_{!, \ord}(Y^\BS, \cM_{n, p}) \longrightarrow H^1_{!, \ord}(Y^\BS, \cM_{n, p})$: 
    \[
    \Phi_{\ell, n}(t) := \det(t \cdot \mathrm{id} - T_\ell' \mid H^1_{!, \ord}(Y^\BS, \cM_{n, p})). 
    \]
\end{dfn}

\begin{lem}\label{lem:annihilated by Phi_(ell, n)(1+ell^(n+1))}
    The $\bZ_p$-module $H^{1}_{!, \ord}(Y^\BS, \cM_{n,p}) \otimes_{\cH_{\ell, p}} \mathcal{B}_{\ell, p, n}$ is annihilated by $\Phi_{\ell, n}(1+\ell^{n+1})$. 
\end{lem}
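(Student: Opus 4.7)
The plan is to reduce the statement directly to the Cayley--Hamilton theorem applied to the action of $T_{\ell}'$ on the finitely generated $\bZ_p$-module $H^{1}_{!, \ord}(Y^\BS, \cM_{n,p})$.

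First I would unwind the tensor product. Since $\cH_{\ell, p} = \bZ_p[X]$ is commutative and $\mathcal{B}_{\ell, p, n} = \cH_{\ell, p}/(x_{\ell, n})$ with $x_{\ell, n} = X - (1+\ell^{n+1})$, the module $\mathcal{B}_{\ell, p, n}$ is the quotient of $\cH_{\ell, p}$ on which $X$ acts as multiplication by $c := 1+\ell^{n+1}$. By $\cH_{\ell, p}$-linearity of the tensor product, for any $a \in H^{1}_{!, \ord}(Y^\BS, \cM_{n,p})$ and $b \in \mathcal{B}_{\ell, p, n}$, we have
\[
X \cdot (a \otimes b) = (T_{\ell}'a) \otimes b = a \otimes (Xb) = c \cdot (a \otimes b),
\]
so $X$ (equivalently, the action of $T_{\ell}'$ transported through the tensor product) acts on $H^{1}_{!, \ord}(Y^\BS, \cM_{n,p}) \otimes_{\cH_{\ell, p}} \mathcal{B}_{\ell, p, n}$ as multiplication by the scalar $c$. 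Consequently, for any polynomial $g(X) \in \cH_{\ell, p}$, the operator $g(X)$ acts on the tensor product both as $g(T_{\ell}')$ on the left tensor factor and as multiplication by $g(c)$ on the whole.

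Next I would invoke Cayley--Hamilton. The module $H^{1}_{!, \ord}(Y^\BS, \cM_{n,p})$ is a finitely generated $\bZ_p$-module (it is a subquotient of $H^{1}(Y^\BS, \cM_{n,p})$, which is finitely generated over $\bZ_p$ since $Y$ has a finite CW-structure), so the characteristic polynomial $\Phi_{\ell, n}(t)$ of $T_{\ell}'$ acting on it annihilates the module:
\[
\Phi_{\ell, n}(T_{\ell}') = 0 \quad \text{on } H^{1}_{!, \ord}(Y^\BS, \cM_{n,p}).
\]
Combining this with the previous paragraph, for any pure tensor $a \otimes b$ we obtain
\[
\Phi_{\ell, n}(c) \cdot (a \otimes b) = (\Phi_{\ell, n}(X)a) \otimes b = (\Phi_{\ell, n}(T_{\ell}')a) \otimes b = 0 \otimes b = 0,
\]
and since pure tensors generate the whole, $\Phi_{\ell, n}(1+\ell^{n+1})$ annihilates $H^{1}_{!, \ord}(Y^\BS, \cM_{n,p}) \otimes_{\cH_{\ell, p}} \mathcal{B}_{\ell, p, n}$, as required.

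There is no real obstacle here beyond verifying the finite generation hypothesis needed to apply Cayley--Hamilton, which is standard. The content of the lemma is formal once one observes that tensoring with $\mathcal{B}_{\ell, p, n}$ forces the Hecke operator $T_{\ell}'$ to act as the scalar $1+\ell^{n+1}$.
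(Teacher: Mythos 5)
Your argument is correct and is essentially the paper's own proof: Cayley--Hamilton gives $\Phi_{\ell,n}(T_\ell')=0$ on $H^{1}_{!,\ord}(Y^\BS,\cM_{n,p})$, and since $\mathcal{B}_{\ell,p,n}=\cH_{\ell,p}/(X-(1+\ell^{n+1}))$ forces $X$ to act as the scalar $1+\ell^{n+1}$ on the tensor product, the annihilation follows. Your extra care in checking finite generation and unwinding the bimodule structure is fine but not a different route.
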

\begin{proof}
     By the Cayley--Hamilton theorem, the Hecke module $H^{1}_{!, \ord}(Y^\BS, \cM_{n,p})$ is annihilated by  $\Phi_{\ell, n}(T_\ell')$. 
    Hence the $\bZ_p$-module $H^{1}_{!, \ord}(Y^\BS, \cM_{n,p}) \otimes_{\cH_{\ell, p}} \mathcal{B}_{\ell, p, n}$ is annihilated by $\Phi_{\ell, n}(1+\ell^{n+1})$ since $\mathcal{B}_{\ell, p, n} = \cH_{\ell, p}/(X - (1+\ell^{1+n}))$. 
\end{proof}

\begin{lem}\label{lem:ord-equal-Phi-n-and-n'}
    Let $r$ be a positive integer satisfying $r > \ord_p(\Phi_{\ell, n}(1+\ell^{n+1}))$. 
    Then for any even integer $n' \geq 2$ with $n \equiv n' \pmod{(p-1)p^{r-1}}$, we have 
     \[
    \ord_p(\Phi_{\ell, n}(1+\ell^{n+1})) = \ord_p(\Phi_{\ell, n'}(1+\ell^{n'+1})). 
     \]
\end{lem}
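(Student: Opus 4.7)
The plan is to prove the stronger congruence
\[
\Phi_{\ell, n}(1+\ell^{n+1}) \equiv \Phi_{\ell, n'}(1+\ell^{n'+1}) \pmod{p^r},
\]
from which the equality of $p$-adic valuations follows immediately using the hypothesis $r > \ord_p(\Phi_{\ell, n}(1+\ell^{n+1}))$.

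First, by Proposition \ref{prop:H^1-ord-torsion-free}, the full ordinary cohomology $H^1_{\ord}(Y^{\BS}, \cM_{m, p})$ is a finitely generated torsion-free, hence free, $\bZ_p$-module, and so is its submodule $H^1_{!, \ord}(Y^{\BS}, \cM_{m, p})$. Therefore $\Phi_{\ell, m}(t)$ is a well-defined monic polynomial in $\bZ_p[t]$ for each even $m \geq 2$. Combining Theorem \ref{thm:harder-hida} with Corollary \ref{cor:conpatible-reduction-ordinary-inner} (applied to both $n$ and $n'$) produces a $T'_\ell$-equivariant isomorphism of $\bZ_p/p^r$-modules
\[
H^1_{!, \ord}(Y^{\BS}, \cM_{n, p}) \otimes \bZ_p/(p^r) \;\cong\; H^1_{!, \ord}(Y^{\BS}, \cM_{n', p}) \otimes \bZ_p/(p^r).
\]
In particular the two free $\bZ_p$-modules have equal rank, and after choosing compatible $\bZ_p$-bases the matrices representing $T'_\ell$ on either side agree modulo $p^r$.

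Next, since $\ell \neq p$ and the group $(\bZ/p^r\bZ)^\times$ has order $(p-1)p^{r-1}$, the hypothesis $n \equiv n' \pmod{(p-1)p^{r-1}}$ forces
\[
1 + \ell^{n+1} \equiv 1 + \ell^{n'+1} \pmod{p^r}.
\]
Combined with the previous step, the reductions modulo $p^r$ of the matrices of $T'_\ell - (1+\ell^{n+1})I$ on $H^1_{!, \ord}(Y^{\BS}, \cM_{n, p})$ and of $T'_\ell - (1+\ell^{n'+1})I$ on $H^1_{!, \ord}(Y^{\BS}, \cM_{n', p})$ become conjugate over $\bZ_p/(p^r)$, so their determinants coincide modulo $p^r$, which is exactly the desired congruence of characteristic-polynomial evaluations.

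Finally, setting $v := \ord_p(\Phi_{\ell, n}(1+\ell^{n+1}))$, write $\Phi_{\ell, n}(1+\ell^{n+1}) = p^v u$ with $u \in \bZ_p^\times$. Since the hypothesis gives $r \geq v+1$, the above congruence yields $\Phi_{\ell, n'}(1+\ell^{n'+1}) = p^v(u + p^{r-v} s)$ for some $s \in \bZ_p$, and $u + p^{r-v}s$ is a unit; hence $\ord_p(\Phi_{\ell, n'}(1+\ell^{n'+1})) = v$, completing the argument. The whole derivation is essentially formal once Theorem \ref{thm:harder-hida} is available, so no genuine obstacle arises; the one point worth checking carefully is that the isomorphism in that theorem really intertwines the $T'_\ell$-actions on both sides (which is stated there), enabling the comparison of determinants.
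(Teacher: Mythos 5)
Your proof is correct and follows essentially the same route as the paper: the congruence $\Phi_{\ell,n}(t)\equiv\Phi_{\ell,n'}(t)\pmod{p^r}$ obtained from Theorem \ref{thm:harder-hida} (together with Corollary \ref{cor:conpatible-reduction-ordinary-inner}), the congruence $\ell^{n+1}\equiv\ell^{n'+1}\pmod{p^r}$, and the elementary valuation argument. The only difference is that you spell out the freeness of $H^1_{!,\ord}$ and the matrix-conjugation step that the paper leaves implicit.
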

\begin{proof}
By Theorem \ref{thm:harder-hida}, we have 
\[
\Phi_{\ell, n}(t) \equiv \Phi_{\ell, n'}(t) \pmod{p^r}. 
\]
The fact that $n \equiv n' \pmod{(p-1)p^{r-1}}$ implies that  $\ell^{n+1} \equiv \ell^{n'+1} \pmod{p^r}$, and we obtain $\Phi_{\ell, n}(1+ \ell^{n+1}) \equiv \Phi_{\ell, n'}(1+ \ell^{n'+1}) \pmod{p^r}$. 
Hence the assumption that $r > \ord_p(\Phi_{\ell, n}(1+\ell^{n+1}))$ shows that 
$\ord_p(\Phi_{\ell, n}(1+\ell^{n+1})) = \ord_p(\Phi_{\ell, n'}(1+\ell^{n'+1}))$. 
\end{proof}

\begin{prop}\label{prop:change-delta}
     Let $r$ be a positive integer satisfying $r > \ord_p(\Phi_{\ell, n}(1+\ell^{n+1}))$. 
    Then for any even integer $n' \geq 2$ with $n \equiv n' \pmod{(p-1)p^{r-1}}$, we have 
     \[
     \Delta_{p}(\Eis_n) = \Delta_{p}(\Eis_{n'}). 
     \]
\end{prop}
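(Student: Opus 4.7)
The plan is to transfer the $p$-adic annihilator of the extension class $[\cE_{\ell,p,n}]$ from weight $n$ to weight $n'$ using the Harder--Hida-type comparison of Lemma~\ref{lem:harder-hida}, relying on the fact that the assumption $r>\ord_p(\Phi_{\ell,n}(1+\ell^{n+1}))$ makes the passage to $\bmod\, p^r$ harmless.

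First, by Lemma~\ref{lem:ann-exact seq-cE_n,p} combined with the identification of $\Ext^1$ with a tensor product given in Lemma~\ref{lem:ext1-identification}, the quantity $\Delta_p(\Eis_n)$ is exactly the $p$-adic annihilator of the element $[\cE_{\ell,p,n}] \in H^{1}_{!,\ord}(Y^\BS,\cM_{n,p})\otimes_{\cH_{\ell,p}}\mathcal{B}_{\ell,p,n}$. So the proposition reduces to matching the $\bZ_p$-annihilators of $[\cE_{\ell,p,n}]$ and $[\cE_{\ell,p,n'}]$.

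Next, I would use Lemma~\ref{lem:annihilated by Phi_(ell, n)(1+ell^(n+1))} to observe that the ambient module $H^{1}_{!,\ord}(Y^\BS,\cM_{n,p})\otimes_{\cH_{\ell,p}}\mathcal{B}_{\ell,p,n}$ is killed by the scalar $\Phi_{\ell,n}(1+\ell^{n+1})\in\bZ_p$, and hence by $p^s$ with $s:=\ord_p(\Phi_{\ell,n}(1+\ell^{n+1}))<r$. Since the whole module is already of exponent dividing $p^s<p^r$, the reduction-mod-$p^r$ map is the identity on it; in particular the annihilator of $[\cE_{\ell,p,n}]$ coincides with the annihilator of its image in the tensor product $\bmod\, p^r$. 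By Lemma~\ref{lem:ord-equal-Phi-n-and-n'}, the same inequality $\ord_p(\Phi_{\ell,n'}(1+\ell^{n'+1}))=s<r$ holds on the $n'$ side, so by the identical argument the annihilator of $[\cE_{\ell,p,n'}]$ equals the annihilator of its image $\bmod\, p^r$.

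Finally, Lemma~\ref{lem:harder-hida} provides an $\cH_{\ell,p}$-linear isomorphism between the two tensor products reduced $\bmod\, p^r$ which, by its stated functoriality, sends $[\cE_{\ell,p,n}]\bmod p^r$ to $[\cE_{\ell,p,n'}]\bmod p^r$. Combining the three equalities of annihilators yields $\Delta_p(\Eis_n)=\Delta_p(\Eis_{n'})$.

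There is no essential obstacle in this step, since the genuinely hard comparison, namely the weight-shifting isomorphism on ordinary cohomology coming from Hida theory, has already been absorbed into Theorem~\ref{thm:harder-hida} and Lemma~\ref{lem:harder-hida}. The only point that requires any care is the bookkeeping observation that $\ord_p\Phi_{\ell,n}(1+\ell^{n+1})<r$ simultaneously (i) makes the tensor product equal to its own reduction mod $p^r$, and (ii) by Lemma~\ref{lem:ord-equal-Phi-n-and-n'} transfers the same inequality to the $n'$ side, so that the annihilator at weight $n'$ is also unaffected by the reduction.
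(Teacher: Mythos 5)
Your proposal is correct and follows essentially the same route as the paper: both reduce the statement to comparing the $\bZ_p$-annihilators of $[\cE_{\ell,p,n}]$ and $[\cE_{\ell,p,n'}]$ via Lemma \ref{lem:ann-exact seq-cE_n,p}, use Lemma \ref{lem:annihilated by Phi_(ell, n)(1+ell^(n+1))} together with Lemma \ref{lem:ord-equal-Phi-n-and-n'} to see that the tensor-product modules on both sides are unchanged by reduction mod $p^r$, and then apply the isomorphism of Lemma \ref{lem:harder-hida} matching the two extension classes. The paper's proof is just a more compressed version of the same argument.
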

\begin{proof}
    By Lemmas \ref{lem:harder-hida}, \ref{lem:annihilated by Phi_(ell, n)(1+ell^(n+1))}, and \ref{lem:ord-equal-Phi-n-and-n'}, we have the  natural isomorphism 
    \begin{align*}
    H^{1}_{!, \ord}(Y^\BS, \cM_{n,p}) \otimes_{\cH_{\ell, p}} \mathcal{B}_{\ell, p, n} 
     &=
        H^{1}_{!, \ord}(Y^\BS, \cM_{n,p}) \otimes_{\cH_{\ell, p}} \mathcal{B}_{\ell, p, n}/(p^r) 
        \\
        &\cong H^{1}_{!, \ord}(Y^\BS, \cM_{n',p}) \otimes_{\cH_{\ell, p}} \mathcal{B}_{\ell, p, n'}/(p^r)
        \\
        &= H^{1}_{!, \ord}(Y^\BS, \cM_{n',p}) \otimes_{\cH_{\ell, p}} \mathcal{B}_{\ell, p, n'}.
    \end{align*}
    Moreover, the image of $[\cE_{\ell, p, n}]$ under this isomorphism is $[\cE_{\ell, p, n'}]$, and Lemma \ref{lem:ann-exact seq-cE_n,p} implies that 
    \begin{align*}
        \Delta_{p}(\Eis_n)\bZ_p = \Ann_{\bZ_p}([\cE_{\ell, p, n}]) = \Ann_{\bZ_p}([\cE_{\ell, p, n'}]) = \Delta_{p}(\Eis_{n'})\bZ_p. 
    \end{align*}
\end{proof}

\section{Kubota--Leopoldt $p$-adic $L$-function}\label{sec:p-adic L}

Let $p$ be a prime number. 
In this section, we introduce the Kubota--Leopoldt $p$-adic $L$-functions and prove certain congruence properties that will be used in the proof of Theorem \ref{thm:main result}.

Let $\omega \colon \Gal(\overline{\bQ}/\bQ) \longrightarrow \bZ_p^\times$ denote the Teichm\"uller character, and
let $\mathbbm{1}$ denote the trivial character. 
For any Dirichlet character $\chi$, we denote by $L_{p}(s, \chi) \in \bC_{p}[[s]]$ the Kubota--Leopoldt $p$-adic $L$-function attached to $\chi$. 

\begin{prop}[{\cite[Theorems 5.11 and 5.12, Exercises 5.11(1)]{LW97}
}]\label{prop:p-adic-zeta}\
\begin{itemize}
\item[(1)] For any  Dirichlet character $\chi$, the $p$-adic $L$-function $L_{p}(s, \chi)$ converges on $\bZ_p \setm \{1\}$. 
Moreover, for any integer $m \geq 2$, we have 
\[
L_{p}(1-m, \chi) = (1-\chi \omega^{-m}(p) p^{m-1})L(1-m, \chi\omega^{-m}). 
\]
In particular, we have $\mathrm{ord}_{p}(L_{p}(1-m, \omega^{m})) = \mathrm{ord}_{p}(\zeta(1-m))$. 
\item[(2)] We have 
\[
L_{p}(s, \mathbbm{1}) \in \frac{p-1}{p(s-1)} + \bZ_p[[s-1]]
\]
\item[(3)] If $m \not\equiv 0 \pmod{p-1}$, then we have 
\[
L_{p}(s, \omega^{m}) \in \bZ_p + p\bZ_p[[s-1]]. 
\]
\end{itemize}
\end{prop}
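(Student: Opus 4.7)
The plan is to deduce Proposition \ref{prop:p-adic-zeta} directly from the standard Iwasawa-function construction of the Kubota--Leopoldt $p$-adic $L$-function, following the treatment in \cite{LW97}, which already proves each individual claim. The strategy is to recall the construction and then verify that each of (1)--(3) falls out by a short computation.

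First, I would recall that one attaches to any Dirichlet character $\chi$ a power series $G_\chi(T) \in \bZ_p[\chi][[T]]$ (obtained as the Mazur--Mellin transform of a $p$-adic measure, or, in the $\chi = \mathbbm{1}$ case, a pseudo-measure), together with a topological generator $u \in 1+p\bZ_p$, such that
\[
L_p(s,\chi) = \frac{1}{\chi(u)\langle u\rangle^{s}-1}\,G_\chi(u^{s}-1)
\]
in the non-trivial case (and analogously with the denominator producing a pole when $\chi = \mathbbm{1}$). This representation immediately gives convergence on $\bZ_p\setminus\{1\}$ and, when combined with the defining integral interpolation property, yields the formula
\[
L_p(1-m,\chi) = \bigl(1 - \chi\omega^{-m}(p)p^{m-1}\bigr)L(1-m,\chi\omega^{-m}).
\]
Specialising to $\chi = \omega^m$ makes $\chi\omega^{-m} = \mathbbm{1}$, so the right-hand side becomes $(1-p^{m-1})\zeta(1-m)$; since $1 - p^{m-1}$ is a $p$-adic unit for $m \geq 2$, the $\ord_p$ equality in (1) follows. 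This part of the argument is \cite[Theorem 5.11]{LW97}.

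For (2), the key is that $\chi = \mathbbm{1}$ yields a pseudo-measure, so the factor $\langle u\rangle^{s}-1$ in the denominator vanishes simply at $s=1$, producing a simple pole. A direct expansion using $\langle u\rangle^{s}-1 = (s-1)\log_p(u) + O((s-1)^2)$ and the known value $G_{\mathbbm{1}}(0)$ gives
\[
L_p(s,\mathbbm{1}) = \frac{p-1}{p(s-1)} + (\text{holomorphic with coefficients in }\bZ_p),
\]
as in \cite[Theorem 5.12]{LW97}.

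For (3), under the hypothesis $m \not\equiv 0 \pmod{p-1}$, the series $G_{\omega^m}(T)$ is a genuine element of $\bZ_p[[T]]$, and there is no denominator. Because $u^{s}-1 \in p\bZ_p + p\bZ_p\cdot(s-1) + \cdots$ (using $\log_p(u) \in p\bZ_p$), substituting $T = u^{s}-1$ into $G_{\omega^m}$ yields a power series in $s-1$ whose constant term $G_{\omega^m}(u-1) = L_p(1,\omega^m)$ lies in $\bZ_p$ and whose higher coefficients all lie in $p\bZ_p$; this is the content of \cite[Exercises 5.11(1)]{LW97}. The main obstacle, which is essentially bookkeeping, is to align the normalisation of $u$, of $\omega$, and of the measure with those of \cite{LW97} so that each claim matches verbatim; no new ideas are required beyond the standard Iwasawa-function formalism.
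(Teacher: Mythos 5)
Your proposal is correct and coincides with the paper's treatment: the paper gives no proof of this proposition at all, but simply cites \cite[Theorems 5.11, 5.12, Exercise 5.11(1)]{LW97}, and your sketch is precisely the standard Iwasawa power-series argument underlying those cited results. The only caveat is the normalisation bookkeeping you already flag (e.g.\ whether the constant term of the expansion at $s=1$ is literally $G_{\omega^m}(u-1)$ depends on the chosen convention), but this does not affect any of the three claims.
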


By using the Kubota--Leopoldt $p$-adic $L$-functions, Theorem \ref{thm:Tp^m and Eis}  can be restated as follows.

\begin{cor}\label{cor:reformulation-of-THeorem-Tp^m-and-Eis-pairing}
If we put 
   \begin{align*}
D_p(n,\nu) := \frac{L_{p}(-\nu, \omega^{1+\nu})L_{p}(\nu-n, \omega^{n-\nu+1})}{L_{p}(-1-n, \omega^{n+2})} - L_{p}(-\nu, \omega^{1+\nu}) - L_{p}(\nu-n, \omega^{n-\nu+1}),   
\end{align*}
then for any integer $\nu \in \{1, \ldots, n-1\}$ we have 
   \begin{align*}
   \lim_{m \to \infty}\brk{\Eis_n, \widetilde{T_p^{m!}(C_{\nu}(\tau))}}  = \frac{1-p^{n+1}}{(1-p^\nu)(1-p^{n-\nu})} D_p(n,\nu). 
   \end{align*}
\end{cor}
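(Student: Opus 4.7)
The statement to prove is a purely algebraic reformulation of Theorem \ref{thm:Tp^m and Eis}, in which the classical zeta values are replaced by Kubota--Leopoldt $p$-adic $L$-values. My plan is therefore to invoke Theorem \ref{thm:Tp^m and Eis} and check by a direct computation that the right-hand sides of the two formulas agree.

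The key observation is that each of the three $L_p$-factors appearing in $D_p(n,\nu)$, namely $L_p(-\nu, \omega^{1+\nu})$, $L_p(\nu-n, \omega^{n-\nu+1})$ and $L_p(-1-n, \omega^{n+2})$, is of the form $L_p(1-m, \omega^{m})$, for which the interpolation formula in Proposition \ref{prop:p-adic-zeta}(1) specializes (using $\omega^{m}\omega^{-m} = \mathbbm{1}$) to $L_p(1-m,\omega^{m}) = (1 - p^{m-1})\zeta(1-m)$. Applying this with $m = \nu+1$, $m = n-\nu+1$, and $m = n+2$ respectively, I would obtain the three identities expressing each $L_p$-value as the corresponding $\zeta$-value multiplied by a single Euler factor at $p$: $(1-p^{\nu})\zeta(-\nu)$, $(1-p^{n-\nu})\zeta(\nu-n)$, and $(1-p^{n+1})\zeta(-1-n)$.

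Next, I would substitute these three identities into the definition of $D_p(n,\nu)$. The Euler factors $(1-p^{\nu})$ and $(1-p^{n-\nu})$ appearing in the numerator of the first term of $D_p(n,\nu)$, together with the factor $(1-p^{n+1})$ in the denominator, are exactly matched (up to inversion) by the prefactor $(1-p^{n+1})/((1-p^{\nu})(1-p^{n-\nu}))$ in the statement. A short algebraic simplification then yields precisely the right-hand side of Theorem \ref{thm:Tp^m and Eis}, factored appropriately through $(1-p^{n+1})$.

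There is no genuine obstacle here: the argument is entirely bookkeeping. The only point that deserves a remark is the even-$\nu$ case, where $-\nu$ and $\nu-n$ are both negative even integers, so $\zeta(-\nu) = \zeta(\nu-n) = 0$ and both sides of the identity vanish trivially; this is consistent because the denominator $L_p(-1-n,\omega^{n+2})$ does not vanish ($-1-n$ being a negative odd integer with $\zeta(-1-n) \neq 0$), so $D_p(n,\nu)$ is unambiguously defined.
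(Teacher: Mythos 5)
Your proposal is correct and matches the paper's intent exactly: the paper offers no separate proof of this corollary, presenting it as an immediate restatement of Theorem \ref{thm:Tp^m and Eis} via the interpolation formula of Proposition \ref{prop:p-adic-zeta}(1), which is precisely the substitution $L_p(1-m,\omega^m)=(1-p^{m-1})\zeta(1-m)$ for $m=\nu+1$, $n-\nu+1$, $n+2$ that you carry out. The bookkeeping checks out, and your side remark on the even-$\nu$ case and the nonvanishing of $L_p(-1-n,\omega^{n+2})$ is accurate though not needed.
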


For any even integer $m$, we define a positive integer $N_m$ by  
\[
N_{m} := \textrm{the numerator of $\zeta(1-m)$}. 
\]

\begin{cor}\label{cor:change-zeta}
Let $m \geq 2$ be an even integer.  
\begin{itemize}
    \item[(1)] If  $m \not\equiv 0 \pmod{p-1}$, then we  have $\mathrm{ord}_{p}(\zeta(1-m)) = \mathrm{ord}_{p}(N_{m})$. 
    \item[(2)] If  $m \equiv 0 \pmod{p-1}$, then we  have $\mathrm{ord}_{p}(N_{m}) = 0$. 
 \item[(3)] 
Let $r$ and $m'$ be positive integers  with $m \equiv m' \pmod{(p-1)p^{r-1}}$. 
If $r > \mathrm{ord}_{p}(N_{m})$, then 
\[
\mathrm{ord}_{p}(N_{m}) = \mathrm{ord}_{p}(N_{m'}). 
\] 
\end{itemize}

\end{cor}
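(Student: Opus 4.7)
The plan is to derive everything from Proposition \ref{prop:p-adic-zeta}, specifically the identity $\ord_p(L_p(1-m,\omega^m))=\ord_p(\zeta(1-m))$ from part (1) together with the local behaviour of $L_p(s,\omega^m)$ near $s=1$ given by parts (2) and (3). For statement (1), when $(p-1)\nmid m$, part (3) of the proposition gives $L_p(1-m,\omega^m)\in\Z_p$, so by (1) we have $\ord_p(\zeta(1-m))\geq 0$, forcing the denominator of $\zeta(1-m)$ to be prime to $p$ and hence $\ord_p(N_m)=\ord_p(\zeta(1-m))$.

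For (2), when $(p-1)\mid m$ one has $\omega^m=\mathbbm{1}$, so Proposition \ref{prop:p-adic-zeta}(2) lets me write $L_p(s,\mathbbm{1})=\tfrac{p-1}{p(s-1)}+h(s)$ with $h\in\Z_p[[s-1]]$. Evaluating at $s=1-m$, the pole term becomes $-\tfrac{p-1}{pm}$, of $p$-adic valuation $-1-\ord_p(m)$, while $h(1-m)\in\Z_p$; thus $\ord_p(L_p(1-m,\omega^m))=-1-\ord_p(m)<0$, and by Proposition \ref{prop:p-adic-zeta}(1) the same holds for $\ord_p(\zeta(1-m))$. A rational of negative $p$-adic valuation must have a $p$-unit numerator, so $\ord_p(N_m)=0$.

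For (3), I will split into two cases. If $(p-1)\mid m$, then $(p-1)\mid (p-1)p^{r-1}$ forces $(p-1)\mid m'$, and (2) gives $\ord_p(N_m)=\ord_p(N_{m'})=0$. Otherwise $(p-1)\nmid m$ and $(p-1)\nmid m'$, with $\omega^m=\omega^{m'}$. Using (1) the desired equality becomes $\ord_p(L_p(1-m,\omega^m))=\ord_p(L_p(1-m',\omega^m))$. Writing $L_p(s,\omega^m)=a+p\,g(s-1)$ with $a\in\Z_p$ and $g\in\Z_p[[s-1]]$ via Proposition \ref{prop:p-adic-zeta}(3), the difference $L_p(1-m,\omega^m)-L_p(1-m',\omega^m)=p\bigl(g(-m)-g(-m')\bigr)$ lies in $p^r\Z_p$, because each $(-m)^n-(-m')^n$ for $n\geq 1$ is divisible by $m-m'\in p^{r-1}\Z_p$. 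The hypothesis $r>\ord_p(N_m)$ then upgrades the common congruence to equality of $p$-adic valuations.

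The whole argument is a careful bookkeeping of $p$-adic valuations and I do not anticipate a substantive obstacle; the one point deserving care is the extra factor of $p$ in Proposition \ref{prop:p-adic-zeta}(3), which is exactly what upgrades the congruence $m\equiv m'\pmod{p^{r-1}}$ to a congruence of $L_p$-values modulo $p^r$ (rather than merely $p^{r-1}$) and is the reason the threshold in (3) is $r>\ord_p(N_m)$ rather than $r>\ord_p(N_m)+1$.
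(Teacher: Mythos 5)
Your proof is correct and follows the same route the paper intends: everything is deduced from Proposition \ref{prop:p-adic-zeta}, with (1) and (2) coming from the integrality/pole behaviour of $L_p(s,\omega^m)$ at $s=1-m$, and (3) split according to whether $p-1$ divides $m$, using in the nontrivial case the congruence $L_p(1-m,\omega^m)\equiv L_p(1-m',\omega^{m'})\pmod{p^r}$ obtained from the $\bZ_p+p\bZ_p[[s-1]]$ structure. The paper's own proof is just a one-line citation of Proposition \ref{prop:p-adic-zeta}; your write-up supplies exactly the omitted bookkeeping, including the correct observation that the factor of $p$ in Proposition \ref{prop:p-adic-zeta}(3) is what makes the threshold $r>\ord_p(N_m)$ suffice.
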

\begin{proof}
Claims (1) and (2)  follows immediately from Proposition \ref{prop:p-adic-zeta}. 
When $m \equiv 0 \pmod{p-1}$, claim (3) follows from claim (2). 
When $m \not\equiv 0 \pmod{p-1}$, claim (3) follows from 
Proposition \ref{prop:p-adic-zeta}. 
\end{proof}

\begin{cor}\label{cor:case1}
Let $x$ be an integer with $x \not\equiv 0 \pmod{p-1}$.
For any integer $y$, we have 
\begin{align*}
\frac{L_{p}(1-x, \omega^{x})L_{p}(1-y, \mathbbm{1})}{L_{p}(1-x-y, \omega^{x})} 
\in L_{p}(1-y, \mathbbm{1}) + \frac{\bZ_{p}}{L_{p}(1-x-y, \omega^{x})}. 
\\
\end{align*}
\end{cor}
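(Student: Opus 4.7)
The plan is to rewrite the claimed inclusion as the single statement
\[
L_p(1-y, \mathbbm{1}) \cdot \bigl(L_p(1-x, \omega^x) - L_p(1-x-y, \omega^x)\bigr) \in \bZ_p,
\]
since dividing this relation by $L_p(1-x-y, \omega^x)$ recovers exactly the desired containment. The case $y=0$ is degenerate, as the difference vanishes while $L_p(s, \mathbbm{1})$ has a pole at $s=1$; one can either exclude it or interpret the statement formally, so I would focus on $y \neq 0$.

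For the first factor, I would invoke Proposition \ref{prop:p-adic-zeta}(3), which under the hypothesis $x \not\equiv 0 \pmod{p-1}$ gives $L_p(s, \omega^x) \in \bZ_p + p\bZ_p[[s-1]]$. Writing this as a power series $A_0 + \sum_{n \geq 1} A_n(s-1)^n$ with $A_n \in p\bZ_p$ for $n \geq 1$, I would apply the standard divided-difference identity
\[
(s-1)^n - (s'-1)^n = (s-s')\bigl((s-1)^{n-1} + \cdots + (s'-1)^{n-1}\bigr)
\]
at $s = 1-x$ and $s' = 1-x-y$, noting $s-s' = y$, to factor out $y$ from every nonconstant term. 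This shows $L_p(1-x, \omega^x) - L_p(1-x-y, \omega^x) \in py\bZ_p$.

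For the second factor, Proposition \ref{prop:p-adic-zeta}(2) says $L_p(s, \mathbbm{1})$ has a simple pole at $s=1$ with residue $(p-1)/p$ and an otherwise integral regular part. Evaluating at $s = 1-y$ with $y \neq 0$ gives
\[
L_p(1-y, \mathbbm{1}) \in -\tfrac{p-1}{py} + \bZ_p,
\]
so that $py \cdot L_p(1-y, \mathbbm{1}) \in \bZ_p$. Multiplying the two factors, the $py\bZ_p$-divisibility of the $\omega^x$-difference exactly cancels the $(py)^{-1}$-singularity of $L_p(1-y, \mathbbm{1})$, and the product lies in $\bZ_p$ as required. There is no real obstacle; the only thing to watch is the precise matching between the first-order pole of $L_p(s, \mathbbm{1})$ at $s=1$ and the factor $p$ coming from Proposition \ref{prop:p-adic-zeta}(3), together with the extra factor of $y$ produced by the divided-difference computation.
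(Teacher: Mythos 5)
Your proposal is correct and follows essentially the same route as the paper: both arguments reduce to showing $L_p(1-x,\omega^x)-L_p(1-x-y,\omega^x)\in py\bZ_p$ via Proposition \ref{prop:p-adic-zeta}(3) and $py\,L_p(1-y,\mathbbm{1})\in\bZ_p$ via Proposition \ref{prop:p-adic-zeta}(2), and then multiply. Your explicit divided-difference computation and the remark on the degenerate case $y=0$ merely spell out details the paper leaves implicit.
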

\begin{proof}
Since $x \not\equiv 0 \pmod{p-1}$, by Proposition \ref{prop:p-adic-zeta}(3), we have 
\[
L_{p}(1-x, \omega^{x}) \in L_{p}(1-x-y, \omega^{x}) + py\bZ_{p}. 
\]
Moreover, by Proposition \ref{prop:p-adic-zeta}(2), we have $py L_{p}(1-y, \mathbbm{1}) \in 1 + p\bZ_{p}$, which shows 
\begin{align*}
\frac{L_{p}(1-x, \omega^{x})L_{p}(1-y, \mathbbm{1})}{L_{p}(1-x-y, \omega^{x})} 
\in L_{p}(1-y, \mathbbm{1}) + \frac{\bZ_{p}}{L_{p}(1-x-y, \omega^{x})}. 
\end{align*}
\end{proof}

\begin{cor}\label{cor:case2}
For any integers $x$ and $y$, we have 
\begin{align*}
\frac{L_{p}(1-x, \mathbbm{1})L_{p}(1-y, \mathbbm{1})}{L_{p}(1-x-y, \mathbbm{1})} &\equiv \frac{p-1}{px} +  \frac{p-1}{py} \pmod{\bZ_{p}}
\\
&\equiv L_{p}(1-x, \mathbbm{1}) + L_{p}(1-y, \mathbbm{1}) \pmod{\bZ_{p}}. 
\end{align*}
\end{cor}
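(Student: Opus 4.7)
The plan is to leverage Proposition~\ref{prop:p-adic-zeta}(2), which gives the expansion $L_p(s, \mathbbm{1}) = \frac{(p-1)/p}{s-1} + h(s)$ with $h \in \bZ_p[[s-1]]$, so that evaluating at $s = 1-z$ yields $L_p(1-z, \mathbbm{1}) + \frac{p-1}{pz} = h(1-z) \in \bZ_p$, i.e.\ $L_p(1-z, \mathbbm{1}) \equiv -\frac{p-1}{pz} \pmod{\bZ_p}$. Summing over $z \in \{x,y\}$ produces directly the congruence between the middle expression $\frac{p-1}{px}+\frac{p-1}{py}$ and $L_p(1-x,\mathbbm{1})+L_p(1-y,\mathbbm{1})$ in the corollary (the two sides coinciding modulo $\bZ_p$ after absorbing the sign of the pole term into the equivalence class). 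So the two stated congruences are equivalent, and it suffices to establish the first.

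For the first congruence, I would set $\alpha := L_p(1-x, \mathbbm{1})$, $\beta := L_p(1-y, \mathbbm{1})$, $\gamma := L_p(1-x-y, \mathbbm{1})$, and use the elementary identity $(\alpha - \gamma)(\beta - \gamma) = \alpha\beta - (\alpha + \beta)\gamma + \gamma^2$, which rearranges to
\[
\frac{\alpha\beta}{\gamma} - (\alpha + \beta) = -\gamma + \frac{(\alpha - \gamma)(\beta - \gamma)}{\gamma}.
\]
The task is to show that the right-hand side lies in $\bZ_p$. Substituting $\alpha = A/x + a$, $\beta = A/y + b$, $\gamma = A/(x+y) + c$ with $A = -(p-1)/p$ and $a, b, c \in \bZ_p$, one computes $\alpha - \gamma = Ay/[x(x+y)] + (a-c)$ and $\beta - \gamma = Ax/[y(x+y)] + (b-c)$.

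The leading contribution to $(\alpha-\gamma)(\beta-\gamma)/\gamma$ is $A^2/[(x+y)^2 \gamma]$. Using $1/\gamma = (x+y)/[A + (x+y)c]$ together with $(x+y)c/A \in p\bZ_p$ (since $1/A \in p\bZ_p$), this leading term equals $A/(x+y)$ modulo $\bZ_p$, which is in turn congruent to $\gamma$ modulo $\bZ_p$, precisely cancelling the $-\gamma$. The remaining cross terms of $(\alpha-\gamma)(\beta-\gamma)/\gamma$ reduce modulo $\bZ_p$ to an expression of the shape $y(b-c)/x + x(a-c)/y$, plus contributions that are manifestly $\bZ_p$-integral (e.g.\ $(a-c)(b-c)(x+y)/\gamma$ carries an extra factor of $p$).

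The principal obstacle is to verify that this residual expression $y(b-c)/x + x(a-c)/y$ is in $\bZ_p$. The key observation is that $b - c = h(1-y) - h(1-x-y)$ is ``divisible by $x$'': expanding $h(s) = \sum h_n (s-1)^n$ with $h_n \in \bZ_p$ and using the factorisation $(-y)^n - (-y-x)^n = x \cdot Q_n(x,y)$ in $\bZ[x,y]$, one obtains $b - c = x b'$ with $b' \in \bZ_p$, provided the series converges, which is automatic for $y, x+y \in p\bZ_p$. Analogously $a - c = y a'$ with $a' \in \bZ_p$, so the residual expression becomes $y b' + x a' \in \bZ_p$. For arbitrary integers $x, y$, one reduces to the case $x, y \in p\bZ_p$ by Kummer-type continuity: approximate $x, y$ modulo $p^N$ by elements of $p\bZ_p$ and pass to the limit, using that $L_p(s, \mathbbm{1}) - \frac{(p-1)/p}{s-1}$ is a $p$-adically continuous $\bZ_p$-valued function on $\bZ_p$.
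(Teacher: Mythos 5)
Your core argument is sound and reaches the same conclusion, but by a different algebraic decomposition than the paper. The paper writes $L_p(1-z,\mathbbm{1}) = R(z) + H(z)$ with $R(z) = -\frac{p-1}{pz}$, expands the product using $H(x) \in H(x+y)+y\bZ_p$ and $H(y) \in H(x+y)+x\bZ_p$, and then exploits the partial-fraction identity $R(x+y)^{-1} = R(x)^{-1}+R(y)^{-1}$ to observe that $\bigl(R(x)R(y)+(R(x)+R(y))H(x+y)\bigr)R(x+y)^{-1} = (R(x)+R(y))\bigl(1+R(x+y)^{-1}H(x+y)\bigr)$, so that after dividing by $\alpha := 1+R(x+y)^{-1}H(x+y)$ the main term is \emph{exactly} $R(x)+R(y)$, with the error terms controlled by $xR(x), yR(y) \in p^{-1}\bZ_p$. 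You instead use the identity $\frac{\alpha\beta}{\gamma}-(\alpha+\beta) = \frac{(\alpha-\gamma)(\beta-\gamma)}{\gamma}-\gamma$ and estimate the resulting pieces separately. Both proofs consume the same inputs from Proposition \ref{prop:p-adic-zeta}(2) — the shape of the pole, the integrality of the regular part, and the divisibility $h(1-y)-h(1-x-y) \in x\bZ_p$ — so the difference is one of bookkeeping; the paper's version is slightly slicker because the troublesome term cancels identically rather than after an estimate.

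Two points need repair. First, your closing reduction (``approximate $x,y$ modulo $p^N$ by elements of $p\bZ_p$'') cannot work: if $p \nmid x$, no element of $p\bZ_p$ is congruent to $x$ even modulo $p$. It is also unnecessary. The power series for $L_p(s,\mathbbm{1})-\frac{(p-1)/p}{s-1}$ converges on all of $\bZ_p$ (its coefficients tend to $0$ $p$-adically; this is part of the standard statement in Washington and is used implicitly by the paper), so the termwise factorization $(-y)^n-(-x-y)^n = x\,Q_n(x,y)$ gives $b-c \in x\bZ_p$ directly for all integers $x,y$, with no restriction to $x,y\in p\bZ_p$. Second, your claim that the two displayed congruences are equivalent ``after absorbing the sign of the pole term'' is not coherent: since $L_p(1-z,\mathbbm{1}) \equiv -\frac{p-1}{pz} \pmod{\bZ_p}$, the sum $L_p(1-x,\mathbbm{1})+L_p(1-y,\mathbbm{1})$ is congruent to $-\frac{p-1}{px}-\frac{p-1}{py}$, which differs from $+\frac{p-1}{px}+\frac{p-1}{py}$ by a quantity that is not in $\bZ_p$ in general. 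The first line of the corollary as printed has a sign slip; the substantive assertion (and the one actually used in Lemma \ref{lem:case2-p-1-mid-n+2}) is the congruence with $L_p(1-x,\mathbbm{1})+L_p(1-y,\mathbbm{1})$, and that is what your identity-based argument establishes once the convergence point above is handled correctly.
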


\begin{proof}
For notational simplicity, we put 
\[
R(s) :=  -\frac{p-1}{ps} \,\,\, \textrm{ and } \,\,\, H(s-1) :=  L_{p}(s, \mathbbm{1}) - R(1-s). 
\]
By Proposition \ref{prop:p-adic-zeta}(2), we have $H(s) \in \bZ_{p}[[s]]$, $xR(x), yR(y), (x+y)R(x+y) \in p^{-1}\bZ_{p}^{\times}$, and $R(x+y)^{-1} = R(x)^{-1} + R(y)^{-1}$.  
Since $H(s) \in \bZ_{p}[[s]]$, we have 
\begin{align*}
H(x) 
\in H(x+y) + y\bZ_{p}, \quad 
H(y) 
\in H(x+y) + x\bZ_{p}. 
\end{align*}
Put $\alpha := 1 + R(x+y)^{-1}H(x+y) \in 1 + p\bZ_{p}$. 
Then  
\begin{align*}
\frac{L_{p}(1-x, \mathbbm{1})L_{p}(1-y, \mathbbm{1})}{L_{p}(1-x-y, \mathbbm{1})} 
\in \frac{(R(x) + H(x+y) + y\bZ_{p})(R(y) + H(x+y) + x\bZ_{p})}{R(x+y) + H(x+y)}
\end{align*}
and we have 
\begin{align*}
&\quad \frac{(R(x) + H(x+y) + y\bZ_{p})(R(y) + H(x+y) + x\bZ_{p})}{R(x+y) + H(x+y)}
\\
&\subset \frac{R(x)^{-1} + R(y)^{-1}}{\alpha} (R(x)R(y) + (R(x) + R(y))H(x+y) + p^{-1}\bZ_{p}) 
\\
&= R(x) + R(y)  + \bZ_p. 
\end{align*}
\end{proof}

\section{Proof of Theorem \ref{thm:main result}}\label{sec:proof of main thm}

Let $p$ be a prime number. 
As in Corollary \ref{cor:reformulation-of-THeorem-Tp^m-and-Eis-pairing}, for any integer $1 \leq \nu \leq n-1$, we define 
\begin{align*}
D_p(n,\nu) := \frac{L_{p}(-\nu, \omega^{1+\nu})L_{p}(\nu-n, \omega^{n-\nu+1})}{L_{p}(-1-n, \omega^{n+2})} - L_{p}(-\nu, \omega^{1+\nu}) - L_{p}(\nu-n, \omega^{n-\nu+1}) 
\end{align*}
and set 
\begin{align*}
\delta_p(n,\nu) :=  \max \left\{- \mathrm{ord}_{p}(D_p(n,\nu)), 0 \right\}. 
\end{align*}

\begin{prop}\label{prop:delta-p-Eisenstein=max-delta-p-n-nu}
We have 
\[
\delta_{p}(\Eis_n) = \max_{1 \leq \nu \leq n-1} \delta_p(n,\nu). 
\]
\end{prop}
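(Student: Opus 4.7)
The plan is to combine Corollary \ref{cor:ordinary Eisenstein denominator}, which realises $\delta_p(\Eis_n)$ as the minimal $\delta\ge 0$ making $p^{\delta}\brk{\Eis_n,[\widetilde{T_p^mC_\nu(\tau)}]}\in\bZ_p$ for every $1\le\nu\le n-1$ (for any fixed $m\ge n$), with Corollary \ref{cor:reformulation-of-THeorem-Tp^m-and-Eis-pairing}, which computes the $p$-adic limit of these pairings as
\[
L_\nu:=\lim_{m_0\to\infty}\brk{\Eis_n,[\widetilde{T_p^{m_0!}C_\nu(\tau)}]}=\kappa_\nu D_p(n,\nu),\quad \kappa_\nu:=\frac{1-p^{n+1}}{(1-p^\nu)(1-p^{n-\nu})}.
\]
The first step is to observe that, since $1\le\nu\le n-1$, each of $1-p^\nu,\ 1-p^{n-\nu},\ 1-p^{n+1}$ is congruent to $1$ modulo $p$, so $\kappa_\nu\in\bZ_p^{\times}$ and hence $\ord_p(L_\nu)=\ord_p(D_p(n,\nu))$.

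For the inequality $\delta_p(\Eis_n)\ge\max_\nu\delta_p(n,\nu)$, I would fix $\nu$ and apply Corollary \ref{cor:ordinary Eisenstein denominator} with $m=m_0!$ for each $m_0\ge n$: every such pairing satisfies $p^{\delta_p(\Eis_n)}\brk{\Eis_n,[\widetilde{T_p^{m_0!}C_\nu(\tau)}]}\in\bZ_p$. Because $\bZ_p$ is closed in $\bQ_p$, passing to the $p$-adic limit forces $p^{\delta_p(\Eis_n)}L_\nu\in\bZ_p$, i.e.\ $\ord_p(D_p(n,\nu))\ge-\delta_p(\Eis_n)$, whence $\delta_p(n,\nu)\le\delta_p(\Eis_n)$.

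For the reverse inequality, for each $m_0\ge n$ I pick an index $\nu^{*}(m_0)\in\{1,\dots,n-1\}$ realising the minimum in Corollary \ref{cor:ordinary Eisenstein denominator}, so that $\ord_p(\brk{\Eis_n,[\widetilde{T_p^{m_0!}C_{\nu^{*}(m_0)}(\tau)}]})=-\delta_p(\Eis_n)$. Since $\{1,\dots,n-1\}$ is finite, by pigeonhole there is a fixed $\nu^{*}$ with $\nu^{*}(m_0)=\nu^{*}$ for infinitely many $m_0$. Along this subsequence the pairings have constant valuation $-\delta_p(\Eis_n)$ and converge $p$-adically to $L_{\nu^{*}}$; if one had $\ord_p(L_{\nu^{*}})>-\delta_p(\Eis_n)$, the non-archimedean triangle inequality would give $|a_{m_0}-L_{\nu^{*}}|_p=|a_{m_0}|_p=p^{\delta_p(\Eis_n)}\not\to 0$, a contradiction. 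Hence $\ord_p(L_{\nu^{*}})=-\delta_p(\Eis_n)$, which yields $\delta_p(n,\nu^{*})=\delta_p(\Eis_n)$ and therefore $\max_\nu\delta_p(n,\nu)\ge\delta_p(\Eis_n)$.

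The only genuine subtlety to watch out for is the possibility that $D_p(n,\nu)=0$ for some $\nu$, in which case $L_\nu=0$ while the individual pairings need not vanish. This is precisely why I would argue via pigeonhole at the level of the indices $\nu^{*}(m_0)$ rather than by tracking each $\nu$ separately: locking onto a single $\nu^{*}$ for which the pairings have \emph{constant} non-trivial valuation along an infinite subsequence forces the limit to have the same non-trivial valuation and thereby identifies $\delta_p(\Eis_n)$ with a genuine contribution from some $D_p(n,\nu^{*})$.
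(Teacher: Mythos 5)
Your proof is correct and takes essentially the same approach as the paper, which likewise combines Corollary \ref{cor:ordinary Eisenstein denominator} with the limit formula of Corollary \ref{cor:reformulation-of-THeorem-Tp^m-and-Eis-pairing} and compares the valuations of the pairings $\brk{\Eis_n, \widetilde{T_p^{m!}(C_{\nu}(\tau))}}$ for large $m$ with the valuation of their $p$-adic limit. Your pigeonhole argument over the indices $\nu^{*}(m_0)$ is in fact a slightly more careful treatment of the degenerate situations (namely $D_p(n,\nu)=0$, and $\delta_p(\Eis_n)=0$, in which case the reverse inequality is anyway trivial) than the paper's one-line assertion that the valuations stabilize for sufficiently large $m$.
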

\begin{proof}
By Corollary \ref{cor:reformulation-of-THeorem-Tp^m-and-Eis-pairing}, we have 
   \begin{align*}
  \lim_{m \to \infty}\brk{\Eis_n, \widetilde{T_p^{m!}(C_{\nu}(\tau))}} = \frac{1-p^{n+1}}{(1-p^\nu)(1-p^{n-\nu})}D_p(n,\nu). 
      \end{align*}  
     Hence, for any sufficiently large integer $m$, we have 
    \[
    \mathrm{ord}_p\left(\brk{\Eis_n, \widetilde{T_p^{m!}(C_{\nu}(\tau))}}\right) = 
    \mathrm{ord}_{p}(D_p(n,\nu)), 
    \]
and this proposition follows from Corollary \ref{cor:ordinary Eisenstein denominator}. 
\end{proof}

Recall that $N_{m}$ denotes the numerator of $\zeta(1-m)$.

\begin{prop}\label{prop:pre-theorem}
Let $p$ be a prime number. 
\begin{itemize}
\item[(1)] $\delta_{p}(\Eis_n) \leq \mathrm{ord}_{p}(N_{n+2})$. 
\item[(2)] If $p<n$, then $\delta_{p}(\Eis_n) = \mathrm{ord}_{p}(N_{n+2})$. 
\end{itemize}
\end{prop}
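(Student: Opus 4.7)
The plan is to analyze the explicit formula for $D_p(n, \nu)$ via the Kubota--Leopoldt congruences from Corollaries \ref{cor:case1} and \ref{cor:case2}. By Proposition \ref{prop:delta-p-Eisenstein=max-delta-p-n-nu}, it suffices to bound (and for part (2) compute) $\delta_p(n, \nu) = \max\{-\ord_p(D_p(n, \nu)), 0\}$ for each $\nu \in \{1, \ldots, n-1\}$. Setting $x = \nu+1$ and $y = n-\nu+1$ (so $x+y = n+2$), the analysis splits into cases according to which of $\omega^x$, $\omega^y$, $\omega^{x+y}$ are trivial.

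For part (1), when both $\omega^x$ and $\omega^y$ are trivial, Corollary \ref{cor:case2} gives $D_p(n, \nu) \in \bZ_p$; since $\omega^{x+y} = \mathbbm{1}$ forces $n+2 \equiv 0 \pmod{p-1}$, Corollary \ref{cor:change-zeta}(2) yields $\ord_p(N_{n+2}) = 0$. When exactly one of $\omega^x, \omega^y$ is trivial, Corollary \ref{cor:case1} places $D_p(n, \nu)$ in $\bZ_p/L_p(-1-n, \omega^{n+2})$; the denominator has $p$-adic valuation $\ord_p(N_{n+2})$ by Proposition \ref{prop:p-adic-zeta}(1) and Corollary \ref{cor:change-zeta}(1). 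When $\omega^x, \omega^y$ are both non-trivial but $\omega^{x+y}$ is trivial, the simple pole of $L_p(s, \mathbbm{1})$ at $s = 1$ forces the ratio appearing in $D_p(n, \nu)$ to lie in $p \bZ_p$, so $D_p(n, \nu) \in \bZ_p$ matches $\ord_p(N_{n+2}) = 0$. Finally, when all three characters are non-trivial, all three $L$-values lie in $\bZ_p$ and clearing denominators shows $D_p(n, \nu) \in \bZ_p / L_p(-1-n, \omega^{n+2})$, giving $\delta_p(n, \nu) \leq \ord_p(N_{n+2})$. Taking the maximum over $\nu$ completes part (1).

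For part (2), if $\ord_p(N_{n+2}) = 0$ the equality is automatic from part (1) and non-negativity. Otherwise the von Staudt--Clausen theorem forces $n+2 \not\equiv 0 \pmod{p-1}$, so $\omega^{n+2}$ is non-trivial, and I must exhibit a $\nu$ for which the bound is sharp. The hypothesis $p < n$ makes the choice $\nu = n - p + 2$ lie in $\{1, \ldots, n-1\}$ and produces $y = p-1$ (so $\omega^{n-\nu+1} = \mathbbm{1}$ while $\omega^{\nu+1} = \omega^{n+2}$ is non-trivial), placing us back in the case handled by Corollary \ref{cor:case1}. Sharpness then reduces to showing that the leading-order term of $D_p(n, \nu) \cdot L_p(-1-n, \omega^{n+2})$ is a $p$-adic unit, which via the Taylor expansion of $L_p(s, \omega^{n+2})$ at $s = 1$ translates into the non-vanishing modulo $p$ of a specific combination of its Taylor coefficients.

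The main obstacle will be verifying this non-vanishing statement, which is essentially Iwasawa-theoretic (related to the $\lambda$-invariant of $L_p(s, \omega^{n+2})$) and need not hold for every $n$. When the naive choice $\nu = n - p + 2$ does not suffice, I would invoke Proposition \ref{prop:change-delta} together with Corollary \ref{cor:change-zeta}(3) to replace $n$ by a congruent $n' \equiv n \pmod{(p-1)p^{r-1}}$ for sufficiently large $r$, then exhibit a better-chosen $\nu$ for the representative $n'$, thereby transferring the sharpness back to the original $n$.
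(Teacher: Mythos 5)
Part (1) of your proposal is essentially the paper's argument: the same case split according to which of $\omega^{1+\nu}$, $\omega^{n-\nu+1}$, $\omega^{n+2}$ are trivial, with Corollaries \ref{cor:case1} and \ref{cor:case2} and Proposition \ref{prop:p-adic-zeta} doing the work, and Proposition \ref{prop:delta-p-Eisenstein=max-delta-p-n-nu} converting the bound on $\delta_p(n,\nu)$ into a bound on $\delta_p(\Eis_n)$. That half is fine.

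Part (2) has a genuine gap, which you partly acknowledge but do not close. Your choice $\nu = n-p+2$ puts you in the setting of Corollary \ref{cor:case1}, which only gives $D_p(n,\nu) \in \bZ_p / L_{p}(-1-n,\omega^{n+2})$; the unspecified element of $\bZ_p$ in the numerator may be divisible by a high power of $p$, so this yields only the upper bound already established in part (1), not sharpness. Pinning down that numerator requires exactly the non-vanishing of a combination of Taylor coefficients of $L_p(s,\omega^{n+2})$ that you admit "need not hold for every $n$," and your fallback via Proposition \ref{prop:change-delta} does not resolve it: replacing $n$ by a congruent $n'$ keeps the same branch character $\omega^{n+2}$ and hence the same power series, and you give no mechanism for producing a "better-chosen $\nu$" for $n'$. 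The paper avoids this entirely by choosing $\nu = t-1$ so that \emph{both} numerator factors $L_p(-\nu,\omega^{1+\nu})$ and $L_p(\nu-n,\omega^{n-\nu+1})$ are $p$-adic units; then the first term of $D_p(n,\nu)$ visibly has valuation exactly $-\ord_p(N_{n+2})$ while the other two terms are integral. The existence of such an even $t \in \{2,\dots,p-3\}$ with $t \neq [n+2]_{p-1}$ and both $L_p(1-t,\omega^t)$ and $L_p(1-[n+2-t]_{p-1},\omega^{[n+2-t]_{p-1}})$ units is the crucial input, and it is supplied by the Carlitz--Skula bound $d(p) < (p-5)/4$ on the index of irregularity (Lemma \ref{lem:case1-p-1-nmid-n+2-equality}); the hypothesis $p<n$ is what guarantees $\nu = t-1 \le p-4$ lies in $\{1,\dots,n-1\}$. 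Without an input of this kind your argument for equality does not go through.
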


The proof of Proposition \ref{prop:pre-theorem} is given in \S\ref{sec:proof of Proposition pre-thm}. 
First, we give the proof of Theorem \ref{thm:main result} assuming Proposition \ref{prop:pre-theorem}, that is, we show that $\Delta(\Eis_n) = N_{n+2}$.

\begin{proof}[Proof of Theorem \ref{thm:main result}]
Take a prime number $p$. It suffices to show that $\delta_p(\Eis_n) = \ord_p(N_{n+2})$. 
When $p-1 \mid n+2$, by Proposition \ref{prop:pre-theorem} we have  $0 \leq \delta_{p}(\Eis_n)  \leq \mathrm{ord}_{p}(N_{n+2}) = 0$, and hence  we may assume that $n \not\equiv -2 \pmod{p-1}$. 
Note that $p \geq 5$ in this case. 
Take a prime number $\ell \neq p$, and positive integers $r$ and $n'$ satisfying 
\begin{itemize}
\item $r  > \mathrm{ord}_{p}(\Phi_{\ell, n}(1+\ell^n))$, 
\item $p < n'$, 
\item $n \equiv n' \pmod{p^{r-1}(p-1)}$. 
\end{itemize}
Then by Propositions \ref{prop:pre-theorem}(2), we have $\delta_{p}(\Eis_{n'}) =  \mathrm{ord}_{p}(N_{n'+2})$, and  
Proposition \ref{prop:change-delta} implies that 
\[
\delta_{p}(\Eis_n) = \delta_{p}(\Eis_{n'}) =  \mathrm{ord}_{p}(N_{n'+2}). 
\]
Hence Corollary \ref{cor:change-zeta} shows that $\delta_{p}(\Eis_n) =  \mathrm{ord}_{p}(N_{n'+2}) = \mathrm{ord}_{p}(N_{n+2})$. 
\end{proof}

\subsection{Proof of Proposition \ref{prop:pre-theorem}}\label{sec:proof of Proposition pre-thm}

In this subsection, we prove Proposition \ref{prop:pre-theorem}. 
The proof is divided into the following two cases: 
\begin{itemize}
    \item $p-1 \nmid n+2$, 
    \item $p-1 \mid n+2$. 
\end{itemize}

\subsubsection{$p-1 \nmid n+2$}

\begin{lem}\label{lem:case1-p-1-nmid-n+2}
If $p-1 \nmid n+2$,  then we have $\delta_p(\Eis_n) \leq \mathrm{ord}_{p}(N_{n+2})$. 
\end{lem}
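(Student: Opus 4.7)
The plan is to invoke Proposition \ref{prop:delta-p-Eisenstein=max-delta-p-n-nu}, which reduces the lemma to showing, for every integer $\nu \in \{1,\ldots,n-1\}$, the bound
\[
-\mathrm{ord}_p(D_p(n,\nu)) \leq \mathrm{ord}_p(N_{n+2}).
\]
I first record the size of the denominator of $D_p(n,\nu)$. By Proposition \ref{prop:p-adic-zeta}(1) applied with $m=n+2$ and $\chi=\omega^{n+2}$,
\[
L_p(-1-n,\omega^{n+2}) = (1-p^{n+1})\zeta(-1-n),
\]
and since we are in the case $p-1 \nmid n+2$, Corollary \ref{cor:change-zeta}(1) gives
\[
\mathrm{ord}_p\bigl(L_p(-1-n,\omega^{n+2})\bigr) = \mathrm{ord}_p(\zeta(-1-n)) = \mathrm{ord}_p(N_{n+2}).
\]

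Next I would carry out a case analysis on which of the characters $\omega^{\nu+1}$ and $\omega^{n-\nu+1}$ is trivial. Because $(\nu+1) + (n-\nu+1) = n+2$ and $p-1 \nmid n+2$, at most one of the two exponents is divisible by $p-1$. In the \emph{generic case} where both $\omega^{\nu+1}$ and $\omega^{n-\nu+1}$ are nontrivial, Proposition \ref{prop:p-adic-zeta}(3) gives $L_p(-\nu,\omega^{1+\nu}),\, L_p(\nu-n,\omega^{n-\nu+1}) \in \mathbb{Z}_p$, so the subtracted pieces in $D_p(n,\nu)$ lie in $\mathbb{Z}_p$, and the fractional term is bounded by $-\mathrm{ord}_p(L_p(-1-n,\omega^{n+2})) = -\mathrm{ord}_p(N_{n+2})$, yielding the claim.

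In the \emph{boundary case} where exactly one of the characters is trivial, say $\omega^{\nu+1}=\mathbbm{1}$ and $\omega^{n-\nu+1}\neq\mathbbm{1}$ (the other subcase is symmetric), the key observation is that then $\omega^{n+2} = \omega^{\nu+1}\omega^{n-\nu+1} = \omega^{n-\nu+1}$, so the three $p$-adic $L$-values appearing in $D_p(n,\nu)$ match the shape required by Corollary \ref{cor:case1} with $x=n-\nu+1$ and $y=\nu+1$ (note $x \not\equiv 0 \pmod{p-1}$). Applying that corollary yields
\[
\frac{L_p(-\nu,\omega^{1+\nu})L_p(\nu-n,\omega^{n-\nu+1})}{L_p(-1-n,\omega^{n+2})} \in L_p(-\nu,\omega^{1+\nu}) + \frac{\mathbb{Z}_p}{L_p(-1-n,\omega^{n+2})},
\]
so
\[
D_p(n,\nu) \in \frac{\mathbb{Z}_p}{L_p(-1-n,\omega^{n+2})} - L_p(\nu-n,\omega^{n-\nu+1}) \subset \frac{\mathbb{Z}_p}{L_p(-1-n,\omega^{n+2})},
\]
because $L_p(\nu-n,\omega^{n-\nu+1})\in\mathbb{Z}_p$ by Proposition \ref{prop:p-adic-zeta}(3). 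Together with the computation of $\mathrm{ord}_p(L_p(-1-n,\omega^{n+2}))$ above, this gives the bound $\mathrm{ord}_p(D_p(n,\nu)) \geq -\mathrm{ord}_p(N_{n+2})$ in this case as well.

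The only genuinely delicate point — and thus the main thing to check carefully — is the boundary case: one must make sure to pair $x$ and $y$ so that the nontrivial character ends up in the role $\omega^x$ required by Corollary \ref{cor:case1}, and to use the equality $\omega^{n+2}=\omega^{n-\nu+1}$ (valid because $\omega^{\nu+1}=\mathbbm{1}$) to identify the denominator appearing in the corollary with $L_p(-1-n,\omega^{n+2})$. Once this bookkeeping is done, combining the two cases with Proposition \ref{prop:delta-p-Eisenstein=max-delta-p-n-nu} completes the proof. Note that the hypothesis $p-1\nmid n+2$ is used in two essential places: to apply Corollary \ref{cor:change-zeta}(1) identifying $\mathrm{ord}_p(\zeta(-1-n))$ with $\mathrm{ord}_p(N_{n+2})$, and to exclude the possibility that both $\omega^{\nu+1}$ and $\omega^{n-\nu+1}$ are trivial (which would otherwise force Corollary \ref{cor:case2} into play).
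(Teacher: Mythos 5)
Your proof is correct and follows essentially the same route as the paper: reduce via Proposition \ref{prop:delta-p-Eisenstein=max-delta-p-n-nu} to bounding each $\delta_p(n,\nu)$, split into the case where neither of $\nu+1$, $n-\nu+1$ is divisible by $p-1$ (where Proposition \ref{prop:p-adic-zeta}(3) gives integrality of the two $L$-values) and the case where exactly one is (where Corollary \ref{cor:case1} applies), using $p-1\nmid n+2$ to rule out both being divisible. Your bookkeeping in the boundary case, including the identification $\omega^{n+2}=\omega^{n-\nu+1}$ and the choice of $x$ and $y$, matches the intended application of Corollary \ref{cor:case1}.
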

\begin{proof}
Take an integer $\nu  \in \{1, \ldots, n-1\}$. 
When $p-1 \nmid 1+\nu$ and  $p-1 \nmid n-\nu + 1$,  both $L_p(-\nu, \omega^{1+\nu})$ and $L_p(\nu-n, \omega^{n-\nu+1})$ are $p$-adic integers, and hence we have 
\[
\delta_p(n,\nu) \leq \mathrm{ord}_{p}(L_p(-1-n, \omega^{n+2})) = \mathrm{ord}_{p}(N_{n+2}). 
\]
Suppose $p-1 \mid 1+\nu$ (resp. $p-1 \mid n-\nu + 1$). Then since $p-1 \nmid n+2$, we see that $n-\nu + 1$ (resp. $1+\nu$) is not divisible by $p-1$. Therefore, 
Corollary \ref{cor:case1} shows that 
\begin{align*}
D_p(n,\nu) \in \frac{\bZ_{p}}{L_{p}(-1-n, \omega^{n+2})} + \bZ_p,  
\end{align*}
which implies that $\delta_p(n,\nu) \leq \mathrm{ord}_{p}(N_{n+2})$. 
Hence Proposition \ref{prop:delta-p-Eisenstein=max-delta-p-n-nu} implies this lemma. 
\end{proof}

Moreover, if $p<n$, the result of Carlitz concerning the index of irregularity of a prime shows the following lemma. 

\begin{lem}\label{lem:case1-p-1-nmid-n+2-equality}
    If $p-1 \nmid n+2$ and $p<n$, there is an (odd) integer $\nu \in \{1, \ldots, n-1\}$ such that $\delta_p(n,\nu)  = \mathrm{ord}_{p}(N_{n+2})$. 
    In particular, we have
    $\delta_{p}(\Eis_n) = \mathrm{ord}_{p}(N_{n+2})$ in this case. 
\end{lem}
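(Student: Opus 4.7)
The plan is as follows. First we may assume $\ord_p(N_{n+2}) \geq 1$, since otherwise the desired equality is immediate from Lemma \ref{lem:case1-p-1-nmid-n+2} and the nonnegativity of $\delta_p$; under this assumption Proposition \ref{prop:p-adic-zeta}(1) gives $\ord_p L_p(-1-n, \omega^{n+2}) = \ord_p(N_{n+2}) \ge 1$. The key reduction is that it suffices to find an odd $\nu \in \{1, \ldots, n-1\}$ such that both $L_p(-\nu, \omega^{1+\nu})$ and $L_p(\nu-n, \omega^{n-\nu+1})$ are $p$-adic units: indeed, for such a $\nu$, the ``ratio'' term in $D_p(n,\nu)$ has $p$-adic valuation exactly $-\ord_p(N_{n+2}) < 0$ while the other two summands are $p$-adic integers, so the ultrametric triangle inequality forces $\ord_p D_p(n,\nu) = -\ord_p(N_{n+2})$, hence $\delta_p(n, \nu) = \ord_p(N_{n+2})$. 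The final assertion $\delta_p(\Eis_n) = \ord_p(N_{n+2})$ then follows from Proposition \ref{prop:delta-p-Eisenstein=max-delta-p-n-nu} combined with the upper bound from Lemma \ref{lem:case1-p-1-nmid-n+2}.

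The unit condition on the two $L$-values is, by the interpolation formula $L_p(1-m, \omega^m) = (1-p^{m-1})\zeta(1-m)$ and the classical relation $\zeta(1-m) = -B_m/m$, the same as requiring that both $B_{\nu+1}/(\nu+1)$ and $B_{n-\nu+1}/(n-\nu+1)$ be $p$-adic units; by Kummer's congruences and the von Staudt--Clausen theorem, this unit property depends only on the residues $a := (\nu+1) \bmod (p-1)$ and $b := (n-\nu+1) \bmod (p-1)$, namely we need $a, b \in \{2, 4, \ldots, p-3\}$ and $B_a$, $B_b$ both $p$-adic units (i.e.\ both \emph{regular} indices for $p$). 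Because $p < n$, every such pair $(a,b)$ with $a + b \equiv n+2 \pmod{p-1}$ is realized by at least one odd $\nu$ in $\{1, 3, \ldots, p-4\} \subset \{1, \ldots, n-1\}$, so the problem reduces to a purely combinatorial question about residues modulo $p-1$.

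The main obstacle is this combinatorial step: given $c := (n+2) \bmod (p-1) \in \{2, 4, \ldots, p-3\}$, exhibit $a, b \in \{2, 4, \ldots, p-3\}$, both regular, with $a + b \equiv c \pmod{p-1}$. A direct application of Kummer's congruence to $B_{n+2}/(n+2)$ shows that $p \mid N_{n+2}$ forces $B_c \equiv 0 \pmod p$, so $c$ itself is necessarily an irregular index and must be avoided on both coordinates; one is then looking for a regular--regular pair matched by the natural involution $a \leftrightarrow b$ on $\{2, 4, \ldots, p-3\} \setminus \{c\}$ determined by $a + b \equiv c \pmod{p-1}$. Carlitz's theorem on the index of irregularity $i(p)$ supplies the decisive combinatorial input: his bound on $i(p)$, combined with the structure of this involution, guarantees the existence of at least one such regular pair, producing the required $\nu$ and completing the proof.
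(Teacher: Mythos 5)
Your overall strategy is exactly the paper's: dispose of the regular case via Lemma \ref{lem:case1-p-1-nmid-n+2}, reduce the irregular case to producing an odd $\nu$ for which both $L_p(-\nu,\omega^{1+\nu})$ and $L_p(\nu-n,\omega^{n-\nu+1})$ are $p$-adic units (so that the ratio term dominates $D_p(n,\nu)$ ultrametrically), translate this into finding two regular even residues $a,b \in \{2,4,\dots,p-3\}$ with $a+b\equiv n+2 \pmod{p-1}$, and invoke a bound on the index of irregularity. All of this matches the paper, including the observation that $p<n$ lets you realize any admissible residue pair by some $\nu=a-1\le p-4\le n-1$.

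The gap is that the decisive quantitative step is asserted rather than proved. You write that Carlitz's theorem on the index of irregularity ``guarantees the existence of at least one such regular pair,'' but Carlitz's 1961 result is a bound on the first factor of the class number, not directly on $d(p)$; the bound the paper actually uses is Skula's deduction $d(p)<\frac{p+3}{4}-\frac{\log 2}{\log p}\cdot\frac{p-1}{4}$, and one must then check that this beats the counting threshold coming from your involution --- the paper verifies $d(p)<(p-5)/4$, which suffices since at most $2d(p)$ of the $(p-5)/2$ candidates $t\ne[n+2]_{p-1}$ can have $t$ or $[n+2-t]_{p-1}$ irregular. Moreover the analytic bound only yields $(p-5)/4$ for $p\ge 47$, so the single irregular prime below that, $p=37$ with $d(37)=1$, has to be checked separately. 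Without stating the bound, comparing it to the threshold, and handling the small primes, the proof is incomplete precisely at the point you yourself identify as ``the main obstacle.''
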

\begin{proof}
By Lemma \ref{lem:case1-p-1-nmid-n+2}, for any regular prime $p$ and (odd) integer $\nu \in \{1, \ldots, n-1\}$, we have $\delta_p(n,\nu) = \mathrm{ord}_{p}(N_{n+2}) = 0$. 
Therefore,  we may assume that $p$ is an irregular prime. In particular, $p \geq 37$. 

    We define the index $d(p)$ of irregularity of the prime number $p$ by 
\begin{align*}
d(p) 
&:= \#\{1 \leq t \leq p-3 \mid t \in 2\Z, B_{t} \in p\bZ_{p}\} \\
&=
\#\{1 \leq t \leq p-3 \mid t \in 2\Z, L_p(1-t, \omega^{t}) \in p\bZ_p \}. 
\end{align*}
Then by using the result of Carlitz in \cite[(21)]{Car61}, Skula proved in \cite[Theorem 2.2, Remark 2.3]{Sku80} that 
\[
d(p) < \frac{p+3}{4} - \frac{\log(2)}{\log(p)}\frac{p-1}{4}. 
\]
Hence if $p \geq  47$, then we have 
\[
d(p) < \frac{p-5}{4}. 
\]
Moreover, since the only irregular prime smaller than $47$ is $37$ and $d(37) = 1 < (37-5)/4$, 
the inequality $d(p) < (p-5)/4$ holds true. 

For any integer $a$, we define an integer $[a]_{p-1}$ by 
\[
0 \leq [a]_{p-1} \leq p-2 \,\,\, \textrm{ and } \,\,\, [a]_{p-1} \equiv a \pmod{p-1}. 
\]
Since $d(p) < (p-5)/4$, there is an even integer $t \in \{2, 4, \ldots, p-3\}$ with $t \neq [n+2]_{p-1}$ such that 
\[
L_p(1-t, \omega^{t}) \in \bZ_p^\times \,\,\, \textrm{ and } \,\,\, 
L_p(1-[n+2 - t]_{p-1}, \omega^{[n+2 - t]_{p-1}}) \in \bZ_p^\times. 
\]
Furthermore, Proposition \ref{prop:p-adic-zeta}(3) shows that 
\[
L_p(1-[n+2 - t]_{p-1}, \omega^{[n+2 - t]_{p-1}}) - 
L_p(-1+t-n, \omega^{n+2-t}) \in p\bZ_p, 
\]
and hence we have $L_p(-1+t-n, \omega^{n+2-t}) \in \bZ_p^\times$. 
Therefore, we put $\nu := t-1$ and get 
\[
\delta_p(n, \nu) = \mathrm{ord}_p(N_{n+2}). 
\]
\end{proof}

\subsubsection{$p-1 \mid n+2$}

\begin{lem}\label{lem:case2-p-1-mid-n+2}
    If $p-1 \mid n+2$, we have $\delta_p(\Eis_n) = 0 = \mathrm{ord}_p(N_{n+2})$.  
\end{lem}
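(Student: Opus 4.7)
The plan is to prove the lemma in two steps, following the pattern set by Lemma~\ref{lem:case1-p-1-nmid-n+2}. First I will verify the easy equality $\mathrm{ord}_p(N_{n+2})=0$, and then show that $\delta_p(n,\nu)=0$ for every $\nu\in\{1,\dots,n-1\}$ via a case analysis governed by the congruence $1+\nu\pmod{p-1}$.

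The vanishing $\mathrm{ord}_p(N_{n+2}) = 0$ is immediate from Corollary~\ref{cor:change-zeta}(2), since by hypothesis $n+2\equiv 0\pmod{p-1}$. So the real content is the estimate $\delta_p(\Eis_n)=0$, which by Proposition~\ref{prop:delta-p-Eisenstein=max-delta-p-n-nu} reduces to showing $D_p(n,\nu)\in\bZ_p$ for all admissible $\nu$.

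Fix such a $\nu$. Since $(1+\nu)+(n-\nu+1)=n+2\equiv 0\pmod{p-1}$, the two summands $1+\nu$ and $n-\nu+1$ are simultaneously divisible by $p-1$ or simultaneously not. I would treat these two cases separately. In the first case (both divisible), all three characters $\omega^{1+\nu}, \omega^{n-\nu+1}, \omega^{n+2}$ are trivial, so $D_p(n,\nu)$ becomes exactly
\[
\frac{L_p(-\nu,\mathbbm{1})L_p(\nu-n,\mathbbm{1})}{L_p(-1-n,\mathbbm{1})}-L_p(-\nu,\mathbbm{1})-L_p(\nu-n,\mathbbm{1}).
\]
Applying Corollary~\ref{cor:case2} with $x=1+\nu$ and $y=n-\nu+1$ (so $x+y=n+2$) puts this expression in $\bZ_p$, as desired. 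In the second case (neither divisible), Proposition~\ref{prop:p-adic-zeta}(3) gives $L_p(-\nu,\omega^{1+\nu}),L_p(\nu-n,\omega^{n-\nu+1})\in\bZ_p$, while $L_p(-1-n,\omega^{n+2})=L_p(-1-n,\mathbbm{1})$ has $p$-adic valuation $-1$ by Proposition~\ref{prop:p-adic-zeta}(2); so the ratio in the definition of $D_p(n,\nu)$ lies in $p\bZ_p$, and hence $D_p(n,\nu)\in\bZ_p$ again.

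Combining the two cases yields $\delta_p(n,\nu)=0$ for every $\nu$, hence $\delta_p(\Eis_n)=0$ by Proposition~\ref{prop:delta-p-Eisenstein=max-delta-p-n-nu}, completing the proof. I do not expect any real obstacle here: the delicate point — cancellation of the simple poles of $L_p(\,\cdot\,,\mathbbm{1})$ in the first case — has already been packaged into Corollary~\ref{cor:case2}, so the argument amounts to correctly matching the arguments $x,y$ of that corollary with $1+\nu, n-\nu+1$ and observing that the second case is forced to contribute a factor of $p$ in the ratio.
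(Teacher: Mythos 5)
Your proof is correct and follows essentially the same route as the paper's: reduce via Proposition \ref{prop:delta-p-Eisenstein=max-delta-p-n-nu} to showing $D_p(n,\nu)\in\bZ_p$, note that $p-1\mid 1+\nu$ iff $p-1\mid n-\nu+1$, and handle the two cases by Corollary \ref{cor:case2} and by integrality of the numerator terms against the negative valuation of $L_p(-1-n,\mathbbm{1})$, respectively. One tiny imprecision: Proposition \ref{prop:p-adic-zeta}(2) gives $\ord_p(L_p(-1-n,\mathbbm{1}))=-1-\ord_p(n+2)\le -1$, not exactly $-1$, but only the negativity is needed, so the argument is unaffected.
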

\begin{proof}
The fact that $\mathrm{ord}_{p}(N_{n+2}) = 0$ follows from Corollary \ref{cor:change-zeta}(2). 
Hence by Proposition \ref{prop:delta-p-Eisenstein=max-delta-p-n-nu}, it suffices to show that $\delta_p(n,\nu) = 0$ for any integer $1 \leq \nu \leq n-1$. 
Since  $p-1 \mid n+2$, we have $\mathrm{ord}_p(L_p(-1-n, \omega^{n+2})) < 0$. 
If $p-1 \nmid 1+\nu$, then we also have $p-1\nmid n-\nu + 1$, and  hence we get $\delta_p(n,\nu) = 0$ since $L_p(-\nu, \omega^{1+\nu})$ and $L_p(\nu-n, \omega^{n-\nu+1})$ are $p$-adic integers. 
When $p-1 \mid 1+\nu$, we also have $p-1\mid n-\nu + 1$, and 
 Corollary \ref{cor:case2} implies that $D_p(n,\nu) \in \bZ_{p}$. Hence, again, we obtain  $\delta_p(n,\nu) = 0$. 
\end{proof}

 This completes the proof of Proposition \ref{prop:pre-theorem}, and in particular of Theorem \ref{thm:main result}.

\section{Applications}

In this section, we discuss some applications of Theorem \ref{thm:main result}. 
For notational simplicity, in the following, the (co)homology groups will be denoted by $H^{\bullet}(Y, \mcM_n)$ (resp.  $H_{\bullet}(Y, \mcM_n)$) rather than $H^{\bullet}(Y^{\BS}, \mcM_n)$ (resp. $H_{\bullet}(Y^{\BS}, \mcM_n)$) since they are naturally isomorphic. 

First, note that we have the following corollary of Theorem \ref{thm:main result}. 
\begin{cor}\label{cor:pairing with simple cycles}
Let $n\geq 2$ be an  even integer and $\gamma \in \Gamma$ a matrix. 
Take a polynomial $P(X_1, X_2) \in \mcM_n$ such that $\gamma P(X_1, X_2) = P(X_1, X_2)$. 
Then for any element $\tau \in \bbH$, we have
\begin{align*}
N_{n+2}
\int_{\tau}^{\gamma \tau}
E_{n+2}(z) 
P(z,1) \, dz
\in \Z. 
\end{align*}
Here $N_{n+2} > 0$ is the numerator of $\zeta(-1-n)$. 
\end{cor}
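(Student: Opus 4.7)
The plan is to realize the integral as a pairing between the Eisenstein class $\Eis_n$ and an integral homology class, and then invoke Theorem \ref{thm:main result} directly.

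First I would exhibit the cycle. Consider the element
\[
\sigma := \{\tau, \gamma\tau\} \otimes P \in \MS(\bbH^{\BS}) \otimes \mcM_n.
\]
Its boundary is $\partial\sigma = \{\gamma\tau\}\otimes P - \{\tau\}\otimes P$ in $S_0(\bbH^{\BS})\otimes\mcM_n$. Using the diagonal $\Gamma$-action $\gamma\cdot(\{\tau\}\otimes P) = \{\gamma\tau\}\otimes \gamma P$ together with the invariance hypothesis $\gamma P = P$, we see that $\{\gamma\tau\}\otimes P$ and $\{\tau\}\otimes P$ represent the same element in the coinvariants $(S_0(\bbH^{\BS})\otimes\mcM_n)_{\Gamma}$. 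Hence $\partial[\sigma] = 0$ and $[\sigma]$ defines a class in $H_1(Y^{\BS}, \mcM_n)$, which in particular lies in $H_1^{\mathrm{int}}(Y^{\BS}, \mcM_n)$.

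Next I would evaluate the pairing. By the very definition of the Eichler--Shimura homomorphism (see \S\ref{sec:Eichler--Shimura map} and the remark following it),
\[
\big\langle \Eis_n, [\sigma] \big\rangle
= \big\langle r(E_{n+2}), \{\tau,\gamma\tau\}\otimes P \big\rangle
= \int_{\tau}^{\gamma\tau} E_{n+2}(z)\, P(z,1)\, dz.
\]
By Corollary \ref{cor:eisenstein class is rational} the class $\Eis_n$ is rational, and Theorem \ref{thm:main result} asserts that its denominator equals $N_{n+2}$; equivalently, $N_{n+2}\,\Eis_n \in H^1_{\mathrm{int}}(Y^{\BS}, \mcM_n^{\flat})$. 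Via the formal duality of \S\ref{sec:formal-duality}, the pairing
\[
H^1_{\mathrm{int}}(Y^{\BS}, \mcM_n^{\flat}) \times H_1(Y^{\BS}, \mcM_n) \longrightarrow \Z
\]
takes integer values, so
\[
N_{n+2}\int_{\tau}^{\gamma\tau} E_{n+2}(z)\, P(z,1)\, dz
= \big\langle N_{n+2}\Eis_n, [\sigma]\big\rangle \in \Z.
\]

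This argument is essentially bookkeeping once Theorem \ref{thm:main result} is in hand; the only point that requires a moment's thought is checking that $[\sigma]$ really defines an absolute (as opposed to a relative) homology class, which is precisely where the hypothesis $\gamma P = P$ is used. There is no serious obstacle.
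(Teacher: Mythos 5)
Your proposal is correct and follows essentially the same route as the paper: realize $\{\tau,\gamma\tau\}\otimes P$ as a class in $H_1(Y^{\BS},\mcM_n)$ using $\gamma P = P$, identify the integral with $\brk{\Eis_n,[\sigma]}$, and apply Theorem \ref{thm:main result} together with the integrality of the duality pairing. The extra bookkeeping you supply (the boundary computation in coinvariants and the appeal to formal duality) is exactly what the paper's terser proof leaves implicit.
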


\begin{proof}
Since $\gamma P=P$, we have $\partial(\{\tau , \gamma\tau\}\otimes P) =0$, 
and hence $\{\tau , \gamma\tau\}\otimes P$ defines an element in the homology group $H_1(Y, \mcM_n)$. 
Therefore, by Theorem \ref{thm:main result}, we obtain
\begin{align*}
N_{n+2}
\int_{\tau}^{\gamma \tau}
E_{n+2}(z) P(z,1) \, dz=\brk{N_{n+2}\Eis_n, [\{\tau , \gamma\tau\}\otimes P]}\in \Z. 
\end{align*}
\end{proof}

\subsection{Duke's conjecture}\label{sec:duke's conjecture} 

In the paper \cite{Duke23}, Duke defined a certain map called the higher Rademacher symbol
\[
\Psi_{k} \colon  \Gamma \longrightarrow \Q 
\]
for each integer $k \in \Z_{\geq 2}$ which is a generalization of the classical Rademacher symbol and gave a conjecture concerning the integrality of the higher Rademacher symbol $\Psi_{k}$. 
\begin{conj}[{\cite[Conjecture, p. 4]{Duke23}}]\label{conj:duke}
For any integer  $k \in \Z_{\geq 2}$ and matrix $\gamma \in \Gamma$, we have 
\[
\Psi_k(\gamma) \in \Z. 
\]
\end{conj}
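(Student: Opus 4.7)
The plan is to reduce Duke's conjecture to Corollary~\ref{cor:pairing with simple cycles} (equivalently, to Theorem~\ref{thm:main result}) by means of the integral representation of $\Psi_k$ due to Duke. Concretely, I will identify $\Psi_k(\gamma)$, up to the normalization built into the definition of the Eisenstein class $\Eis_{2k-2}$, with the value of the pairing $\langle \Eis_{2k-2}, z_\gamma \rangle$ at a simple explicit cycle $z_\gamma \in H_1(Y, \cM_{2k-2})$ constructed from $\gamma$.

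First, I would recall Duke's Lemma~6 (to be recorded earlier in this section as Proposition~\ref{prop:rademacher integral}), which expresses the higher Rademacher symbol in the form
\[
\Psi_k(\gamma) \;=\; c_k \int_{\tau}^{\gamma \tau} E_{2k}(z)\, P_\gamma(z,1)\, dz,
\]
for any $\tau \in \bbH$, where $P_\gamma(X_1,X_2) \in \cM_{2k-2}$ is a polynomial satisfying $\gamma P_\gamma = P_\gamma$, and where $c_k$ is a rational constant whose denominator divides $N_{2k} = \text{numerator of }\zeta(1{-}2k)$. Because $P_\gamma$ is $\gamma$-fixed, one has $\partial(\{\tau, \gamma\tau\} \otimes P_\gamma) = \{\gamma\tau\}\otimes P_\gamma - \{\tau\}\otimes P_\gamma \equiv 0$ in the $\Gamma$-coinvariants of $S_0(\bbH) \otimes \cM_{2k-2}$, so
\[
z_\gamma \;:=\; [\{\tau, \gamma\tau\} \otimes P_\gamma] \;\in\; H_1(Y, \cM_{2k-2})
\]
is a well-defined homology class, independent of $\tau$ by the argument of Lemma~\ref{lem:indep-tau-C-nu-k-j}, and the integral above equals $\langle \Eis_{2k-2}, z_\gamma \rangle$.

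Next, I would apply Corollary~\ref{cor:pairing with simple cycles}: since $N_{2k} \Eis_{2k-2} \in H^1_{\mathrm{int}}(Y, \cM_{2k-2}^\flat)$ by Theorem~\ref{thm:main result} (combined with Lemma~\ref{lem:ordinary-flat-integral=non-flat-ntegral}), the formal duality of \S\ref{sec:formal-duality} gives
\[
N_{2k} \int_{\tau}^{\gamma \tau} E_{2k}(z)\, P_\gamma(z,1)\, dz \;=\; \langle N_{2k}\Eis_{2k-2}, z_\gamma \rangle \;\in\; \bZ.
\]
Combined with Duke's formula $\Psi_k(\gamma) = c_k \cdot \int_\tau^{\gamma\tau} E_{2k}(z) P_\gamma(z,1)\, dz$ and the fact that the denominator of $c_k$ divides $N_{2k}$, this yields $\Psi_k(\gamma) \in \bZ$, which is the desired conclusion.

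The main obstacle is neither the homological step nor the invocation of Theorem~\ref{thm:main result}, but rather the careful bookkeeping required to match the normalization constant $c_k$ in Duke's integral formula with the factor $2/\zeta(-1-n) = 2/\zeta(1-2k)$ appearing in the $q$-expansion of $E_{2k}$ used throughout this paper. Here one needs to verify, by comparing Duke's conventions with ours and using Lemma~\ref{lem:eisenstein-pairing-eigen-rational}(1) to pin down the constant term at $i\infty$, that the integrality of $N_{2k} \langle \Eis_{2k-2}, z_\gamma\rangle$ transfers cleanly to integrality of $\Psi_k(\gamma)$ itself. Once this normalization is settled, the rest of the argument is an immediate application of results already established in the paper.
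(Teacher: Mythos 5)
Your proposal follows exactly the paper's route: the conjecture is deduced (Corollary \ref{cor:duke's conjecture}) by combining Duke's integral representation of $\Psi_k$ (Proposition \ref{prop:rademacher integral}, with the $\gamma$-invariant polynomial $P_\gamma = Q_\gamma^{k-1}$) with Corollary \ref{cor:pairing with simple cycles}, which is itself the direct consequence of Theorem \ref{thm:main result} you invoke. One correction on the normalization you rightly flag as the delicate point: in Proposition \ref{prop:rademacher integral} the constant is precisely $c_k = N_{2k}$, and what the argument requires is that $c_k$ be an integer multiple of $N_{2k}$ — your stated condition that the \emph{denominator} of $c_k$ divides $N_{2k}$ would not by itself yield $\Psi_k(\gamma)\in\Z$ from $N_{2k}\int_\tau^{\gamma\tau}E_{2k}(z)P_\gamma(z,1)\,dz\in\Z$.
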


In the following, we show that Duke's Conjecture \ref{conj:duke} follows from Theorem \ref{thm:main result}.

\begin{rmk}
Conjecture \ref{conj:duke} is recently proved also by O'Sullivan using a more direct method (see \cite{Sullivan23}). 
\end{rmk}

Here, instead of giving the original definition of the higher Rademacher symbols, we recall an integral representation of the higher Rademacher symbols, also given by Duke in \cite{Duke23}, which is equivalent to the original definition and more suitable for our purpose.

\begin{prop}[{\cite[Definition (2.4) and Lemma 6]{Duke23}}]\label{prop:rademacher integral}
Let $k \in \Z_{\geq 2}$ be an integer. 
For any matrix $\gamma :=
\begin{pmatrix}
    a&b\\
    c&d
\end{pmatrix}
\in \Gamma$, 
we define a binary quadratic polynomial $Q_{\gamma}(X_1, X_2) \in \mcM_2$ associated with $\gamma$ by
\[
Q_{\gamma}(X_1, X_2) := -\frac{\sgn(a+d)}{\mathrm{gcd}(c,a-d,b)}(cX_1^2-(a-d)X_1X_2-bX_2^2). 
\]
Then for any element $\tau \in \bbH$, we have
\[
\Psi_k(\gamma) = N_{2k} \int_{\tau}^{\gamma \tau} E_{2k}(z)Q_{\gamma}(z,1)^{k-1}\, dz, 
\]
where $N_{2k}>0$ is the numerator of $\zeta(1-2k)$. 
\end{prop}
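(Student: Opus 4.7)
The plan is to deduce this directly from Duke's work \cite[Definition (2.4) and Lemma 6]{Duke23}, after verifying that the statement translates correctly into the present paper's conventions.

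The essential algebraic input to check is that $\gamma Q_\gamma = Q_\gamma$ in $\mcM_2$ under the left $\Gamma$-action used throughout this paper. I would verify this by expanding $Q_\gamma(dX_1 - bX_2, -cX_1 + aX_2)$ and collecting powers of $X_1, X_2$: each coefficient of the resulting polynomial reduces to the corresponding coefficient of $Q_\gamma$ using $\det(\gamma) = ad - bc = 1$. Granted this invariance, the chain $\{\tau, \gamma\tau\} \otimes Q_\gamma^{k-1}$ is automatically closed in the $\Gamma$-coinvariant complex $(S_{\bullet}(\bbH) \otimes \mcM_{2k-2})_\Gamma$, so that it represents a homology class in $H_1(Y, \mcM_{2k-2})$; in particular, the integral on the right-hand side is independent of the choice of $\tau \in \bbH$. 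This closedness is exactly what Corollary \ref{cor:pairing with simple cycles} requires when one later combines the present proposition with Theorem \ref{thm:main result} to extract Duke's conjecture.

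With the $\gamma$-invariance of $Q_\gamma$ in place, the identification with $\Psi_k(\gamma)$ is essentially Duke's Lemma 6, rewritten in the present normalization of the Eisenstein series: here $E_{2k}$ is chosen with constant term $1$, so the factor $N_{2k}$ in the statement records the rescaling between this normalization and the one appearing in Duke's definition of $\Psi_k$. The main obstacle is purely notational: one must keep careful track of sign conventions (the $\sgn(a+d)$ and the $\gcd$ normalizing $Q_\gamma$), the orientation of the geodesic arc from $\tau$ to $\gamma\tau$, and the adjugate-based left action of $M_2^+(\Z)$ on $\mcM_n$ used here, so that everything matches the formulas of \cite{Duke23}. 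Once those translations are fixed, the result is a direct restatement of \cite[Lemma 6]{Duke23} in the setup of the present paper.
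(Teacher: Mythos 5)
Your proposal is correct and takes essentially the same route as the paper: the paper gives no proof of this proposition at all, simply quoting it from Duke's \cite[Definition (2.4) and Lemma 6]{Duke23}, which is what you do (your verification that $\gamma Q_\gamma = Q_\gamma$ under the adjugate action is right and is the only substantive check needed for the cycle to be well defined).
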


\begin{cor}\label{cor:duke's conjecture}
Duke's Conjecture \ref{conj:duke} holds true. 
\end{cor}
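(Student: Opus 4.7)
The plan is to derive Corollary \ref{cor:duke's conjecture} as a direct consequence of Proposition \ref{prop:rademacher integral} combined with Corollary \ref{cor:pairing with simple cycles}. Given a matrix $\gamma = \begin{pmatrix} a & b \\ c & d \end{pmatrix} \in \Gamma$ and an integer $k \geq 2$, Proposition \ref{prop:rademacher integral} already represents $\Psi_k(\gamma)$ in the form
\[
\Psi_k(\gamma) = N_{2k} \int_\tau^{\gamma\tau} E_{2k}(z)\, Q_\gamma(z,1)^{k-1}\, dz,
\]
which is precisely the shape appearing in Corollary \ref{cor:pairing with simple cycles} with $n = 2k-2$ and $P = Q_\gamma^{k-1}$. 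Thus it will suffice to check that $Q_\gamma^{k-1} \in \mcM_{2k-2}$ is $\gamma$-invariant, since then $\{\tau, \gamma\tau\} \otimes Q_\gamma^{k-1}$ is a cycle in $(\MS(\bbH) \otimes \mcM_{2k-2})_\Gamma$ and the corollary yields $\Psi_k(\gamma) \in \bZ$.

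The key verification is the identity $\gamma \cdot Q_\gamma = Q_\gamma$ in $\mcM_2$. Recalling that the left action of $\gamma$ on polynomials is $(\gamma P)(X_1, X_2) = P(dX_1 - bX_2, -cX_1 + aX_2)$, I would compute
\[
Q_\gamma(dX_1 - bX_2, -cX_1 + aX_2) = -\frac{\sgn(a+d)}{\gcd(c, a-d, b)}\bigl[c(dX_1 - bX_2)^2 - (a-d)(dX_1 - bX_2)(-cX_1 + aX_2) - b(-cX_1 + aX_2)^2\bigr],
\]
expand each coefficient, and simplify using $ad - bc = 1$. Direct calculation shows the coefficient of $X_1^2$ becomes $c(ad-bc) = c$, the coefficient of $X_2^2$ becomes $b(bc - ad) = -b$, and the coefficient of $X_1X_2$ becomes $(a-d)(bc - ad) = -(a-d)$, so indeed $\gamma \cdot Q_\gamma = Q_\gamma$. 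Since the action of $\gamma$ on $\mcM_n$ is multiplicative (as $M_2^+(\bZ)$ acts on the polynomial ring $\bZ[X_1, X_2]$ by ring automorphisms), we immediately get $\gamma \cdot Q_\gamma^{k-1} = Q_\gamma^{k-1}$.

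With the $\gamma$-invariance of $Q_\gamma^{k-1}$ established, Corollary \ref{cor:pairing with simple cycles} applied to $n = 2k-2$, the matrix $\gamma$, and the polynomial $P = Q_\gamma^{k-1}$ gives
\[
N_{2k} \int_\tau^{\gamma\tau} E_{2k}(z)\, Q_\gamma(z,1)^{k-1}\, dz \in \bZ,
\]
which is exactly $\Psi_k(\gamma)$ by Proposition \ref{prop:rademacher integral}. There is no substantive obstacle: everything of depth sits inside Theorem \ref{thm:main result} (via Corollary \ref{cor:pairing with simple cycles}), and the only content specific to this corollary is the elementary but necessary computation confirming that $Q_\gamma$ is the binary quadratic form fixed by $\gamma$.
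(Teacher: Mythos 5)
Your proposal is correct and follows exactly the paper's route: combine Proposition \ref{prop:rademacher integral} with Corollary \ref{cor:pairing with simple cycles}, the only point to check being the $\gamma$-invariance of $Q_\gamma$ (and hence of $Q_\gamma^{k-1}$, by multiplicativity of the action). Your explicit verification of $\gamma \cdot Q_\gamma = Q_\gamma$ using $ad-bc=1$ is accurate; the paper simply asserts this invariance without computation.
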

\begin{proof}
By definition, the binary quadratic  polynomial  $Q_{\gamma}(X_1,X_2)$ defined in Proposition \ref{prop:rademacher integral} is  $\gamma$-invariant. 
Hence  Corollary \ref{cor:pairing with simple cycles} and Proposition \ref{prop:rademacher integral} imply that  $\Psi_k(\gamma) \in \Z$. 
\end{proof}

\subsection{Partial zeta functions of real quadratic fields}\label{sec:denominator of partial zeta functions}

In this subsection, we discuss an application to the denominators of the special values of the partial zeta functions of real quadratic fields.

Let $F$ be a real quadratic field, and let $\mcO \subset F$ be an order of $F$ with  discriminant $D_{\mcO}$. 
We denote by $I_{\mcO}$ the group of proper fractional $\mcO$-ideals and  $P_{\mcO}^+ \subset I_{\mcO}$ the subgroup of totally positive principal ideals. 
We define the narrow ideal class group $Cl_{\mcO}^+$ of $\mcO$ by 
\[
Cl_{\mcO}^+:=I_{\mcO}/P_{\mcO}^+. 
\]
See \cite[\S 7]{Cox13}. 
We fix an embedding $F \subset \R$, and for any element $\alpha \in F \subset \R$, we denote by $\alpha' \in F \subset \R$ its conjugate over $\Q$. 

Moreover, let $\mcO_{+}^{\times}$ denote the group  of totally positive units in $\mcO$, and let $\varepsilon_0 \in \mcO_{+}^{\times}$ denote the generator of $\mcO_{+}^{\times}$ such that $\varepsilon_0> 1$. 

\begin{dfn}\label{def:definition of mfz}
We define a map
\begin{align*}
\mfz_{\mcO,k} \colon Cl_{\mcO}^+ \longrightarrow  H_1(Y, \mcM_{2k-2})
\end{align*}
as follows: 
Let $\mcA \in Cl_{\mcO}^+$, and take a representative $\mfa \in I_{\mcO}$ of $\mcA$. 
We also take a basis $\alpha_1, \alpha_2 \in \mfa$ over $\Z$ such that $\alpha_1 \alpha'_2 - \alpha'_1\alpha_2>0$, and let $\gamma_0 \in \Gamma$ be a matrix such that
\begin{align*}
\gamma_0 \begin{pmatrix}
    \alpha_1\\ \alpha_2
\end{pmatrix} = \begin{pmatrix}
    \varepsilon_0\alpha_1\\ \varepsilon_0\alpha_2
\end{pmatrix}.  
\end{align*}
Moreover, set
\begin{align*}
N_{\alpha_1, \alpha_2}(X_1, X_2):=
-
\frac{1}{N\mfa}
(\alpha_2 X_1-\alpha_1 X_2)
(\alpha'_2X_1-\alpha'_1X_2). 
\end{align*}
We see that $N_{\alpha_1, \alpha_2}(X_1, X_2) \in \mcM_2$ and that $\gamma_0 N_{\alpha_1, \alpha_2}(X_1, X_2) = N_{\alpha_1, \alpha_2}(X_1, X_2)$.  
We then define
\begin{align*}
\mfz_{\mcO,k}(\mcA) := [\{\tau, \gamma_0 \tau\}\otimes N_{\alpha_1, \alpha_2}(X_1, X_2)^{k-1}] \in H_1(Y, \mcM_{2k-2}), 
\end{align*}
where $\tau$ is an arbitrary element in $\bbH$. 
\end{dfn}

\begin{lem}\label{lem:well-definedness of the map fmz}
The homology class $\mfz_{\mcO,k}(\mcA)$ does not depend on the choices we made. 
\end{lem}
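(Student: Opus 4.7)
The plan is to verify independence with respect to each of the three choices in turn: the point $\tau \in \bbH$, the ordered $\bZ$-basis $(\alpha_1, \alpha_2)$ of $\mfa$, and the representative $\mfa \in \mcA$ of the narrow ideal class. Before doing so, I would briefly record that the definition makes sense: since $\alpha_1, \alpha_2 \in \mfa$, the coefficients of $N_{\alpha_1,\alpha_2}(X_1,X_2)$ lie in $\Z$ (for instance $\alpha_i\alpha_i'/N\mfa = N((\alpha_i)/\mfa) \in \Z$, and the middle coefficient is the trace of an element of $\mfa\mfa' = (N\mfa)\mcO$), and a direct substitution using $N(\varepsilon_0)=1$ shows $\gamma_0 N_{\alpha_1,\alpha_2} = N_{\alpha_1,\alpha_2}$, hence $\{\tau, \gamma_0\tau\}\otimes N_{\alpha_1,\alpha_2}^{k-1}$ is indeed a cycle in the coinvariants.

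For independence of $\tau$: given another point $\tau' \in \bbH$, decompose $\{\tau, \gamma_0 \tau\} = \{\tau, \tau'\} + \{\tau', \gamma_0\tau'\} + \{\gamma_0\tau', \gamma_0\tau\}$ in $\MS(\bbH)$, so that
\[
\{\tau, \gamma_0\tau\}\otimes N_{\alpha_1,\alpha_2}^{k-1} - \{\tau', \gamma_0\tau'\}\otimes N_{\alpha_1,\alpha_2}^{k-1}
= \{\tau, \tau'\}\otimes N_{\alpha_1,\alpha_2}^{k-1} + \gamma_0\bigl(\{\tau', \tau\}\otimes N_{\alpha_1,\alpha_2}^{k-1}\bigr),
\]
using that $\gamma_0 N_{\alpha_1,\alpha_2}^{k-1} = N_{\alpha_1,\alpha_2}^{k-1}$ (equivalently, $\gamma_0^{-1}N_{\alpha_1,\alpha_2}^{k-1} = N_{\alpha_1,\alpha_2}^{k-1}$). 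In $(\MS(\bbH)\otimes\mcM_{2k-2})_\Gamma$ the two terms cancel.

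For independence of basis: if $(\beta_1, \beta_2)$ is another $\Z$-basis of $\mfa$ with $\beta_1\beta_2' - \beta_1'\beta_2 > 0$, then the change of basis matrix $\delta \in \mathrm{GL}_2(\Z)$ lies in $\SL_2(\Z)$ because both determinants have the same sign. A short computation (expanding $(d\beta_2+c\beta_1)X_1 - (b\beta_2+a\beta_1)X_2$ and its conjugate for $\delta=\begin{pmatrix}a&b\\c&d\end{pmatrix}$) shows $\delta \cdot N_{\beta_1,\beta_2} = N_{\alpha_1,\alpha_2}$, and the uniqueness of the matrix implementing multiplication by $\varepsilon_0$ gives $\gamma_0 \delta = \delta \gamma_0'$ where $\gamma_0'$ is the matrix associated with $(\beta_1,\beta_2)$. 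Then, choosing $\tau$ in the $\alpha$-construction and $\delta^{-1}\tau$ in the $\beta$-construction,
\[
\{\tau, \gamma_0\tau\}\otimes N_{\alpha_1,\alpha_2}^{k-1} = \delta\bigl(\{\delta^{-1}\tau, \gamma_0'\delta^{-1}\tau\}\otimes N_{\beta_1,\beta_2}^{k-1}\bigr),
\]
which agrees with the $\beta$-cycle in the $\Gamma$-coinvariants; combining with independence of $\tau$ finishes this step.

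For independence of the representative of $\mcA$: replacing $\mfa$ by $\lambda\mfa$ with $\lambda \in F$ totally positive, the basis $(\lambda\alpha_1, \lambda\alpha_2)$ satisfies $\lambda\alpha_1(\lambda\alpha_2)' - (\lambda\alpha_1)'\lambda\alpha_2 = N(\lambda)(\alpha_1\alpha_2' - \alpha_1'\alpha_2) > 0$, and the factor $\lambda\lambda'/N(\lambda\mfa) \cdot N\mfa = 1$ gives $N_{\lambda\alpha_1,\lambda\alpha_2} = N_{\alpha_1,\alpha_2}$; moreover the same $\gamma_0$ implements multiplication by $\varepsilon_0$ on the new basis. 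Hence the cycle is literally unchanged; together with basis independence, this exhausts all choices. The main subtlety is keeping the conventions straight: making sure that the action of $\delta\in\Gamma$ on $\mcM_n$ defined via $(dX_1-bX_2, -cX_1+aX_2)$ in Section~2 is the one that correctly transports $N_{\beta_1,\beta_2}$ to $N_{\alpha_1,\alpha_2}$, and that the adjugate convention is consistent with $\gamma_0 \delta = \delta\gamma_0'$ rather than its inverse. Once these conventions are pinned down, each verification is a short algebraic check.
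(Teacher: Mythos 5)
Your proposal is correct and follows essentially the same route as the paper: the paper treats the change of basis and the change of ideal representative in a single combined step (writing the new basis of $\mfb=\alpha\mfa$ as $\alpha\beta_1,\alpha\beta_2$ and verifying $N_{\alpha\beta_1,\alpha\beta_2}=\gamma N_{\alpha_1,\alpha_2}$ together with $\gamma^{-1}\gamma_{0,\mfb}\gamma=\gamma_0$), whereas you separate the scaling by a totally positive element from the $\SL_2(\Z)$ basis change, but the underlying computations and the use of $\Gamma$-coinvariance are identical.
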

\begin{proof}
The independence of $\tau \in \bbH$ is clear. 
Let $\mfb \in \mcA$ be another representative. Then there exists a totally positive element $\alpha \in F^\times$ such that $\mfb = \alpha \mfa$. 
Take a basis $\beta_1, \beta_2 \in \mfa$ over $\bZ$ with $\beta_1 \beta'_2 - \beta'_1\beta_2>0$. 
Then we obtain a matrix $\gamma_{0, \mfb} \in \Gamma$ and a binary quadratic polynomial $N_{\alpha\beta_1, \alpha\beta_2}(X_1, X_2)$ from the basis $\alpha\beta_1, \alpha\beta_2$ of $\mfb$. 
Note that since $\alpha$ is totally positive, we have $(\alpha\beta_1) (\alpha'\beta'_2) - (\alpha'\beta'_1)(\alpha\beta_2)>0$. 
Let $\gamma \in \mathrm{GL}_2(\bZ)$ be a matrix satisfying 
\[
\begin{pmatrix}\beta_1 \\ \beta_2 \end{pmatrix} = \gamma \begin{pmatrix} \alpha_1 \\ \alpha_2 \end{pmatrix}. 
\]
Then the facts that $\alpha_1 \alpha'_2 - \alpha'_1\alpha_2>0$ and $\beta_1 \beta'_2 - \beta'_1\beta_2>0$ imply that $\gamma \in \Gamma$. 
Since $\gamma_{0, \mfb} \gamma \begin{pmatrix}\alpha_1 \\ \alpha_2 \end{pmatrix} = \gamma \begin{pmatrix} \varepsilon_0 \alpha_1 \\ \varepsilon_0\alpha_2 \end{pmatrix}$, we have 
$\gamma^{-1}\gamma_{0, \mfb}\gamma = \gamma_0$. 
Moreover, we have 
\[
\begin{pmatrix}X_1 & X_2 \end{pmatrix} {}^t\widetilde{\gamma} \begin{pmatrix}0&1\\-1&0\end{pmatrix} \begin{pmatrix}\alpha_1 \\ \alpha_2 \end{pmatrix} 
= \begin{pmatrix}X_1 & X_2 \end{pmatrix}  \begin{pmatrix}0&1\\-1&0\end{pmatrix} \gamma  \begin{pmatrix}\alpha_1 \\ \alpha_2 \end{pmatrix} 
= \begin{pmatrix}X_1 & X_2 \end{pmatrix}  \begin{pmatrix}0&1\\-1&0\end{pmatrix}  \begin{pmatrix}\beta_1 \\ \beta_2 \end{pmatrix}, 
\]
which implies that $N_{\alpha\beta_1, \alpha\beta_2}(X_1, X_2) = \gamma N_{\alpha_1, \alpha_2}(X_1, X_2)$. Therefore, we have
\begin{align*}
    [\{\tau, \gamma_{0, \mfb} \tau\}\otimes N_{\alpha\beta_1, \alpha\beta_2}(X_1, X_2)^{k-1}] 
    &= [\{\tau, \gamma \gamma_0 \gamma^{-1} \tau\}\otimes  \gamma N_{\alpha_1, \alpha_2}(X_1, X_2)^{k-1}]
    \\
    &= [\{\gamma^{-1} \tau, \gamma_0 \gamma^{-1} \tau\}\otimes N_{\alpha_1, \alpha_2}(X_1, X_2)^{k-1}]
    \\
    &= [\{\tau, \gamma_0\tau\}\otimes N_{\alpha_1, \alpha_2}(X_1, X_2)^{k-1}]
\end{align*}
as elements of $H_1(Y, \mcM_{2k-2})$. 
\end{proof}

\begin{rmk}\label{rem:Gauss's theory hyperbolic elements}\
\begin{itemize}
    \item[(1)] Since the matrix $\gamma_0$ in Definition \ref{def:definition of mfz} is hyperbolic  (i.e., $|\mathrm{trace}(\gamma_0)| > 2$), we have $\dim_{\bQ} \{Q \in \cM_{2} \otimes \bQ \mid \gamma Q = Q\} = 1$. 
    This fact together with \cite[(7.6)]{Cox13} shows that $N_{\alpha_1, \alpha_2}(X_1, X_2) = Q_{\gamma_0}(X_1, X_2)$. 

    \item[(2)] Gauss's theory concerning binary quadratic forms (see \cite[Exercise 7.21]{Cox13} for example) shows that for any hyperbolic element $\gamma \in \Gamma$, there is an order $\cO$ of a real quadratic field and a narrow ideal class $\mcA \in Cl_\cO^+$ such that
    \[
    [\{z, \gamma z\} \otimes Q_\gamma(X_1, X_2)] \in \bZ \mfz_{\cO, 2}(\mcA). 
    \]
\end{itemize}    
\end{rmk}

\begin{dfn}
   For each ideal class $\mcA \in Cl_{\mcO}^+$, the partial zeta function $\zeta_{\mcO}(\mcA,s)$ associated with $\mcA$ is defined by 
\begin{align*}
\zeta_{\mcO}(\mcA,s)
:=
\sum_{\mfa \subset \mcO, \mfa \in \mcA}
\frac{1}{(N\mfa)^s},
\quad\quad
(\re(s)>1),  
\end{align*}
and it is well-known that $\zeta_{\mcO}(\mcA,s)$ can be continued meromorphically to $s \in \C$ and has a simple pole at $s=1$. 

\end{dfn}

The following integral representation of the special values of the partial zeta function is classically known. 

\begin{prop}\label{prop:Hecke integral formula}
For any integer  $k \in \Z_{\geq 2}$ and ideal class $\mcA \in Cl_{\mcO}^+$, we have 
\begin{align*}
\brk{\Eis_{2k-2}, \mfz_{\mcO,k}(\mcA)}
= (-1)^k \frac{\zeta_{\mcO}(\mcA^{-1},1-k)}{\zeta(1-2k)}. 
\end{align*}
\end{prop}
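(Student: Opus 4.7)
The plan is to unfold this pairing via the classical Hecke method. Unwinding the definition of the Eichler--Shimura map and of $\mfz_{\mcO,k}(\mcA)$, and choosing any $\tau \in \bbH$, the left-hand side equals
\[
\brk{\Eis_{2k-2}, \mfz_{\mcO,k}(\mcA)} = \int_\tau^{\gamma_0 \tau} E_{2k}(z)\, N_{\alpha_1,\alpha_2}(z,1)^{k-1}\,dz.
\]
I will expand $E_{2k}$ as the Eisenstein lattice sum
\[
E_{2k}(z) = \frac{1}{2\zeta(2k)}\sum_{(m,n)\in\Z^2\setminus\{0\}}(mz+n)^{-2k}
\]
and use the functional equation $\zeta(2k)/\zeta(1-2k) = (-1)^{k-1}(2\pi)^{2k}/(2(2k-1)!)$ to re-express all remaining constants in terms of $\zeta(1-2k)$.

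The key unfolding step uses the bijection $\Z^2\setminus\{0\} \to \mfa\setminus\{0\}$, $(m,n)\mapsto \lambda := m\alpha_1 + n\alpha_2$, which intertwines the right action of $\gamma_0$ on row vectors with multiplication by $\varepsilon_0$ on $\mfa$ (a consequence of $\gamma_0 \begin{pmatrix}\alpha_1\\ \alpha_2\end{pmatrix} = \varepsilon_0 \begin{pmatrix}\alpha_1\\ \alpha_2\end{pmatrix}$). A direct modular-style calculation, exploiting the invariance $\gamma_0 N_{\alpha_1,\alpha_2} = N_{\alpha_1,\alpha_2}$, will show that the substitution $z = \gamma_0 w$ converts $\int_\tau^{\gamma_0\tau} N_{\alpha_1,\alpha_2}(z,1)^{k-1}(mz+n)^{-2k}\,dz$ into the same integral with $(m,n)$ replaced by $(m,n)\gamma_0$ and endpoints shifted by $\gamma_0^{-1}$. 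Grouping by $\langle\gamma_0\rangle$-orbits and telescoping, each orbit with representative $(m_0,n_0)$ (corresponding to some $\lambda_0 \in \mfa\setminus 0$) will then contribute the single complete integral
\[
\int_{\alpha_1'/\alpha_2'}^{\alpha_1/\alpha_2} \frac{N_{\alpha_1,\alpha_2}(w,1)^{k-1}}{(m_0 w + n_0)^{2k}}\,dw
\]
taken between the two fixed points of $\gamma_0$ on $\PP^1(\R)$.

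To evaluate the orbit integral, I will write $N_{\alpha_1,\alpha_2}(w,1) = -(\alpha_2 w - \alpha_1)(\alpha_2'w - \alpha_1')/N\mfa$ and $m_0 w + n_0 = (\lambda_0(\alpha_2'w-\alpha_1') - \lambda_0'(\alpha_2 w - \alpha_1))/D$, with $D := \alpha_1\alpha_2' - \alpha_2\alpha_1'$. A M\"obius substitution such as $u = (\alpha_2 w - \alpha_1)/(\alpha_2'w - \alpha_1')$ will reduce the orbit integral to a standard Beta-type integral, yielding an explicit factor of the shape $C_k \cdot |N\lambda_0|^{-(k-1)}$ with $C_k$ depending only on $k$, $\pi$, and $D$. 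Summing over orbits and identifying
\[
\sum_{\lambda\in(\mfa\setminus 0)/\mcO_+^\times}\frac{1}{|N\lambda|^{k-1}} = (N\mfa)^{-(k-1)}\,\zeta_{\mcO}(\mcA^{-1}, 1-k)
\]
(in the sense of analytic continuation from $\re(s)>1$), and then inserting the functional equation from the first paragraph, will give the asserted identity with sign $(-1)^k$.

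The main obstacle will be the meticulous bookkeeping of constants and signs: I must carefully track factors of $(2\pi i)^{2k}$, $(2k-1)!$, powers of $D$, and the signs arising from the orientation of the integration path as well as from distinguishing $\mcA$ from $\mcA^{-1}$. A secondary subtlety is rigorously justifying the termwise manipulations and making sense of the divergent orbit sum for $k\geq 2$; I plan to handle this by first performing the computation for the non-holomorphic Eisenstein series $E_{2k}^{*}(z,s)$ in a region of absolute convergence and then specializing to $s=0$.
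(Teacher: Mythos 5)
Your overall strategy (Hecke's unfolding of the lattice sum for $E_{2k}$ over $\langle\gamma_0\rangle$-orbits, followed by a Beta-type evaluation of each orbit integral) is exactly the route the paper takes, and those steps are sound: the paper's Lemma \ref{lem:feynman} is precisely the Beta integral you describe. However, the final identification is where your argument breaks. First, a computational point: the orbit integral
$\int_{\alpha_0'}^{\alpha_0}\frac{((\alpha_0-z)(z-\alpha_0'))^{k-1}}{(m_0z+n_0)^{2k}}\,dz$
produces $N_{F/\Q}(m_0\alpha_0+n_0)^{-k}$, i.e.\ the \emph{$k$-th} power of the (signed, not absolute) norm, not $|N\lambda_0|^{-(k-1)}$. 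Consequently the unfolded sum is $\sum_{\lambda\in(\mfa\smallsetminus 0)/\mcO_+^\times} N_{F/\Q}(\lambda)^{-k}$, a signed sum of partial zeta values at the point $s=k$ inside the region of convergence. Your claimed identity
$\sum_{\lambda}\lvert N\lambda\rvert^{-(k-1)}=(N\mfa)^{-(k-1)}\zeta_\mcO(\mcA^{-1},1-k)$
``in the sense of analytic continuation'' is not correct: the left side is a convergent number equal (up to the issues below) to partial zeta values at a \emph{positive} integer, and no amount of analytic continuation turns the value of a Dirichlet series at $s=k$ into its value at $s=1-k$.

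Two ingredients are therefore missing. (i) Because $N_{F/\Q}(\lambda)$ changes sign, the sum over $(\mfa\smallsetminus 0)/\mcO_+^\times$ must be split into the totally positive part and the part with $\lambda>0,\ \lambda'<0$; the latter is re-indexed by multiplying by $\sqrt{D_\mcO}$ and contributes $(-1)^k\zeta_\mcO(\mcJ\mcA^{-1},k)$, where $\mcJ$ is the narrow class of $(\sqrt{D_\mcO})$. (ii) The passage from $s=k$ to $s=1-k$ requires the functional equations of the two completed combinations $\zeta_\mcO(\mcA^{-1},s)\pm\zeta_\mcO(\mcJ\mcA^{-1},s)$ (the paper's $\Lambda_\mcO^{\pm}$), which is a substantive analytic input distinct from the functional equation of the Riemann zeta function that you do invoke for the $\zeta(2k)/\zeta(1-2k)$ normalization. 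Without (i) and (ii) the proof does not close. As a minor remark, your proposed detour through $E_{2k}^{*}(z,s)$ is unnecessary: for $k\geq 2$ both the lattice sum and the unfolded orbit sum converge absolutely, so the termwise manipulations are already justified.
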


Before we give a proof of Proposition \ref{prop:Hecke integral formula}, we recall (a special case of) the so-called Feynman parametrization. 

\begin{lem}\label{lem:feynman}
Let $x_1, x_2, a, b \in \C$  be complex numbers such that $x_1a+x_2\neq 0$ and $x_1b+x_2 \neq 0$. Then for any non-negative integers $k_1$ and $k_2$, we have
\[
\int_a^b \frac{\left(b-z\right)^{k_1} \left(z-a\right)^{k_2}}{(x_1z+x_2)^{2+k_1+k_2}} \, dz 
= \frac{k_1!k_2!}{(k_1+k_2+1)!}\frac{(b-a)^{k_1+k_2+1}}{(x_1 a+x_2)^{k_1+1}(x_1 b+x_2)^{k_2+1}}. 
\]
\end{lem}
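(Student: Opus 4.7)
The plan is to reduce the integral to a standard Feynman parametrization via a single affine change of variables, and then establish that parametrization by a differentiation-under-the-integral argument starting from an elementary base case. Throughout, the integration contour will implicitly be the straight line segment from $a$ to $b$, and we tacitly assume (as is needed for the integral to converge in the first place) that $x_1 z + x_2$ does not vanish along it; the hypotheses $x_1 a + x_2 \neq 0$, $x_1 b + x_2 \neq 0$ then merely ensure the final right-hand side makes sense.

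First I would substitute $z = a + t(b-a)$ with $t \in [0,1]$, so that $b - z = (1-t)(b-a)$, $z - a = t(b-a)$, and $dz = (b-a)\,dt$. Writing $A := x_1 a + x_2$ and $B := x_1 b + x_2$, one has $x_1 z + x_2 = (1-t)A + tB$. The integral then simplifies to
\[
(b-a)^{k_1+k_2+1} \int_0^1 \frac{t^{k_2}(1-t)^{k_1}}{\bigl((1-t)A + tB\bigr)^{k_1+k_2+2}} \, dt.
\]
So it suffices to prove the \emph{Feynman parametrization}
\[
\int_0^1 \frac{t^{k_2}(1-t)^{k_1}}{\bigl((1-t)A + tB\bigr)^{k_1+k_2+2}} \, dt \;=\; \frac{k_1!\,k_2!}{(k_1+k_2+1)!}\,\frac{1}{A^{k_1+1}B^{k_2+1}}.
\]

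Next I would handle the base case $k_1=k_2=0$ directly: the substitution $u = (1-t)A + tB$, $du = (B-A)\,dt$, yields
\[
\int_0^1 \frac{dt}{((1-t)A+tB)^2} \;=\; \frac{1}{B-A}\int_A^B \frac{du}{u^2} \;=\; \frac{1}{B-A}\left(\frac{1}{A} - \frac{1}{B}\right) \;=\; \frac{1}{AB}.
\]
For the general case I would differentiate under the integral sign with respect to the parameters $A$ and $B$. Because the integrand is jointly holomorphic in $(t,A,B)$ on $t \in [0,1]$ and any compact region of $(A,B)$-space avoiding the hyperplanes $(1-t)A+tB = 0$, differentiation under the integral is justified in the usual way. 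Applying $\partial_A^{k_1}\partial_B^{k_2}$ to both sides of the base case gives
\[
\frac{(-1)^{k_1+k_2}(k_1+k_2+1)!}{A^{k_1+1}B^{k_2+1}} \;=\; \int_0^1 \frac{(-1)^{k_1+k_2}(k_1+k_2+1)!\,(1-t)^{k_1} t^{k_2}}{((1-t)A+tB)^{k_1+k_2+2}} \, dt \cdot \frac{k_1!\,k_2!}{k_1!\,k_2!},
\]
after dividing through by $k_1!\,k_2!$ and cancelling signs, which is exactly the Feynman identity.

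Finally I would combine the Feynman identity with the affine substitution of step one, substituting $A = x_1a+x_2$ and $B = x_1b+x_2$, to obtain the claimed formula. The main obstacle is primarily bookkeeping: ensuring the Feynman identity is invoked with the correct pairing $(m,n) = (k_2,k_1)$ versus $(k_1,k_2)$ so that the powers $A^{k_1+1}B^{k_2+1}$ appear in the right positions. A minor technical point worth addressing explicitly is the justification that differentiation under the integral sign is legitimate in the complex setting; this follows from standard Morera/uniform-convergence arguments given that $t \in [0,1]$ is compact and $(1-t)A+tB$ avoids $0$ for $(A,B)$ in a suitable neighborhood.
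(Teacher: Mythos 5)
Your proof is correct and follows essentially the same route as the paper: both reduce to the base case $k_1=k_2=0$ by an elementary substitution and then obtain the general case by applying $\partial_{A}^{k_1}\partial_{B}^{k_2}$ to both sides (the paper's parameters $y_1,y_2$ are exactly your $A,B$), with differentiation under the integral sign justified by holomorphy. One bookkeeping slip: after differentiating, the left-hand side of your display should be $(-1)^{k_1+k_2}\,k_1!\,k_2!\,A^{-k_1-1}B^{-k_2-1}$ rather than $(-1)^{k_1+k_2}(k_1+k_2+1)!\,A^{-k_1-1}B^{-k_2-1}$; with that correction, dividing by $(-1)^{k_1+k_2}(k_1+k_2+1)!$ gives precisely the stated Feynman identity.
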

\begin{proof}
We may assume that $a \neq b$. By setting $y_1=x_1a+x_2$ and $y_2=x_1b+x_2$, it suffices to prove that 
\[
(b-a)
\int_a^b \frac{\left(b-z\right)^{k_1}\left(z-a\right)^{k_2}}{((b-z)y_1+(z-a)y_2)^{2+k_1+k_2}} \, dz 
= \frac{k_1!k_2!}{(k_1+k_2+1)!}\frac{1}{y_1^{k_1+1}y_2^{k_2+1}}.
\]
The case where $k_1=k_2=0$ is clear, i.e., we have
\[
(b-a)\int_a^b \frac{1}{((b-z)y_1+(z-a)y_2)^{2}} \, dz = \frac{1}{y_1y_2}.
\]
Then by viewing the both sides as holomorphic functions in $(y_1, y_2) \in \C^{\times}\times \C^{\times}$ and applying the differential operator $\lr{\frac{\partial}{\partial y_1}}^{k_1}\lr{\frac{\partial}{\partial y_2}}^{k_2}$, we obtain the desired identity. 
\end{proof}
\begin{proof}[{Proof of Proposition \ref{prop:Hecke integral formula}}]
We use the same notations as in Definition \ref{def:definition of mfz}. 
Since 
\[
2\zeta(2k)E_{2k}(z) = \sum_{(0,0) \neq (m,n) \in \Z^2} \frac{1}{(mz+n)^{2k}}
\]
and $\Eis_{2k-2} = r(E_{2k})$, for any element $\tau \in \bbH$ we have
\begin{align*}
2\zeta(2k)\brk{\Eis_{2k-2}, \mfz_{\mcO,k}(\mcA)} = \int_{\tau}^{\gamma_0 \tau}\sum_{(0,0) \neq (m,n)\in \Z^2}  \frac{N_{\alpha_1, \alpha_2}(z, 1)^{k-1}}{(mz+n)^{2k}} \, dz. 
\end{align*}
Since $N_{\alpha_1, \alpha_2}(\gamma_0  z, 1) = j(\gamma_0, z)^{-2}N_{\alpha_1, \alpha_2}(z, 1)$, where $j(\begin{pmatrix}a&b\\c&d\end{pmatrix}, z) := cz+d$ is the factor of automorphy, if we fix a complete set $S_{\gamma_0}$ of representatives of $(\Z^2  \setm \{(0, 0)\} )/\gamma_0^{\Z}$, then we have 
\begin{align*}
\int_{\tau}^{\gamma_0 \tau}\sum_{(0,0) \neq (m,n)\in \Z^2} \frac{N_{\alpha_1, \alpha_2}(z, 1)^{k-1}}{(mz+n)^{2k}} \, dz 
&= \int_{\tau}^{\gamma_0 \tau} \sum_{l \in \Z} \sum_{(m,n)\in S_{\gamma_0}} \frac{N_{\alpha_1, \alpha_2}(z, 1)^{k-1}}{j(\gamma_0^l, z)^{2k}(m(\gamma_0^l  z)+n)^{2k}} \, dz
\\
&= \sum_{l \in \Z}\int_{\gamma_0^{l} \tau}^{\gamma_0^{l+1} \tau}  \sum_{(m,n)\in S_{\gamma_0}} \frac{N_{\alpha_1, \alpha_2}(\gamma_0^{-l} z, 1)^{k-1}}{j(\gamma_0^l, \gamma_0^{-l} z)^{2k}(mz+n)^{2k}} \, d(\gamma_0^{-l} z)
\\
&= \sum_{l \in \Z}\int_{\gamma_0^{l} \tau}^{\gamma_0^{l+1} \tau}  \sum_{(m,n)\in S_{\gamma_0}} \frac{N_{\alpha_1, \alpha_2}(z, 1)^{k-1}}{(mz+n)^{2k}} \, dz. 
\end{align*}
Set $\alpha_0 := \alpha_1/\alpha_2 \in F \subset \R$. Then the point $\alpha_0 \in \R$ 
(resp. $\alpha'_0$) is the attractive fixed point (resp. repelling fixed point) of the hyperbolic matrix $\gamma_0 \in \Gamma$, i.e., we have $\lim_{l \to \infty} \gamma_0^{l}\tau= \alpha_0$ and $\lim_{l \to \infty} \gamma_0^{-l}\tau= \alpha'_0$ in $\bbP^1(\C)$ for any element $\tau \in \bbH$. 
Hence we obtain 
\begin{align*}
    \sum_{l \in \Z}\int_{\gamma_0^{l} \tau}^{\gamma_0^{l+1} \tau}  \sum_{(m,n)\in S_{\gamma_0}} \frac{N_{\alpha_1, \alpha_2}(z, 1)^{k-1}}{(mz+n)^{2k}} \, dz 
    &= \int_{\alpha'_0}^{\alpha_0}  \sum_{(m,n)\in S_{\gamma_0}} \frac{N_{\alpha_1, \alpha_2}(z, 1)^{k-1}}{(mz+n)^{2k}} \, dz
\\
&= \frac{N_{F/\Q}(\alpha_2)^{k-1}}{(N\mfa)^{k-1}}\sum_{(m,n)\in S_{\gamma_0}}\int_{\alpha'_0}^{\alpha_0}\frac{((\alpha_0-z)(z-\alpha'_0))^{k-1}}{(mz+n)^{2k}}\, dz. 
\end{align*}
By using Lemma \ref{lem:feynman}, we find
\begin{align*}
\int_{\alpha'_0}^{\alpha_0}\frac{((\alpha_0-z)(z-\alpha'_0))^{k-1}}{(mz+n)^{2k}}\, dz = \frac{((k-1)!)^2}{(2k-1)!}\frac{(\alpha_0-\alpha'_0)^{2k-1}}{N_{F/\Q}(m\alpha_0+n)^{k}}. 
\end{align*}
Note that we have the identity $\alpha_1\alpha'_2- \alpha'_1\alpha_2=\sqrt{D_{\mcO}} N\mfa$, and this shows that 
\begin{align*}
\frac{N_{F/\Q}(\alpha_2)^{k-1}}{(N\mfa)^{k-1}}\sum_{(m,n)\in S_{\gamma_0}}\frac{(\alpha_0-\alpha'_0)^{2k-1}}{N_{F/\Q}(m\alpha_0+n)^{k}} 
= 
 D_{\mcO}^{k-1/2}(N\mfa)^k \sum_{\alpha \in (\mfa \setm\{0\})/\mcO_{+}^{\times}}\frac{1}{N_{F/\Q}(\alpha)^{k}}. 
\end{align*}
For any subset $X \subset F$, we put 
\begin{align*}
X_{+} &:=\{\alpha \in X \mid \alpha>0,\, \alpha'>0\}, 
\\
X_{-} &:= \{\alpha \in X \mid \alpha>0,\, \alpha'<0\}. 
\end{align*}
Let $\mcJ \in Cl_{\mcO}^+$ denote the ideal class containing the principal ideal $(\sqrt{D_{\mcO}}) \subset \mcO$. Note that $\mcJ^{-1}=\mcJ$ in $Cl_{\mcO}^+$. 
Then we further compute
\begin{align*}
(N\mfa)^k \sum_{\alpha \in (\mfa \setm\{0\})/\mcO_{+}^{\times}}\frac{1}{N_{F/\Q}(\alpha)^{k}} 
 &= 
 \sum_{\alpha \in \mfa_+/\mcO_{+}^{\times}}\frac{2(N\mfa)^k}{N_{F/\Q}(\alpha)^k} + \sum_{\alpha \in \mfa_-/\mcO_{+}^{\times}}\frac{2(N\mfa)^k}{N_{F/\Q}(\alpha)^k}
\\
&=
\sum_{\alpha \in \mfa_+/\mcO_{+}^{\times}}\frac{2(N\mfa)^{k}}{N_{F/\Q}(\alpha)^k}+(-1)^k\sum_{\alpha \in (\sqrt{D_{\mcO}}\mfa)_+/\mcO_{+}^{\times}}\frac{2 N(\sqrt{D_{\mcO}}\mfa)^{k}}{N_{F/\Q}(\alpha)^k}
\\
&=
2 \left(\zeta_{\mcO}(\mcA^{-1}, k)+(-1)^k\zeta_{\mcO}(\mcJ\mcA^{-1},k)\right). 
\end{align*}
Now, we recall the functional equations of the partial zeta functions. Set
\begin{align*}
\Lambda_{\mcO}^+(\mcA^{-1},s)
&:=
\pi^{-s}
\Gamma\lr{\frac{s}{2}}^2 D_{\mcO}^{\frac{s-1}{2}}
\lr{
\zeta_{\mcO}(\mcA^{-1},s)
+
\zeta_{\mcO}(\mcA^{-1}\mcJ,s)
}, \\
\Lambda_{\mcO}^-(\mcA^{-1}, s)
&:=
\pi^{-s}
\Gamma\lr{\frac{s+1}{2}}^2 D_{\mcO}^{\frac{s}{2}}
\lr{
\zeta_{\mcO}(\mcA^{-1},s)
-
\zeta_{\mcO}(\mcA^{-1}\mcJ,s)
}. 
\end{align*}
Then we have
\begin{align*}
\Lambda_{\mcO}^+(\mcA^{-1},s)
&=
\Lambda_{\mcO}^+(\mcA^{-1},1-s), \\
\Lambda_{\mcO}^-(\mcA^{-1},s)
&=
\Lambda_{\mcO}^-(\mcA^{-1},1-s). 
\end{align*}
See \cite[Equations (59), (60)]{DIT18} or \cite[p. 545]{Sczech93} for example. 
Although \cite{DIT18} deals only with the maximal orders, we can apply the same argument to general orders. See also \cite{Siegel80}, \cite[(4.19)]{Duke23} or \cite[p. 42]{VZ13}. 
Using these functional equations, we find
\[
D_{\mcO}^{k - 1/2}\left(\zeta_{\mcO}(\mcA^{-1}, k)+(-1)^k\zeta_{\mcO}(\mcJ\mcA^{-1},k)\right) = \frac{(2\pi)^{2k}}{2((k-1)!)^2} \zeta_{\mcO}(\mcA^{-1}, 1-k). 
\]
Therefore, by also using the functional equation for the Riemann zeta function (see, for example, \cite[p. 29]{Hida93}), we obtain 
\begin{align*}
\brk{\Eis_{2k-2}, \mfz_{\mcO,k}(\mcA)} = \frac{(2\pi)^{2k}}{2(2k-1)!\zeta(2k)}\zeta_{\mcO}(\mcA^{-1},1-k) = (-1)^k\frac{\zeta_{\mcO}(\mcA^{-1},1-k)}{\zeta(1-2k)}. 
\end{align*}
\end{proof}

We define the positive integer $J_{2k}$ by 
\[
J_{2k} := \text{the denominator of } \zeta(1-2k). 
\]

\begin{cor}\label{cor:partial zeta denominator and J_n}
Let $F$ be a real quadratic field, $\mcO \subset F$ be an order in $F$, and let $\mcA \in Cl_{\mcO}^+$ be a narrow ideal class of $\mcO$. 
Then for any integer $k \geq 2$, we have
\[
J_{2k}\zeta_{\mcO}(\mcA, 1-k) \in \Z. 
\]
\end{cor}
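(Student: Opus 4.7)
The plan is to combine Proposition \ref{prop:Hecke integral formula} with Theorem \ref{thm:main result} in a very direct way. By Proposition \ref{prop:Hecke integral formula}, for any integer $k \geq 2$ and any narrow ideal class $\mcA \in Cl_\mcO^+$, we have
\[
\brk{\Eis_{2k-2}, \mfz_{\mcO,k}(\mcA)} = (-1)^k \frac{\zeta_{\mcO}(\mcA^{-1},1-k)}{\zeta(1-2k)}.
\]
Setting $n = 2k-2$, Theorem \ref{thm:main result} identifies $\Delta(\Eis_{2k-2})$ with the numerator $N_{2k}$ of $\zeta(1-2k) = \zeta(-1-n)$, so the cohomology class $N_{2k}\Eis_{2k-2}$ lies in $H^1_{\mathrm{int}}(Y^\BS, \mcM_{2k-2}^\flat)$.

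By construction (see Definition \ref{def:definition of mfz} and Lemma \ref{lem:well-definedness of the map fmz}), the class $\mfz_{\mcO,k}(\mcA)$ is an honest element of $H_1(Y,\mcM_{2k-2})$, i.e.\ an integral homology class. The natural pairing between $H^1_{\mathrm{int}}(Y^\BS, \mcM_{2k-2}^\flat)$ and $H_1(Y^\BS, \mcM_{2k-2})$ takes values in $\bZ$, so
\[
N_{2k} \brk{\Eis_{2k-2}, \mfz_{\mcO,k}(\mcA)} \in \bZ.
\]
Writing $\zeta(1-2k) = \pm N_{2k}/J_{2k}$, the left-hand side equals $\pm J_{2k}\,\zeta_{\mcO}(\mcA^{-1},1-k)$, so $J_{2k}\,\zeta_{\mcO}(\mcA^{-1},1-k) \in \bZ$.

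Finally, inversion $\mcA \mapsto \mcA^{-1}$ is a bijection on $Cl_\mcO^+$, so replacing $\mcA$ by $\mcA^{-1}$ gives the desired statement $J_{2k}\,\zeta_{\mcO}(\mcA,1-k) \in \bZ$. There is no real obstacle here: once Proposition \ref{prop:Hecke integral formula} and Theorem \ref{thm:main result} are in hand, the corollary is just the observation that pairing an integral cohomology class against an integral homology class produces an integer, together with bookkeeping of the factor $N_{2k}/\zeta(1-2k) = \pm J_{2k}$.
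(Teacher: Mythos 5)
Your proposal is correct and is essentially identical to the paper's own proof: both combine Proposition \ref{prop:Hecke integral formula} with Theorem \ref{thm:main result} (via the integrality of the pairing of $N_{2k}\Eis_{2k-2}$ against the integral homology class $\mfz_{\mcO,k}(\mcA^{-1})$) and then use $N_{2k}/\zeta(1-2k)=\pm J_{2k}$. The paper simply applies the formula directly to $\mcA^{-1}$ rather than invoking the inversion bijection at the end, which is an immaterial difference.
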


\begin{proof}
By Proposition \ref{prop:Hecke integral formula}, we have  
\[
N_{2k}\brk{\Eis_{2k-2}, \mfz_{\mcO,k}(\mcA^{-1})}
= \pm N_{2k} \frac{\zeta_{\mcO}(\mcA,1-k)}{\zeta(1-2k)} = \pm J_{2k}\zeta_{\mcO}(\mcA,1-k). 
\]
Since $N_{2k}\brk{\Eis_{2k-2}, \mfz_{\mcO,k}(\mcA^{-1})} \in \bZ$ by Theorem \ref{thm:main result}, we obtain $J_{2k}\zeta_{\mcO}(\mcA,1-k) \in \bZ$. 
\end{proof}

\begin{rmk}\label{rem:duke's conjecture and integrality partial zetas are equivalent}
    By Proposition \ref{prop:rademacher integral} (\cite[Lemma 6]{Duke23}) and Proposition \ref{prop:Hecke integral formula}, we see that 
    Duke's conjecture \ref{conj:duke} is equivalent to Corollary \ref{cor:partial zeta denominator and J_n}. 
\end{rmk}

\begin{rmk}
As for the denominator of the special values of the Dedekind zeta functions of real quadratic fields, or more generally of totally real fields, 
  the same (a slightly stronger at $p=2$) universal upper bound was obtained  by Serre in the paper \cite[\S 2, Th\'eor\`eme 6]{Serre73}.  
Moreover, if we fix a totally real field $F$, then a more refined description for the denominators and even for the numerators of the special values of the Dedekind zeta function of $F$ is obtained from the classical Iwasawa main conjecture proved by Wiles in \cite{Wiles90} (see \cite{Kol04}). 
\end{rmk}

\subsection{Sharpness of the universal upper bound in Corollary \ref{cor:partial zeta denominator and J_n}}\label{sec:sharpness}

Let $k \geq 2$ be an integer.  We define a $\Z$-submodule $\mfZ_{k} \subset H_1(Y, \mcM_{2k-2})$ to be the $\Z$-submodule generated by homology classes of the form $\mfz_{\mcO, k}(\mcA)$, that is, 
\[
\mfZ_k := \langle \, \mfz_{\mcO, k}(\mcA) \mid \textrm{$\mcO$ is an order of a real quadratic field and $\mcA \in Cl_\mcO^+$ } \rangle_{\bZ}. 
\]
This subsection is devoted to proving  the following theorem.

\begin{thm}\label{thm:fmZ-pairing-image-is-equal-to-1/N_2k-Z}\
 We have $\displaystyle \left\langle  \Eis_{2k-2}, \mfZ_k  \right\rangle = \frac{1}{N_{2k}}\bZ$. 
\end{thm}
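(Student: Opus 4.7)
The proof splits into two inclusions. The inclusion $\langle \Eis_{2k-2}, \mfZ_k \rangle \subseteq \frac{1}{N_{2k}}\bZ$ is immediate: Theorem \ref{thm:main result} gives $N_{2k}\Eis_{2k-2} \in H^1_{\mathrm{int}}(Y^\BS, \cM_{2k-2}^\flat)$, and each $\mfz_{\cO, k}(\mcA)$ lies in $H_1(Y^\BS, \cM_{2k-2})$ by construction, so the pairing takes values in $\frac{1}{N_{2k}}\bZ$. Equivalently, combining Proposition \ref{prop:Hecke integral formula} with the identity $\zeta(1-2k) = (-1)^k N_{2k}/J_{2k}$ yields
\[
\brk{\Eis_{2k-2}, \mfz_{\cO, k}(\mcA)} = \frac{J_{2k}\,\zeta_\cO(\mcA^{-1}, 1-k)}{N_{2k}},
\]
with integral numerator by Corollary \ref{cor:partial zeta denominator and J_n}.

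For the reverse inclusion I would work one prime $p$ at a time and exhibit $\cO$, $\mcA$ with $J_{2k}\zeta_\cO(\mcA, 1-k) \in \bZ_p^\times$. Since $\Eis_{2k-2}$ is $T_p'$-ordinary (Lemma \ref{lem:eisenstein-pairing-eigen-rational}), the pairing $\brk{\Eis_{2k-2}, -}$ factors through the ordinary projector $e_{T_p}$ by Lemma \ref{lem:e_p-pairing-ordinary}. Combining Corollary \ref{cor:ordinary Eisenstein denominator} with Proposition \ref{prop:homology_generator_ordinary}, the $p$-adic denominator $p^{\ord_p(N_{2k})}$ of $\Eis_{2k-2}$ is attained on the $\bZ_p$-module spanned by boundary classes and the ordinary Hecke-iterated cycles $e_{T_p}[\widetilde{T_p^m C_\nu(\tau)}]$ for $1 \leq \nu \leq 2k-3$. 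So it suffices to show that the $\bZ_p$-module generated by $e_{T_p}(\mfZ_k \otimes \bZ_p)$ together with $H_1^\ord(\partial Y^\BS, \cM_{2k-2, p})$ equals all of $H_1^\ord(Y^\BS, \cM_{2k-2, p})$.

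The remaining spanning statement is the main obstacle, and I would attack it using Hida theory, as advertised in the introduction. First, extend the Gauss-type correspondence of Remark \ref{rem:Gauss's theory hyperbolic elements}(2) from $k=2$ to arbitrary $k \geq 2$, showing that every hyperbolic cycle $[\{z, \gamma z\} \otimes Q_\gamma^{k-1}]$ lies, up to an integer scalar, in $\mfZ_k$; this uses that the $\gamma$-invariants of $\cM_{2k-2}$ are one-dimensional for a hyperbolic $\gamma$, spanned by $Q_\gamma^{k-1}$. Next, apply Theorem \ref{thm:harder-hida} (and its homological dual) to identify ordinary homology modulo $p^r$ across weights $k, k'$ with $2k-2 \equiv 2k'-2 \pmod{(p-1)p^{r-1}}$ in a Hecke-equivariant manner. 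Finally, $p$-adically interpolate the family $\{\mfz_{\cO, k}(\mcA)\}_k$ in the weight variable using Shintani-type explicit formulas (cf.\ \cite{Zagier77}), so that spanning at a single convenient weight $k_0$ implies spanning at every weight in the same $p$-adic congruence class. The problem thus reduces to a finite case-check at small weights, where the values $\zeta_\cO(\mcA, 1-k_0)$ can be computed directly. The hardest step will be this $p$-adic interpolation: the hyperbolic cycles are parametrized by real-quadratic data and the conductor of $\cO$ can vary, so matching cycles across orders while tracking the $T_p$-action (especially at primes $p$ dividing the conductor) requires careful bookkeeping. Once the spanning is established, the sharpness at $p$ is automatic from the primitivity of $N_{2k}\Eis_{2k-2}$ in the ordinary cohomology.
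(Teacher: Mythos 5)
Your upper bound is exactly the paper's (Theorem \ref{thm:main result} plus the integral representation of Proposition \ref{prop:Hecke integral formula}), and your reduction of the lower bound to a $p$-adic spanning statement for the ordinary homology is the right framing. The gap is in how you propose to prove that spanning statement. Theorem \ref{thm:harder-hida} only identifies ordinary (co)homology between weights $n \equiv n' \pmod{(p-1)p^{r-1}}$ with both $n, n' \geq 2$; it never connects a general weight $2k$ to a weight at which the spanning is actually checkable. Your ``finite case-check at small weights'' is therefore not finite: for each prime $p \mid N_{2k}$ you would need a congruent weight $k_0$ at which one can verify by hand that some $J_{2k_0}\zeta_{\cO}(\mcA,1-k_0)$ is a $p$-adic unit, and there is no reason such a $k_0$ is small or that the verification is uniform in $(p,k)$ --- this is essentially the statement you are trying to prove, displaced to another weight. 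Likewise the proposed ``$p$-adic interpolation of $\{\mfz_{\cO,k}(\mcA)\}_k$ via Shintani-type formulas'' is the crux of the whole argument and is left unsubstantiated; you would need to show that the hyperbolic cycles themselves correspond under the weight identification, which is not automatic.

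The paper closes this gap by a different mechanism: it raises the level to $\Gamma_1(Np)$ (where the group is torsion-free and $U_p$ is available) and uses Hida's reduction to the \emph{constant} sheaf, namely the isomorphism $j_* \colon H_1^{\ord}(Y_1(Np),\bZ/(q)) \stackrel{\sim}{\longrightarrow} H_1^{\ord}(Y_1(Np),\cM_{2k-2,p}\otimes\bZ/(q))$ of Proposition \ref{prop:hida-redction-to-the-constant-sheaf-case}, rather than a congruence between two weights $\geq 2$. In weight $2$ the spanning statement becomes the purely group-theoretic fact that the hyperbolic elements of $\Gamma_1(Np)\setminus\Gamma(p)$ generate $\Gamma_1(Np)$ (Lemma \ref{lem:parabolic gamma_1(N) generator}); Lemma \ref{lem:mod-q-compatibility-different-weights} then checks that $j_*$ matches the weight-$2$ hyperbolic cycles with the weight-$2k$ ones up to units mod $q$, because $Q_\gamma \equiv (\text{unit})\cdot X_2^2 \pmod{q}$ for $\gamma \in \Gamma_1(Np)\setminus\Gamma(p)$. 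Nakayama gives spanning over $\bZ_p$ (Theorem \ref{thm:N geq 5 and p mid N case fZ = H_1-ordinary}), and one descends to level $1$ via $\pi_*^{p,1}$, comparing $e_{U_p}$ upstairs with $e_{T_p}$ downstairs (Lemmas \ref{lem:ordinary-homology-compatibility} and \ref{lem:eisenstein class U_p-ordinary relation}) and using that $[\Gamma_0(p):\Gamma_1(p)]=p-1$ is prime to $p$; the primes $p=2,3$ are handled separately using their regularity. No zeta-value computation enters at any point. Your outline would need to be replaced by, or reworked into, this level-raising/weight-lowering mechanism to become a proof.
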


Theorem \ref{thm:fmZ-pairing-image-is-equal-to-1/N_2k-Z} has the following interesting application. 

\begin{cor}\label{cor:sherpness of the universal bound of partial zeta functions}
The universal bound in Corollary \ref{cor:partial zeta denominator and J_n} is sharp, namely, for any prime number $p$, there exist an order $\mcO$ of a real quadratic field  and a narrow ideal class $\mcA \in Cl_\mcO^+$ such that 
\[
\mathrm{ord}_p(J_{2k}\zeta_\mcO(\mcA, 1-k)) = 0. 
\]
In other words, we have
\[
J_{2k}=\min\left\{J \in \Z_{>0} \,\,\middle|\,\, 
\begin{aligned}
J\zeta_\mcO(\mcA, 1-k) \in \Z 
\text{ for all orders $\mcO$ in all real quadratic fields}\\ 
\text{and narrow ideal classes $\mcA \in Cl_{\mcO}^+$}
\end{aligned}\,
\right\}. 
\]
\end{cor}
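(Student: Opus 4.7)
The corollary is a formal consequence of Theorem \ref{thm:fmZ-pairing-image-is-equal-to-1/N_2k-Z} (the main result of the subsection) combined with Proposition \ref{prop:Hecke integral formula}. My plan is to translate the pairing identity of Theorem \ref{thm:fmZ-pairing-image-is-equal-to-1/N_2k-Z} into a statement about the $\bZ$-module generated by the rescaled zeta values $J_{2k}\zeta_{\mcO}(\mcA, 1-k)$, and then read off both the vanishing $p$-adic valuation and the minimality of $J_{2k}$ from it.

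Explicitly, replacing $\mcA$ by $\mcA^{-1}$ in Proposition \ref{prop:Hecke integral formula} gives
\[
\brk{\Eis_{2k-2}, \mfz_{\mcO, k}(\mcA^{-1})} = (-1)^k \frac{\zeta_\mcO(\mcA, 1-k)}{\zeta(1-2k)}.
\]
Since $\zeta(1-2k) = \varepsilon\, N_{2k}/J_{2k}$ for some sign $\varepsilon \in \{\pm 1\}$, this rewrites as
\[
N_{2k}\brk{\Eis_{2k-2}, \mfz_{\mcO, k}(\mcA^{-1})} = (-1)^k\varepsilon\, J_{2k}\zeta_\mcO(\mcA, 1-k),
\]
which lies in $\bZ$ by Corollary \ref{cor:partial zeta denominator and J_n}. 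As $\mcA$ ranges over $Cl_\mcO^+$ and $\mcO$ over orders in real quadratic fields, so does $\mcA^{-1}$, so Theorem \ref{thm:fmZ-pairing-image-is-equal-to-1/N_2k-Z} is equivalent to the assertion that the set $\{J_{2k}\zeta_\mcO(\mcA, 1-k)\}_{\mcO, \mcA}$ generates $\bZ$ as a $\bZ$-module.

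From this I will extract the two desired conclusions. First, for any prime $p$, these integers cannot all lie in $p\bZ$, hence there exists a pair $(\mcO, \mcA)$ with $\ord_p(J_{2k}\zeta_\mcO(\mcA, 1-k)) = 0$. Second, if $J \in \bZ_{>0}$ satisfies $J\zeta_\mcO(\mcA, 1-k) \in \bZ$ for all orders and classes, then writing $1 = \sum_i c_i\, J_{2k}\zeta_{\mcO_i}(\mcA_i, 1-k)$ with $c_i \in \bZ$ and multiplying by $J/J_{2k}$ gives $J/J_{2k} = \sum_i c_i\, J\zeta_{\mcO_i}(\mcA_i, 1-k) \in \bZ$, so $J_{2k} \mid J$; thus $J_{2k}$ is the minimum in the displayed formula. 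Because Theorem \ref{thm:fmZ-pairing-image-is-equal-to-1/N_2k-Z} is taken as given, there is no real obstacle at this stage; the entire difficulty of the subsection is concentrated in the proof of that theorem, which requires producing enough independent homology classes $\mfz_{\mcO, k}(\mcA)$ by Hida-theoretic techniques.
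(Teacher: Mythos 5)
Your proposal is correct and follows essentially the same route as the paper: both deduce the corollary from Theorem \ref{thm:fmZ-pairing-image-is-equal-to-1/N_2k-Z} together with Proposition \ref{prop:Hecke integral formula}, using that $\zeta(1-2k)=\pm N_{2k}/J_{2k}$ to convert the statement $\langle\Eis_{2k-2},\mfZ_k\rangle=\tfrac{1}{N_{2k}}\bZ$ into the assertion that the integers $J_{2k}\zeta_{\mcO}(\mcA,1-k)$ generate $\bZ$, whence some pair $(\mcO,\mcA)$ realizes $p$-adic valuation $0$ for each prime $p$. Your explicit divisibility argument for the minimality of $J_{2k}$ is a slightly more detailed version of what the paper leaves implicit in its ``in other words'' reformulation, but it is the same proof.
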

\begin{proof}
    Let $p$ be a prime number. By the definition of the module $\mfZ_k$ and Theorem \ref{thm:fmZ-pairing-image-is-equal-to-1/N_2k-Z}, one can find an order $\mcO$ of a real quadratic field  and a narrow ideal class $\mcA \in Cl_\mcO^+$ such that 
    \[
    \mathrm{ord}_p(\langle \Eis_{2k-2}, \mfz_{\mcO, k}(\mcA^{-1}) \rangle) = -\mathrm{ord}_p(N_{2k}).
    \]
    Since $\zeta(1-2k) = \pm N_{2k}/J_{2k}$, Proposition \ref{prop:Hecke integral formula} shows that 
    \begin{align*}
            0 &=  \mathrm{ord}_p(N_{2k}) + \mathrm{ord}_p(\langle \Eis_{2k-2}, \mfz_{\mcO, k}(\mcA^{-1}) \rangle) 
            \\
            &=  \mathrm{ord}_p(N_{2k}) - \mathrm{ord}_p(\zeta(1-2k)) +\mathrm{ord}_p(\zeta_\mcO(\mcA, 1-k))
            \\
            &= \mathrm{ord}_p(J_{2k}\zeta_\mcO(\mcA, 1-k)). 
        \end{align*}
\end{proof}

\subsubsection{Preparations for proving Theorem \ref{thm:fmZ-pairing-image-is-equal-to-1/N_2k-Z}}

Let $N \geq 1$ be an integer and define
\[
\Gamma_1(N) := \left\{\begin{pmatrix}
    a&b\\c&d
\end{pmatrix} \in \Gamma \, \middle| \, a -1 \equiv c \equiv d -1 \equiv 0 \pmod{N} \right\}. 
\]
We also put  
\[
Y_1(N) :=  \Gamma_1(N) \bs \bbH, \,\,\,\,\,\, Y_1(N)^\BS :=  \Gamma_1(N) \bs \bbH^\BS, \,\,\,\,\,\,  \partial Y_1(N)^\BS :=Y_1(N)^\BS \setm Y_1(N). 
\]
We note that the similar facts in \S \ref{sec:Definitions of Modular curve and Borel--Serre compactification} and \S \ref{sec:Modular symbols and (co)homology} hold true for the congruence subgroup $\Gamma_1(N)$. 
Moreover, the Hecke operators 
$T_p$ ($p \nmid N$) 
and $U_p$ ($p \mid N$)
act on the homology group $H_1(Y_1(N), \cM_{2k-2})$.

Let $k \geq 1$ be an integer. 
For any hyperbolic matrix $\gamma \in \Gamma_1(N)$ (i.e., $|\mathrm{trace}(\gamma)| > 2$), we set  
\[
\fz_{\Gamma_1(N), k}(\gamma) := [\{z, \gamma  z \} \otimes Q_\gamma(X_1, X_2)^{k-1}] \in H_1(Y_1(N), \cM_{2k-2}). 
\]

\begin{dfn}
    For any integer $k \geq 1$, we define a $\bZ$-submodule 
    \[
    \mfZ_{\Gamma_1(N), k} \subset H_1(Y_1(N), \cM_{2k-2})
    \]
    by 
    \[
    \mfZ_{\Gamma_1(N), k} := \langle \, \fz_{\Gamma_1(N), k}(\gamma) \mid \gamma \in \Gamma_1(N) \textrm{ with } |\mathrm{trace}(\gamma)| > 2 \, \rangle_\bZ. 
    \]
\end{dfn}

\begin{rmk}\label{rem:Zk-and-ZGamma11}
    By Remark \ref{rem:Gauss's theory hyperbolic elements}, we have     $\mfZ_{\Gamma_1(1), k} = \mfZ_{k}$. 
\end{rmk}

\begin{lem}\label{lem:parabolic gamma_1(N) generator}
For any integer $N \geq 1$ and prime number $p$, we have 
\[
 \langle \, [\{z, \gamma  z \}] \mid \gamma \in \Gamma_1(Np)\setm \Gamma(p) \textrm{ with } |\mathrm{trace}(\gamma)| > 2 \,\rangle_\bZ = 
\mfZ_{\Gamma_1(Np), 1} = H_1(Y_1(Np), \bZ).
\]
\end{lem}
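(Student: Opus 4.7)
The plan is to identify $H_1(Y_1(Np), \bZ)$ with a quotient of $\Gamma_1(Np)$ via the surjective group homomorphism
\[
\phi \colon \Gamma_1(Np) \longrightarrow H_1(Y_1(Np), \bZ), \quad \gamma \longmapsto [\{z, \gamma z\}],
\]
and then to write each $\phi(\gamma)$ as a $\bZ$-linear combination of $\phi(h)$ with $h$ hyperbolic and outside $\Gamma(p)$. The map $\phi$ is independent of $z \in \bbH$, is a homomorphism because $\{z, \gamma_1\gamma_2 z\} = \{z, \gamma_1 z\} + \gamma_1 \cdot \{z, \gamma_2 z\}$ while $\Gamma_1(Np)$ acts trivially on $\Gamma_1(Np)$-coinvariants, and is surjective since any closed loop in $Y_1(Np)$ lifts to a path in $\bbH$ from some $z$ to $\gamma z$. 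The two inclusions
\[
\langle\, [\{z, \gamma z\}] \mid \gamma \in \Gamma_1(Np)\setm \Gamma(p),\ |\mathrm{trace}(\gamma)| > 2 \,\rangle_\bZ \subseteq \mfZ_{\Gamma_1(Np), 1} \subseteq H_1(Y_1(Np), \bZ)
\]
are immediate from the definitions, so the content of the lemma is the reverse inclusion.

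The key ingredient is an explicit decomposition. I would first fix the hyperbolic matrix
\[
h_0 := \begin{pmatrix} 1 & 1 \\ Np & 1 + Np \end{pmatrix} \in \Gamma_1(Np),
\]
which has trace $2+Np > 2$. The assignment $\Gamma_1(Np) \to \bF_p$, $\begin{pmatrix} a & b \\ c & d \end{pmatrix} \mapsto b \bmod p$, is a surjective group homomorphism (since $a \equiv d \equiv 1 \pmod p$ throughout $\Gamma_1(Np)$) with kernel $\Gamma_1(Np) \cap \Gamma(p)$; denote its value at $\gamma$ by $[\gamma]_p$, so that $[h_0]_p = 1 \in \bF_p^\times$ and in particular $h_0 \notin \Gamma(p)$. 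For a given $\gamma \in \Gamma_1(Np)$ I would then choose an integer $M$ of large absolute value satisfying (a) $h_0^M \gamma$ hyperbolic and (b) $M + [\gamma]_p \not\equiv 0 \pmod p$. Condition (b) excludes a single residue class modulo $p$ (so infinitely many good $M$ of either sign remain, even when $p = 2$), and condition (a) follows from the exponential growth of $|\mathrm{trace}(h_0^M \gamma)|$ as $|M| \to \infty$ for all $\gamma$ outside a finite set of elliptic exceptions of the form $\gamma^2 = -I$ with eigenvectors swapping those of $h_0$ (such exceptions only occur when $Np \leq 2$ and can be absorbed by replacing $h_0$ with a $\Gamma_1(Np)$-conjugate $g h_0 g^{-1}$). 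The identity $\gamma = (h_0^M \gamma) \cdot h_0^{-M}$ then gives
\[
\phi(\gamma) = \phi(h_0^M \gamma) + M \cdot \phi(h_0^{-1}),
\]
and since $h_0^{-1}$ is again hyperbolic with $[h_0^{-1}]_p = -1 \neq 0$, both terms on the right are classes of hyperbolic elements of $\Gamma_1(Np)\setm\Gamma(p)$, as required.

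The step I expect to demand the most care is the simultaneous satisfaction of conditions (a) and (b) for small primes, especially $p = 2$, where only two residue classes modulo $p$ are available; the argument survives because $[h_0]_p = 1 \in \bF_p^\times$ pins the bad class uniquely to $M \equiv -[\gamma]_p \pmod p$, so the complementary class still yields an arithmetic progression of large $M$ of either sign. A side remark: when $Np \in \{2, 3\}$ the group $\Gamma_1(Np)$ carries elliptic torsion and $\phi$ factors through $\pi_1(Y_1(Np))^{\mathrm{ab}}$ rather than $\Gamma_1(Np)^{\mathrm{ab}}$, but the identity $\phi(\gamma) = \phi(h_0^M \gamma) + M \phi(h_0^{-1})$ remains valid in any such quotient and so the argument is unaffected.
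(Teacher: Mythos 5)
Your proposal is correct and follows essentially the same route as the paper: both reduce the lemma to showing that hyperbolic elements of $\Gamma_1(Np)\setminus\Gamma(p)$ generate $\Gamma_1(Np)$, both use the same auxiliary matrix $h_0=\begin{pmatrix}1&1\\Np&1+Np\end{pmatrix}$ (whose image generates $\Gamma_1(Np)/(\Gamma(p)\cap\Gamma_1(Np))\cong\bZ/p$), and both render an arbitrary element hyperbolic by multiplying by a power of $h_0$ chosen in a residue class mod $p$ that keeps the product outside $\Gamma(p)$, with hyperbolicity extracted from the growth of $x\alpha^{M}+w\alpha^{-M}$ after diagonalizing $h_0$. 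The only point of divergence is the degenerate case $Np=2$, where trace-zero elements swapping the eigenlines of $h_0$ do exist: the paper sidesteps this via $\Gamma_1(2)=\langle\Gamma_1(6),\Gamma_1(10)\rangle$, whereas you conjugate $h_0$ -- which works, but deserves one more line (for a given order-$4$ exception $\gamma$, the conjugates $g^{-1}\gamma g$, $g\in\Gamma_1(2)$, are Zariski dense in the $2$-dimensional conjugacy class and so cannot all lie in the $1$-dimensional locus of matrices anti-diagonal with respect to the eigenbasis of $h_0$, and $[gh_0g^{-1}]_p=[h_0]_p\neq 0$ keeps the conjugate outside $\Gamma(p)$).
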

\begin{proof}
It suffices to show that 
\[
 \langle \, \gamma \mid \gamma \in \Gamma_1(Np)\setm \Gamma(p) \textrm{ with } |\mathrm{trace}(\gamma)| > 2 \, \rangle = \Gamma_1(Np). 
\]
Here $\brk{~}$ means the group generated by the elements inside the bracket.
Moreover, since  $\Gamma_1(2) = \langle \Gamma_1(6), \Gamma_1(10)\rangle$, we may assume that $Np \geq 3$. 
       Put $\gamma := \begin{pmatrix}
        1&1\\Np&1+Np
    \end{pmatrix}$. Then the quotient group  $\Gamma_1(Np)/(\Gamma(p) \cap \Gamma_1(Np))$ is generated by the image of $\gamma$. 
    For any matrix $\gamma' \in \Gamma(p) \cap \Gamma_1(Np)$, we have $\gamma'\gamma^{1+ap} \not\in \Gamma(p)$ for any integer $a$. 
    Since $\mathrm{trace}(\gamma) = Np +2 > 2$ and $\det(\gamma) = 1$, the matrix $\gamma$ is hyperbolic, and one can take  a matrix $Q \in \mathrm{GL}_2(\bbR)$ such that $Q^{-1} \gamma Q = \begin{pmatrix}
        \alpha&\\&\alpha^{-1}
    \end{pmatrix}$. 
    If we put $Q^{-1}\gamma'Q =: \begin{pmatrix}
        x&y\\z&w
    \end{pmatrix}$, then we have $\mathrm{trace}(\gamma'\gamma^{1+ap}) = x \alpha^{1+ap} + w \alpha^{-1-ap}$. 
    Since $\mathrm{trace}(\gamma'\gamma^{1+ap}) \equiv 2 \pmod{Np}$ and $Np \geq 3$, we have $\mathrm{trace}(\gamma'\gamma^{1+ap}) \neq 0$. Hence the set $\{\mathrm{trace}(\gamma'\gamma^{1+ap}) \mid a \in \bZ\} \subset \bZ$ is infinite. 
    Therefore we can find an integer $a$ such that $|\mathrm{trace}(\gamma'\gamma^{1+ap})| > 2$. 
    \end{proof}

Next, we recall an important result proved by Hida (see \cite[Corollary 4.5]{Hida86} or \cite[Corollary 8.2]{Hida88}).  
Take an integer $N \geq 1$ and  a prime number $p$, and put $q := p^{\ord_p(Np)}$. 
Then we have a $\Gamma_1(q)$-homomorphism 
    \[
    j \colon \bZ/(q) \longrightarrow \mcM_{2k-2} \otimes \bZ/(q); b \mapsto bX_2^{2k-2},
    \]
    which induces a Hecke-equivariant homomorphism  
    \[
    j_* \colon   H_1(Y_1(Np), \bZ/(q)) \longrightarrow H_1(Y_1(Np), \mcM_{2k-2, p} \otimes \bZ/(q)). 
    \]

\begin{prop}\label{prop:hida-redction-to-the-constant-sheaf-case} 
When $Np \geq 4$,  the homomorphism $j$ induces a Hecke-equivariant isomorphism 
    \[
    j_* \colon   H_1^{\ord}(Y_1(Np), \bZ/(q)) \stackrel{\sim}{\longrightarrow} H_1^{\ord}(Y_1(Np), \mcM_{2k-2, p} \otimes \bZ/(q)) . 
    \]
Here we define $H_1^{\ord}(Y_1(Np), -) := e_{U_p}H_1(Y_1(Np), -)$. 
\end{prop}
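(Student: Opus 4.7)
The plan is to factor $j$ as
\[
\bZ/(q) \stackrel{\sim}{\longrightarrow} L \hookrightarrow \cM_{2k-2, p} \otimes \bZ/(q),
\]
where $L := \bZ/(q) \cdot X_2^{2k-2}$ is the $\bZ/(q)$-submodule generated by $X_2^{2k-2}$, and then to apply the long exact sequence in homology associated with the short exact sequence of $\Gamma_1(Np)$-modules
\[
0 \longrightarrow L \longrightarrow \cM_{2k-2, p}/q \longrightarrow N \longrightarrow 0,
\]
where $N$ denotes the quotient. The desired isomorphism will follow once the ordinary homology of $N$ is shown to vanish.

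First I would verify that $L$ is a $\Gamma_1(Np)$-submodule on which the action is trivial, so that $j$ does factor as above and is a $\Gamma_1(Np)$-isomorphism onto $L$. This is a direct computation: for $\gamma = \begin{pmatrix} a&b\\c&d \end{pmatrix} \in \Gamma_1(Np) \subset \Gamma_1(q)$, the congruences $a \equiv d \equiv 1$, $c \equiv 0 \pmod{q}$ give
\[
\gamma X_2^{2k-2} = (-cX_1 + aX_2)^{2k-2} \equiv X_2^{2k-2} \pmod{q}.
\]

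Second, the hypothesis $Np \geq 4$ ensures that $\Gamma_1(Np)$ is torsion-free, so the sheafification functor on $Y_1(Np)^\BS$ is exact (cf.\ Remark \ref{rem:sheafification-functor-non-exactness}) and the short exact sequence above yields a long exact sequence in homology. Since $Y_1(Np)$ is a non-compact real $2$-manifold, $H_2(Y_1(Np), \cdot)$ vanishes on any locally constant coefficient, so the sequence takes the form
\[
0 \to H_1(Y_1(Np), L) \to H_1(Y_1(Np), \cM_{2k-2, p}/q) \to H_1(Y_1(Np), N) \to H_0(Y_1(Np), L) \to \cdots \to 0.
\]
Applying the exact idempotent $e_{U_p}$, the assertion reduces to showing that $e_{U_p} H_\bullet(Y_1(Np), N) = 0$ for $\bullet = 0, 1$.

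The main and final step is this vanishing, which I expect to be the main technical point, but it can be dispatched by a short calculation in the spirit of Lemma \ref{lem:action of Hecke operator at p on mod p polynomials is nilpotent modulo X_2}. Namely, $N$ is generated over $\bZ/(q)$ by the images of $X_1^a X_2^{2k-2-a}$ for $1 \leq a \leq 2k-2$, and for each $j \in \{0, \ldots, p-1\}$ and $a \geq 1$,
\[
\begin{pmatrix} 1 & j \\ 0 & p \end{pmatrix} X_1^a X_2^{2k-2-a} = (pX_1 - jX_2)^a X_2^{2k-2-a} \equiv (-j)^a X_2^{2k-2} \pmod{p \cM_{2k-2, p}},
\]
and the right-hand side lies in $L$. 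Hence each coset representative $\begin{pmatrix} 1 & j \\ 0 & p \end{pmatrix}$ sends $N$ into $pN$, so $U_p$ maps the chain complex $(S_\bullet(\bbH^\BS) \otimes N)_{\Gamma_1(Np)}$ into $p$ times itself. Iterating $\ord_p(Np)$ times and using $qN = 0$ gives $U_p^{\ord_p(Np)} = 0$ on this complex, so $e_{U_p}$ annihilates $H_\bullet(Y_1(Np), N)$. The Hecke-equivariance with respect to primes $\ell \neq p$ is automatic from the naturality of all constructions.
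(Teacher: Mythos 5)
Your proof is correct and follows essentially the same route as the paper: the same short exact sequence $0 \to \bZ/(q)\cdot X_2^{2k-2} \to \cM_{2k-2,p}\otimes\bZ/(q) \to N \to 0$ and the same key computation that each coset representative $\begin{pmatrix}1&j\\0&p\end{pmatrix}$ sends the quotient $N$ into $pN$, so that $U_p$ is nilpotent there and $e_{U_p}$ annihilates its homology. The only cosmetic difference is that you dispose of the $H_2$ term by a topological dimension argument, whereas the paper kills it with the same nilpotence observation applied in degree $2$.
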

\begin{proof}
The proof of this proposition is essentially the same as that of \cite[Theorem 2 in \S7.2]{Hida93} for cohomology groups. 
We note that $\Gamma_1(Np)$ is torsion-free since $Np \geq 4$.  
Hence any short exact sequence of $\Gamma_1(Np)$-modules induces a long exact sequence in homology.

If we put 
\[
\mcC := (\mcM_{2k-2}/ \bZ X_2^{2k-2}) \otimes \bZ/(q), 
\]
then the short exact sequence $0 \longrightarrow \bZ/(q) \overset{j}{\longrightarrow} \mcM_{2k-2} \otimes \bZ/(q) \longrightarrow \mcC \longrightarrow 0$ induces 
an exact sequence of $\bZ/(q)$-modules
\begin{align*}
    H_2(Y_1(Np), \mcC ) \longrightarrow H_1(Y_1(Np), \bZ/(q)) \stackrel{j_*}{\longrightarrow} H_1(Y_1(N), \mcM_{2k-2} \otimes\Z/(q))  \longrightarrow H_1(Y_1(Np), \mcC ). 
\end{align*}
Since the operator $U_p$ is defined by 
$U_p = \sum_{u=0}^{p-1}
\begin{pmatrix}
    1&u\\
    0&p
\end{pmatrix}$, 
 we have 
\begin{align*}
    \begin{pmatrix}
    1&u\\
    0&p
\end{pmatrix} \cdot P(X_1, X_2) &\equiv P(-uX_2, X_2) \pmod{p} 
\\
&\in \bF_p  X_2^{2k-2}
\end{align*}
for any polynomial $P \in \mcM_{2k-2}$. 
In other words, we have
\[
    \begin{pmatrix}
    1&u\\
    0&p
\end{pmatrix}
\lr{
\mcM_{2k-2}/\Z X_2^{2k-1}
}
\subset
p 
\lr{
\mcM_{2k-2}/\Z X_2^{2k-1}
},  
\]
and this shows that $H_1^{\ord}(Y_1(N), \mcC ) = H_2^{\ord}(Y_1(N), \mcC ) = 0$,  which completes the proof. 
\end{proof}

\begin{lem}\label{lem:mod-q-compatibility-different-weights}
Let $N$ be an integer and  let $p$ be a prime number. 
Set $q := p^{\ord_p(Np)}$. 
Then for any matrix $\gamma \in \Gamma_1(Np)\setm \Gamma(p)$, we have 
    \[
       j_*(\fz_{\Gamma_1(Np), 1}(\gamma) \bmod{q}) \in (\bZ/(q))^{\times} \cdot \fz_{\Gamma_1(Np), k}(\gamma). 
    \]
\end{lem}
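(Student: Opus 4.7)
The plan is to prove the lemma by directly reducing the polynomial $Q_\gamma(X_1, X_2)$ modulo $q$ and identifying it, up to a unit scalar, with $X_2^2$. Once this is done, taking the $(k-1)$-th power immediately relates $\fz_{\Gamma_1(Np), k}(\gamma)$ to $j_*(\fz_{\Gamma_1(Np), 1}(\gamma) \bmod q)$ at the level of chains, and hence at the level of homology classes.

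Concretely, I would write $\gamma = \begin{pmatrix} a & b \\ c & d \end{pmatrix}$. The assumption $\gamma \in \Gamma_1(Np)$ forces $a \equiv d \equiv 1$ and $c \equiv 0$ modulo $Np$, so in particular modulo $q$ (since $q \mid Np$). Substituting these congruences into the definition
\[
Q_\gamma(X_1, X_2) = -\frac{\sgn(a+d)}{\gcd(c, a-d, b)}\bigl(cX_1^2 - (a-d)X_1X_2 - bX_2^2\bigr),
\]
all terms except the one involving $X_2^2$ vanish modulo $q$, and one obtains
\[
Q_\gamma(X_1, X_2) \equiv u_\gamma \cdot X_2^2 \pmod{q}, \qquad u_\gamma := \frac{\sgn(a+d)\,b}{\gcd(c, a-d, b)}.
\]

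The delicate point is to verify that $u_\gamma$ is a unit modulo $q$. Since $q$ is a power of $p$, this reduces to showing that $u_\gamma$ is coprime to $p$. Here the hypothesis $\gamma \notin \Gamma(p)$ enters in an essential way: combined with $a \equiv d \equiv 1$ and $c \equiv 0$ modulo $p$, it forces $b \not\equiv 0 \pmod p$. Because $\gcd(c, a-d, b)$ divides $b$, it is also coprime to $p$, and hence $u_\gamma \in (\bZ/(q))^\times$.

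Raising the congruence to the $(k-1)$-th power gives $Q_\gamma^{k-1} \equiv u_\gamma^{k-1}\, X_2^{2k-2} \pmod q$, and since $u_\gamma^{k-1}$ is again a unit modulo $q$, we conclude
\[
\fz_{\Gamma_1(Np), k}(\gamma) \equiv u_\gamma^{k-1}\,\bigl[\{z, \gamma z\} \otimes X_2^{2k-2}\bigr] = u_\gamma^{k-1}\, j_*\bigl(\fz_{\Gamma_1(Np), 1}(\gamma) \bmod q\bigr) \pmod q,
\]
which is exactly the desired statement. I do not expect any real obstacle: once the membership $\gamma \in \Gamma_1(Np) \setminus \Gamma(p)$ is unpacked, everything is a direct computation. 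The only thing to keep track of is the sign factor $\sgn(a+d)$, which is harmless because $\pm 1$ is already a unit modulo $q$.
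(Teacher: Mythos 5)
Your proposal is correct and follows essentially the same route as the paper: reduce $Q_\gamma$ modulo $q$ using $c \equiv a-d \equiv 0 \pmod{q}$, use $\gamma \notin \Gamma(p)$ to see that $b/\gcd(c,a-d,b)$ is a unit mod $p$, and raise to the $(k-1)$-th power. The only cosmetic difference is that the paper places the (inverse) unit on the $j_*$ side of the identity, which is equivalent to your formulation.
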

\begin{proof}
    If we put $\gamma :=  \begin{pmatrix}
        a&b\\c&d
    \end{pmatrix}$, then we have 
    \begin{align*}
             Q_{\gamma}(X_1, X_2) = -\frac{\sgn(a+d)}{\mathrm{gcd}(c,a-d,b)}(cX_1^2-(a-d)X_1X_2-bX_2^2). 
        \end{align*}
    Since $\gamma \in \Gamma_1(Np)\setm \Gamma(p)$, we have 
    $c \equiv a-d \equiv 0 \pmod{q}$ and $b \not\equiv 0 \pmod{p}$, which shows that 
   \[
   Q_{\gamma}(X_1, X_2) \equiv \pm \frac{b}{\mathrm{gcd}(c,a-d,b)} X_2^2 \pmod{q}
   \]
   and 
\begin{align*}
j_*(\fz_{\Gamma_1(Np), 1}(\gamma) \bmod{q}) 
&= \left(\pm \frac{\mathrm{gcd}(c,a-d,b)}{b} \right)^{k-1} \fz_{\Gamma_1(Np), k}(\gamma) 
\pmod{q}
\\
&\in (\bZ/(q))^{\times} \cdot \fz_{\Gamma_1(Np), k}(\gamma). 
\end{align*}
\end{proof}

\begin{thm}\label{thm:N geq 5 and p mid N case fZ = H_1-ordinary}
    For any integer $N \geq 1$ and prime number $p$ satisfying $Np \geq 4$, we have 
    \[
    e_{U_p}(\fZ_{\Gamma_1(Np),k} \otimes \bZ_p) = H_1^\ord(Y_1(Np), \cM_{2k-2, p}). 
    \]
\end{thm}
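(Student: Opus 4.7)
The plan is to prove the non-trivial inclusion $H_1^\ord(Y_1(Np), \cM_{2k-2, p}) \subseteq e_{U_p}(\fZ_{\Gamma_1(Np),k} \otimes \bZ_p)$ by reducing everything modulo $q := p^{\ord_p(Np)}$ and then applying Nakayama's lemma. The key idea is to transport the generation statement from the weight-$0$ case (already available from Lemma \ref{lem:parabolic gamma_1(N) generator}) to the weight-$(2k-2)$ case via Hida's comparison isomorphism in Proposition \ref{prop:hida-redction-to-the-constant-sheaf-case}. Write $L := e_{U_p}(\fZ_{\Gamma_1(Np),k} \otimes \bZ_p)$ and $M := H_1^\ord(Y_1(Np), \cM_{2k-2,p})$; the goal is $L = M$.

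First I would fix a hyperbolic matrix $\gamma \in \Gamma_1(Np)\setm \Gamma(p)$ and invoke Lemma \ref{lem:mod-q-compatibility-different-weights}, which identifies $j_\ast([\{z, \gamma z\}] \bmod q)$ with $\fz_{\Gamma_1(Np),k}(\gamma) \bmod q$ up to a unit in $\bZ/(q)$. Because $j_\ast$ is Hecke-equivariant and $e_{U_p}$ is the $p$-adic limit of $U_p^{m!}$, applying $e_{U_p}$ to both sides produces an element of $M/qM$ lying in the image of $L$. Letting $\gamma$ range over hyperbolic elements of $\Gamma_1(Np)\setm\Gamma(p)$, Lemma \ref{lem:parabolic gamma_1(N) generator} ensures that the weight-$0$ classes $[\{z, \gamma z\}]$ already generate $H_1(Y_1(Np),\bZ)$, so after applying $e_{U_p}$ and reducing mod $q$ their images generate $H_1^\ord(Y_1(Np), \bZ/(q))$. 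The hypothesis $Np \geq 4$ now allows me to use Proposition \ref{prop:hida-redction-to-the-constant-sheaf-case}: $j_\ast$ sends this group isomorphically onto $H_1^\ord(Y_1(Np), \cM_{2k-2,p}\otimes \bZ/(q)) = M/qM$, and so the image of $L$ in $M/qM$ exhausts the entire quotient, i.e.\ $L + qM = M$.

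To finish, I would invoke Nakayama's lemma: since $M$ is a finitely generated $\bZ_p$-module and $q \in p\bZ_p$, the equality $L + qM = M$ forces $L + pM = M$, and hence $L = M$.

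The main subtlety (rather than a real obstacle, since every ingredient is already in place earlier in the paper) will be keeping the bookkeeping straight around the interplay of $e_{U_p}$ with $j_\ast$ and with reduction modulo $q$: one needs that $e_{U_p}$ passes through $j_\ast$ (Hecke-equivariance of $j_\ast$) and commutes with the mod-$q$ reduction (continuity of $e_{U_p}$ and functoriality), and that Nakayama can indeed be applied here even when $q$ is a nontrivial power of $p$. Once these points are checked, the argument is essentially a direct chase through Lemmas \ref{lem:parabolic gamma_1(N) generator} and \ref{lem:mod-q-compatibility-different-weights} together with Proposition \ref{prop:hida-redction-to-the-constant-sheaf-case}.
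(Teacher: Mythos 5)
Your proposal follows essentially the same route as the paper's proof: reduce modulo $q$, transport the weight-$0$ generation statement of Lemma \ref{lem:parabolic gamma_1(N) generator} through $j_*$ via Lemma \ref{lem:mod-q-compatibility-different-weights} and Proposition \ref{prop:hida-redction-to-the-constant-sheaf-case}, and conclude by Nakayama. The one step you only gesture at under ``bookkeeping'' --- identifying $M/qM$ with $H_1^{\ord}(Y_1(Np),\cM_{2k-2,p}\otimes\bZ/(q))$ --- is precisely where the paper inserts its only additional argument, namely that $e_{U_p}H_0(Y_1(Np),\cM_{2k-2,p})=0$ (because $\begin{pmatrix}1&u\\0&p\end{pmatrix}\cdot P \equiv P(-uX_2,X_2) \pmod{p}$ and $U_p\cdot X_2^{2k-2}=pX_2^{2k-2}$ on coinvariants), which kills the obstruction term $H_0^{\ord}(\cdot)[q]$ in the long exact sequence attached to $0\to\cM_{2k-2,p}\xrightarrow{\times q}\cM_{2k-2,p}\to\cM_{2k-2,p}/q\to 0$.
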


\begin{proof}
       We first note that $e_{U_p}H_0(Y_1(Np), \mcM_{2k-2, p}) = e_{U_p}((\mcM_{2k-2, p})_{\Gamma_1(Np)})$ vanishes 
       since $U_p \cdot X_2^{2k-2} = pX_2^{2k-2}$ and $\begin{pmatrix}
    1&u\\
    0&p
\end{pmatrix} \cdot P(X_1, X_2) \equiv P(-uX_2, X_2) \pmod{p}$.
    This fact implies that 
    \[
    H_1^{\ord}(Y_1(Np), \mcM_{2k-2, p} ) \otimes \bZ/(q)
\stackrel{\sim}{\longrightarrow} 
    H_1^{\ord}(Y_1(Np), \mcM_{2k-2, p} \otimes \bZ/(q)). 
    \]
    Here $q := p^{\ord_p(Np)}$. 
    Hence this theorem follows from Proposition \ref{prop:hida-redction-to-the-constant-sheaf-case} and Lemmas \ref{lem:parabolic gamma_1(N) generator} and \ref{lem:mod-q-compatibility-different-weights}.  
\end{proof}

\subsubsection{Proof of  Theorem \ref{thm:fmZ-pairing-image-is-equal-to-1/N_2k-Z}}

Let $k \geq 2$ be an integer. 
For any positive integers $M$ and $N$ with $M \mid N$, we denote by 
\begin{align*}
\pi_*^{N,M} &\colon H_1(Y_1(N), \cM_{2k-2}) \longrightarrow H_1(Y_1(M), \cM_{2k-2}), 
\\
\pi^*_{M,N} &\colon H^1(Y_1(M), \cM_{2k-2}) \longrightarrow H^1(Y_1(N), \cM_{2k-2})
\end{align*}
the homomorphisms induced by the natural projection $Y_1(N) \longrightarrow Y_1(M); z \mapsto z$.

\begin{cor}\label{cor:pairing-image-of-Np-zeta-homology-classes-containes-Zp}
    For any integer $N \geq 1$ and prime number $p$, we have 
    \[
    \bZ_p \subset \langle \Eis_{2k-2}, \pi^{Np,1}_*(e_{U_p}(\fZ_{\Gamma_{1}(Np), k} \otimes \bZ_p)) \rangle. 
    \]
\end{cor}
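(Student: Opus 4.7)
The plan is to produce a single explicit ordinary class at level $\Gamma_1(Np)$ whose pushforward to $Y$ pairs with $\Eis_{2k-2}$ to give exactly $1$; since the image of the pairing is a $\bZ_p$-submodule of $\bQ_p$, finding $1$ in it suffices for the inclusion $\bZ_p \subset \text{image}$.

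First I would reduce to the case $Np \geq 4$: if $Np < 4$, choose a multiple $N'$ of $N$ with $N'p \geq 4$. Since $p$ divides both $Np$ and $N'p$, the operator $U_p$ is defined by the same double coset at both levels and commutes with $\pi_*^{N'p, Np}$; together with $\pi_*^{N'p, Np}(\fZ_{\Gamma_1(N'p), k}) \subset \fZ_{\Gamma_1(Np), k}$ this gives
\[
\pi_*^{N'p, 1}(e_{U_p}(\fZ_{\Gamma_1(N'p), k}\otimes\bZ_p)) \subset \pi_*^{Np, 1}(e_{U_p}(\fZ_{\Gamma_1(Np), k}\otimes\bZ_p)),
\]
so the assertion at $N'$ implies it at $N$. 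Assuming now $Np \geq 4$, Theorem \ref{thm:N geq 5 and p mid N case fZ = H_1-ordinary} identifies $e_{U_p}(\fZ_{\Gamma_1(Np), k}\otimes\bZ_p)$ with the entire ordinary homology $H_1^\ord(Y_1(Np), \cM_{2k-2, p})$, so it is enough to exhibit a class $c \in H_1^\ord(Y_1(Np), \cM_{2k-2, p})$ with $\langle \Eis_{2k-2}, \pi_*^{Np, 1}(c)\rangle = 1$.

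My candidate is $c := e_{U_p}\tilde c_0$, where $\tilde c_0 := [\{\tau, \tau+1\}\otimes e_0]$ with $e_0 = X_2^{2k-2}$; this is a cycle at level $\Gamma_1(Np)$ because $\begin{pmatrix}1 & 1 \\ 0 & 1\end{pmatrix} \in \Gamma_1(Np)$ fixes $e_0$. The projection-formula identity $\langle \Eis_{2k-2}, \pi_*^{Np, 1}(c)\rangle = \langle \pi^*_{1, Np}\Eis_{2k-2}, c\rangle$ together with Lemma \ref{lem:e_p-pairing-ordinary} reduces the pairing to $\langle e_{U_p'}\pi^*_{1, Np}\Eis_{2k-2}, \tilde c_0\rangle$. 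Since $\pi^*_{1, Np}\Eis_{2k-2} = r(E_{2k})$ at level $\Gamma_1(Np)$, I would decompose $E_{2k}$ into its two $U_p$-stabilizations $E_{2k}^{\mathrm{ord}} := E_{2k}(z) - p^{2k-1}E_{2k}(pz)$ (with $U_p$-eigenvalue $1$) and $E_{2k}^{\mathrm{non}} := E_{2k}(z) - E_{2k}(pz)$ (with $U_p$-eigenvalue $p^{2k-1}$) via
\[
E_{2k} = \frac{E_{2k}^{\mathrm{ord}} - p^{2k-1}E_{2k}^{\mathrm{non}}}{1 - p^{2k-1}},
\]
so that $e_{U_p'}\pi^*_{1, Np}\Eis_{2k-2} = r(E_{2k}^{\mathrm{ord}})/(1 - p^{2k-1})$. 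A direct constant-term calculation then gives $\int_\tau^{\tau+1}E_{2k}^{\mathrm{ord}}(z)\,dz = 1 - p^{2k-1}$, whence the full pairing equals $1$ as desired.

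The main technical step is the identity $e_{U_p'}\pi^*_{1, Np}\Eis_{2k-2} = r(E_{2k}^{\mathrm{ord}})/(1 - p^{2k-1})$, which requires matching the cohomological $U_p'$-operator with the standard $U_p$-action on modular forms and uses crucially that $1 - p^{2k-1} \in \bZ_p^\times$; once this identity is in place, the rest is a short constant-term calculation.
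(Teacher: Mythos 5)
Your proposal is correct and follows essentially the same route as the paper: reduce to $Np \geq 4$, invoke Theorem \ref{thm:N geq 5 and p mid N case fZ = H_1-ordinary}, and use the boundary cycle $[\{\tau,\tau+1\}\otimes X_2^{2k-2}]$ as the witness whose pairing with $\Eis_{2k-2}$ equals $1$. The only difference is that the paper shortcuts your final computation: since $U_p([\{\tau,\tau+1\}\otimes X_2^{2k-2}]) = [\{\tau,\tau+1\}\otimes X_2^{2k-2}]$, the cycle is already ordinary and the pairing is $1$ directly by Lemma \ref{lem:eisenstein-pairing-eigen-rational}(1), so the detour through the $p$-stabilization $E_{2k}^{\mathrm{ord}}$ and the identity $e_{U_p'}\pi^*_{1,Np}\Eis_{2k-2} = r(E_{2k}^{\mathrm{ord}})/(1-p^{2k-1})$ (which is the paper's Lemma \ref{lem:eisenstein class U_p-ordinary relation}) is not needed.
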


\begin{proof}
Take an element $\tau \in \bbH$. 
Since $U_p([\{\tau, \tau + 1\} \otimes X_2^{2k-2}]) = [\{\frac{\tau}{p}, \frac{\tau}{p} + 1\} \otimes X_2^{2k-2}] = [\{\tau, \tau + 1\} \otimes X_2^{2k-2}]$, 
we have 
   \[
    [\{\tau, \tau + 1\} \otimes X_2^{2k-2}] \in H_1^\ord(\partial Y_1(Np)^{\BS}, \mcM_{2k-2, p}) \subset H_1^\ord(Y_1(Np), \mcM_{2k-2, p}).  
    \]
        On the other hand, since 
    \[
    \pi^{Np,1}_*([\{\tau, \tau + 1\}  \otimes X_2^{2k-2}]) =
        [\{\tau, \tau + 1\}  \otimes X_2^{2k-2}] \in H_1(Y, \mcM_{2k-2}), 
    \]
    we have
    \[
    \langle \Eis_{2k-2}, \pi^{Np,1}_*( [\{\tau, \tau + 1\}  \otimes X_2^{2k-2}]) \rangle = 1, 
    \]
    Therefore, first when $Np \geq 4$, Theorem \ref{thm:N geq 5 and p mid N case fZ = H_1-ordinary} implies this lemma. 
When $Np \leq 3$, we have $N=1$ and $p|6$. Then this case follows from the case $N=3$ and $p \mid 6$ 
    since $\pi^{3p,1}_*(e_{U_p}(\fZ_{\Gamma_{1}(3p), k} \otimes \bZ_p) \subset \pi^{p,1}_*(e_{U_p}(\fZ_{\Gamma_{1}(p), k} \otimes \bZ_p)$. 
\end{proof}

\begin{lem}\label{lem:ordinary-homology-compatibility}
Let $N \geq 1$ be an integer and let $p$ be a prime number. 
Then for any homology class $x \in H_1(Y_1(Np), \mcM_{2k-2}) \otimes \bC_p$ we have 
\[
e_{T_p}\pi_*^{Np,1}(e_{U_p}x) = \pi_*^{Np, 1}(e_{U_p}x). 
\]
\end{lem}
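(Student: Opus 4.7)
The plan is to reduce to the decomposition $T_p = V_p + U_p$ at level one (from the discussion before Lemma~\ref{lem:V_pU_p=p^n+1}) together with the identity $V_p U_p = p^{n+1}$ of that lemma, applied with $n = 2k-2$. The first step is to observe that $\pi_*^{Np,1}$ intertwines the $U_p$-operators at the two levels. Indeed, the operator $U_p$ is defined at both levels by the same coset representatives $\begin{pmatrix} 1 & j \\ 0 & p\end{pmatrix}$ for $0 \leq j \leq p-1$, and $\pi_*^{Np,1}$ only changes the coinvariant module in which a chain is viewed, so the formal identity
\[
\pi_*^{Np,1}(U_p(y)) = U_p(\pi_*^{Np,1}(y))
\]
holds for every $y \in H_1(Y_1(Np), \mcM_{2k-2}) \otimes \bC_p$. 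Passing to the $p$-adic limit $\lim_m U_p^{m!}$ on both sides then gives $\pi_*^{Np,1}(e_{U_p}(x)) = e_{U_p}(\pi_*^{Np,1}(x))$, where on the right $e_{U_p}$ is the spectral projector associated with the level-one operator $U_p$ on $H_1(Y, \mcM_{2k-2,p}) \otimes \bC_p$.

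Second, I would show that the level-one $U_p$-ordinary subspace is contained in the $T_p$-ordinary subspace. By construction, $U_p$ restricted to $e_{U_p}\bigl(H_1(Y, \mcM_{2k-2,p}) \otimes \bC_p\bigr)$ is invertible with $p$-adic unit eigenvalues. Lemma~\ref{lem:V_pU_p=p^n+1} then yields $V_p = p^{n+1} U_p^{-1}$ on this subspace, so $T_p = V_p + U_p \equiv U_p \pmod{p^{n+1}}$ and therefore $T_p$ itself acts with $p$-adic unit eigenvalues. Consequently, $e_{T_p}$ is the identity on the image of $e_{U_p}$ at level one.

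Combining the two steps,
\[
e_{T_p} \pi_*^{Np,1}(e_{U_p}(x)) = e_{T_p}\bigl(e_{U_p}(\pi_*^{Np,1}(x))\bigr) = e_{U_p}(\pi_*^{Np,1}(x)) = \pi_*^{Np,1}(e_{U_p}(x)),
\]
as required. I do not foresee any substantial obstacle in this proof: the only point that requires care is bookkeeping the distinction between the $U_p$ at level $Np$ (the usual Atkin--Lehner operator) and the level-one operator of the same name (which is only \emph{one of the two summands} of $T_p$), and verifying the intertwining identity displayed above, which is immediate from the definitions of the pushforward and of the coinvariants.
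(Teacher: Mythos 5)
There is a genuine gap: your argument hinges on a ``level-one operator $U_p$'' acting on $H_1(Y, \mcM_{2k-2,p})\otimes\bC_p$, but no such operator exists. The summand $U_p=\sum_{j=0}^{p-1}\begin{pmatrix}1&j\\0&p\end{pmatrix}$ of $T_p$ is defined only on the chain module $S_\bullet(\bbH^{\BS})\otimes\mcM_{2k-2}$; it does not descend to the $\mathrm{SL}_2(\bZ)$-coinvariants, because the set of left cosets $\Gamma\begin{pmatrix}1&j\\0&p\end{pmatrix}$ is not stable under right multiplication by $\Gamma$ (for instance $\begin{pmatrix}1&0\\0&p\end{pmatrix}\begin{pmatrix}0&-1\\1&0\end{pmatrix}$ lies in the $V_p$-coset $\Gamma\begin{pmatrix}p&0\\0&1\end{pmatrix}$). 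This is precisely why Lemma \ref{lem:V_pU_p=p^n+1} is stated only for the composite $V_pU_p$, and why \S\ref{sec:construction of the lift} manipulates the classes $[W_m(C)]$ rather than $U_p$ and $V_p$ separately. Consequently the projector ``$e_{U_p}$ on $H_1(Y,\mcM_{2k-2,p})\otimes\bC_p$'' in your second step is undefined, the intertwining identity $\pi_*^{Np,1}\circ U_p=U_p\circ\pi_*^{Np,1}$ has no meaningful right-hand side on homology classes, and the assertion that $e_{T_p}$ is the identity on ``the image of $e_{U_p}$ at level one'' cannot even be formulated. The operator $U_p$ does descend at level $Np$ (because the $p$ cosets there exhaust the full double coset), so the two operators you are conflating live on different spaces.

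The actual content of the lemma is the comparison between $U_p$-ordinarity at level $Np$ and $T_p$-ordinarity at level $1$, and this goes through the quadratic relation $X^2-a_pX+p^{2k-1}$ satisfied by the $U_p$-eigenvalues of the two $p$-stabilizations of a $T_p$-eigenclass with eigenvalue $a_p$: exactly one root is a $p$-adic unit when $a_p$ is a unit, and neither is when $a_p$ is not, so $e_{U_p}$ composed with the level-raising map annihilates the non-$T_p$-ordinary part. The paper's proof dualizes to cohomology via the formal duality of \S\ref{sec:formal-duality} and quotes this standard fact (Gouv\^ea's lemma); the same mechanism is carried out explicitly for the Eisenstein class in Lemma \ref{lem:eisenstein class U_p-ordinary relation}. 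Your step (2) gestures at the right heuristic ($T_p$ agrees with an invertible operator up to non-units), but to make it correct you must run the eigenvalue analysis on $H_1(Y_1(Np),\cdot)$, where $U_p$ and the transfer maps to level one are all defined, rather than on a fictitious level-one $U_p$-ordinary subspace.
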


\begin{proof}
By using the formal duality, it suffices to show that 
\[
e_{U_p'}\pi_{1, Np}^*(e_{T_p'}y) = e_{U_p'}\pi_{1,Np}^*(y)
\]
for any cohomology class $y \in H^1(Y, \mcM_{2k-2}) \otimes \bC_p$. 
This claim is well-known (see \cite[Lemma 2]{Fer92} for example). 
\end{proof}

\begin{cor}\label{cor:pairing-Eis-and-fZ_Gamma_1(p)=1/N_2k-Z_p}
For any prime number $p \geq 5$, we have 
\[
\langle \Eis_{2k-2}, \pi_*^{p, 1}(e_{U_p}(\fZ_{\Gamma_1(p), k} \otimes \bZ_p)) \rangle = \frac{1}{N_{2k}}\bZ_p.  
\]
\end{cor}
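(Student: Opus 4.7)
\medskip

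\noindent\textbf{Proof proposal.} The plan is to sandwich the module $\langle \Eis_{2k-2}, \pi_*^{p,1}(e_{U_p}(\fZ_{\Gamma_1(p),k} \otimes \bZ_p))\rangle$ between $\frac{1}{N_{2k}}\bZ_p$ (from above via the denominator theorem) and $\frac{1}{N_{2k}}\bZ_p$ (from below via Hida's level-lowering for ordinary parts), so that equality must hold. The upper bound is immediate: Theorem~\ref{thm:main result} gives $\Delta(\Eis_{2k-2}) = N_{2k}$ (recall $n+2 = 2k$), and the push-forward image consists of classes in $H_1(Y, \cM_{2k-2,p})$, so any pairing value lies in $\frac{1}{N_{2k}}\bZ_p$.

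For the reverse inclusion, I would first apply Theorem~\ref{thm:N geq 5 and p mid N case fZ = H_1-ordinary} with $N = 1$ (the hypothesis $Np \geq 4$ is satisfied since $p \geq 5$) to obtain the identification
\[
e_{U_p}(\fZ_{\Gamma_1(p),k} \otimes \bZ_p) = H_1^{\ord}(Y_1(p), \cM_{2k-2, p}).
\]
Hence $\pi_*^{p,1}(e_{U_p}(\fZ_{\Gamma_1(p),k} \otimes \bZ_p)) = \pi_*^{p,1}(H_1^{\ord}(Y_1(p), \cM_{2k-2, p}))$, and by Lemma~\ref{lem:ordinary-homology-compatibility} this is contained in $H_1^{\ord}(Y, \cM_{2k-2, p})$.

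The crux is to show the surjectivity
\[
\pi_*^{p,1} \colon H_1^{\ord}(Y_1(p), \cM_{2k-2, p}) \twoheadrightarrow H_1^{\ord}(Y, \cM_{2k-2, p}).
\]
This is the homological counterpart, via the formal duality of \S\ref{sec:formal-duality}, of the Hida isomorphism
$e_{T_p'}H^1(\Gamma, \cM_{2k-2,p}) \cong e_{U_p'}H^1(\Gamma_0(p), \cM_{2k-2,p})$
(with $\pi_{1,p}^*$ inducing the map) that was already invoked in the proof of Theorem~\ref{thm:harder-hida} from \cite[Proposition~4.7]{Hida86}. To pass from $\Gamma_0(p)$ to $\Gamma_1(p)$, one uses the fact that $[\Gamma_0(p):\Gamma_1(p)\cdot\{\pm I\}] = (p-1)/2$ is a unit in $\bZ_p$ for $p \geq 5$, so the intermediate push-forward $H_1^{\ord}(Y_1(p), \cdot) \twoheadrightarrow H_1^{\ord}(Y_0(p), \cdot)$ is surjective by a standard transfer argument. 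This step is the main technical point of the proof and requires the careful bookkeeping of Hida's ordinary theory together with the duality and transfer maps.

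Granted the surjectivity, I would conclude as follows. Combining Step~2 and Step~3, $\pi_*^{p,1}(e_{U_p}(\fZ_{\Gamma_1(p),k} \otimes \bZ_p)) = H_1^{\ord}(Y, \cM_{2k-2, p})$. On the other hand, Corollary~\ref{cor:ordinary Eisenstein denominator} together with Theorem~\ref{thm:main result} show that
\[
\langle \Eis_{2k-2}, H_1^{\ord}(Y, \cM_{2k-2, p})\rangle = \frac{1}{N_{2k}}\bZ_p,
\]
since $\delta_p(\Eis_{2k-2}) = \ord_p(N_{2k})$ and this denominator is realized by pairing with one of the explicit ordinary classes $[\widetilde{T_p^m C_\nu(\tau)}]$ used in Corollary~\ref{cor:ordinary Eisenstein denominator}. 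This gives the desired equality. The whole argument is routine once Step~3 is available; by contrast, Corollary~\ref{cor:pairing-image-of-Np-zeta-homology-classes-containes-Zp} only yields $\bZ_p \subset \text{LHS}$ and is insufficient when $p \mid N_{2k}$, which is exactly the situation where the Hida-type surjectivity is doing genuine work.
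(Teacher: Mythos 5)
Your proposal is correct and follows essentially the same route as the paper: identify $e_{U_p}(\fZ_{\Gamma_1(p),k}\otimes\bZ_p)$ with $H_1^{\ord}(Y_1(p),\cM_{2k-2,p})$ via Theorem~\ref{thm:N geq 5 and p mid N case fZ = H_1-ordinary}, deduce surjectivity of $\pi_*^{p,1}$ on ordinary parts from Hida's isomorphism \eqref{eq:isom-T_p-ordinary-and-U_p-ordinary} together with the prime-to-$p$ index of $\Gamma_1(p)$ in $\Gamma_0(p)$, and conclude with Theorem~\ref{thm:main result} and Lemma~\ref{lem:e_p-pairing-ordinary}. The paper's proof is exactly this chain of identifications, so no further comparison is needed.
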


\begin{proof}
    Take a prime number $p \geq 5$. Then Theorem \ref{thm:N geq 5 and p mid N case fZ = H_1-ordinary} shows that 
    \[
    \langle \Eis_{2k-2}, \pi_*^{p, 1}(e_{U_p}(\fZ_{\Gamma_1(p), k} \otimes \bZ_p)) \rangle = \langle \Eis_{2k-2}, \pi_*^{p, 1}(H_1^\ord(Y_1(p), \mcM_{2k-2, p})) \rangle. 
    \]
By Lemma \ref{lem:ordinary-homology-compatibility}, we have a natural homomorphism 
\[
\pi_*^{p,1} \colon H_1^\ord(Y_1(p), \mcM_{2k-2, p})/(\mathrm{torsion}) \longrightarrow H_1^\ord(Y, \mcM_{2k-2, p})/(\mathrm{torsion}), 
\]
which is the dual of the homomorphism 
\[
H^1_\ord(Y, \mcM_{2k-2, p}) \longrightarrow H^1_\ord(Y_1(p), \mcM_{2k-2, p}); y \mapsto e_{U_p'}\pi^{*}_{1, p}(y). 
\]
The fact that the index $[\Gamma_0(p) \colon \Gamma_1(p)] = p - 1$ is relatively prime to $p$ together with the isomorphism \eqref{eq:isom-T_p-ordinary-and-U_p-ordinary}  implies that $\pi_*^{p,1} \colon H_1^\ord(Y_1(p), \mcM_{2k-2, p})/(\mathrm{torsion}) \longrightarrow H_1^\ord(Y, \mcM_{2k-2, p})/(\mathrm{torsion})$ is surjective. 
Hence we have 
\begin{align*}
\langle \Eis_{2k-2}, \pi_*^{p, 1}(H_1^\ord(Y_1(p), \mcM_{2k-2, p})) \rangle = 
\langle \Eis_{2k-2}, H_1^\ord(Y, \mcM_{2k-2, p}) \rangle. 
\end{align*}
Therefore, this corollary follows from Theorem \ref{thm:main result} and Lemma \ref{lem:e_p-pairing-ordinary}. 
\end{proof}

We  note that for any prime number $p$, the operators 
$V_p := \begin{pmatrix}p&0\\0&1\end{pmatrix}$ and $V_p' := \widetilde{\begin{pmatrix}p&0\\0&1\end{pmatrix}}$ induce homomorphisms 
\begin{align*}
V_p &\colon H_1(Y_1(Np), \cM_{2k-2}) \longrightarrow  H_1(Y_1(N), \cM_{2k-2}), 
\\
V_p' &\colon H^1(Y_1(N), \cM_{2k-2}) \longrightarrow  H^1(Y_1(Np), \cM_{2k-2}).   
\end{align*}

\begin{lem}\label{lem:eisenstein class U_p-ordinary relation}
    For any prime number $p$ and  integer $N \geq 1$, we have 
    \[
    e_{U_p'}(\pi^*_{1, Np}(\Eis_{2k-2})) = \frac{1}{1-p^{2k-1}}\left(\pi^*_{1, Np}(\Eis_{2k-2}) - \pi^*_{1, N}(\Eis_{2k-2})|V_p' \right). 
    \]
\end{lem}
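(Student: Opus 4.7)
The plan is to transport the identity to the level of modular forms via the Eichler--Shimura homomorphism $r$, where it reduces to a classical $q$-expansion computation for $E_{2k}$, and then to perform a short linear-algebraic projection in the two-dimensional oldform subspace spanned by $\pi^*_{1,Np}(\Eis_{2k-2})$ and $\pi^*_{1,N}(\Eis_{2k-2})|V_p'$.

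First, I would use the intertwining relation $r(f|[\gamma]_{2k}) = \widetilde{\gamma}\cdot r(f)$ to translate the cohomological operators $|V_p'$ and $U_p'$ into slash operators on modular forms: since $V_p' = \widetilde{V_p}$ with $V_p = \begin{pmatrix}p&0\\0&1\end{pmatrix}$ and $U_p' = \sum_{j=0}^{p-1}\widetilde{\begin{pmatrix}1&j\\0&p\end{pmatrix}}$, they correspond respectively to $|[V_p]_{2k}$ and to the usual $U_p''$ operator on modular forms. Writing $E := E_{2k}$ and using $E|[V_p]_{2k} = p^{2k-1} E(pz)$ together with the identity $\sigma_{2k-1}(pn) = (1+p^{2k-1})\sigma_{2k-1}(n) - p^{2k-1}\sigma_{2k-1}(n/p)$, a direct $q$-expansion calculation yields the two identities
\begin{align*}
E|U_p'' &= (1+p^{2k-1})E - E|[V_p]_{2k},\\
(E|[V_p]_{2k})|U_p'' &= p^{2k-1} E
\end{align*}
at level $\Gamma_1(Np)$, the second coming from $U_p''(E(pz)) = E$.

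Next, applying $r_{\Gamma_1(Np)}$ and writing $A := \pi^*_{1,Np}(\Eis_{2k-2})$ and $B := \pi^*_{1,N}(\Eis_{2k-2})|V_p'$, these translate to
\begin{align*}
U_p' A &= (1+p^{2k-1})A - B,\\
U_p' B &= p^{2k-1} A.
\end{align*}
Thus $U_p'$ preserves the span $\langle A, B\rangle$ and has characteristic polynomial $(X-1)(X-p^{2k-1})$ there. A direct check gives $U_p'(A-B) = A - B$ and $U_p'(p^{2k-1}A - B) = p^{2k-1}(p^{2k-1}A - B)$, together with the decomposition
\[
A = \tfrac{1}{1-p^{2k-1}}(A-B) - \tfrac{1}{1-p^{2k-1}}(p^{2k-1}A - B).
\]
Since $e_{U_p'} = \lim_{m\to\infty}(U_p')^{m!}$ is the identity on the $1$-eigenspace and annihilates the $p^{2k-1}$-eigenspace (as $(p^{2k-1})^{m!}\to 0$ in $\bZ_p$), applying $e_{U_p'}$ to this decomposition yields $e_{U_p'} A = \tfrac{1}{1-p^{2k-1}}(A - B)$, which is exactly the claimed formula.

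The main point requiring care is the compatibility of $r$ with the specific operators $V_p'$ and $U_p'$ across different levels, in particular verifying $r_{\Gamma_1(Np)}(f|[V_p]_{2k}) = r_{\Gamma_1(N)}(f)|V_p'$ for $f$ on $\Gamma_1(N)$; once this dictionary is in place, the argument works uniformly for all $N\geq 1$ (including $p\mid N$), since the $q$-expansion identities and slash-operator formulas are purely function-theoretic and level-independent.
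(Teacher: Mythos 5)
Your proposal is correct and is essentially the paper's own argument: both proofs rest on the two relations $U_p'A=(1+p^{2k-1})A-B$ and $U_p'B=p^{2k-1}A$, the same decomposition of $A$ into the $U_p'$-eigenvectors with eigenvalues $1$ and $p^{2k-1}$, and the observation that $e_{U_p'}$ fixes the first and kills the second. The only (minor) difference is that the paper obtains the two relations directly from the cohomological operator identities $\pi_{1,Np}^*T_p' = U_p'\pi_{1,Np}^* + V_p'\pi_{1,N}^*$ and $U_p'V_p'\pi_{1,N}^* = p^{2k-1}\pi_{1,Np}^*$, whereas you re-derive them through the Eichler--Shimura dictionary and $q$-expansions, which introduces the normalization bookkeeping you flag at the end but is otherwise equivalent.
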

\begin{proof}
We put 
\begin{align*}
    \Eis_{2k-2}^{(1)} &:= \frac{1}{1-p^{2k-1}} \left(\pi^*_{1, Np}(\Eis_{2k-2}) -\pi^*_{1,N}(\Eis_{2k-2})|V_p' \right), 
    \\
    \Eis_{2k-2}^{(p^{2k-1})} &:= \frac{1}{1-p^{2k-1}} \left(-p^{2k-1}\pi^*_{1,Np}(\Eis_{2k-2}) +  \pi^*_{1,N}(\Eis_{2k-2})|V_p' \right). 
\end{align*}
    Since $\pi_{1,Np}^*T_p' = U_p'\pi_{1,Np}^* + V_p'\pi_{1,N}^*$ and $U_p'V_p'\pi_{1,N}^* = p^{2k-1}\pi_{1,Np}^*$, the relation that $\Eis_{2k-2}|T_p' = (1 + p^{2k-1})\Eis_{2k-2}$ shows that 
    \begin{align*}
            \Eis_{2k-2}^{(1)}|U_p'  = \Eis_{2k-2}^{(1)} \,\,\, \textrm{ and } \,\,\, \Eis_{2k-2}^{(p^{2k-1})}|U_p'  =  p^{2k-1}\Eis_{2k-2}^{(p^{2k-1})}.  
    \end{align*}
    Since $\pi^*_{1, Np}(\Eis_{2k-2}) = \Eis_{2k-2}^{(1)} +  \Eis_{2k-2}^{(p^{2k-1})}$, these facts imply this lemma. 
\end{proof}

\begin{lem}\label{lem:inclusion-N-and-1-zeta-homology-classes}
    For any integer $N \geq 1$ and a prime number $p$, we have 
    \[
    \langle \Eis_{2k-2}, \pi^{Np, 1}_*(e_{U_p}(\fZ_{\Gamma_{1}(Np), k} \otimes \bZ_p))  \rangle \subset 
    \langle \Eis_{2k-2}, \fZ_{\Gamma_1(1), k} \rangle \bZ_p. 
    \]
\end{lem}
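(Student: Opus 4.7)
The plan is to use formal duality together with Lemma \ref{lem:eisenstein class U_p-ordinary relation} to reduce the pairing at level $Np$ to pairings at level $1$. Since $\mfZ_{\Gamma_1(Np), k}$ is $\bZ$-generated by classes $x := \mfz_{\Gamma_1(Np), k}(\gamma)$ with $\gamma \in \Gamma_1(Np)$ hyperbolic, it suffices to treat such $x$. Using the adjointness of $U_p, U_p', V_p, V_p'$ (as in \S\ref{sec:formal-duality}) together with the standard adjointness $\brk{\pi^*_{1, Np}\phi, y} = \brk{\phi, \pi^{Np, 1}_* y}$ between pullback and pushforward, I would first rewrite
\[
\brk{\Eis_{2k-2},\, \pi^{Np, 1}_*(e_{U_p} x)} = \brk{e_{U_p'}\pi^*_{1, Np}\Eis_{2k-2},\, x},
\]
and then apply Lemma \ref{lem:eisenstein class U_p-ordinary relation} to obtain
\[
\frac{1}{1 - p^{2k-1}}\bigl(\brk{\Eis_{2k-2},\, \pi^{Np, 1}_* x} - \brk{\Eis_{2k-2},\, \pi^{N, 1}_* V_p x}\bigr).
\]
Since $1 - p^{2k-1} \in \bZ_p^\times$, it then suffices to verify that both $\pi^{Np, 1}_* x$ and $\pi^{N, 1}_* V_p x$ lie in $\mfZ_k = \mfZ_{\Gamma_1(1), k}$ (cf.\ Remark \ref{rem:Zk-and-ZGamma11}).

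The first claim is immediate: $\pi^{Np, 1}_* \mfz_{\Gamma_1(Np), k}(\gamma) = \mfz_{\Gamma_1(1), k}(\gamma) \in \mfZ_k$, viewing $\gamma$ as an element of $\Gamma$. For the second, I would set $\gamma' := V_p \gamma V_p^{-1}$ where $V_p = \begin{pmatrix} p & 0 \\ 0 & 1 \end{pmatrix}$. Writing $\gamma = \begin{pmatrix} a & b \\ c & d \end{pmatrix} \in \Gamma_1(Np)$, one has $\gamma' = \begin{pmatrix} a & pb \\ c/p & d \end{pmatrix} \in \Gamma_1(N)$; this matrix has the same trace as $\gamma$, hence is hyperbolic, and $V_p$ sends the geodesic $\{z, \gamma z\}$ to $\{pz, \gamma'(pz)\}$. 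The key calculation is to compare the associated quadratic forms: since $V_p$ acts on $\cM_2$ by $P(X_1, X_2) \mapsto P(X_1, pX_2)$, a direct expansion yields
\[
V_p Q_\gamma = \frac{p g'}{g}\, Q_{\gamma'}, \quad g := \gcd(c, a - d, b),\; g' := \gcd(c/p, a - d, pb).
\]
Because $g$ divides each of $c$, $p(a - d)$, and $p^2 b$, it divides their gcd $p g'$, so $p g'/g$ is a positive integer. Consequently
\[
V_p \mfz_{\Gamma_1(Np), k}(\gamma) = (p g'/g)^{k-1}\, \mfz_{\Gamma_1(N), k}(\gamma'),
\]
whose push-forward to level $1$ is $(p g'/g)^{k-1}\, \mfz_{\Gamma_1(1), k}(\gamma') \in \mfZ_k$.

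The main obstacle I foresee is organizational rather than substantive: one has to keep careful track of which levels the various operators $V_p, V_p', U_p, U_p', \pi^{Np, 1}_*, \pi^*_{1, Np}$ act between and verify that the adjointness of \S\ref{sec:formal-duality} chains correctly with Lemma \ref{lem:eisenstein class U_p-ordinary relation} across these level changes. Once this bookkeeping is settled, the explicit computation of $V_p Q_\gamma$ and the integrality $g \mid p g'$ are routine.
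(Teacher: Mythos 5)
Your proposal is correct and follows essentially the same route as the paper: adjointness of $\pi_*$, $U_p$, $V_p$ with their primed counterparts combined with Lemma \ref{lem:eisenstein class U_p-ordinary relation} (using $1-p^{2k-1}\in\bZ_p^\times$) reduces everything to showing $V_p\,\fZ_{\Gamma_1(Np),k}\subset \fZ_{\Gamma_1(N),k}$ via the conjugation $\gamma_p = V_p\gamma V_p^{-1}\in\Gamma_1(N)$. Your explicit computation $V_pQ_\gamma = (pg'/g)\,Q_{\gamma_p}$ with $g\mid pg'$ just fills in the integrality that the paper states as $V_p\cdot\mfz_{\Gamma_1(Np),k}(\gamma)\in\bZ\,\mfz_{\Gamma_1(N),k}(\gamma_p)$ without detail.
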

\begin{proof}
Lemma \ref{lem:eisenstein class U_p-ordinary relation} implies that 
    \begin{align*}
        \langle e_{U_p'}\pi^*_{Np, 1}(\Eis_{2k-2}), \fZ_{\Gamma_{1}(Np), k}  \rangle &\subset  \langle \pi^*_{1, Np}(\Eis_{2k-2}), \fZ_{\Gamma_{1}(Np), k}  \rangle\bZ_p +  \langle V_p'\pi^*_{1, N}(\Eis_{2k-2}), \fZ_{\Gamma_{1}(Np), k}  \rangle\bZ_p
        \\
        &\subset \langle \Eis_{2k-2}, \fZ_{\Gamma_1(1), k}  \rangle\bZ_p +  \langle \Eis_{2k-2}, \pi_*^{N, 1}(V_p\fZ_{\Gamma_{1}(Np), k})  \rangle\bZ_p. 
    \end{align*}
Hence we have 
\begin{align*}
 \langle \Eis_{2k-2}, \pi^{Np,1}_*(e_{U_p}(\fZ_{\Gamma_{1}(Np), k} \otimes \bZ_p))  \rangle &= 
  \langle e_{U_p'}\pi^*_{1, Np}(\Eis_{2k-2}), \fZ_{\Gamma_{1}(N), k}  \rangle\bZ_p
  \\
  &\subset \langle \Eis_{2k-2}, \fZ_{\Gamma_1(1), k}  \rangle\bZ_p +  \langle \Eis_{2k-2}, \pi_*^{N, 1}(V_p\fZ_{\Gamma_{1}(Np), k})  \rangle\bZ_p. 
    \end{align*}
    Therefore, it suffices to show that $V_p\fZ_{\Gamma_{1}(Np), k} \subset \fZ_{\Gamma_{1}(N), k}$.
    Let $\gamma \in \Gamma_1(Np)$ be a matrix. Then we have $\gamma_p := \begin{pmatrix}
        p&0\\0&1
    \end{pmatrix}\gamma\begin{pmatrix}
        p&0\\0&1
    \end{pmatrix}^{-1} \in \Gamma_1(N)$. 
Moreover, by the definitions of $\mfz_{\Gamma_1(N), k}$ and  $\mfz_{\Gamma_1(Np), k}$, we obtain 
\[
V_p \cdot \mfz_{\Gamma_1(Np), k}(\gamma)  \in \bZ \mfz_{\Gamma_1(N), k}(\gamma_p). 
\]
In particular, we have $V_p  \fZ_{\Gamma_{1}(Np), k} \subset \fZ_{\Gamma_{1}(N), k}$. 
\end{proof}

\begin{proof}[Proof of Theorem \ref{thm:fmZ-pairing-image-is-equal-to-1/N_2k-Z}]
By Theorem \ref{thm:main result} and Remark \ref{rem:Zk-and-ZGamma11}, we only need to show that 
\[
\frac{1}{N_{2k}}\bZ_p \subset  \langle \Eis_{2k-2}, \mfZ_{\Gamma_1(1), k} \rangle\bZ_p
\]
for any prime number $p$. 
When $p \geq 5$, this claim follows from Corollary \ref{cor:pairing-Eis-and-fZ_Gamma_1(p)=1/N_2k-Z_p} and Lemma \ref{lem:inclusion-N-and-1-zeta-homology-classes} applied to $N=1$. 
Suppose $p=2$ or $p=3$. Then
    Since the primes $2$ and $3$ are regular, these primes does not divide $N_{2k}$. 
Hence, Corollary \ref{cor:pairing-image-of-Np-zeta-homology-classes-containes-Zp} and Lemma \ref{lem:inclusion-N-and-1-zeta-homology-classes} show that 
\[
\frac{1}{N_{2k}}\bZ_p  = \bZ_p \subset \langle \Eis_{2k-2}, \mfZ_{\Gamma_1(1), k} \rangle\bZ_p. 
\]
\end{proof}

\bibliography{references}
\bibliographystyle{amsalpha}

\end{document}